\documentclass[11pt]{amsart}
\usepackage{amsfonts}

\setcounter{MaxMatrixCols}{10}

\newtheorem{theorem}{Theorem}
\theoremstyle{plain}

\newtheorem{corollary}{Corollary}

\newtheorem{definition}{Definition}

\newtheorem{lemma}{Lemma}

\newtheorem{proposition}{Proposition}

\numberwithin{equation}{section}
\numberwithin{theorem}{section}
\numberwithin{equation}{section}

\begin{document}
	\title[Generalized Smoothness and the Cauchy Problem]{On the Cauchy problem for parabolic integro-differential equations in generalized H\"{o}lder spaces}
	\author{R. Mikulevi\v{c}ius and Fanhui Xu}
	\address{University of Southern California, Los Angeles}
	\date{March 22, 2018}
	\subjclass{60H10, 60H35, 41A25}
	\keywords{Generalized smoothness, non-local parabolic Kolmogorov equations, Levy processes, strong solutions}
	
	\begin{abstract}
		An integro-differential Kolmogorov equation is considered in H\"{o}lder-type spaces defined by a scalable L\'{e}vy measure. Some properties of those spaces and estimates of the solution are derived by using probabilistic representations. 
	\end{abstract}
	
	\maketitle
	\tableofcontents

\section{Introduction}

Let $\left( \Omega,\mathcal{F},\mathbf{P}\right)$ be a complete probability space. Given a L\'{e}vy measure $\nu$ on $\mathbf{R}^d_0=\mathbf{R}^d\backslash\{0\}$, we suppose there exists an adapted Poisson random measure $J\left( ds, dy\right)$ on $\left( \Omega,\mathcal{F},\mathbf{P}\right)$ such that
\begin{eqnarray*}
	&&\qquad\mathbf{E}\left[ J\left( ds, dy\right)\right]= \nu\left( dy\right)ds,\\
	&&\tilde{J}\left( ds, dy\right)= J\left( ds, dy\right)-\nu\left( dy\right)ds.
\end{eqnarray*}
Then there is a L\'{e}vy process $Z_t^{\nu}$ associated to $\nu$ in the way that
\begin{eqnarray}\label{lev}
\qquad Z_t^{\nu}=\int_0^t\int_{\mathbf{R}^d_0} \chi_{\alpha}\left( y\right) y \tilde{J}\left( ds, dy\right)+\int_0^t\int_{\mathbf{R}^d_0} \left( 1-\chi_{\alpha}\left( y\right)\right) y J\left( ds, dy\right),
\end{eqnarray}
where, as a convention, 
\begin{eqnarray*}
	\alpha:=\inf\{\sigma\in\left( 0,2\right):\int_{\left\vert y\right\vert\leq 1}\left\vert y\right\vert ^{\sigma}\nu\left( dy\right)<\infty \}
\end{eqnarray*}
is the order of $\nu$, and $\chi_{\alpha}\left( y\right):=1_{\alpha\in\left(1,2\right)}+1_{\alpha=1}1_{\left\vert y\right\vert\leq 1}$.

The aim of this paper is twofold. One is to introduce function spaces of generalized smoothness and reveal the embedding relations among them. The other is to study the Cauchy problem of the following parabolic-type Kolmogorov equation within the framework of such generalized smoothness:
\begin{eqnarray}\label{eq1}
\partial_t u\left( t,x\right)&=&Lu\left( t,x\right)-\lambda u\left( t,x\right)+f\left( t,x\right), \lambda\geq 0,\\
u\left( 0,x\right)&=& 0,\quad\left( t,x\right)\in \left[0,T\right]\times\mathbf{R}^d,\nonumber
\end{eqnarray}
where $L$ is the infinitesimal generator of $Z_t^{\nu}$.
 
Study on function spaces of generalized smoothness dates back to the seventies, signified by the work of H. Triebel \cite{tr}, G.A. Kalyabin \cite{kg}, P.I. Lizorkin \cite{kg2} and so on. It is a natural development after the theory of differentiable functions of multi-variables and has been thriving for decades due to its close relation to interpolation theory, potential theory and the theory of differential operators. What is of most interest to us is the possibility to use the language of generalized smoothness to describe and investigate some special L\'{e}vy processes, $\eqref{lev}$ in particular. We know by the L\'{e}vy-Khinchine formula that each L\'{e}vy process $\left(Z_t\right)_{t\geq 0}$ is determined by a continuous negative definite function which is called the symbol. Generally speaking, by assuming the symbol $\tilde{\psi}$ of $\left(Z_t\right)_{t\geq 0}$ behaves up to a perturbation like $\psi$, one could expect the scales of spaces associated with $\psi$ plays the same role for $\tilde{\psi}$ as the classical Besov spaces do for elliptic operators. This was illustrated in \cite{fw} and \cite{fw2} and was a motivation for defining such spaces. In this paper, we utilize a continuous function $w$ to capture the discrepancy generated by scaling and we support this viewpoint by investigating $\eqref{eq1}$ in $w$-scaled Besov spaces.

\begin{definition}
	A continuous function $w: \left( 0,\infty\right)\rightarrow \left( 0,\infty\right)$ is called a scaling function if
	\begin{eqnarray*}
		\lim_{r\rightarrow 0}w\left( r\right)=0, \quad\lim_{R\rightarrow \infty}w\left( R\right)=\infty
	\end{eqnarray*}
	and if there is a nondecreasing continuous function $l\left(\varepsilon\right),\varepsilon>0$ such that $\lim_{\varepsilon\rightarrow 0}l\left(\varepsilon\right)=0$ and 
	\begin{equation}\label{scale}
	w\left( \varepsilon r\right)\leq l\left(\varepsilon\right)w\left( r\right), \quad\forall r,\varepsilon>0.
	\end{equation}
	We call $l$ the scaling factor of $w$.
\end{definition} 

Assumptions throughout this paper will be summarized in Section 2.3, denoted by \textbf{A(w,l)}. Intuitively, a measure satisfying \textbf{A(w,l)} is non-degenerate and has a scaling effect on integrability that can be compensated by $w$, which voices for a large family of L\'{e}vy measures, including $\alpha$-stable measures, $\alpha$-stable-like measures and certain radical-and-angular expressed measures. (See Section 2.3.) We will fix a L\'{e}vy measure $\mu$ that meets \textbf{A(w,l)} as our \textbf{reference measure} and use $w$ to define generalized Besov (resp. H\"{o}lder) norms $\left\vert \cdot\right\vert_{\beta,\infty}$ (resp. $\left\vert \cdot\right\vert_{\beta}$) and generalized Besov (resp. H\"{o}lder) spaces $\tilde{C}^{\beta}_{\infty,\infty},\beta>0$ (resp. $\tilde{C}^{\beta}$). (See Section 2.2.) Write $H_T=\left[0,T\right]\times\mathbf{R}^d$. One of the main results of this paper is:

\begin{theorem}\label{thm2}
	Let $\beta\in\left(0,\infty\right), \lambda\geq 0$ and $\nu$ be a L\'{e}vy measure satisfying \textbf{A(w,l)}. If $f\left( t,x\right)\in  \tilde{C}^{\beta}_{\infty,\infty}\left(H_T\right)$. Then there is a unique solution $u\in\left( t,x\right)\in \tilde{C}^{1+\beta}_{\infty,\infty}\left(H_T\right)$ to
	\begin{eqnarray}\label{eqq}
	\partial_t u\left( t,x\right)&=&L^{\nu}u\left( t,x\right)-\lambda u\left( t,x\right)+f\left( t,x\right), \lambda\geq 0,\\
	u\left( 0,x\right)&=& 0,\quad\left( t,x\right)\in H_T,\nonumber
	\end{eqnarray}
	where for any function $\varphi\in  C_b^{2}\left(\mathbf{R}^d\right)$,
	\begin{eqnarray}\label{op}
L^{\nu}\varphi\left( x\right):=\int\left[ \varphi\left( x+y\right)-\varphi\left( x\right)-\chi_{\alpha}\left( y\right)y\cdot \nabla \varphi\left( x\right)\right]\nu\left(d y\right).
	\end{eqnarray}
	Moreover, there exists a constant $C$ depending on $\kappa,\beta, d, T,\mu,\nu$ such that
	\begin{eqnarray}
	\left\vert u\right\vert_{\beta,\infty}&\leq& C\left( \lambda^{-1}\wedge T\right) \left\vert f\right\vert_{\beta,\infty},\label{est6}\\
	\left\vert u\right\vert_{1+\beta,\infty}&\leq& C\left\vert f\right\vert_{\beta,\infty}\label{est3}
	\end{eqnarray}
And there is a constant $C$ depending on $\kappa,\beta,d, T,\mu,\nu$ such that for all $0\leq s<t\leq T$, $\kappa\in\left[ 0,1\right]$,
	\begin{equation}\label{est4}
	\left\vert u\left(t,\cdot\right)-u\left(s,\cdot\right)\right\vert_{\kappa+\beta,\infty}\leq C\left\vert t-s\right\vert^{1-\kappa}\left\vert f\right\vert_{\beta,\infty}.
	\end{equation}
\end{theorem}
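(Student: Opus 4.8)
The plan is to construct the solution by a probabilistic Feynman--Kac/Duhamel representation and then read off every estimate from smoothing bounds for the transition semigroup of $Z_t^\nu$ in the $w$-scaled H\"{o}lder scale. Write $P_r^\nu\varphi\left(x\right):=\mathbf{E}\left[\varphi\left(x+Z_r^\nu\right)\right]$ for the transition semigroup; by \eqref{op} its generator is $L^\nu$, so $\partial_r P_r^\nu=L^\nu P_r^\nu=P_r^\nu L^\nu$ on $C_b^2\left(\mathbf{R}^d\right)$. I would take as candidate solution
\begin{equation*}
u\left(t,x\right)=\int_0^t e^{-\lambda\left(t-s\right)}P_{t-s}^\nu f\left(s,\cdot\right)\left(x\right)\,ds=\int_0^t e^{-\lambda r}P_r^\nu f\left(t-r,\cdot\right)\left(x\right)\,dr .
\end{equation*}
Differentiating in $t$ --- the upper-limit term contributes $f\left(t,x\right)$ since $P_0^\nu=\mathrm{id}$, and the integrand contributes $-\lambda u+L^\nu u$ --- shows that $u$ solves \eqref{eqq} with $u\left(0,\cdot\right)=0$, once the regularity below justifies moving $L^\nu$ under the integral. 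The real content of the theorem is therefore the regularity of this $u$ and the quantitative bounds.

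The crux is a family of semigroup smoothing estimates in the generalized scale. I would prove that for $\varphi\in\tilde{C}^\beta_{\infty,\infty}$ one has the order-preserving bound $\left\vert P_r^\nu\varphi\right\vert_{\beta,\infty}\leq C\left\vert\varphi\right\vert_{\beta,\infty}$ uniformly in $r$, together with a top-order bound
\begin{equation*}
\left\vert L^\nu P_r^\nu\varphi\right\vert_{\beta,\infty}\leq C\,\rho\left(r\right)\left\vert\varphi\right\vert_{\beta,\infty},
\end{equation*}
where the rate $\rho$ is dictated by the scaling function $w$ and its factor $l$ and is integrable near $r=0$. These are obtained by testing $P_r^\nu$ against the dyadic blocks that define $\left\vert\cdot\right\vert_{\beta,\infty}$: on the block at frequency scale $j$ the characteristic function $\mathbf{E}\left[e^{i\xi\cdot Z_r^\nu}\right]=e^{-r\psi^\nu\left(\xi\right)}$ decays at a rate governed, through \eqref{scale} and the non-degeneracy of the reference measure $\mu$ in \textbf{A(w,l)}, by $w$ evaluated at the block scale; summing the blocks converts the time parameter $r$ into the correct $w$-rate and produces $\rho$. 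Since the norm $\left\vert\cdot\right\vert_{1+\beta,\infty}$ measures one extra application of the generator relative to $\left\vert\cdot\right\vert_{\beta,\infty}$, control of the top-order piece $L^\nu P_r^\nu$ is exactly what upgrades the regularity of $u$ from $\beta$ to $1+\beta$.

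With these bounds in hand the remaining estimates are assembly. For \eqref{est6} I insert the order-preserving bound under the integral and integrate the weight, using $\int_0^t e^{-\lambda r}\,dr=\left(1-e^{-\lambda t}\right)/\lambda\leq\lambda^{-1}\wedge t\leq\lambda^{-1}\wedge T$, which gives $\left\vert u\right\vert_{\beta,\infty}\leq C\left(\lambda^{-1}\wedge T\right)\left\vert f\right\vert_{\beta,\infty}$. For \eqref{est3} I move $L^\nu$ inside and split $L^\nu P_r^\nu f\left(t-r,\cdot\right)=L^\nu P_r^\nu\left[f\left(t-r,\cdot\right)-f\left(t,\cdot\right)\right]+L^\nu P_r^\nu f\left(t,\cdot\right)$; the top-order bound together with integrability of $\rho$ near $0$ controls $\left\vert L^\nu u\right\vert_{\beta,\infty}$, and $\partial_t u$ is then handled through the equation, yielding $\left\vert u\right\vert_{1+\beta,\infty}\leq C\left\vert f\right\vert_{\beta,\infty}$. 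For the time-increment \eqref{est4} I split
\begin{equation*}
u\left(t,\cdot\right)-u\left(s,\cdot\right)=\int_s^t e^{-\lambda\left(t-\sigma\right)}P_{t-\sigma}^\nu f\left(\sigma,\cdot\right)\,d\sigma+\int_0^s\left[e^{-\lambda\left(t-\sigma\right)}P_{t-\sigma}^\nu-e^{-\lambda\left(s-\sigma\right)}P_{s-\sigma}^\nu\right]f\left(\sigma,\cdot\right)\,d\sigma ,
\end{equation*}
estimate the short-interval term by its length, treat the evolution-difference term through the identity $P_{t-\sigma}^\nu-P_{s-\sigma}^\nu=\int_{s-\sigma}^{t-\sigma}L^\nu P_\rho^\nu\,d\rho$ (and the analogous smoothing of the $e^{-\lambda\cdot}$ factor), and interpolate in the $w$-scale between the endpoint $\kappa=0$ (Lipschitz in time, no smoothing used) and $\kappa=1$ (full order, the bound \eqref{est3}), with $\left\vert t-s\right\vert^{1-\kappa}$ as the interpolation weight. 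Uniqueness follows from \eqref{est6} applied to the difference of two solutions driven by $f=0$.

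The main obstacle is the second paragraph: identifying the correct rate $\rho$ from \textbf{A(w,l)} and proving the top-order smoothing bound with an integrable singularity as $r\to0$. This is where the scaling relation \eqref{scale}, the scaling factor $l$, and the non-degeneracy of the reference measure $\mu$ must be combined, and where the H\"{o}lder cancellation $f\left(t-r,\cdot\right)-f\left(t,\cdot\right)$ is needed to absorb the non-integrable part of the top-order term. Everything else is routine once those semigroup bounds are established.
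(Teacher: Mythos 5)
Your candidate solution $u\left(t,x\right)=\int_0^t e^{-\lambda\left(t-s\right)}\mathbf{E}f\left(s,x+Z^{\nu}_{t-s}\right)ds$ is exactly the paper's, and your treatment of \eqref{est6} (integrate $e^{-\lambda r}$ against the order-preserving bound) is fine. The genuine gap is in the top-order estimate \eqref{est3}. You propose a single smoothing rate $\left\vert L^{\nu}P_r^{\nu}\varphi\right\vert_{\beta,\infty}\leq C\rho\left(r\right)\left\vert \varphi\right\vert_{\beta,\infty}$ with $\rho$ integrable near $r=0$, obtained by ``summing the blocks.'' No such integrable rate exists: already for $w\left(r\right)=r^{\alpha}$ the block-$j$ bound is $w\left(N^{-j}\right)^{-1}e^{-cw\left(N^{-j}\right)^{-1}r}$, and taking the supremum over $j$ \emph{before} integrating in time yields $\rho\left(r\right)\sim r^{-1}$, which is not integrable. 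Your fallback — absorbing the non-integrable part via the cancellation $f\left(t-r,\cdot\right)-f\left(t,\cdot\right)$ — is unavailable here, because $f\in\tilde{C}^{\beta}_{\infty,\infty}\left(H_T\right)$ carries no time regularity whatsoever (the norm is a supremum over $t$ of spatial block norms), so $\left\vert f\left(t-r,\cdot\right)-f\left(t,\cdot\right)\right\vert$ gains nothing as $r\to 0$. The paper's resolution, which you gesture at but do not carry through, is to keep the estimate blockwise to the very end: for each fixed $j$ one has $\int_0^T\left\vert \mathcal{F}^{-1}\tilde{F}^{j,1}_{r}\right\vert_{L^1}dr\leq Cw\left(N^{-j}\right)^{-1}\int_0^{\infty}C_1e^{-C_2 w\left(N^{-j}\right)^{-1}r}dr\leq C$ uniformly in $j$ (Lemmas \ref{lem1}--\ref{lem2}), because the exponential decay rate exactly matches the prefactor; only after this time integration does one take $\sup_j$. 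Interchanging the supremum and the time integral is precisely what your $\rho$ destroys. The same issue propagates to \eqref{est4} for $\kappa$ near $1$, where the paper again argues block by block with the two regimes $w\left(N^{-j}\right)^{-1}\left(t-s\right)\lessgtr 1$.

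Two smaller points. First, the Duhamel representation and the identity $\partial_r P^{\nu}_r=L^{\nu}P^{\nu}_r$ are only justified for smooth $f$; for general $f\in\tilde{C}^{\beta}_{\infty,\infty}$ the paper first proves everything for $f\in C_b^{\infty}\cap\tilde{C}^{\beta}_{\infty,\infty}$ and then passes to the limit along the approximating sequence of Proposition \ref{app}, using the a priori bounds to keep the limit in $\tilde{C}^{1+\beta}_{\infty,\infty}$; your proposal needs this step too. Second, uniqueness does not follow immediately from \eqref{est6} applied to the difference of two solutions: a priori one only knows the difference lies in $\tilde{C}^{1+\beta}_{\infty,\infty}$ and solves the integral equation, so one must first show the a priori estimate applies to \emph{every} such solution (the paper does this by mollifying the difference and invoking Theorem \ref{thm11} together with Proposition \ref{cont}). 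Also note the norms $\left\vert\cdot\right\vert_{1+\beta,\infty}$ and $\left\vert\cdot\right\vert_{\kappa+\beta,\infty}$ are tied to the reference measure $\mu$, not to $\nu$, so the kernels to be estimated mix $\psi^{\mu}$ and $\psi^{\nu}$; this works only because both measures satisfy \textbf{A(w,l)} for the same $w$, and the norm equivalences of Section 3 (Proposition \ref{pr4}, Theorem \ref{thm4}) are needed to convert $\left\vert u\right\vert_0+\left\vert L^{\mu}u\right\vert_{\beta,\infty}$ into $\left\vert u\right\vert_{1+\beta,\infty}$.
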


As it will be seen later, when $\nu$ behaves like an $\alpha$-stable measure, $\eqref{est6}$-$\eqref{est4}$ are ordinary Besov (equiv. H\"{o}lder-Zygmund) regularity estimates.

By norm equivalence stated in Section 3, Theorem \ref{thm2} implies immediately 
\begin{theorem}\label{thm3}
	Let $\beta\in\left(0,\infty\right), \lambda\geq 0$ and $\nu$ be a L\'{e}vy measure satisfying \textbf{A(w,l)}. If $f\left( t,x\right)\in  \tilde{C}^{\beta}\left(H_T\right)$ and
	\begin{eqnarray*}
	\int_0^1 l\left( t\right)^{\beta}\frac{dt}{t}+\int_1^{\infty} l\left( t\right)^{\beta}\frac{dt}{t^2}<\infty.
	\end{eqnarray*}
	 Then there is a unique solution $u\in\left( t,x\right)\in \tilde{C}^{1+\beta}\left(H_T\right)$ to $\eqref{eqq}$. Moreover, there exists a constant $C$ depending on $\kappa,\beta, d, T,\mu,\nu$ such that
	\begin{eqnarray}
	\left\vert u\right\vert_{\beta}&\leq& C\left( \lambda^{-1}\wedge T\right) \left\vert f\right\vert_{\beta},\label{est62}\\
	\left\vert u\right\vert_{1+\beta}&\leq& C\left\vert f\right\vert_{\beta}\label{est32}
	\end{eqnarray}
	And there is a constant $C$ depending on $\kappa,\beta,d, T,\mu,\nu$ such that for all $0\leq s<t\leq T$, $\kappa\in\left[ 0,1\right]$,
	\begin{equation}\label{est42}
	\left\vert u\left(t,\cdot\right)-u\left(s,\cdot\right)\right\vert_{\kappa+\beta}\leq C\left\vert t-s\right\vert^{1-\kappa}\left\vert f\right\vert_{\beta}.
	\end{equation}
\end{theorem}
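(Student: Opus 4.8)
The plan is to deduce Theorem \ref{thm3} from Theorem \ref{thm2} by transporting both the hypothesis and the conclusion through the norm equivalence established in Section 3. The integrability condition $\int_0^1 l(t)^{\beta}\frac{dt}{t}+\int_1^{\infty} l(t)^{\beta}\frac{dt}{t^2}<\infty$ is precisely the Dini-type requirement under which the generalized Besov space $\tilde{C}^{\gamma}_{\infty,\infty}$ and the generalized H\"{o}lder space $\tilde{C}^{\gamma}$ coincide with equivalent norms, $\left\vert\cdot\right\vert_{\gamma}\simeq\left\vert\cdot\right\vert_{\gamma,\infty}$. The first step is therefore to invoke that equivalence to pass the datum $f$ from $\tilde{C}^{\beta}(H_T)$ into $\tilde{C}^{\beta}_{\infty,\infty}(H_T)$, obtaining a constant $C$ with $\left\vert f\right\vert_{\beta,\infty}\leq C\left\vert f\right\vert_{\beta}$.

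Once $f$ is viewed as an element of the Besov scale, Theorem \ref{thm2} applies verbatim and furnishes a unique $u\in\tilde{C}^{1+\beta}_{\infty,\infty}(H_T)$ solving \eqref{eqq}, together with the Besov estimates \eqref{est6}, \eqref{est3}, and \eqref{est4}. The second step is to read these back into the H\"{o}lder scale. The membership $u\in\tilde{C}^{1+\beta}(H_T)$ and the estimates \eqref{est62}, \eqref{est32} follow by chaining the equivalence at levels $\beta$ and $1+\beta$; for instance $\left\vert u\right\vert_{\beta}\leq C\left\vert u\right\vert_{\beta,\infty}\leq C(\lambda^{-1}\wedge T)\left\vert f\right\vert_{\beta,\infty}\leq C(\lambda^{-1}\wedge T)\left\vert f\right\vert_{\beta}$, and likewise for \eqref{est32}. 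For the time-regularity bound \eqref{est42} I would apply the equivalence at the intermediate smoothness level $\kappa+\beta$ to the spatial function $u(t,\cdot)-u(s,\cdot)$ and then invoke \eqref{est4}. Uniqueness requires no new argument: because the hypothesis forces $\tilde{C}^{1+\beta}=\tilde{C}^{1+\beta}_{\infty,\infty}$ as sets, any $\tilde{C}^{1+\beta}$-solution is automatically a $\tilde{C}^{1+\beta}_{\infty,\infty}$-solution, so the uniqueness in Theorem \ref{thm2} transfers directly.

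The one point demanding care is that the equivalences above are needed not only at the base level $\beta$ but across the whole range $[\beta,1+\beta]$, whereas the hypothesis is stated only for the exponent $\beta$. Near the origin this causes no difficulty, since $l(\varepsilon)\to 0$ makes $\int_0^1 l(t)^{\gamma}\frac{dt}{t}$ converge a fortiori for every $\gamma\geq\beta$. The genuine issue is the tail integral $\int_1^{\infty}l(t)^{\gamma}\frac{dt}{t^2}$, which deteriorates as $\gamma$ increases. The main obstacle is thus to confirm that the Section 3 equivalence at level $1+\beta$ (and at $\kappa+\beta$) is controlled by the tail condition at the base exponent $\beta$ — that is, that the integer part of the smoothness index is carried by the regularizing gain of the operator $L^{\nu}$ and does not worsen the tail requirement — so that the single stated condition indeed licenses every equivalence invoked above.
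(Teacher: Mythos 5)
Your proposal follows exactly the route the paper takes: Theorem \ref{thm3} is obtained from Theorem \ref{thm2} purely by the norm equivalences of Section 3, with no new analytic input. The concern you raise in your last paragraph, however, is not actually an obstacle, and it is worth seeing why: the paper never needs the Dini condition \eqref{asl} at any exponent other than $\beta$. By definition, $\left\vert u\right\vert_{1+\beta}=\left\vert u\right\vert_0+\left\vert L^{\mu}u\right\vert_0+\left[L^{\mu}u\right]_{\beta}$, i.e.\ the ``integer part'' of the smoothness is already encoded by applying $L^{\mu}$ and measuring the result in the order-$\beta$ H\"{o}lder seminorm. So the chain is $\left\vert u\right\vert_{1+\beta}\simeq\left\vert u\right\vert_0+\left\vert L^{\mu}u\right\vert_{\beta,\infty}=\left\vert u\right\vert_{\mu,1,\beta}\simeq\left\vert u\right\vert_{1+\beta,\infty}$, where the first equivalence is Proposition \ref{pr2} applied to $L^{\mu}u$ at exponent $\beta$ (this is where \eqref{asl} enters, and only here), and the second is Theorem \ref{thm4}, which holds with no integrability hypothesis on $l$ at all. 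The same mechanism handles $\kappa+\beta$ via $L^{\mu,\kappa}$ and Theorem \ref{thm4} at parameter $\kappa$; no tail integral at exponent $\gamma>\beta$ is ever formed. With that observation your argument closes completely and coincides with the paper's.
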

In \cite{mp2}, a parabolic-type Kolmogorov equation with an operator $\mathcal{L}=A+B$ was considered in the standard H\"{o}lder-Zygmund space, where $B$ is the lower order part and the principal part $A$ assumes a form of 
\begin{eqnarray*}
Au\left(t, x\right):=\int\left[ u\left(t, x+y\right)-u\left( t,x\right)-\chi_{\alpha}\left( y\right)y\cdot \nabla u\left( t,x\right)\right]\rho\left( t,x,y\right)\frac{dy}{\left\vert y\right\vert^{d+\alpha}}.
\end{eqnarray*}
While in \cite{mp}, a parabolic integro-differential equation perturbed by Gaussian noise was studied in the stochastic H\"{o}lder spaces. Operators were introduced as 
\begin{eqnarray*}
	\mathcal{L}u\left(t, x\right)&:=&\int\left[ u\left(t, x+y\right)-u\left( t,x\right)-1_{\alpha\geq 1}1_{\left\vert y\right\vert\leq 1}y\cdot \nabla u\left( t,x\right)\right]\nu\left( t,x,dy\right)\\
	&+&1_{\alpha=2}a^{ij}\left(t, x\right)\partial^2_{ij}u\left(t, x\right)+1_{\alpha\geq 1}\tilde{b}^{i}\left(t, x\right)\partial_i u\left(t, x\right)+l\left(t, x\right) u\left(t, x\right),
\end{eqnarray*}
and results were expressed in terms of moments. A similar operator was adopted in \cite{mp3} and the corresponding deterministic model was studied in the little H\"{o}lder-Zygmund spaces. Besides, the Cauchy problem for a linear parabolic SPDE of the second order was considered in \cite{rm} and \cite{br} in standard H\"{o}lder classes.

Our note is organized as follows. 

In section 2, notation is introduced and spaces are defined. Meanwhile, we collect all assumptions that are needed in this paper and provide with examples that satisfy all the assumptions. A few more defining properties of our model are discussed as well.

In section 3, we elaborate embedding relations among function spaces. Probability representations are used to extend operations to all functions in $C_b^{\infty}\left(\mathbf{R}^d\right)$. After the extension, those operations become bijections on $C_b^{\infty}\left(\mathbf{R}^d\right)$. Norm equivalence then follows from continuity of operators. 

Regularity estimates in the case of smooth inputs are collected in section 4, while those for Besov (equiv. H\"{o}lder) inputs are put in section 5. Section 6 accommodates existing results that are used in our proofs.

\section{Notation, Spaces and Models}
\subsection{Basic Notation}
$\mathbf{N}=\{0,1,2,3,\ldots,\}$, $\mathbf{N}_{+}=\mathbf{N}\backslash\{0\}$. $H_T=\left[0,T\right]\times\mathbf{R}^d$. $S^{d-1}$ is the unit sphere in $\mathbf{R}^d$. $\Re$ is a notation for the real part of a complex-valued quantity.

For a function $u=u\left( t,x\right) $ on $H_T$, we denote its partial derivatives by $\partial _{t}u=\partial u/\partial t$, $\partial
_{i}u=\partial u/\partial x_{i}$, $\partial _{ij}^{2}u=\partial
^{2}u/\partial x_{i}x_{j}$, and denote its gradient with respect to $x$ by $%
\nabla u=\left( \partial _{1}u,\ldots ,\partial _{d}u\right) $ and $%
D^{|\gamma |}u=\partial ^{|\gamma |}u/\partial x_{1}^{\gamma _{1}}\ldots
\partial x_{d}^{\gamma _{d}}$, where $\gamma =\left( \gamma _{1},\ldots
,\gamma _{d}\right) \in \mathbf{N}^{d}$ is a multi-index.

We use $C_b^{\infty}\left( \mathbf{R}^d\right)$ to denote the set of infinitely differentiable functions on $\mathbf{R}^d$ whose derivative of arbitrary order is finite, and $C^{k }\left( \mathbf{R}^{d}\right),k\in\mathbf{N} $ the class of $k$-times continuously differentiable functions.
 
$\mathcal{S}\left( \mathbf{R}^d\right)$ denotes the Schwartz space on $\mathbf{R}^d$ 
and $\mathcal{S}'\left( \mathbf{R}^d\right)$ denotes the space of continuous functionals on $\mathcal{S}\left( \mathbf{R}^d\right)$, i.e. the space of tempered distributions.

We adopt the normalized definition for Fourier and its inverse transforms for functions in $\mathcal{S}\left( \mathbf{R}^d\right)$, i.e.,
\begin{eqnarray*}
	\mathcal{F}\varphi\left(\xi\right) &=& \hat{\varphi}\left(\xi\right) :=\int e^{-i2\pi x\cdot \xi}\varphi\left(x\right)dx, \\
	\mathcal{F}^{-1}\varphi\left(x\right)&=&\check{\varphi}\left(x\right) := \int e^{i2\pi x\cdot \xi}\varphi\left(\xi\right)d\xi, \enskip \varphi\in\mathcal{S}\left( \mathbf{R}^d\right).
\end{eqnarray*}
Recall that Fourier transform can be extended to a bijection on $\mathcal{S}'\left( \mathbf{R}^d\right)$.

$\mu$ always refers to our reference measure, and $\alpha$ denotes its order unless otherwise specified. 

Throughout the sequel, $Z_t^{\nu}$ represents the L\'{e}vy process associated to the L\'{e}vy measure $\nu$ in the way as $\eqref{lev}$.

For any L\'{e}vy measure $\nu$, any $R>0$ and $\forall B\in\mathcal{B}\left( \mathbf{R}^d_0\right)$, 
\begin{eqnarray}
\nu_R \left(B\right)&=&\int 1_B\left( y/R\right)\nu\left( dy\right),\label{mea}\\ 
\tilde{\nu}_R\left( dy\right) &:=&w\left( R\right) \nu_R\left( dy\right).\label{meas}
\end{eqnarray}
Without loss of generality, we normalize $w$ by a constant so that $w\left( 1\right)=1$ and $\tilde{\nu}_1\left( dy\right)=\nu\left( dy\right)$. Meanwhile, we introduce for any L\'{e}vy measure $\nu$,
\begin{eqnarray}
	\bar{\nu}\left( dy\right):=\frac{1}{2}\left( \nu\left( dy\right)+\nu\left(- dy\right)\right).
\end{eqnarray}

We have specific values assigned for $\alpha_1,\alpha_2,c_0,c_1, c_2, N_0,N_1$, but we allow $C$ to vary from line to line. In particular, $C\left(\cdots\right)$ represents a constant depending only on quantities in the parentheses.

\subsection{Function Spaces of Generalized Smoothness}

By definition of the scaling factor, there is a constant $N>3$ such that $l\left( N^{-1}\right)<1<l\left( N\right)$. For such a $N$, by Lemma 6.1.7 in \cite{bl} and appropriate scaling, there exists $\phi\in C_0^{\infty}\left(\mathbf{R}^d\right)$ such that $supp\left(\phi\right)=\{ \xi:\frac{1}{N}\leq \left\vert \xi\right\vert\leq N\}$, $\phi\left(\xi\right)> 0$ in the interior of its support, and 
\begin{equation*}
\sum_{j=-\infty}^{\infty}\phi\left( N^{-j}\xi\right)=1 \mbox{ if } \xi\neq 0.
\end{equation*}
We denote throughout this paper
\begin{eqnarray*}
	&&\varphi_j =\mathcal{F}^{-1}\left[\phi\left( N^{-j}\xi\right)\right],\quad  j=1,2,\ldots,\xi\in \mathbf{R}^d,\label{j1}\\
	&&\varphi_0 =\mathcal{F}^{-1}\left[ 1-\sum_{j=1}^{\infty}\phi\left( N^{-j}\xi\right)\right].\label{j0}
\end{eqnarray*}
Apparently, $\varphi_j\in \mathcal{S}\left( \mathbf{R}^d\right),j\in\mathbf{N}$. If we write
\begin{eqnarray*}
&& \tilde{\varphi_j}=\varphi_{j-1}+\varphi_{j}+\varphi_{j+1}, j\geq 2,\label{ssch}\\
&& \tilde{\varphi}_1=\check{\phi}+\varphi_{1}+\varphi_{2},\quad \tilde{\varphi}_0=\varphi_{0}+\varphi_{1},\label{sch}
\end{eqnarray*}
then, 
\begin{eqnarray*}\label{schw}
\mathcal{F}\tilde{\varphi}_j\left( \xi\right)=\hat{\tilde{\varphi}}_j\left( \xi\right)=\mathcal{F}\tilde{\varphi}\left( N^{-j}\xi\right), \quad \xi\in\mathbf{R}^d, j\geq 1,
\end{eqnarray*}
where
\begin{eqnarray*}\label{schwa}
\mathcal{F}\tilde{\varphi}\left( \xi\right)=\phi\left( N\xi\right)+\phi\left( \xi\right)+\phi\left( N^{-1}\xi\right).
\end{eqnarray*}
Note that $\phi$ is necessarily $0$ on the boundary of its support. Then,
\begin{eqnarray*}
	\mathcal{F}\varphi_j=\mathcal{F}\varphi_j\mathcal{F}\tilde{\varphi}_j, j\geq 0,
\end{eqnarray*}
and then
\begin{eqnarray}\label{convol}
\varphi_j=\varphi_j\ast\tilde{\varphi}_j, j\geq 0,
\end{eqnarray}
where in particular
\begin{eqnarray*}
	\tilde{\varphi}_j\left( x\right) =N^{jd}\tilde{\varphi}\left( N^{j}x\right), j\geq 1.
\end{eqnarray*}
$\varphi_j,j\geq 0$ are convolution functions we use to define \textbf{generalized Besov spaces}. Namely, we write $\tilde{C}^{\beta}_{\infty,\infty}\left( \mathbf{R}^{d}\right) $ as the set of functions in $\mathcal{S}'\left( \mathbf{R}^d\right)$ for which the norm
\begin{eqnarray*}
	\left\vert u\right\vert_{\beta,\infty}:=\sup_{j} w\left( N^{-j}\right)^{-\beta}\left\vert u\ast \varphi_j\right\vert _{0}<\infty, \quad \beta\in\left( 0,\infty\right).
\end{eqnarray*}

For $\kappa\in\left[ 0,1\right],\beta\in\left(0,\infty\right)$, $C^{\mu,\kappa,\beta}\left( \mathbf{R}^{d}\right) $ denotes the collection of functions in $\mathcal{S}'\left( \mathbf{R}^d\right)$ whose norm
\begin{eqnarray*}
	\left\vert u\right\vert _{\mu,\kappa,\beta }:=\left\vert u\right\vert _{0}+\left\vert\mathcal{F}^{-1}\left[ \psi^{\mu,\kappa}\mathcal{F}u\right]\right\vert_{\beta,\infty}=\left\vert u\right\vert _{0}+\left\vert L^{\mu,\kappa}u\right\vert_{\beta,\infty}<\infty,
\end{eqnarray*}
where 
\begin{eqnarray*}
	\psi^{\mu}\left(\xi\right):=\int \left[ e^{i2\pi \xi\cdot y}-1-i2\pi\chi_{\alpha}\left( y\right)\xi\cdot y\right]\mu\left( dy\right), \xi\in\mathbf{R}^d,
\end{eqnarray*}
is the L\'{e}vy symbol associated to $L^{\mu}$, 
\begin{eqnarray*}
\psi^{\mu,\kappa}:=\left\{\begin{array}{ll}
	\psi^{\mu} & \mbox{ if } \kappa=1,\\
	-\left(-\Re\psi^{\mu}\right)^{\kappa}  & \mbox{ if } \kappa\in\left( 0,1\right),\\
	1 & \mbox{ if } \kappa=0,
	\end{array}\right.
\end{eqnarray*}
and 
\begin{eqnarray}\label{opp}
	L^{\mu,\kappa}u:=\mathcal{F}^{-1}\left[ \psi^{\mu,\kappa}\mathcal{F}u\right], u\in \mathcal{S}'\left( \mathbf{R}^d\right).
\end{eqnarray}
When $\kappa=0$, $L^{\mu,\kappa}u:=u$, then $C^{\mu,\kappa,\beta}\left( \mathbf{R}^{d}\right) $ is $\tilde{C}^{\beta}_{\infty,\infty}\left( \mathbf{R}^{d}\right) $. When $\kappa=1$, we simply write $L^{\mu,\kappa}=L^{\mu}$ and write $\left\vert u\right\vert _{\mu,\kappa,\beta }$ as $\left\vert u\right\vert _{\mu,\beta }$. In this case, if $u\in  C_b^{1}\left(\mathbf{R}^d\right)$, then definition $\eqref{opp}$ coincides with
\begin{eqnarray}
L^{\mu}\varphi\left( x\right):=\int\left[ \varphi\left( x+y\right)-\varphi\left( x\right)-\chi_{\alpha}\left( y\right)y\cdot \nabla \varphi\left( x\right)\right]\mu\left(d y\right).
\end{eqnarray}

For $\kappa\in\left[ 0,1\right],\beta\in\left(0,\infty\right)$, $\tilde{C}^{\mu,\kappa,\beta}\left( \mathbf{R}^{d}\right) $ is the class of functions in $\mathcal{S}'\left( \mathbf{R}^d\right)$ whose norm
\begin{eqnarray*}
	\left\Vert u\right\Vert _{\mu,\kappa,\beta }:=\left\vert \left( I-L^{\mu}\right)^{\kappa}u\right\vert_{\beta,\infty}<\infty,
\end{eqnarray*}
where 
\begin{eqnarray*}
	\left( I-L^{\mu}\right)^{\kappa}u:=\left\{\begin{array}{ll}
		\left( I-L^{\mu}\right)u & \mbox{ if } \kappa=1,\\
		\mathcal{F}^{-1}\left[ \left(1-\Re\psi^{\mu}\right)^{\kappa}\mathcal{F}u\right]  & \mbox{ if } \kappa\in\left[ 0,1\right).
	\end{array}\right.
\end{eqnarray*}
When $\kappa=0$, $\left( I-L^{\mu}\right)^{\kappa}u:=u$, then $\tilde{C}^{\mu,\kappa,\beta}\left( \mathbf{R}^{d}\right) $ is again $\tilde{C}^{\beta}_{\infty,\infty}\left( \mathbf{R}^{d}\right)$. When $\kappa=1$, we simply write $\left\Vert u\right\Vert _{\mu,\kappa,\beta }$ as $\left\Vert u\right\Vert _{\mu,\beta }$. 

We will see in Section 3 that $L^{\mu,\kappa}$ and $\left( I-L^{\mu}\right)^{\kappa}$ could be defined for functions in $C_b^{\infty}\left( \mathbf{R}^{d}\right) $ even if $\kappa\in\left( 1,2\right)$. That is 
\begin{eqnarray*}
L^{\mu,\kappa}:=L^{\mu,\kappa/2}\circ L^{\mu,\kappa/2},\quad \left( I-L^{\mu}\right)^{\kappa}:=\left( I-L^{\mu}\right)^{\kappa/2}\circ \left( I-L^{\mu}\right)^{\kappa/2},
\end{eqnarray*}
where $\circ$ means composition. 

There will also be \textbf{generalized H\"{o}lder spaces}. Using the scaling function, we write for $\beta \in \left( 0,1/\alpha\right)$
\begin{eqnarray*}
	\left\vert u\right\vert _{0} &=&\sup_{t,x}\left\vert u\left( t,x\right)
	\right\vert , \\
	\left[ u\right] _{\beta } &=&\sup_{t,x,h\neq 0}\frac{\left\vert u\left(
		t,x+h\right) -u\left( t,x\right) \right\vert }{w\left( \left\vert h\right\vert\right)
		^{\beta }}.
\end{eqnarray*}
$\tilde{C}^{\beta}\left( \mathbf{R}^{d}\right) $ denotes the set of functions such that the norm
\begin{eqnarray*}
	\left\vert u\right\vert _{\beta }:=\left\vert u\right\vert
	_{0}+\left[
	u\right] _{\beta }<\infty , \quad\beta \in \left( 0,1/\alpha\right).
\end{eqnarray*}
And $\tilde{C}^{1+\beta}\left( \mathbf{R}^{d}\right) $ denotes the set of functions such that the norm
\begin{eqnarray*}
	\left\vert u\right\vert_{1+\beta }:=\left\vert u\right\vert
	_{0}+\left\vert L^{\mu}u\right\vert
	_{0}+\left[L^{\mu}
	u\right]_{\beta }<\infty,\quad\beta \in \left( 0,1/\alpha\right).
\end{eqnarray*}

\subsection{Assumptions and Examples}

All the assumptions needed in this paper are collected in this section. Because of their dependence on $w,l$, we denote them by \textbf{A(w,l)}. Let $\nu$ be a L\'{e}vy measure, i.e.
\begin{eqnarray*}
\int_{\mathbf{R}^d_0} \left( 1\wedge \left\vert y\right\vert^2\right) \nu\left( dy\right)<\infty.
\end{eqnarray*}
Recall definitions $\eqref{mea}$ and $\eqref{meas}$.

\noindent\textbf{A(w,l)}. (i) For all $R>0$, $\tilde{\nu}_R\left( dy\right)\geq \mu^0 \left( dy\right)$, where $\mu^0$ is a L\'{e}vy measure supported on the unit ball $B\left( 0\right)$ and 
\begin{equation}\label{mu0}
\int \left\vert y\right\vert^2 \mu^0\left( dy\right) +\int \left\vert\xi\right\vert^4\left[1+\upsilon\left( \xi\right)\right]^{d+3}\exp\{-\zeta^0\left( \xi\right) \}d\xi<\infty,
\end{equation} 
in which
\begin{eqnarray*}
	&&\upsilon\left( \xi\right)=\int \chi_{\alpha}\left( y\right) \left\vert y\right\vert \left[ \left( \left\vert \xi\right\vert\left\vert y\right\vert\right)\wedge 1\right]\mu^0\left(dy\right),\\
	&&\quad\zeta^0\left( \xi\right) =\int \left[ 1-\cos \left( 2\pi\xi\cdot y\right)\right]\mu^0\left( dy\right).
\end{eqnarray*}
In addition, for all $\xi\in S_{d-1}=\{\xi\in\mathbf{R}^d:\left\vert\xi\right\vert=1 \}$, there is a constant $c_1>0$, such that
\begin{equation*}
\int_{\left\vert y\right\vert\leq 1} \left\vert \xi\cdot y\right\vert^2 \mu^0\left( dy\right)\geq c_0.
\end{equation*} 
(ii) If $\alpha=1$, then 
\begin{equation}\label{alpha1}
\int_{r<\left\vert y\right\vert <R} y\nu\left( dy\right)=0 \quad\text{ for all } 0<r<R<\infty.
\end{equation}
(iii) There exist constants $\alpha_1\geq\alpha_2$ such that $\alpha_1,\alpha_2\in\left( 0,1\right)$ if $\alpha\in\left( 0,1\right)$, $\alpha_1,\alpha_2\in \left( 1,2\right]$ if $\alpha\in\left( 1,2\right)$, $\alpha_1\in\left( 1,2\right]$ and $\alpha_2\in\left[0,1\right)$ if $\alpha=1$, and 
\begin{equation*}
\int_{\left\vert y\right\vert\leq 1} \left\vert y\right\vert^{\alpha_1}\tilde{\nu}_R\left( dy\right)+\int_{\left\vert y\right\vert> 1} \left\vert y\right\vert^{\alpha_2}\tilde{\nu}_R\left( dy\right)\leq N_0
\end{equation*}
for some positive constant $N_0$ that is independent of $R$. \\
\noindent(iv) $\varsigma\left( r\right):=\nu\left( \left\vert y\right\vert >r\right),r>0$ is continuous in $r$ and
\begin{eqnarray*}
	\int_0^{1}s \varsigma\left( rs\right)\varsigma\left( r\right)^{-1} ds\leq C_0,
\end{eqnarray*}
for some $C_0>0$ independent of $r$.

We assume both the reference measure $\mu$ and the operator measure $\nu$ satisfy \textbf{A(w,l)}.

Though looking heavy, \textbf{A(w,l)} embraces various models that have been receiving wide attention. For instance, in \cite{zh}, $\nu$ is confined by two $\alpha$-stable L\'{e}vy measures, namely, 
\begin{eqnarray}\label{as1}
&&\int_{S_{d-1}}\int_{0}^{\infty }1_{B}\left(
rw\right) \frac{dr}{r^{1+\alpha }}\Sigma _{1}\left( dw\right) \nonumber\\
&\leq &\nu \left( B\right) \leq \int_{S_{d-1}}\int_{0}^{\infty }1_{B}\left( rw\right) \frac{dr}{%
	r^{1+\alpha }}\Sigma _{2}\left( dw\right)
\end{eqnarray}
for any Borel measurable set $B$, where $\Sigma_1$ and $\Sigma_2$ are two finite measures defined on the unit sphere and $\Sigma_1$ is nondegenerate. As a result, $\nu$ satisfies \textbf{ A(w,l)} for $w\left( r\right)=l\left( r\right)=r^{\alpha},r>0$. 

To see some other examples, let us adopt for now the radial and angular coordinate system and write $\nu$ as 
\begin{equation}\label{mod}
\nu\left( B\right)=\int_0^{\infty}\int_{\left\vert w\right\vert=1} 1_B\left( rw\right)a\left( r,w\right)j\left(r\right)r^{d-1}S\left( dw\right)dr, \quad \forall B\in\mathcal{B}\left( \mathbf{R}^d_0\right),
\end{equation}
where $S\left( dw\right)$ is a finite measure on the unit sphere. 

Suppose $\Lambda\left( dt\right)$ is a measure on $\left(0,\infty\right)$ such that $\int_0^{\infty} \left( 1\wedge t\right) \Lambda\left( dt\right)<\infty$, and $\phi\left(r\right)=\int_0^{\infty}\left( 1-e^{-rt}\right)\Lambda\left( dt\right), r\geq 0$ is the associated Bernstein function. Set in $\eqref{mod}$ $S\left(dw\right)$ to be the usual Lebesgue measure, $a\left( r,w\right)=1$, and
\begin{equation*}
j\left( r\right)=\int_0^{\infty}\left( 4\pi t\right)^{-d/2}\exp\left( -\frac{r^2}{4t}\right)\Lambda\left( dt\right), r>0.
\end{equation*}
Futhermore, assume\\
\textbf{H. }(i) There is $C>1$ such that
\begin{equation*}
C^{-1}\phi\left( r^{-2}\right)r^{-d}\leq j\left(r\right)\leq C\phi\left( r^{-2}\right)r^{-d}.
\end{equation*}
(ii) There are $0<\delta_1\leq \delta_2<1$ and $C>0$ such that for all $0<r\leq R$
\begin{equation*}
C^{-1}\left( \frac{R}{r}\right)^{\delta_1}\leq \frac{\phi\left( R\right)}{\phi\left( r\right)}\leq C\left( \frac{R}{r}\right)^{\delta_2}.
\end{equation*}
\textbf{G. }There is a function $\rho_0\left(w\right)$ defined on the unit sphere such that $\rho_0\left(w\right)\leq a\left( r,w\right)\leq 1, \forall r>0$, and for all $\left\vert \xi\right\vert=1$,
\begin{equation*}
\int_{S^{d-1}}\left\vert \xi\cdot w\right\vert^2\rho_0\left( w\right) \geq c>0.
\end{equation*}

Options for such $\Lambda$ and thus $\phi$ could be\\
(1) $\phi\left( r\right) =\Sigma_{i=1}^{n} r^{\alpha_i}, \alpha_i\in\left( 0,1\right), i=1,\ldots,n$;\\
(2) $\phi\left( r\right) =\left( r+r^{\alpha}\right)^{\beta}, \alpha,\beta\in\left( 0,1\right)$;\\
(3) $\phi\left( r\right) =r^{\alpha}\left(\ln\left(1+r\right)\right)^{\beta}, \alpha\in\left( 0,1\right),\beta\in\left( 0,1-\alpha\right)$;\\
(4) $\phi\left( r\right) =\left[ \ln\left( \cosh \sqrt{r}\right)\right]^{\alpha}, \alpha\in\left( 0,1\right)$.

It can be shown that Assumptions \textbf{H} and \textbf{G} offer us quite a few candidates of the \textbf{A(w,l)}, with the scaling function $w\left( r\right)=j\left( r\right)^{-1}r^{-d}, r>0$ and the scaling factor
\begin{equation*}
l\left( r\right)=\left\{\begin{array}{ll}
Cr^{2\delta_1} & \mbox{ if } r\leq 1,\\
Cr^{2\delta_2} & \mbox{ if } r> 1
\end{array}\right.
\end{equation*}
for some $C>0$. (See \cite{cr} for details.) In \cite{cr} and \cite{cr2}, Cauchy problems have been considered in the $L^p$-space and $H^{\mu,s}_p$-space respectively which are defined by L\'{e}vy measures from the \textbf{A(w,l)} class.

\subsection{More Discussion about the Model}

Some estimates on magnitude of the scaling function $w$ and the scaling factor $l$ can be extracted merely from their definitions. 
\begin{lemma}\label{kl}
	Let $w: \left( 0,\infty\right)\rightarrow \left( 0,\infty\right)$ be a scaling function and $l$ be an associated scaling factor which satisfies $l\left( N^{-1}\right)<1<l\left( N\right)$. $r_1=\inf \{r>0: N^r\geq l\left( N\right)\}$, $r_2=\sup \{r>0: N^{-r}\geq l\left( N^{-1}\right)\}$. Then
	
	\noindent(i) there exist $c_0, C_0>0$ such that 
	\begin{equation*}
	c_0\leq w\left( x\right)\leq C_0, \quad \forall x\in\left[ N^{-1},N\right],
	\end{equation*}
	(ii) $r_1\geq r_2$, and for $c_0, C_0$ above, 
	\begin{eqnarray*}
		c_0\left( x^{r_1}\wedge x^{r_2}\right)\leq w\left( x\right)\leq C_0 \left( x^{r_1}\vee x^{r_2}\right), \quad \forall x\in\mathbf{R}_{+},
	\end{eqnarray*}
	(iii) for the same $c_0$ and $C_0$,
	\begin{eqnarray*}
		l\left( x\right)\geq \frac{c_0}{C_0} \left( x^{r_1}\wedge x^{r_2}\right), \quad \forall x\in\mathbf{R}_{+}.
	\end{eqnarray*}
(iv) $\gamma\left(x\right):=\inf\{s:l\left( s\right)\geq x\}$. For the same $c_0$ and $C_0$,
\begin{eqnarray*}
	\gamma\left( x\right)\leq \frac{C_0}{c_0} \left( x^{\frac{1}{r_1}}\vee x^{\frac{1}{r_2}}\right), \quad \forall x\in\mathbf{R}_{+}.
\end{eqnarray*}
\end{lemma}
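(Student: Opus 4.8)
The plan is to extract everything from two exact identities together with the monotone scaling inequality $\eqref{scale}$. First I would observe that, since $l(N)>1>l(N^{-1})$ and $r\mapsto N^{r}$ is strictly increasing, the defining infimum and supremum are attained at the crossing points, so that $l(N)=N^{r_{1}}$ and $l(N^{-1})=N^{-r_{2}}$ exactly. Part (i) is then immediate from the extreme value theorem: $w$ is continuous and strictly positive on the compact interval $[N^{-1},N]$, hence attains a positive minimum $c_{0}$ and a finite maximum $C_{0}$ there. For $r_{1}\geq r_{2}$ I would apply $\eqref{scale}$ twice, with $\varepsilon=N$ at the point $N^{-1}r$ and then with $\varepsilon=N^{-1}$ at $r$, obtaining $w(r)\leq l(N)l(N^{-1})w(r)$ and hence $l(N)l(N^{-1})\geq1$; taking $\log_{N}$ and using the two identities yields $r_{1}-r_{2}=\log_{N}\bigl(l(N)l(N^{-1})\bigr)\geq0$.

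The heart of the lemma is the two-sided power bound (ii), which I would obtain by iterating $\eqref{scale}$ along the geometric sequence $N^{k}$. Forward iteration gives $w(Nr)\leq N^{r_{1}}w(r)$ and $w(N^{-1}r)\leq N^{-r_{2}}w(r)$, while rewriting $\eqref{scale}$ in the reverse direction gives the matching lower estimates $w(Nr)\geq N^{r_{2}}w(r)$ and $w(N^{-1}r)\geq N^{-r_{1}}w(r)$. For $x\geq1$ I would write $x=N^{n}y$ and choose the representative $y$ on the correct side of $1$ for each inequality: taking $y\in[1,N)$, so $N^{n}\leq x$, and iterating the forward bound gives $w(x)\leq C_{0}N^{nr_{1}}\leq C_{0}x^{r_{1}}$, whereas taking $y\in(N^{-1},1]$, so $N^{n}\geq x$, and iterating the reverse bound gives $w(x)\geq c_{0}N^{nr_{2}}\geq c_{0}x^{r_{2}}$; since $x\geq1$ and $r_{1}\geq r_{2}$ these are exactly $C_{0}(x^{r_{1}}\vee x^{r_{2}})$ and $c_{0}(x^{r_{1}}\wedge x^{r_{2}})$. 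The range $0<x\leq1$ is symmetric via the $N^{-1}$ iterations, with the roles of $r_{1}$ and $r_{2}$ interchanged. Selecting the side of $1$ on which $y$ lives so that the bound $w(y)\in[c_{0},C_{0}]$ from (i) pushes in the favorable direction is the one genuinely delicate bookkeeping step, since a single naive decomposition leaks a spurious factor $N^{\pm r_{i}}$ into the constant.

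Part (iii) follows by evaluating $\eqref{scale}$ at $r=1$: one has $l(x)\geq w(x)/w(1)\geq w(x)/C_{0}$, and inserting the lower bound from (ii) gives $l(x)\geq\tfrac{c_{0}}{C_{0}}(x^{r_{1}}\wedge x^{r_{2}})$. For (iv) I would invert this: as $l$ is nondecreasing, $\{s:\tfrac{c_{0}}{C_{0}}(s^{r_{1}}\wedge s^{r_{2}})\geq x\}\subseteq\{s:l(s)\geq x\}$, so $\gamma(x)$ is bounded by the infimum over the first set, which I compute by solving $\tfrac{c_{0}}{C_{0}}(s^{r_{1}}\wedge s^{r_{2}})=x$ separately according as $s\leq1$ or $s\geq1$ and then consolidating the two roots into $\tfrac{C_{0}}{c_{0}}(x^{1/r_{1}}\vee x^{1/r_{2}})$. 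I expect the main obstacle to sit precisely here and in (ii): one must check that the active branch of the $\min$/$\max$ in (iii) matches the active branch after inversion, and that the resulting constant collapses to the first power $C_{0}/c_{0}$ rather than to $(C_{0}/c_{0})^{1/r_{i}}$. Controlling this collapse is the crux, and it has to be done using $C_{0}/c_{0}\geq1$ together with the ordering $r_{1}\geq r_{2}$ and the two-sided estimates of (ii), rather than by a blunt substitution into the lower bound for $l$, which by itself is too lossy to recover the stated constant.
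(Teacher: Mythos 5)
Your argument for parts (i)--(iii) is correct and follows essentially the same route as the paper: the paper also proves (i) from the scaling inequality $w(x)\le l(x)w(1)$ and $w(1)\le l(x^{-1})w(x)$ (yielding the explicit constants $c_0=l(N)^{-1}w(1)$, $C_0=l(N)w(1)$ rather than your extreme-value-theorem constants, but either choice works), proves (ii) by exactly your iteration of \eqref{scale} along $[N^{j},N^{j+1}]$ and $[N^{-j-1},N^{-j}]$, and proves (iii) by the same evaluation at $r=1$. Your derivation of $r_1\ge r_2$ from $w(r)\le l(N)l(N^{-1})w(r)$, i.e. $N^{r_1-r_2}=l(N)l(N^{-1})\ge 1$, is cleaner than the paper's (which infers it from comparing the two sides of (ii) for large $x$). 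The one place you stop short is (iv): you correctly observe that the blunt substitution into (iii) gives $\gamma(x)\le\bigl(\tfrac{C_0}{c_0}x\bigr)^{1/r_1}\vee\bigl(\tfrac{C_0}{c_0}x\bigr)^{1/r_2}$, whose prefactor collapses to $\tfrac{C_0}{c_0}$ only when $r_1,r_2\ge 1$, and you promise a sharper argument using (ii) and $r_1\ge r_2$ without supplying it. Be aware that no such sharpening is available from (ii)--(iii) alone (testing $s_0=\tfrac{C_0}{c_0}x^{1/r_2}$ against the lower bound of (iii) yields $l(s_0)\ge(\tfrac{C_0}{c_0})^{r_2-1}x$, which falls short of $x$ precisely when $r_2<1$), and the paper does not supply one either: its entire proof of (iv) is the sentence that it is a direct conclusion of (iii), i.e. the blunt substitution, with the constant to be read as a quantity determined by $c_0,C_0,r_1,r_2$. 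Since the subsequent Remark re-parametrizes $r_1,r_2$ and every later use of $\gamma$ (e.g. Lemma \ref{lemma3}) only needs the polynomial form of the bound, nothing is lost; I would simply state (iv) with the constant $(\tfrac{C_0}{c_0})^{1/r_1}\vee(\tfrac{C_0}{c_0})^{1/r_2}$ rather than chase the collapse you describe.
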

\begin{proof}
	(i) Utilize $\eqref{scale}$ and monotonicity of $l$, for  $x\in\left[ N^{-1},N\right]$,
	\begin{eqnarray*}
		l\left(N\right)^{-1}w\left( 1\right)\leq l\left(x^{-1}\right)^{-1}w\left( 1\right)\leq w\left( x\right)\leq l\left( x\right)w\left( 1\right)\leq l\left( N\right)w\left( 1\right).
	\end{eqnarray*}
	Set $c_0=l\left(N\right)^{-1}w\left( 1\right)$, $C_0=l\left( N\right)w\left( 1\right)$.
	
	(ii) Apply $\eqref{scale}$ iteratively. For $x\in\left[ N^{j},N^{j+1}\right], \forall j\in\mathbf{N}$, we have
	\begin{eqnarray*}
		c_0 x^{r_2}\leq l\left(N^{-1}\right)^{-j-1}w\left( N^{-j-1}x\right)\leq w\left( x\right)\leq l\left( N\right)^{j}w\left( N^{-j}x\right)\leq C_0 x^{r_1}.
	\end{eqnarray*}
	Since this is true for arbitrarily large $x$, we can conclude $r_1\geq r_2$. For $x\in\left[ N^{-j-1},N^{-j}\right], \forall j\in\mathbf{N}$,
	\begin{eqnarray*}
		c_0 x^{r_1}\leq l\left(N\right)^{-j}w\left( N^{-j}x\right)\leq w\left( x\right)\leq l\left( N^{-1}\right)^{j+1}w\left( N^{j+1}x\right)\leq C_0 x^{r_2}.
	\end{eqnarray*}
	As a summary, 
	\begin{eqnarray*}
		c_0\left( x^{r_1}\wedge x^{r_2}\right)<w\left( x\right)<C_0 \left( x^{r_1}\vee x^{r_2}\right), \quad \forall x\in\mathbf{R}_{+}.
	\end{eqnarray*}
	
	(iii) By (ii) and $\eqref{scale}$,
	\begin{eqnarray*}
		l\left( x\right)\geq w\left( 1\right)^{-1} w\left( x\right)\geq \frac{c_0}{C_0}\left( x^{r_1}\wedge x^{r_2}\right), \quad \forall x\in\mathbf{R}_{+}.
	\end{eqnarray*}

(iv) is a direct conclusion from (iii).
\end{proof}

\noindent\textbf{Remark:} Suggested by the bounds in (ii), we redefine
\begin{eqnarray*}
r_1&=&\inf \left\{ \sigma>0:\limsup_{r\rightarrow 0}\frac{%
	r^{\sigma }}{w \left( r\right) }=0\right\}\vee\sup \left\{ \sigma>0:\liminf_{r\rightarrow \infty}\frac{%
	r^{\sigma }}{w \left( r\right) }=0\right\},\\
r_2&=&\sup \left\{ \sigma>0:\liminf_{r\rightarrow 0}\frac{%
	r^{\sigma }}{w \left( r\right) }=\infty\right\}\wedge\inf \left\{ \sigma>0:\limsup_{r\rightarrow \infty}\frac{%
	r^{\sigma }}{w \left( r\right) }=\infty\right\}.
\end{eqnarray*}
Namely, we take the smallest possbile $r_1$ and the largest possible $r_2$ such that (ii) holds. In Lemma \ref{order}, we shall show that the order $\alpha\in\left[ r_2,r_1\right]$. In $\alpha$-stable-like examples, $r_1=r_2=\alpha$. For models illustrated by Bernstein functions, $r_1\leq 2\delta_2, r_2\geq 2\delta_1$. (iii) and (iv) still hold with the amended parameters.

It can be told from the next lemma that the scaling function is rather genetic to the measure $\nu$, if \textbf{A(w,l)} holds for $\nu$.

\begin{lemma}\label{ess}
	Let $\nu$ be a L\'{e}vy measure and $w$ be the scaling function which $\nu$ satisfies \textbf{A(w,l)} for. Then,\\
	\noindent a) there are constants $C_1, C_2>0$ such that 
	\begin{eqnarray}
	C_1\varsigma\left( r\right)\leq w\left( r\right)^{-1}\leq C_2 \varsigma\left( r\right), \quad\forall r>0.
	\end{eqnarray}
	\noindent b) $\int_{\left\vert y\right\vert \leq 1}w\left( \left\vert y\right\vert
	\right) \nu\left( dy\right)=+\infty$.\\
	\noindent c) For any $\varepsilon>0$, $\int_{\left\vert y\right\vert \leq 1}w \left( \left\vert y\right\vert
	\right) ^{1+\varepsilon }\nu\left( dy\right) <\infty$.\\
	\noindent d) For any $\varepsilon >0$, $\int_{\left\vert y\right\vert \leq 1}\left\vert y\right\vert ^{\varepsilon
	}w\left( \left\vert y\right\vert \right) \nu\left( dy\right) <\infty$.
\end{lemma}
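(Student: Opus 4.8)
The plan is to treat part (a) as the engine and to derive (b)--(d) from it by reducing each to a one-dimensional radial integral against $-d\varsigma$. Throughout I will use the scaling identities $\nu_R\left(\{|y|>r\}\right)=\varsigma\left(Rr\right)$ and $\tilde{\nu}_R\left(\{|y|>r\}\right)=w\left(R\right)\varsigma\left(Rr\right)$, which follow directly from $\eqref{mea}$--$\eqref{meas}$. For the inequality $C_1\varsigma\left(r\right)\le w\left(r\right)^{-1}$ in (a): since $\alpha_2\ge 0$ we have $|y|^{\alpha_2}\ge 1$ on $\{|y|>1\}$, so \textbf{A(w,l)}(iii) gives $w\left(R\right)\varsigma\left(R\right)=\tilde{\nu}_R\left(\{|y|>1\}\right)\le\int_{|y|>1}|y|^{\alpha_2}\tilde{\nu}_R\left(dy\right)\le N_0$, i.e. $\varsigma\left(R\right)\le N_0\,w\left(R\right)^{-1}$.

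For the reverse inequality $w\left(r\right)^{-1}\le C_2\varsigma\left(r\right)$ I will use the nondegeneracy in \textbf{A(w,l)}(i). Since $\tilde{\nu}_R\ge\mu^0$, $\mu^0$ is supported in the unit ball, and $\int|y|^2\mu^0\left(dy\right)\ge c_0>0$, I get $\int_{|y|\le 1}|y|^2\tilde{\nu}_R\left(dy\right)\ge c_0$, which after unwinding the definitions reads $w\left(R\right)R^{-2}\int_{|y|\le R}|y|^2\nu\left(dy\right)\ge c_0$, hence $\int_{|y|\le R}|y|^2\nu\left(dy\right)\ge c_0 R^2 w\left(R\right)^{-1}$. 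The remaining ingredient is a matching upper bound on this truncated second moment. The layer-cake identity yields $\int_{|y|\le R}|y|^2\nu\left(dy\right)=2\int_0^R t\varsigma\left(t\right)dt-R^2\varsigma\left(R\right)$, while \textbf{A(w,l)}(iv), after the substitution $t=rs$, says $\int_0^R t\varsigma\left(t\right)dt\le C_0 R^2\varsigma\left(R\right)$. Combining these gives $\int_{|y|\le R}|y|^2\nu\left(dy\right)\le\left(2C_0-1\right)R^2\varsigma\left(R\right)$, and comparing with the lower bound produces $w\left(R\right)^{-1}\le\frac{2C_0-1}{c_0}\varsigma\left(R\right)=:C_2\varsigma\left(R\right)$, finishing (a). I will note that $2C_0-1>0$ because $\varsigma$ is nonincreasing forces $\int_0^R t\varsigma\left(t\right)dt\ge\frac{R^2}{2}\varsigma\left(R\right)$, and that (a) forces $\varsigma\left(0^+\right)=+\infty$ since $w\left(r\right)\to 0$.

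For (b)--(d) I will pass to radial integrals $\int_{|y|\le 1}f\left(|y|\right)\nu\left(dy\right)=\int_0^1 f\left(r\right)\left(-d\varsigma\left(r\right)\right)$ (legitimate since $\varsigma$ is continuous and nonincreasing) and use (a) in the form $\tfrac{1}{C_2}\varsigma\left(r\right)^{-1}\le w\left(r\right)\le\tfrac{1}{C_1}\varsigma\left(r\right)^{-1}$. For (b): $w\left(r\right)\ge\tfrac{1}{C_2}\varsigma\left(r\right)^{-1}$ and the substitution $u=\varsigma\left(r\right)$ turn the integral into $\tfrac{1}{C_2}\int_{\varsigma\left(1\right)}^{\infty}u^{-1}\,du=+\infty$, the divergence coming precisely from $\varsigma\left(0^+\right)=\infty$. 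For (c): $w\left(r\right)^{1+\varepsilon}\le C\varsigma\left(r\right)^{-\left(1+\varepsilon\right)}$ and the same substitution give $C\int_{\varsigma\left(1\right)}^{\infty}u^{-\left(1+\varepsilon\right)}\,du<\infty$ since $1+\varepsilon>1$. For (d) the factor $|y|^{\varepsilon}$ obstructs the direct substitution, so I will instead decompose dyadically over the shells $\{N^{-j-1}<|y|\le N^{-j}\}$: on each shell $|y|^{\varepsilon}\le N^{-j\varepsilon}$, and by $\eqref{scale}$ one has $w\left(|y|\right)\asymp w\left(N^{-j}\right)$ with constants uniform in $j$, while $\nu$ of the shell is at most $\varsigma\left(N^{-j-1}\right)\asymp w\left(N^{-j}\right)^{-1}$ by (a); the product $w\left(N^{-j}\right)\varsigma\left(N^{-j-1}\right)$ is bounded, leaving the convergent geometric sum $\sum_j N^{-j\varepsilon}<\infty$.

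The main obstacle is the lower-bound direction of (a), namely controlling $w^{-1}$ by $\varsigma$. This is where the full strength of the hypotheses enters: one must convert the nondegeneracy condition (i)---which a priori only sees the unit ball of each rescaled measure---into a genuine lower bound on the truncated second moment of $\nu$, and then close the loop using the Abel-type estimate (iv) to bound that same truncated moment \emph{from above} by a multiple of $R^2\varsigma\left(R\right)$. The careful bookkeeping of the scaling identities relating $\nu$, $\nu_R$ and $\tilde{\nu}_R$, and the verification that $2C_0-1>0$, are the technical points demanding attention; once (a) is in hand, parts (b)--(d) are essentially elementary one-variable calculus.
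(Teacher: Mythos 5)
Your argument is correct, and for parts a)--c) it is essentially the paper's proof: the bound $\varsigma(r)\le N_0\,w(r)^{-1}$ from \textbf{A(w,l)}(iii) applied to $\tilde{\nu}_R(\{|y|>1\})$, the reverse bound by combining the nondegeneracy of $\mu^0$ with the Abel-type estimate (iv) applied to the truncated second moment, and then the one-variable substitution $u=\varsigma(r)$ for b) and c). Two small differences work in your favor there: using the layer-cake identity for $\int_{|y|\le R}|y|^2\nu(dy)$ sidesteps the boundary term $\lim_{\varepsilon\to 0}\varepsilon^2\varsigma(\varepsilon)=0$ that the paper has to justify separately before integrating by parts, and your $\ge$ in b) fixes what is evidently a sign typo in the paper's display. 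For part d) you take a genuinely different route: the paper writes $|y|^{\varepsilon}w(|y|)=\bigl(|y|^{\sigma'}/w(|y|)\bigr)^{\varepsilon/\sigma'}w(|y|)^{1+\varepsilon/\sigma'}$ for $\sigma'$ above the critical exponent $\alpha'$, uses the boundedness of $r^{\sigma'}/w(r)$ near $0$, and reduces to c); your dyadic decomposition over the shells $\{N^{-j-1}<|y|\le N^{-j}\}$ instead uses only the scaling inequality $\eqref{scale}$ and part a) to bound each shell by $CN^{-j\varepsilon}$, which is self-contained and avoids invoking the exponent $\alpha'$ altogether, at the cost of not exhibiting d) as a corollary of c). One cosmetic point: your stated reason for $2C_0-1>0$ only yields $2C_0-1\ge 0$; strict positivity is automatic (otherwise the truncated second moment would be nonpositive, contradicting the lower bound $c_0R^2w(R)^{-1}>0$), or one may simply enlarge $C_0$.
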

\begin{proof}
a) First,
\begin{eqnarray}\label{11}
w\left( r\right)^{-1}\int_{\left\vert y\right\vert>1}\tilde{\nu}_r\left(  dy\right)=\int_{\left\vert y\right\vert>r}\nu\left(  dy\right)=\varsigma\left( r\right),\quad \forall r>0,
\end{eqnarray}
then by (iii) in \textbf{A(w,l)},
\begin{eqnarray}\label{10}
\varsigma\left( r\right)\leq C w\left( r\right)^{-1}, \quad\forall r>0.
\end{eqnarray}

On the other hand, for all $r>0$,
\begin{equation}\label{12}
	w\left( r\right)^{-1}\int_{\left\vert y\right\vert\leq 1}\left\vert y\right\vert^2\tilde{\nu}_r\left(  dy\right)=r^{-2}\int_{\left\vert y\right\vert\leq r}\left\vert y\right\vert^2\nu\left(  dy\right)=-r^{-2}\int_0^{r}s^2 d\varsigma\left( s\right).
\end{equation}
By the generalized formula of integration by parts from stochastic calculus, 
\begin{eqnarray}
	&&\int_0^{r}s^2 d\varsigma\left( s\right)=\lim_{\varepsilon\rightarrow 0}\int_{\varepsilon}^{r}s^2 d\varsigma\left( s\right)\nonumber\\
	&=&\lim_{\varepsilon\rightarrow 0} \left( s^2 \varsigma\left( s\right)\big\arrowvert_{\varepsilon}^r-2\int_{\varepsilon}^{r}s \varsigma\left( s\right)ds\right)\nonumber\\
	&=& r^2 \varsigma\left( r\right)-2\int_{0}^{r}s \varsigma\left( s\right)ds.\label{13}
\end{eqnarray}
Note that in above derivation, we used the fact that $\lim_{\varepsilon\rightarrow 0}  \varepsilon^2 \varsigma\left( \varepsilon\right)=0$. This is due to \textbf{A(w,l)}(i), $\eqref{10}$ and $\eqref{12}$, which implies
\begin{eqnarray*}
\lim_{\varepsilon\rightarrow 0}  \varepsilon^2 \varsigma\left( \varepsilon\right)\leq C\lim_{\varepsilon\rightarrow 0}  \varepsilon^2 w\left( \varepsilon\right)^{-1}\leq C\lim_{\varepsilon\rightarrow 0}\int_{\left\vert y\right\vert\leq \varepsilon}\left\vert y\right\vert^2\nu\left(  dy\right)=0.
\end{eqnarray*}
Combine $\eqref{11}$, $\eqref{12}$ and $\eqref{13}$.
\begin{eqnarray}\label{14}
\qquad\quad w\left( r\right)^{-1}\int\left(\left\vert y\right\vert^2\wedge 1\right)\tilde{\nu}_r\left(  dy\right)=2r^{-2}\int_0^{r}s \varsigma\left( s\right) ds=2\int_0^{1}s \varsigma\left( rs\right) ds.
\end{eqnarray}
Again by \textbf{A(w,l)}(i),
\begin{eqnarray*}
w\left( r\right)^{-1}\leq C\varsigma\left( r\right)\int_0^{1}s \varsigma\left( rs\right)\varsigma\left( r\right)^{-1} ds\leq C\varsigma\left( r\right).
\end{eqnarray*}
\noindent b) Use a) and the It\^{o} formula.
\begin{eqnarray*}
&&\int_{\left\vert y\right\vert \leq 1}w\left( \left\vert y\right\vert
\right) \nu\left( dy\right)=-\int_0^1 w\left( r
\right)d\varsigma\left(r
\right)\\
&\leq& -C\int_0^1 \varsigma\left( r
\right)^{-1}d\varsigma\left(r
\right)=-C\lim_{\varepsilon\rightarrow 0}\int_{\varepsilon}^1 \varsigma\left( r
\right)^{-1}d\varsigma\left(r
\right)\\
&=&C\left( \lim_{\varepsilon\rightarrow 0}\ln \varsigma\left( \varepsilon\right)-\ln \varsigma\left( 1\right)\right)=\infty.
\end{eqnarray*}

\noindent c) For any $\varepsilon >0$,
\begin{equation*}
\int_{\left\vert y\right\vert \leq 1}w\left( \left\vert y\right\vert
\right) ^{1+\varepsilon }\nu\left( dy\right) \leq C\int_{0}^{1}\varsigma \left( r\right)
^{-1-\varepsilon }d\varsigma \left( r\right) =C\varsigma \left( r\right)
^{-\varepsilon }|_{0}^{1}<\infty .
\end{equation*}
d) By c), for any $\varepsilon >0$, $\sigma'>\inf \left\{ \sigma\in\left( 0,2\right):\limsup_{r\rightarrow 0}\frac{r^{\sigma }}{w \left( r\right) }=0\right\}$,
	\begin{eqnarray*}
	\int_{\left\vert y\right\vert \leq 1}\left\vert y\right\vert ^{\varepsilon
	}w \left( \left\vert y\right\vert \right) \nu\left( dy\right) &=&\int_{\left\vert
		y\right\vert \leq 1}\left( \frac{\left\vert y\right\vert ^{\sigma ^{\prime }}%
	}{w\left( \left\vert y\right\vert \right) }\right) ^{\frac{\varepsilon 
		}{\sigma ^{\prime }}}w\left( \left\vert y\right\vert \right) ^{1+\frac{%
			\varepsilon }{\sigma ^{\prime }}}\nu\left( dy\right)\\
	&\leq &C\int_{\left\vert y\right\vert \leq 1}w \left( \left\vert
	y\right\vert \right) ^{1+\frac{\varepsilon }{\sigma ^{\prime }}}\nu\left( dy\right) <\infty.
\end{eqnarray*}
\end{proof}

\begin{lemma}\label{order}
	Let $\nu$ be a L\'{e}vy measure and $w$ be the scaling function which $\nu$ satisfies \textbf{A(w,l)} for. $\alpha$ is the order of $\nu$. Then  
	 \begin{equation*}
	 \alpha=\inf \left\{ \sigma:\limsup_{r\rightarrow 0}\frac{%
	 	r^{\sigma }}{w \left( r\right) }=0\right\} .
	 \end{equation*}%
\end{lemma}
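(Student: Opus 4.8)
The plan is to write $\sigma^{\ast}:=\inf\{\sigma:\limsup_{r\to0}r^{\sigma}/w(r)=0\}$ for the right-hand side and prove $\alpha\le\sigma^{\ast}$ and $\alpha\ge\sigma^{\ast}$ separately, leaning on two facts already in hand. The first is Lemma~\ref{ess}(a), which gives $\varsigma(r)\asymp w(r)^{-1}$ (with $\varsigma(r)=\nu(|y|>r)$), so all information about the radial mass of $\nu$ near the origin is encoded in $w$. The second is the same integration-by-parts computation used in the proof of Lemma~\ref{ess}: for $\sigma\in(0,2)$ and $0<\varepsilon<1$ one has $\int_{\varepsilon<|y|\le1}|y|^{\sigma}\nu(dy)=\varepsilon^{\sigma}\varsigma(\varepsilon)-\varsigma(1)+\sigma\int_{\varepsilon}^{1}r^{\sigma-1}\varsigma(r)\,dr$. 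Every term except $-\varsigma(1)$ is nonnegative, so the finiteness of $\int_{|y|\le1}|y|^{\sigma}\nu(dy)$ is governed by that of $\int_{0}^{1}r^{\sigma-1}\varsigma(r)\,dr$, equivalently, by Lemma~\ref{ess}(a), by that of $\int_{0}^{1}\frac{r^{\sigma}}{w(r)}\frac{dr}{r}$. Before starting I would also record, via the two-sided power bounds of Lemma~\ref{kl}(ii), that $\sigma^{\ast}\in[r_{2},r_{1}]$, so $\sigma^{\ast}$ is finite and positive and competes in the same range as the order $\alpha$.

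For $\alpha\le\sigma^{\ast}$ I would argue directly from Lemma~\ref{ess}(d). Fix $\sigma>\sigma^{\ast}$ and choose $\sigma'\in(\sigma^{\ast},\sigma)$. By definition of $\sigma^{\ast}$, $\limsup_{r\to0}r^{\sigma'}/w(r)=0$, so $|y|^{\sigma'}\le w(|y|)$ for all small $|y|$; hence $|y|^{\sigma}\le|y|^{\sigma-\sigma'}w(|y|)$ there, and Lemma~\ref{ess}(d) (with $\varepsilon=\sigma-\sigma'>0$) yields $\int_{|y|\le1}|y|^{\sigma}\nu(dy)<\infty$. Thus $\alpha\le\sigma$ for every $\sigma>\sigma^{\ast}$, i.e.\ $\alpha\le\sigma^{\ast}$.

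The converse $\alpha\ge\sigma^{\ast}$ is where the work lies, and it is the step I expect to be the main obstacle. Fix $\sigma<\sigma^{\ast}$; then $\limsup_{r\to0}r^{\sigma}/w(r)>0$, which a priori only produces a sequence $r_{n}\downarrow0$ with $w(r_{n})^{-1}\ge c\,r_{n}^{-\sigma}$ for some $c>0$ — a subsequential statement, not by itself enough to force a one-dimensional integral to diverge. To promote it to genuine divergence I would use the scaling hypothesis to fill the gaps: for $s\in[N^{-1},1]$ we have $w(s r_{n})\le l(s)w(r_{n})\le l(1)w(r_{n})$, so on the whole interval $[N^{-1}r_{n},r_{n}]$ one gets $r^{\sigma}/w(r)\ge N^{-\sigma}l(1)^{-1}\,r_{n}^{\sigma}/w(r_{n})\ge c'$ with $c'>0$ independent of $n$. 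Passing to a subsequence so that the intervals $[N^{-1}r_{n},r_{n}]$ are pairwise disjoint (possible since $r_{n}\downarrow0$), each contributes $\int_{N^{-1}r_{n}}^{r_{n}}\frac{r^{\sigma}}{w(r)}\frac{dr}{r}\ge c'\ln N$, whence $\int_{0}^{1}\frac{r^{\sigma}}{w(r)}\frac{dr}{r}=\infty$. By the reduction of the first paragraph this gives $\int_{|y|\le1}|y|^{\sigma}\nu(dy)=\infty$, so $\alpha\ge\sigma$ for every $\sigma<\sigma^{\ast}$, and therefore $\alpha\ge\sigma^{\ast}$.

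Combining the two inequalities gives $\alpha=\sigma^{\ast}$. The points requiring care beyond the gap-filling argument are routine: the boundary term $\varepsilon^{\sigma}\varsigma(\varepsilon)$ is nonnegative (which is all the divergence direction needs) and tends to $0$ when $\sigma>\sigma^{\ast}$ because $w(r)^{-1}\le M r^{-\sigma'}$ for small $r$; and the comparison $\varsigma\asymp w^{-1}$ is used only on $(0,1]$, where Lemma~\ref{ess}(a) applies.
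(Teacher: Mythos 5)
Your proof is correct, and while the easy inequality $\alpha\le\sigma^{\ast}$ is obtained exactly as in the paper (both arguments factor $|y|^{\sigma}=\bigl(|y|^{\sigma'}/w(|y|)\bigr)\,|y|^{\sigma-\sigma'}w(|y|)$ and invoke Lemma \ref{ess}(d); the paper merely phrases it as a contradiction), the converse inequality is handled by a genuinely different mechanism. The paper proves $\sigma^{\ast}\le\alpha$ by first establishing a dichotomy ($\limsup_{r\to0}r^{\sigma}/w(r)$ is $\infty$ for $\sigma<\sigma^{\ast}$ and $0$ for $\sigma>\sigma^{\ast}$) and then using the exact scaling identity $\int_{|y|\le r}|y|^{\sigma}\nu(dy)=\frac{r^{\sigma}}{w(r)}\int_{|y|\le1}|y|^{\sigma}\tilde{\nu}_{r}(dy)$ together with the non-degeneracy hypothesis \textbf{A(w,l)}(i), which bounds $\int_{|y|\le1}|y|^{2}\,d\tilde{\nu}_{r}$ below uniformly in $r$; finiteness of the $\sigma$-moment then forces $r^{\sigma}/w(r)$ to stay bounded, whence $\sigma\ge\sigma^{\ast}$. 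You instead reduce the moment to the one-dimensional integral $\int_{0}^{1}r^{\sigma-1}\varsigma(r)\,dr$ via integration by parts, replace $\varsigma$ by $w^{-1}$ using Lemma \ref{ess}(a), and promote the subsequential bound $r_{n}^{\sigma}/w(r_{n})\ge c$ to divergence by filling the gaps on $[N^{-1}r_{n},r_{n}]$ with the scaling factor $l$ --- you correctly identified that this promotion is the crux, and your gap-filling step is sound since $l$ is nondecreasing and $l(1)<\infty$. The trade-off: the paper's route is shorter and extracts the stronger conclusion that $r^{\sigma}/w(r)$ is actually bounded for $\sigma>\alpha$, while yours uses non-degeneracy only indirectly (through Lemma \ref{ess}(a)) and stays entirely within elementary real analysis of the tail function once that lemma is granted. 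Both are complete proofs of the stated equality.
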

\begin{proof}
	Denote $\alpha'=\inf \left\{ \sigma\in\left( 0,\infty\right):\limsup_{r\rightarrow 0}\frac{%
		r^{\sigma }}{w \left( r\right) }=0\right\}$. We first show that $\alpha'\leq \alpha$. Note if $\sigma\in \left(0,\alpha'\right)$, $\limsup_{r\rightarrow 0}\frac{
		r^{\sigma }}{w \left( r\right) }=\infty$. Otherwise
	\begin{eqnarray*}
	\limsup_{r\rightarrow 0}\frac{
		r^{\frac{\sigma+\alpha'}{2 }}}{w \left( r\right) }\leq \limsup_{r\rightarrow 0}\frac{
		r^{\sigma }}{w \left( r\right) }\lim_{r\rightarrow 0}
		r^{\frac{\alpha'-\sigma}{2} }=0,
	\end{eqnarray*}
	which contradicts with the definition of $\alpha'$. Now take $0<r\leq 1$. For any $\sigma\in \left(\alpha,\infty\right)$,
		\begin{eqnarray*}
		\int_{\left\vert y\right\vert \leq 1}\left\vert y\right\vert
		^{\sigma }d\nu\geq\int_{\left\vert y\right\vert \leq
			r}\left\vert y\right\vert ^{\sigma}d\nu=\frac{r^{\sigma }}{w
			\left( r\right) }\int_{\left\vert y\right\vert \leq 1}\left\vert
		y\right\vert ^{\sigma}d\tilde{\nu}_{r}\geq \frac{r^{\sigma }}{w\left( r\right) }\int_{\left\vert
			y\right\vert \leq 1}\left\vert y\right\vert ^{2}d\tilde{\nu}_{r}.
	\end{eqnarray*}
By (i) in \textbf{A(w,l)}, $\{\tilde{\nu}_{r}:r>0\}$ are non-degenerate. Hence 
\begin{eqnarray*}
c\frac{r^{\sigma }}{w\left( r\right) }\leq\frac{r^{\sigma }}{w\left( r\right) }\int_{\left\vert
	y\right\vert \leq 1}\left\vert y\right\vert ^{2}d\tilde{\nu}_{r}\leq \int_{\left\vert
		y\right\vert \leq 1}\left\vert y\right\vert ^{\sigma}d\nu<C
\end{eqnarray*}
for some $c,C>0$ independent of $r$. Thus $\alpha'\leq \sigma $, and thus $\alpha'\leq\alpha$.

For the other direction, assume to the contrary $\alpha' <\alpha$. Then by Lemma \ref{ess}, for $\alpha' <\sigma'<\sigma<\alpha$,
\begin{eqnarray*}
	\int_{\left\vert y\right\vert \leq 1}\left\vert y\right\vert ^{\sigma
	}d\nu &=&\int_{\left\vert y\right\vert \leq 1}
	\frac{\left\vert y\right\vert ^{\sigma ^{\prime }}}{w\left( \left\vert
		y\right\vert \right) }\left\vert y\right\vert ^{\sigma-\sigma ^{\prime }}w\left( \left\vert y\right\vert \right) d\nu \\
	&\leq &C\int_{\left\vert y\right\vert \leq 1}\left\vert y\right\vert
	^{\sigma-\sigma ^{\prime }}w\left( \left\vert
	y\right\vert \right) d\nu <\infty.
\end{eqnarray*}%
But this contradicts with the definition of $\alpha$. Therefore, $\alpha\leq\alpha'$. Combining the argument above, we obtain $\alpha'=\alpha$.
\end{proof}

According to Lemma \ref{order}, we can claim immediately that two L\'{e}vy measures which satisfy \textbf{A(w,l)} for the same $w,l$ have the same order. 

The last two lemmas of this section explain why we restricted the H\"{o}lder order $\beta \in \left( 0,1/\alpha\right)$ when defining generalized H\"{o}lder spaces in section 2.2. If $\beta$ exceeds or equals to $1/\alpha$, the function may reduce to a trivial case that is no longer of interest.
\begin{lemma}\label{es1}
	Set $\alpha'=\inf \left\{ \sigma\in\left( 0,2\right):\limsup_{r\rightarrow 0}\frac{%
		r^{\sigma }}{w \left( r\right) }=0\right\}$. \\
	\noindent a) If $0<\frac{1}{\beta }<\alpha'$ and $\sup_{x,y}\frac{\left\vert f\left( x\right) -f\left( x+y\right) \right\vert }{%
		w \left( \left\vert y\right\vert \right) ^{\beta }}<\infty $, then $f$ is a constant.\\
	
	\noindent b) If $\frac{1}{\beta }>\alpha'$ and $f$ is a bounded Lipschitz function, then for each $\varepsilon \in \left(
	0,1\right) $, there is a positive constant $C_{\varepsilon}$ depending on $\varepsilon$ but independent of $f$ so that%
	\begin{equation*}
	\sup_{x,y}\frac{\left\vert f\left( x\right)
		-f\left( x+y\right) \right\vert }{w\left( \left\vert y\right\vert
		\right)^{\beta }}\leq \varepsilon \sup_{x,y}%
	\frac{\left\vert f\left( x+y\right) -f\left( x\right) \right\vert }{%
		\left\vert y\right\vert }+C_{\varepsilon}\left\vert f\right\vert_{0}.
	\end{equation*}%
	Namely, the space $\tilde{C}^{\beta }$
	contains all bounded Lipschitz functions.
\end{lemma}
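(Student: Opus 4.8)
The plan for part (a) is to show that every increment of $f$ vanishes, via a chaining argument tuned to the scaling function. Write $M=\sup_{x,y}\left\vert f(x)-f(x+y)\right\vert/w(\left\vert y\right\vert)^{\beta}<\infty$ and fix $x$ and $y$ with $\left\vert y\right\vert=h$. Splitting the segment from $x$ to $x+y$ into $m$ equal pieces and applying the triangle inequality gives $\left\vert f(x+y)-f(x)\right\vert\le M\,m\,w(h/m)^{\beta}$ for every $m\in\mathbf{N}_{+}$. Since the left-hand side does not depend on $m$, it suffices to choose $m$ so that the right-hand side tends to $0$, which then forces $f(x+y)=f(x)$. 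Setting $s=h/m$, the bound reads $\left\vert f(x+y)-f(x)\right\vert\le M h\,w(s)^{\beta}/s$, so the task reduces to producing a sequence $s\to 0$ along which $w(s)^{\beta}/s\to 0$, equivalently $w(s)/s^{1/\beta}\to 0$.

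Here the hypothesis $1/\beta<\alpha'$ enters. Taking $\sigma_{0}=1/\beta<\alpha'$, the argument in the proof of Lemma \ref{order} shows $\limsup_{r\to 0}r^{\sigma_{0}}/w(r)=\infty$, so there is a sequence $\rho_{k}\to 0$ with $w(\rho_{k})/\rho_{k}^{1/\beta}\to 0$. The main obstacle is that the chaining samples $w$ only on the discrete grid $\{h/m:m\in\mathbf{N}_{+}\}$, which need not meet $\{\rho_{k}\}$. I would bridge this with the scaling inequality $\eqref{scale}$: choosing $m_{k}=\lceil h/\rho_{k}\rceil$ makes $h/m_{k}=\varepsilon_{k}\rho_{k}$ with $\varepsilon_{k}\le 1$, whence $w(h/m_{k})\le l(\varepsilon_{k})w(\rho_{k})\le l(1)w(\rho_{k})$ by monotonicity of $l$. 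Since $m_{k}\le h/\rho_{k}+1$, the bound becomes $\left\vert f(x+y)-f(x)\right\vert\le M\,l(1)^{\beta}\bigl(h\,w(\rho_{k})^{\beta}/\rho_{k}+w(\rho_{k})^{\beta}\bigr)$, and both terms vanish as $k\to\infty$ because $w(\rho_{k})^{\beta}/\rho_{k}=(w(\rho_{k})/\rho_{k}^{1/\beta})^{\beta}\to 0$ and $w(\rho_{k})\to 0$. Hence $f$ is constant.

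For part (b) the plan is the standard interpolation dichotomy, splitting $\sup_{x,y}$ according to whether $\left\vert y\right\vert$ is large or small. For $\left\vert y\right\vert\ge\eta$ I would bound $\left\vert f(x+y)-f(x)\right\vert\le 2\left\vert f\right\vert_{0}$ and bound $w(\left\vert y\right\vert)^{\beta}$ below by $(\inf_{[\eta,\infty)}w)^{\beta}>0$, positive because $w$ is continuous, strictly positive, and tends to $\infty$; this yields a term $C_{\eta}\left\vert f\right\vert_{0}$. For $\left\vert y\right\vert<\eta$ I would use the Lipschitz bound $\left\vert f(x+y)-f(x)\right\vert\le[f]_{\mathrm{Lip}}\left\vert y\right\vert$, reducing matters to making $\sup_{0<r<\eta}r/w(r)^{\beta}$ smaller than $\varepsilon$ by taking $\eta$ small. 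This is exactly where $1/\beta>\alpha'$ is used: pick $\sigma\in(\alpha',1/\beta)$; then $\lim_{r\to 0}r^{\sigma}/w(r)=0$ gives $w(r)/r^{\sigma}\to\infty$, so $r/w(r)^{\beta}=r^{1-\sigma\beta}/(w(r)/r^{\sigma})^{\beta}\to 0$ as $r\to 0$ since $1-\sigma\beta>0$. Combining the two regimes gives the claimed estimate with $C_{\varepsilon}=C_{\eta(\varepsilon)}$ independent of $f$, and in particular $[f]_{\beta}<\infty$, so $\tilde{C}^{\beta}$ contains every bounded Lipschitz function.
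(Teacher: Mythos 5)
Your proof is correct, and part (a) takes a genuinely different route from the paper. For part (b) you follow essentially the same dichotomy as the paper (Lipschitz bound for small $\left\vert y\right\vert$, the bound $2\left\vert f\right\vert_0$ for large $\left\vert y\right\vert$); the only cosmetic differences are that you lower-bound $w$ on $[\eta,\infty)$ by continuity plus divergence at infinity where the paper uses the scaling inequality $w(\delta)\le l(1)w(\left\vert y\right\vert)$, and that you route the small-$r$ limit through an intermediate exponent $\sigma\in(\alpha',1/\beta)$ where the paper applies $\limsup_{r\to 0}r^{1/\beta}/w(r)=0$ directly with $\sigma=1/\beta$. For part (a) the paper instead mollifies: it picks $\beta'$ with $1/\beta\le 1/\beta'<\alpha'$, takes a sequence $y_n\to 0$ with $\left\vert y_n\right\vert^{1/\beta'}/w(\left\vert y_n\right\vert)\ge C$, converts the $w$-H\"{o}lder bound into a bound of order $\left\vert y_n\right\vert^{1+\varepsilon}$ on increments of $f_\varepsilon=f\ast w_\varepsilon$, and concludes $\nabla f_\varepsilon=0$, hence $f$ constant. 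Your chaining argument — subdividing the segment into $m$ equal steps, making $m\,w(h/m)^{\beta}$ vanish along the subsequence supplied by $\limsup_{r\to 0}r^{1/\beta}/w(r)=\infty$, and using the scaling inequality to pass from the good sequence $\rho_k$ to the discrete grid $h/m$ — is more elementary (no mollifiers or derivatives appear) and avoids having to track in which directions the derivative of the mollification is shown to vanish. Both arguments ultimately rest on the same quantitative input, namely the dichotomy for $\limsup_{r\to 0}r^{\sigma}/w(r)$ on either side of $\alpha'$ established in the proof of Lemma \ref{order}.
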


\begin{proof}
	a) Let $\varepsilon\in\left(0,\beta\alpha'-1\right)$, $\beta'\left( 1+\varepsilon \right) =\beta$. Then  $\frac{1}{\beta }\leq \frac{1}{\beta'}<\alpha'$ and we can find a sequence $y_{n}\rightarrow 0$ so that $\frac{\left\vert
		y_{n}\right\vert ^{1/\beta ^{\prime }}}{w \left( \left\vert
		y_{n}\right\vert \right) }\geq C>0$. Let $f_{\varepsilon }=f\ast w_{\varepsilon}$ where $w_{\varepsilon }$ is the standard
	mollifier. We then have%
	\begin{eqnarray*}
		\sup_{x,y}\frac{\left\vert f\left( x\right) -f\left( x+y\right) \right\vert }{%
			w \left( \left\vert y\right\vert \right) ^{\beta }} &\geq&C\frac{\left\vert f_{\varepsilon }\left( x\right) -f_{\varepsilon }\left(
			x+y_{n}\right) \right\vert }{w\left( \left\vert y_{n}\right\vert
			\right) ^{\beta }} \\
		&=&C\frac{\left\vert f_{\varepsilon }\left( x\right) -f_{\varepsilon }\left(
			x+y_{n}\right) \right\vert }{\left\vert y_{n}\right\vert ^{1+\varepsilon }}%
		\left( \frac{\left\vert y_{n}\right\vert ^{\frac{1}{\beta ^{\prime }}}}{%
			w\left( \left\vert y_{n}\right\vert \right) }\right) ^{\beta ^{\prime
			}(1+\varepsilon )} \\
		&\geq &C\frac{\left\vert f_{\varepsilon }\left( x\right) -f_{\varepsilon
			}\left( x+y_{n}\right) \right\vert }{\left\vert y_{n}\right\vert
			^{1+\varepsilon }},\quad x\in \mathbf{R}^{d}.
	\end{eqnarray*}%
	Hence $\nabla f_{\varepsilon }\left( x\right) =0,x\in \mathbf{R}^{d},\forall \varepsilon \in \left(0,\beta\alpha'-1\right)$, which implies $ f_{\varepsilon }\left( x\right) =C_{\varepsilon}, \forall \varepsilon \in \left(0,\beta\alpha'-1\right)$. Obviously, $f$ is continuous, then $f_{\varepsilon }\rightarrow f$ uniformly on any compact subsets, and thus $f$ is a constant.
	
	b) Since $\limsup_{r\rightarrow 0}\frac{r^{1/\beta }}{w\left(
		r\right) }=0$, then for each $\varepsilon \in \left( 0,1\right) $ there is $%
	\delta >0$ so that $\frac{\left\vert y\right\vert ^{\frac{1}{\beta }}}{%
		w\left( \left\vert y\right\vert \right) }\leq \varepsilon^{\frac{1}{\beta }} $ if $%
	\left\vert y\right\vert \leq \delta .$ Hence,  
	\begin{eqnarray*}
		\frac{\left\vert f\left( x\right) -f\left( x+y\right) \right\vert }{w
			\left( \left\vert y\right\vert \right) ^{\beta }} &=&\frac{\left\vert
			f\left( x\right) -f\left( x+y\right) \right\vert }{\left\vert y\right\vert }%
		\left( \frac{\left\vert y\right\vert ^{\frac{1}{\beta }}}{w\left(
			\left\vert y\right\vert \right) }\right) ^{\beta } \\
		&\leq &\varepsilon \frac{\left\vert f\left( x\right) -f\left( x+y\right)
			\right\vert }{\left\vert y\right\vert }
	\end{eqnarray*}
	if $\left\vert y\right\vert
	\leq \delta$, and
	\begin{eqnarray*}
		\frac{\left\vert f\left( x\right) -f\left( x+y\right) \right\vert }{w
			\left( \left\vert y\right\vert \right) ^{\beta }} \leq 2w\left( \delta\right)^{-\beta}l\left( 1\right)^{\beta}\left\vert f\right\vert_0\leq C_{\varepsilon}\left\vert f\right\vert_0
	\end{eqnarray*}
	if $\left\vert y\right\vert
	> \delta$.
\end{proof}

\begin{lemma}\label{es2}
	Let $\alpha'=\inf \left\{ \sigma\in\left( 0,2\right):\limsup_{r\rightarrow 0}\frac{%
		r^{\sigma }}{w \left( r\right) }=0\right\}$. \\
	\noindent a)  If $\limsup_{r\rightarrow 0}\frac{r^{\alpha'}}{w\left(
		r\right) }=0$ and $f$ is a bounded Lipschitz function, then for each $\varepsilon \in \left( 0,1\right) $ there is a positive constant $C_{\varepsilon}$ depending on $\varepsilon$ but independent of $f$ so that
	\begin{equation*}
	\sup_{x,y}\frac{\left\vert f\left( x\right)
		-f\left( x+y\right) \right\vert }{w\left( \left\vert y\right\vert
		\right) ^{1/\alpha'}}\leq \varepsilon \sup_{x,y}\frac{\left\vert f\left( x+y\right) -f\left( x\right) \right\vert }{%
		\left\vert y\right\vert }+C_{\varepsilon}\left\vert f\right\vert _{0}.
	\end{equation*}
	Namely, the space $\tilde{C}^{1/\alpha'}$ contains all bounded Lipschitz functions.\\
	\noindent b) If $\limsup_{r\rightarrow 0}\frac{r^{\alpha'}}{w \left( r\right) }\in
	\left( 0,\infty \right) $, then $\tilde{C}^{1/\alpha'}$ is the space of bounded Lipschitz functions.\\
	\noindent c) If $\limsup_{r\rightarrow 0}\frac{r^{\alpha'}}{w\left(
		r\right) }=\infty $, then $\tilde{C}^{1/\alpha'}$ consists of
	constants only.
\end{lemma}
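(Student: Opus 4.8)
The plan is to reduce all three parts to the single quantity $g(r):=r^{\alpha'}/w(r)$ and its behaviour as $r\to0$, exploiting the algebraic identity
\[
\frac{\left\vert f(x+h)-f(x)\right\vert}{w(\left\vert h\right\vert)^{1/\alpha'}}=\frac{\left\vert f(x+h)-f(x)\right\vert}{\left\vert h\right\vert}\,g(\left\vert h\right\vert)^{1/\alpha'}.
\]
Throughout I would fix a standard mollifier and set $f_\varepsilon=f\ast\eta_\varepsilon$; since $\left\vert f_\varepsilon(x+h)-f_\varepsilon(x)\right\vert\le\int\left\vert f(x+h-z)-f(x-z)\right\vert\eta_\varepsilon(z)\,dz$, the seminorm is preserved, $[f_\varepsilon]_{1/\alpha'}\le[f]_{1/\alpha'}$, each $f_\varepsilon$ is smooth, and $f_\varepsilon\to f$ uniformly because the bound $\left\vert f(x+h)-f(x)\right\vert\le[f]_{1/\alpha'}w(\left\vert h\right\vert)^{1/\alpha'}$ furnishes a uniform modulus of continuity. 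The three cases are then distinguished solely by whether $\limsup_{r\to0}g(r)$ is $0$, finite positive, or $+\infty$.

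For the inclusion of Lipschitz functions into $\tilde C^{1/\alpha'}$ — all of part a) and the forward half of part b) — I would argue as in Lemma \ref{es1}(b). In case a), $g(r)\to0$, so given $\varepsilon\in(0,1)$ there is $\delta>0$ with $g(\left\vert h\right\vert)^{1/\alpha'}\le\varepsilon$ for $\left\vert h\right\vert\le\delta$; the identity then bounds the quotient by $\varepsilon\sup\left\vert f(x+h)-f(x)\right\vert/\left\vert h\right\vert$ on $\{\left\vert h\right\vert\le\delta\}$, while on $\{\left\vert h\right\vert>\delta\}$ I would use $\left\vert f(x+h)-f(x)\right\vert\le2\left\vert f\right\vert_0$ together with the lower bound $w(\left\vert h\right\vert)\ge c_0(\left\vert h\right\vert^{r_1}\wedge\left\vert h\right\vert^{r_2})$ of Lemma \ref{kl}(ii), which keeps $w$ away from $0$ and produces the term $C_\varepsilon\left\vert f\right\vert_0$. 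This yields the stated interpolation inequality. In case b) only $\limsup_{r\to0}g(r)<\infty$ is needed: $g$ is then bounded near $0$, so the same splitting shows $[f]_{1/\alpha'}<\infty$ for every bounded Lipschitz $f$.

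The reverse inclusions — the content of part c) and the second half of part b) — are where the real work lies, and the device is to differentiate the mollification along a \emph{favourable} sequence. Choose $r_n\to0$ realizing the $\limsup$, so $g(r_n)\to\limsup_{r\to0}g(r)=:M\in(0,\infty]$. Since $f_\varepsilon$ is smooth, its directional derivative may be computed along this one sequence, and the identity gives, for every unit vector $e$ and every $x$,
\[
\left\vert\nabla f_\varepsilon(x)\cdot e\right\vert=\lim_{n\to\infty}\frac{\left\vert f_\varepsilon(x+r_ne)-f_\varepsilon(x)\right\vert}{r_n}\le[f]_{1/\alpha'}\,M^{-1/\alpha'}.
\]
In case c), $M=+\infty$ forces $\nabla f_\varepsilon\equiv0$, so each $f_\varepsilon$ is constant and, by uniform convergence, so is $f$; together with the trivial membership of constants this proves $\tilde C^{1/\alpha'}$ consists exactly of constants. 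In case b), $M\in(0,\infty)$ gives a gradient bound uniform in $\varepsilon$, hence each $f_\varepsilon$ is Lipschitz with constant $\le[f]_{1/\alpha'}M^{-1/\alpha'}$, and the uniform limit $f$ inherits the same Lipschitz bound; combined with the forward inclusion this identifies $\tilde C^{1/\alpha'}$ with the bounded Lipschitz functions.

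The step I expect to be most delicate is precisely this reverse direction in case b): the naive attempt to pass from $\left\vert f(x+h)-f(x)\right\vert\le[f]_{1/\alpha'}w(\left\vert h\right\vert)^{1/\alpha'}$ directly to a Lipschitz bound would require $\liminf_{r\to0}g(r)>0$, which does \emph{not} follow from $\limsup_{r\to0}g(r)\in(0,\infty)$ and the scaling inequality \eqref{scale} alone. The point of first mollifying is that a smooth function has a genuine, sequence-independent gradient, so one is free to evaluate it along the sparse sequence $r_n$ where $g$ attains its $\limsup$; this converts the one-sided $\limsup$ hypothesis into the two-sided control that a direct H\"older-to-Lipschitz comparison would seem to need. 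I would take care to verify that the mollifications are uniformly Lipschitz with an $\varepsilon$-independent constant and that this bound survives the passage $f_\varepsilon\to f$, which is exactly where the uniform continuity of $f$ (again read off from the seminorm) is used.
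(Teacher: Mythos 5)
Your proposal is correct and follows essentially the same route as the paper: the inclusion of Lipschitz functions is the splitting argument of Lemma \ref{es1}(b), and the reverse inclusions in b) and c) are obtained by mollifying and evaluating the difference quotient of $f_{\varepsilon}$ along a sequence $r_n\to 0$ realizing $\limsup_{r\to 0}r^{\alpha'}/w(r)$, exactly as in the paper's proof (which picks $y_n$ with $|y_n|^{\alpha'}/w(|y_n|)\geq c$ in b) and $\geq n$ in c)). Your unified treatment via $M=\limsup g\in(0,\infty]$ and your explicit remark on why the mollification is needed (a $\liminf$ bound on $g$ is not available) are accurate refinements of the same argument, not a different one.
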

\begin{proof}
	a) The proof is identical to part b) of Lemma \ref{es1}.	\\
	\noindent b) Let $w_{\varepsilon }$ be a standard mollifier and $f_{\varepsilon }=f\ast w_{\varepsilon}$. For any $f\in \tilde{C}^{1/\alpha'}$, there is a sequence $y_{n}\rightarrow 0$ so that $\frac{\left\vert
		y_{n}\right\vert ^{\alpha'}}{w \left( \left\vert y_{n}\right\vert
		\right) }\geq c>0$. Then
	\begin{eqnarray*}
		\frac{%
			\left\vert f_{\varepsilon }\left( x\right) -f_{\varepsilon }\left(
			x+y_{n}\right) \right\vert }{w\left( \left\vert y_{n}\right\vert
			\right) ^{1/\alpha'}}&=&\frac{\left\vert f_{\varepsilon }\left( x\right)
			-f_{\varepsilon }\left( x+y_n\right) \right\vert }{\left\vert y_n\right\vert }%
		\left( \frac{\left\vert y_n\right\vert ^{\alpha'}}{w\left(
			\left\vert y_n\right\vert \right) }\right) ^{1/\alpha'} \\
		&\geq& c\frac{\left\vert f_{\varepsilon }\left( x\right) -f_{\varepsilon }\left(
			x+y_{n}\right) \right\vert }{\left\vert y_{n}\right\vert }.
	\end{eqnarray*}%
	Thus, $\left\vert \nabla f_{\varepsilon }\right\vert _{0}\leq C\left\vert
	f\right\vert _{1/\alpha'}$, and thus $\left\vert \nabla f\right\vert _{0}\leq C\left\vert
	f\right\vert _{1/\alpha'}$.
	
	On the other hand, $\frac{\left\vert y\right\vert ^{\alpha'}}{w
		\left( \left\vert y\right\vert \right) }\leq C$ if $\left\vert y\right\vert
	\leq 1$, then   
	\begin{eqnarray*}
		\frac{\left\vert f\left( x\right) -f\left(
			x+y\right) \right\vert }{w\left( \left\vert y\right\vert \right)
			^{1/\alpha'}} &=&\frac{\left\vert f\left( x\right)
			-f\left( x+y\right) \right\vert }{\left\vert y\right\vert }%
		\left( \frac{\left\vert y\right\vert ^{\alpha'}}{w\left(
			\left\vert y\right\vert \right) }\right) ^{1/\alpha'} \\
		&\leq &C\frac{\left\vert f\left( x\right) -f\left( x+y\right) \right\vert }{\left\vert y\right\vert }
	\end{eqnarray*}%
	if $\left\vert y\right\vert \leq 1$, and for $\left\vert y\right\vert> 1$,
	\begin{eqnarray*}
		\frac{\left\vert f\left( x\right) -f\left( x+y\right) \right\vert }{w
			\left( \left\vert y\right\vert \right)^{1/\alpha'}} \leq 2w\left( 1\right)^{-1/\alpha'}l\left( 1\right)^{1/\alpha'}\left\vert f\right\vert_0\leq C\left\vert f\right\vert_0.
	\end{eqnarray*}
	Hence $f\in \tilde{C}^{1/\alpha'}$ if $f$ is a bounded Lipschitz function.\\
	\noindent c) There is a sequence $\{y_{n}:n\in\mathbf{N}\}$ so that $y_{n}\rightarrow 0$ and for any $n\in\mathbf{N}$, $\frac{\left\vert y_k\right\vert ^{\alpha'}}{w
		\left( \left\vert y_k\right\vert \right) }\geq n$ if $k\geq n$. Then for $k\geq n$,
	\begin{equation*}
	n^{\frac{1}{\alpha'}}\frac{\left\vert f_{\varepsilon }\left( x\right) -f_{\varepsilon }\left(
		x+y_k\right) \right\vert }{\left\vert y_k\right\vert }\leq \frac{\left\vert
		f_{\varepsilon }\left( x\right)-f_{\varepsilon }\left( x+y_k\right)
		\right\vert }{w\left( \left\vert y_k\right\vert \right) ^{1/\alpha'}}%
	\leq \left\vert f_{\varepsilon }\right\vert_{1/\alpha'}\leq C\left\vert f\right\vert_{1/\alpha'}.
	\end{equation*}%
	Thus $\nabla f_{\varepsilon }=0$, $\forall x\in 
	\mathbf{R}^{d}$, $\forall\varepsilon \in \left( 0,1\right)$, and thus $f$ is a constant.
\end{proof}

\section{Characterization of Spaces and Norm Equivalence}

Our target spaces of general smoothness are $\tilde{C}^{\beta}$, $\tilde{C}^{\beta}_{\infty,\infty}$, $C^{\mu,\kappa,\beta}$, $\tilde{C}^{\mu,\kappa,\beta}$ endowed with norms $\left\vert \cdot\right\vert_{\beta}$, $\left\vert \cdot\right\vert_{\beta,\infty}$, $\left\vert\cdot\right\vert_{\mu,\kappa,\beta}$, $\left\Vert\cdot\right\Vert_{\mu,\kappa,\beta}$ respectively, and our goal in this section is to establish norm equivalence among them. 

\begin{lemma}\label{equiv}
	Let $\beta\in\left( 0,\infty\right)$. If $u\in \tilde{C}^{\beta}_{\infty,\infty}\left( \mathbf{R}^d\right)$, then $u\in C\left( \mathbf{R}^d\right)$ and $u\left( x\right)=\sum_{j=0}^{\infty}\left( u\ast\varphi_j\right)\left( x\right)$. Besides,
	\begin{equation}\label{sup}
	\left\vert u\right\vert_0\leq \sum_{j=0}^{\infty}\left\vert u\ast\varphi_j\right\vert_0\leq  C\left( \beta\right)\left\vert u\right\vert_{\beta,\infty}.
	\end{equation}
\end{lemma}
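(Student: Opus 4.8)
The plan is to read off the right-hand inequality in \eqref{sup} essentially from the definition of $\left\vert\cdot\right\vert_{\beta,\infty}$ together with the polynomial two-sided bounds for $w$ in Lemma \ref{kl}, and then to upgrade the resulting absolutely and uniformly convergent series into an honest representation of $u$ by a Fourier-multiplier argument in $\mathcal{S}'(\mathbf{R}^d)$. First, since $u\in\tilde{C}^{\beta}_{\infty,\infty}(\mathbf{R}^d)$, each $u\ast\varphi_j$ is a bounded function (its $\left\vert\cdot\right\vert_0$ is one of the terms in the finite supremum defining $\left\vert u\right\vert_{\beta,\infty}$) and it is continuous, because $\varphi_j\in\mathcal{S}(\mathbf{R}^d)$ and the convolution of a tempered distribution with a Schwartz function is smooth. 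By the definition of the norm, $\left\vert u\ast\varphi_j\right\vert_0\leq w(N^{-j})^{\beta}\left\vert u\right\vert_{\beta,\infty}$ for every $j$.

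Next I would control $\sum_j w(N^{-j})^{\beta}$. For $j\geq 0$ we have $N^{-j}\leq 1$, so Lemma \ref{kl}(ii) gives $w(N^{-j})\leq C_0\bigl(N^{-jr_1}\vee N^{-jr_2}\bigr)=C_0 N^{-jr_2}$, the last equality because $r_1\geq r_2>0$ and the base is at most $1$. Since $N>3$ and $r_2\beta>0$, summing the geometric series yields
\[
\sum_{j=0}^{\infty}\left\vert u\ast\varphi_j\right\vert_0\leq \left\vert u\right\vert_{\beta,\infty}\sum_{j=0}^{\infty}w(N^{-j})^{\beta}\leq \frac{C_0^{\beta}}{1-N^{-r_2\beta}}\left\vert u\right\vert_{\beta,\infty}=:C(\beta)\left\vert u\right\vert_{\beta,\infty},
\]
which is the right-hand inequality of \eqref{sup}. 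By the Weierstrass $M$-test the series $\sum_{j\geq 0}u\ast\varphi_j$ then converges absolutely and uniformly, so its sum $v:=\sum_{j\geq 0}u\ast\varphi_j$ is a bounded continuous function satisfying $\left\vert v\right\vert_0\leq\sum_j\left\vert u\ast\varphi_j\right\vert_0$.

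The real content is the identification $v=u$. I would show that the partial sums $S_n:=\sum_{j=0}^{n}u\ast\varphi_j$ converge to $u$ in $\mathcal{S}'(\mathbf{R}^d)$ and then invoke uniqueness of limits there, noting that the uniform limit $v$ is also the $\mathcal{S}'$ limit (a uniformly bounded sequence converging uniformly converges when paired against any Schwartz function). On the Fourier side, $\widehat{S_n}=m_n\,\hat u$ with $m_n(\xi)=\sum_{j=0}^{n}\hat\varphi_j(\xi)=1-\sum_{j>n}\phi(N^{-j}\xi)$, using the partition of unity $\sum_{j\geq 0}\hat\varphi_j\equiv 1$ built into the construction of the $\varphi_j$. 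Because $\phi$ is supported in the annulus $\{N^{-1}\leq\left\vert\xi\right\vert\leq N\}$, the tail $\sum_{j>n}\phi(N^{-j}\xi)$ is supported in $\{\left\vert\xi\right\vert\geq N^{n}\}$, so $m_n\equiv 1$ on the ball of radius $N^{n}$.

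For the $\mathcal{S}'$ convergence I would then verify that $m_n\psi\to\psi$ in $\mathcal{S}(\mathbf{R}^d)$ for every $\psi\in\mathcal{S}$: the difference $(m_n-1)\psi$ is supported in $\{\left\vert\xi\right\vert\geq N^{n}\}$, the multipliers $m_n$ have all derivatives bounded uniformly in $n$ (a standard feature of the $N$-adic decomposition, since $\partial^{\gamma}[\phi(N^{-j}\cdot)]$ carries the factor $N^{-j\left\vert\gamma\right\vert}$ and only $O(1)$ terms overlap at each point), and $\psi$ decays faster than any polynomial; hence every Schwartz seminorm of $(m_n-1)\psi$ tends to $0$. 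Consequently $\langle\hat u,m_n\psi\rangle\to\langle\hat u,\psi\rangle$, i.e. $m_n\hat u\to\hat u$ in $\mathcal{S}'$, and applying $\mathcal{F}^{-1}$ gives $S_n\to u$ in $\mathcal{S}'$. Matching the two limits forces $v=u$, whence $u\in C(\mathbf{R}^d)$, $u=\sum_{j\geq 0}u\ast\varphi_j$ pointwise, and $\left\vert u\right\vert_0=\left\vert v\right\vert_0\leq\sum_j\left\vert u\ast\varphi_j\right\vert_0$, which is the left-hand inequality of \eqref{sup}. The main obstacle is exactly this last convergence: one must confirm that the tail multipliers act as an approximate identity on $\mathcal{S}$, and that is where the support and uniform-derivative properties of the Littlewood--Paley type decomposition are essential; the estimate itself is then routine.
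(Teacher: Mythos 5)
Your proposal is correct and follows essentially the same route as the paper: bound $\left\vert u\ast\varphi_j\right\vert_0\leq w\left(N^{-j}\right)^{\beta}\left\vert u\right\vert_{\beta,\infty}$, sum a geometric series (your use of Lemma \ref{kl}(ii) with $w\left(N^{-j}\right)\leq C_0N^{-jr_2}$ is just the iterated scaling bound $w\left(N^{-j}\right)\leq l\left(N^{-1}\right)^{j}$ that the paper writes directly, since $N^{-r_2}=l\left(N^{-1}\right)$), and identify the uniform limit with $u$ via $\mathcal{S}'$-convergence on the Fourier side. The only difference is that you verify the convergence $\sum_{j=0}^{n}\hat{\varphi}_j\hat{u}\to\hat{u}$ in $\mathcal{S}'$ explicitly through the multiplier estimate $m_n\psi\to\psi$ in $\mathcal{S}$, a detail the paper asserts without proof; this is a welcome tightening rather than a departure.
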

\begin{proof}
	Note that $u\ast\varphi_j\in C\left(\mathbf{R}^d\right), \forall j\in \mathbf{N}$, so is $\sum_{j=0}^{n} u\ast\varphi_j,\forall n\in \mathbf{N}_{+} $. Since 
	\begin{eqnarray*}
		\sum_{j=0}^{\infty}\left\vert u\ast\varphi_j\right\vert_0 &=& \sum_{j=0}^{\infty}w\left( N^{-j}\right)^{\beta}w\left( N^{-j}\right)^{-\beta}\left\vert u\ast\varphi_j\right\vert_0\\
		&\leq& \sup_{j\geq 0}w\left( N^{-j}\right)^{-\beta}\left\vert u\ast\varphi_j\right\vert_0\sum_{j=0}^{\infty}w\left( N^{-j}\right)^{\beta}\\
		&\leq& C\left\vert u\right\vert_{\beta,\infty}\sum_{j=0}^{\infty}l\left( N^{-1}\right)^{j\beta}<\infty,
	\end{eqnarray*}
	we have $\sum_{j=0}^{n} u\ast\varphi_j\rightarrow \sum_{j=0}^{\infty}u\ast\varphi_j$ uniformly in $\mathbf{R}^d$ as $n\to\infty$. Therefore, $\sum_{j=0}^{\infty}u\ast\varphi_j\in C\left(\mathbf{R}^d\right)$, and $\sum_{j=0}^{n} u\ast\varphi_j\xrightarrow{n\to\infty} \sum_{j=0}^{\infty}u\ast\varphi_j$ in the topology of $\mathcal{S}'\left(\mathbf{R}^d\right)$. By continuity of the Fourier transform, 
	\begin{equation*}
\mathcal{F}\left(\sum_{j=0}^{\infty}u\ast\varphi_j\right)=\lim_{n\rightarrow \infty}\mathcal{F}\left(\sum_{j=0}^{n} u\ast\varphi_j\right)= \lim_{n\rightarrow \infty}\sum_{j=0}^{n} \hat{u}\hat{\varphi_j}=\sum_{j=0}^{\infty} \hat{u}\hat{\varphi_j}=\hat{u}.
	\end{equation*}And therefore, $u=\sum_{j=0}^{\infty}u\ast\varphi_j\in C\left(\mathbf{R}^d\right)$.
\end{proof}

\begin{proposition}\label{pr2}
	Let $\beta\in\left( 0,1\right)$ and 
	\begin{eqnarray}\label{asl}
	\int_0^1 l\left( t\right)^{\beta}\frac{dt}{t}+\int_1^{\infty} l\left( t\right)^{\beta}\frac{dt}{t^2}<\infty.
	\end{eqnarray}
	Then norm $\left\vert u\right\vert_{\beta}$ and norm $\left\vert u\right\vert_{\beta,\infty}$ are equivalent. Namely, there is a constant positive $C$ depending only on $d,\beta, N$ such that
	\begin{eqnarray*}
		C^{-1}\left\vert u\right\vert_{\beta}\leq \left\vert u\right\vert_{\beta,\infty}\leq C\left\vert u\right\vert_{\beta}, \forall u\in C\left( \mathbf{R}^d\right).
	\end{eqnarray*}
\end{proposition}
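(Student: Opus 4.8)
The plan is to establish the two-sided bound by estimating the Littlewood--Paley blocks $u\ast\varphi_j$ against the two norms separately. The decomposition $u=\sum_{j\ge0}u\ast\varphi_j$ together with the bound $|u|_0\le C|u|_{\beta,\infty}$ is already available from Lemma \ref{equiv}, so the whole game reduces to comparing the seminorm $[u]_\beta$ with the block sup-norms $w(N^{-j})^{-\beta}|u\ast\varphi_j|_0$.

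For the inequality $|u|_{\beta,\infty}\le C|u|_\beta$ I would exploit that $\varphi_j$ has vanishing integral for $j\ge1$, since $\widehat{\varphi_j}(0)=\phi(0)=0$ as $\phi$ is supported away from the origin. Writing $u\ast\varphi_j(x)=\int[u(x-y)-u(x)]\varphi_j(y)\,dy$ and using $\varphi_j(y)=N^{jd}\check\phi(N^jy)$ together with the scaling inequality $w(N^{-j}|z|)\le l(|z|)w(N^{-j})$ reduces the block bound to
\[
|u\ast\varphi_j|_0\le [u]_\beta\,w(N^{-j})^\beta\int l(|z|)^\beta|\check\phi(z)|\,dz .
\]
The remaining point is the finiteness of $\int l(|z|)^\beta|\check\phi(z)|\,dz$: on $|z|\le1$ this is clear because $l$ is bounded there and $\check\phi\in L^1$, while on $|z|>1$ it follows from the rapid (Schwartz) decay of $\check\phi$ against the at most polynomial growth of $l$ (indeed \eqref{asl} forces $l(t)^\beta=o(t)$). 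The block $j=0$ is handled by the crude estimate $|u\ast\varphi_0|_0\le|u|_0\,\|\varphi_0\|_{L^1}$, and taking the supremum over $j$ finishes this direction.

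The reverse inequality $|u|_\beta\le C|u|_{\beta,\infty}$ is the substantive one. I would fix $x$ and $h$ with $|h|<1$, choose the integer $J\ge1$ with $N^{-J}\le|h|<N^{1-J}$, and split
\[
u(x+h)-u(x)=\sum_{j=0}^{J}\bigl[(u\ast\varphi_j)(x+h)-(u\ast\varphi_j)(x)\bigr]+\sum_{j>J}\bigl[(u\ast\varphi_j)(x+h)-(u\ast\varphi_j)(x)\bigr].
\]
For the low-frequency part I would use a Bernstein-type gradient bound coming from the reproducing identity \eqref{convol}: since $u\ast\varphi_j=(u\ast\varphi_j)\ast\tilde\varphi_j$ and $\tilde\varphi_j(x)=N^{jd}\tilde\varphi(N^jx)$, one gets $|\nabla(u\ast\varphi_j)|_0\le CN^j|u\ast\varphi_j|_0\le CN^j w(N^{-j})^\beta|u|_{\beta,\infty}$, so each low-frequency difference is at most $|h|\,CN^jw(N^{-j})^\beta|u|_{\beta,\infty}$. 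For the high-frequency part I would use the trivial bound $2|u\ast\varphi_j|_0\le2w(N^{-j})^\beta|u|_{\beta,\infty}$. It then remains to sum the two series and recover $w(|h|)^\beta$: using scaling in the form $w(N^{-j})\le l(N^{J-j})w(N^{-J})$ for $j\le J$ and $w(N^{-j})\le l(N^{-(j-J)})w(N^{-J})$ for $j>J$, the low-frequency sum is controlled by $w(N^{-J})^\beta\sum_{k\ge0}N^{-k}l(N^k)^\beta$ and the high-frequency sum by $w(N^{-J})^\beta\sum_{k\ge1}l(N^{-k})^\beta$. Comparing these discrete sums with \eqref{asl} (for nondecreasing $l$, $\sum_{k\ge0}N^{-k}l(N^k)^\beta\lesssim\int_1^\infty l(t)^\beta t^{-2}\,dt$ and $\sum_{k\ge1}l(N^{-k})^\beta\lesssim\int_0^1 l(t)^\beta t^{-1}\,dt$) shows both are finite, and finally $w(N^{-J})\le l(1)w(|h|)$ (again by scaling, since $N^{-J}\le|h|$) converts everything into $Cw(|h|)^\beta|u|_{\beta,\infty}$. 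The range $|h|\ge1$ is disposed of directly, as there $w(|h|)\ge c_0$ by Lemma \ref{kl}(ii) while $|u(x+h)-u(x)|\le2|u|_0$.

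The main obstacle will be this second direction, specifically the bookkeeping that turns the two dyadic sums into the integral condition \eqref{asl} and the matching of $w(N^{-J})$ with $w(|h|)$: the convergence of both series is exactly where the hypothesis \eqref{asl} is consumed, and the split index $J$ must be chosen so that the scaling-factor estimates point in the favorable direction in each of the two sums.
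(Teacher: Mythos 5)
Your proposal is correct and follows essentially the same route as the paper: the forward direction via the vanishing mean of $\varphi_j$ and the scaling inequality for $w$, and the reverse direction via the reproducing identity $\varphi_j=\varphi_j\ast\tilde\varphi_j$, a split of the Littlewood--Paley sum at the scale of $|h|$, and comparison of the two resulting geometric-type sums with the integrals in \eqref{asl}. The only (harmless) cosmetic differences are that the paper phrases the second direction through the modulus of continuity $\varpi(N^{-k},u)$ and the bound $(N^j|y|\wedge 1)$ rather than your explicit Bernstein gradient estimate, and your observation that \eqref{asl} forces $l(t)^\beta=o(t)$ is a welcome justification of the finiteness of $\int l(|y|)^{\beta}|\check\phi(y)|\,dy$ that the paper leaves implicit.
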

\begin{proof}
	Suppose $\left\vert u\right\vert_{\beta}<\infty$. If $j=0$, then 
	\begin{equation*}
	w\left(1\right)^{-\beta}\left\vert u\ast\varphi_0\right\vert_0\leq w\left( 1\right)^{-\beta}\left\vert u\right\vert_0\int\left\vert\varphi_0\left(y\right)\right\vert dy\leq C\left\vert u\right\vert_{\beta}.
	\end{equation*}
	If $j\neq 0$, then by the construction of $\varphi_j$, $\int \varphi_j\left(y\right)dy=\hat{\varphi}_j\left( 0\right)=0$. Therefore,
	\begin{eqnarray*}
		&&w\left( N^{-j}\right)^{-\beta}\left\vert u\ast\varphi_j\right\vert_0\\
		&=& w\left( N^{-j}\right)^{-\beta}\left\vert \int \left[u\left(y\right)-u\left(x\right)\right]\varphi_j\left(x-y\right)dy\right\vert_0\\
		&\leq& w\left( N^{-j}\right)^{-\beta}\left[ u\right]_{\beta}\int w\left(\left\vert y-x\right\vert\right)^{\beta}N^{jd}\left\vert \check{ \phi}\left( N^j\left(x-y\right)\right)\right\vert dy\\
		&=&w\left( N^{-j}\right)^{-\beta}\left[ u\right]_{\beta}\int w\left(N^{-j}\left\vert y\right\vert\right)^{\beta}\left\vert\check{ \phi}\left( y\right)\right\vert dy\\
		&\leq& \left[ u\right]_{\beta}\int l\left(\left\vert y\right\vert\right)^{\beta}\left\vert\check{ \phi}\left( y\right)\right\vert dy\leq C\left\vert u\right\vert_{\beta}.
	\end{eqnarray*}
	That is to say $\left\vert u\right\vert_{\beta,\infty}\leq C\left\vert u\right\vert_{\beta}$ for some constant $C\left( \beta,d\right)>0$.
	
	For the other direction, by Lemma \ref{equiv}, $\left\vert u\right\vert_0\leq C\left\vert u\right\vert_{\beta,\infty}$. Meanwhile, we can write
	\begin{equation*}
	\left[ u\right]_{\beta}=\sup_{x,y}\frac{\left\vert u\left( x+y\right)-u\left( x\right)\right\vert}{w\left(\left\vert y\right\vert\right)^{\beta}}\leq\sup_{t>0}\frac{\sup_{\left\vert y\right\vert\leq t}\left\vert u\left( x+y\right)-u\left( x\right)\right\vert_0}{w\left(t\right)^{\beta}}:=\sup_{t>0}\frac{\varpi\left( t,u\right)}{w\left(t\right)^{\beta}},
	\end{equation*}
	where $\varpi\left( t,u\right):=\sup_{\left\vert y\right\vert\leq t}\left\vert u\left( x+y\right)-u\left( x\right)\right\vert_0$ is increasing in $t$. Then, for $k\geq 0$,
	\begin{equation*}
	\varpi\left( N^{-k-1},u\right)\leq \varpi\left( t,u\right)\leq \varpi\left( N^{-k},u\right) \mbox{ if } N^{-k-1}\leq t< N^{-k},
	\end{equation*}
	and then by monotonicity of $l$, for $N^{-k-1}\leq t< N^{-k}$, 
	\begin{eqnarray*}
		l\left( N \right)^{-\beta}\frac{\varpi\left(  N^{-k-1},u\right)}{w\left( N^{-k-1}\right)^{\beta}} \leq \frac{\varpi\left( t,u\right)}{w\left(t\right)^{\beta}}\leq l\left( N \right)^{\beta}\frac{\varpi\left(  N^{-k},u\right)}{w\left( N^{-k}\right)^{\beta}}  .
	\end{eqnarray*}
	Hence,
	\begin{eqnarray*}
		\left[ u\right]_{\beta} &=& \sup_{t\geq 1}\frac{\sup_{\left\vert y\right\vert\leq t}\left\vert u\left( x+y\right)-u\left( x\right)\right\vert_0}{w\left(t\right)^{\beta}}\vee \sup_{0<t<1}\frac{\varpi\left( t,u\right)}{w\left(t\right)^{\beta}}\\
		&\leq& C\left[\left\vert u\right\vert_0\vee \sup_{k\geq 0}w\left( N^{-k}\right)^{-\beta} \varpi\left( N^{-k},u\right)\right]\\
		&\leq& C\left[\left\vert u\right\vert^{\beta}_{\infty,\infty}\vee \sup_{k\geq 0}w\left( N^{-k}\right)^{-\beta} \varpi\left( N^{-k},u\right)\right].
	\end{eqnarray*}
	
	It suffices to show that $w\left( N^{-k}\right)^{-\beta} \varpi\left( N^{-k},u\right)\leq C\left\vert u\right\vert_{\beta,\infty}$ for any $k\in\mathbf{N}$. Use the convolution functions introduced in section 2.2. Note 
	\begin{eqnarray*}
		&&\left\vert u\ast \varphi _{0}\left( x+y\right) -u\ast \varphi _{0}\left(
		x\right) \right\vert  \\
		&\leq &\int \left\vert \tilde{\varphi}_{0}\left( x+y-z\right) -\tilde{\varphi%
		}_{0}\left( x-z\right) \right\vert \left\vert u\ast \varphi _{0}\left(
		z\right) \right\vert dz \\
		&\leq &C\left( \left\vert y\right\vert \wedge 1\right) \left\vert u\ast
		\varphi _{0}\right\vert _{0},
	\end{eqnarray*}%
	and 
	\begin{eqnarray*}
		&&\left\vert u\ast \varphi _{j}\left( x+y\right) -u\ast \varphi _{j}\left(
		x\right) \right\vert  \\
		&\leq &N^{jd}\int \left\vert \tilde{\varphi}\left( N^{j}\left( x+y-z\right)
		\right) -\tilde{\varphi}\left( N^{j}\left( x-z\right) \right) \right\vert
		\left\vert u\ast \varphi _{j}\left( z\right) \right\vert dz \\
		&\leq &C\left( \left\vert N^{j}y\right\vert \wedge 1\right) \left\vert u\ast
		\varphi _{j}\right\vert _{0},\quad j\geq 1.
	\end{eqnarray*}%
	Therefore by Lemma \ref{equiv}, for each $k\in \mathbf{N}$, 
	\begin{eqnarray*}
		\varpi\left( N^{-k},u\right)  &=&\sup_{\left\vert y\right\vert \leq
			N^{-k}}\left\vert u\left( x+y\right) -u\left( x\right) \right\vert _{0} \\
		&\leq &\sup_{\left\vert y\right\vert \leq N^{-k}}\sum_{j=0}^{\infty
		}\left\vert u\ast \varphi _{j}\left( x+y\right) -u\ast \varphi _{j}\left(
		x\right) \right\vert _{0} \\
		&\leq &C\sup_{\left\vert y\right\vert \leq N^{-k}}\sum_{j=0}^{\infty }\left(
		N^{j}\left\vert y\right\vert \wedge 1\right) \left\vert u\ast \varphi
		_{j}\right\vert _{0},
	\end{eqnarray*}%
	and therefore, 
	\begin{eqnarray*}
		\varpi\left( N^{-k},u\right) &\leq& C\left\vert u\right\vert _{\beta ,\infty }\sup_{\left\vert
			y\right\vert \leq N^{-k}}\sum_{j=0}^{\infty }\left( N^{j}\left\vert
		y\right\vert\wedge 1\right) w \left( N^{-j}\right) ^{\beta }
		\\
		&\leq &C\left\vert u\right\vert _{\beta ,\infty }\left[
		\sum_{j=0}^{k}N^{j-k}w\left( N^{-j}\right) ^{\beta }+\sum_{j=k+1}^{\infty
		}w\left( N^{-j}\right) ^{\beta }\right] .
	\end{eqnarray*}
	
	Clearly, for all $j\in\mathbf{N}$, $j\leq x\leq j+1$,
	\begin{eqnarray*}
		l\left( 1\right)^{-\beta} w\left( N^{-x}\right)^{\beta}&\leq& w\left( N^{-j}\right)^{\beta}\leq l\left( N\right)^{\beta}w\left( N^{-x}\right)^{\beta},\\
		\frac{l\left( 1\right)^{-\beta}}{N} N^x w\left( N^{-x}\right)^{\beta} &\leq& N^j w\left( N^{-j}\right)^{\beta}\leq l\left( N\right)^{\beta}N^x w\left( N^{-x}\right)^{\beta}.
	\end{eqnarray*}
	Then for all $k\in\mathbf{N}$,	
	\begin{eqnarray*}
		C_{1}\int_{0}^{k+1}N^{x}w \left( N^{-x}\right) dx\leq
		\sum_{j=0}^{k}N^{j}w \left( N^{-j}\right) ^{\beta }\leq
		C_{2}\int_{0}^{k+1}N^{x}w \left( N^{-x}\right) ^{\beta }dx
	\end{eqnarray*}%
	for some positive constants $C_1,C_2$ that do not depend on $k,j$. Hence,
	\begin{eqnarray*}
		&&\sum_{j=0}^{k}N^{j-k}w\left( N^{-j}\right) ^{\beta}= N^{-k}\sum_{j=0}^{k}N^{j}w \left( N^{-j}\right) ^{\beta }\\		&\leq& C N^{-k}\int_{0}^{k+1}N^{x}w \left( N^{-x}\right) ^{\beta }dx=CN^{-k}\int_{1}^{N^{k+1}}w \left( t^{-1}\right) ^{\beta}dt\\
		&\leq&CN^{-k}w \left( N^{-k}\right) ^{\beta }\int_{1}^{N^{k+1}}l\left( \frac{N^{k}}{t}\right) ^{\beta }dt\\
		&=& Cw \left( N^{-k}\right) ^{\beta}\int_{N^{-1}}^{N^{k}}l\left( r\right) ^{\beta }r^{-2}dr.
	\end{eqnarray*}%
	Meanwhile,
	\begin{eqnarray*}
		\sum_{j=k+1}^{\infty }w\left( N^{-j}\right) ^{\beta } &\leq
		&C\int_{k+1}^{\infty }w \left( N^{-x}\right) ^{\beta }dx\leq Cw
		\left( N^{-k}\right) ^{\beta }\int_{k+1}^{\infty }l\left( N^{k}N^{-x}\right)
		^{\beta }dx \\
		&=&Cw \left( N^{-k}\right) ^{\beta }\int_{0}^{N^{-1}}l\left( r\right)
		^{\beta }\frac{dr}{r}.
	\end{eqnarray*}%
	
	Therefore, under the assumption $\eqref{asl}$, 
	\begin{eqnarray*}
		&&w\left( N^{-k}\right)^{-\beta} \varpi\left( N^{-k},u\right)\\
		&\leq &Cw\left( N^{-k}\right)^{-\beta}\left\vert u\right\vert _{\beta ,\infty }\left[
		\sum_{j=0}^{k}N^{j-k}w\left( N^{-j}\right) ^{\beta }+\sum_{j=k+1}^{\infty
		}w\left( N^{-j}\right) ^{\beta }\right]\\
		&\leq &C\left\vert u\right\vert _{\beta ,\infty }\left[
		\int_{N^{-1}}^{\infty}l\left( r\right) ^{\beta }r^{-2}dr+\int_{0}^{N^{-1}}l\left( r\right)
		^{\beta }\frac{dr}{r}\right]\leq  C\left\vert u\right\vert_{\beta,\infty}.
	\end{eqnarray*}
	That ends the proof.
\end{proof}

\noindent\textbf{Remark: }When $\mu\left( dy\right)=\frac{dy}{\left\vert y\right\vert^{d+\alpha}}$, one of L\'{e}vy measures that are of the most research interest, or when in case \cite{zh}, $w\left( t\right)=l\left( t\right)=t^{\alpha}$, $\eqref{asl}$ reduces to $\beta<1/\alpha$, which corresponds to the classical equivalence of the H\"{o}lder-Zygmund norm and the Besov norm.

The next lemma is fundamental to this paper.
\begin{lemma}\label{Lop}
	Let $\nu$ be a L\'{e}vy measure satisfying (iii) in \textbf{A(w,l)}. For any function $\varphi\in  C^{\infty}_b\left(\mathbf{R}^d\right)$,
	\begin{eqnarray*}
	L^{\tilde{\nu}_R}\varphi\left( x\right):=\int\left[ \varphi\left( x+y\right)-\varphi\left( x\right)-\chi_{\alpha}\left( y\right)y\cdot \nabla \varphi\left( x\right)\right]\tilde{\nu}_R\left(d y\right), R>0.
	\end{eqnarray*}
Then,
	\begin{equation}\label{fubi}
	\int\left\vert \varphi\left( x+y\right)-\varphi\left(  x\right)-\chi_{\alpha}\left( y\right)y\cdot \nabla \varphi\left( x\right)\right\vert\tilde{\nu}_R\left(dy\right)<C\left( \alpha,d,\varphi,\alpha_1,\alpha_2\right).
	\end{equation}
	Moreover, $L^{\tilde{\nu}_R}\varphi\in C^{\infty}_b\left(\mathbf{R}^d\right)$ and $D^{\gamma}L^{\tilde{\nu}_R}\varphi=L^{\tilde{\nu}_R} D^{\gamma}\varphi$, where $\gamma\in\mathbf{N}^d$ is a multi-index. If $\varphi\left(x\right)\in\mathcal{S}\left(\mathbf{R}^d\right)$, then $L^{\tilde{\nu}_R}\varphi\left( x\right)\in L^1\left(\mathbf{R}^d\right)$ and $\left\vert L^{\tilde{\nu}_R}\varphi\right\vert_{L^1\left(\mathbf{R}^d\right)}\leq C$ for some positive $C$ that is uniform with respect to $R$.
\end{lemma}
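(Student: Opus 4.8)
The plan is to establish \eqref{fubi} first, since the other assertions follow from it. I would split the integral over $\{|y|\le 1\}$ and $\{|y|>1\}$ and dominate the integrand by the exact weight $|y|^{\alpha_1}1_{|y|\le 1}+|y|^{\alpha_2}1_{|y|>1}$ appearing in \textbf{A(w,l)}(iii), whose $\tilde\nu_R$-integral is at most $N_0$ uniformly in $R$. Because the presence of the first-order compensator is governed by $\chi_\alpha$, which depends on the regime of $\alpha$, the matching of exponents is done by cases. On $\{|y|\le 1\}$, whenever $\chi_\alpha(y)=1$ (all such $y$ when $\alpha\in(1,2)$, and precisely the case $\alpha=1$ where $\chi_\alpha=1_{|y|\le1}$) a second-order Taylor expansion gives $|\varphi(x+y)-\varphi(x)-y\cdot\nabla\varphi(x)|\le C|y|^2\le C|y|^{\alpha_1}$, using $\alpha_1\le 2$ and $|y|\le 1$; when $\chi_\alpha\equiv0$ (i.e. $\alpha\in(0,1)$, whence $\alpha_1<1$) a first-order expansion gives $|\varphi(x+y)-\varphi(x)|\le C|y|\le C|y|^{\alpha_1}$. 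On $\{|y|>1\}$, where $\chi_\alpha=1$ only for $\alpha\in(1,2)$, I bound the integrand by $2|\varphi|_0+|y||\nabla\varphi|_0\le C|y|^{\alpha_2}$ using $\alpha_2>1$, and by $2|\varphi|_0\le C|y|^{\alpha_2}$ (using $\alpha_2\ge0$) otherwise. The constant $C$ involves only $\alpha$, $d$, and finitely many sup-norms of $\varphi$, so integrating against $\tilde\nu_R$ and invoking (iii) proves \eqref{fubi} with a bound uniform in $x$ (through the sup-norms) and in $R$ (through $N_0$).

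For the smoothness statement I would differentiate under the integral sign. Forming the $x$-difference quotient of $L^{\tilde\nu_R}\varphi$ in a coordinate direction and applying the mean value theorem in $x$ rewrites it as the integrand of $L^{\tilde\nu_R}(\partial_i\varphi)$ evaluated at an intermediate point. Since $\partial_i\varphi\in C^\infty_b(\mathbf{R}^d)$, the estimate of the previous paragraph bounds this quantity, uniformly in the intermediate point, by the $\tilde\nu_R$-integrable weight $|y|^{\alpha_1}1_{|y|\le1}+|y|^{\alpha_2}1_{|y|>1}$; dominated convergence then yields $\partial_i L^{\tilde\nu_R}\varphi=L^{\tilde\nu_R}(\partial_i\varphi)$. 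Iterating gives $D^\gamma L^{\tilde\nu_R}\varphi=L^{\tilde\nu_R}D^\gamma\varphi$ for every multi-index $\gamma$, boundedness of each derivative follows from \eqref{fubi} applied to $D^\gamma\varphi$, and continuity follows from one further dominated-convergence argument as $x$ varies, so $L^{\tilde\nu_R}\varphi\in C_b^\infty(\mathbf{R}^d)$.

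For $\varphi\in\mathcal S(\mathbf{R}^d)$ I would estimate the $L^1$ norm by Tonelli's theorem: interchanging the order of integration gives
\[
\left\vert L^{\tilde\nu_R}\varphi\right\vert_{L^1}\le\int I(y)\,\tilde\nu_R(dy),\qquad I(y):=\int\left\vert\varphi(x+y)-\varphi(x)-\chi_\alpha(y)y\cdot\nabla\varphi(x)\right\vert dx.
\]
Applying Taylor's formula with integral remainder and the translation invariance of Lebesgue measure, one finds $I(y)\le C|y|^2\left\vert D^2\varphi\right\vert_{L^1}$ or $C|y|\left\vert\nabla\varphi\right\vert_{L^1}$ on $\{|y|\le1\}$, and $I(y)\le C(1+|y|)$ or $I(y)\le 2\left\vert\varphi\right\vert_{L^1}$ on $\{|y|>1\}$, the alternative in each case being dictated by the value of $\chi_\alpha$; every $L^1$ norm that enters is finite because $\varphi$ is Schwartz. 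The same exponent matching as before dominates $I(y)$ by $C\bigl(|y|^{\alpha_1}1_{|y|\le1}+|y|^{\alpha_2}1_{|y|>1}\bigr)$, so \textbf{A(w,l)}(iii) gives $\left\vert L^{\tilde\nu_R}\varphi\right\vert_{L^1}\le CN_0$ with $C$ independent of $R$.

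The proof is largely disciplined bookkeeping across the three regimes of $\alpha$, with the case $\alpha=1$ demanding care because there $\chi_\alpha=1_{|y|\le1}$ splits the roles of the small- and large-jump parts. The one genuinely substantive point is that the compensated increment must be estimated as a whole through Taylor's theorem rather than term by term: bounding the drift $\int_{|y|\le1}|y|\,\tilde\nu_R(dy)$ on its own would fail when $\alpha_1\ge1$, so the cancellation built into $\varphi(x+y)-\varphi(x)-\chi_\alpha(y)y\cdot\nabla\varphi(x)$ is essential to the uniform bounds.
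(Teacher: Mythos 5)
Your proposal is correct and follows essentially the same route as the paper: Taylor expansion of the compensated increment split over $\{|y|\le 1\}$ and $\{|y|>1\}$, domination by the weight $|y|^{\alpha_1}1_{|y|\le 1}+|y|^{\alpha_2}1_{|y|>1}$ from \textbf{A(w,l)}(iii) with the same case analysis in $\alpha$, differentiation under the integral via dominated convergence and induction, and Tonelli plus translation invariance for the $L^1$ bound. No gaps.
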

\begin{proof}
	Obviously,
	\begin{eqnarray}\label{r1}
		&&\int\left\vert \varphi\left( x+y\right)-\varphi\left(  x\right)-\chi_{\alpha}\left( y\right)y\cdot \nabla \varphi\left( x\right)\right\vert\tilde{\nu}_R\left(dy\right)\\
	&\leq& 1_{\alpha\in\left(0,1\right)}\int_{\left\vert y\right\vert\leq 1}\int_0^1\left\vert \nabla\varphi\left(x+\theta y\right)\right\vert \left\vert y\right\vert d\theta\tilde{\nu}_R\left(dy\right)\nonumber\\
	&&+ 1_{\alpha\in\left[1,2\right)}\int_{\left\vert y\right\vert\leq 1}\int_0^1\int_0^1\left\vert \nabla^2\varphi\left(x+\theta_1\theta_2 y\right)\right\vert \left\vert y\right\vert^2 d\theta_1 d\theta_2\tilde{\nu}_R\left(dy\right)\nonumber\\
	&&+ \int_{\left\vert y\right\vert> 1}\left(\left\vert \varphi\left(x+y\right)\right\vert+\left\vert\varphi\left( x\right)\right\vert+\chi_{\alpha}\left( y\right)\left\vert y\right\vert\left\vert \nabla \varphi\left( x\right)\right\vert\right) \tilde{\nu}_R\left(dy\right)\nonumber.
	\end{eqnarray}
	
	If $\varphi\left(x\right)\in C^{\infty}_b\left(\mathbf{R}^d\right)$, by (iii) in \textbf{A(w,l)},
	\begin{eqnarray}
	&&\int\left\vert \varphi\left( x+y\right)-\varphi\left(  x\right)-\chi_{\alpha}\left( y\right)y\cdot \nabla \varphi\left( x\right)\right\vert\tilde{\nu}_R\left(dy\right)\nonumber\\
	&\leq& 1_{\alpha\in\left(0,1\right)}\int_{\left\vert y\right\vert\leq 1}\left\vert \nabla\varphi\right\vert_0\left\vert y\right\vert \tilde{\nu}_R\left(dy\right)+1_{\alpha\in\left[1,2\right)}\int_{\left\vert y\right\vert\leq 1}\left\vert \nabla^2\varphi\right\vert_0 \left\vert y\right\vert^2 \tilde{\nu}_R\left(dy\right)\nonumber\\
	&& +\int_{\left\vert y\right\vert> 1}\left(2\left\vert \varphi\right\vert_0+\chi_{\alpha}\left( y\right)\left\vert y\right\vert\left\vert \nabla \varphi\right\vert_0\right) \tilde{\nu}_R\left(dy\right)\nonumber\\
\quad	&<&C\left( \int_{\left\vert y\right\vert\leq 1}\left\vert y\right\vert^{\alpha_1}\tilde{\nu}_R\left(dy\right)+\int_{\left\vert y\right\vert> 1}\left\vert y\right\vert^{\alpha_2}\tilde{\nu}_R\left(dy\right)\right)<C.\label{r2}
	\end{eqnarray}
	
	Since $\partial_i \varphi\in C_b^{\infty}\left(\mathbf{R}^d\right), i=1,2,\ldots,d$, the same steps can be applied to $\partial_i \varphi$. Then $\eqref{r2}$ indicates that $L^{\tilde{\nu}_R} \partial_i  \varphi\in C_b\left(\mathbf{R}^d\right)$ and $\partial_i L^{\tilde{\nu}_R}\varphi=L^{\tilde{\nu}_R} \partial_i\varphi$ by the dominated convergence theorem. Then, $L^{\tilde{\nu}_R}\varphi\in C^{\infty}_b\left(\mathbf{R}^d\right)$ and $D^{\gamma}L^{\tilde{\nu}_R}\varphi=L^{\tilde{\nu}_R} D^{\gamma}\varphi, \gamma\in \mathbf{N}^d$ is a consequence of induction.
	
	If $\varphi\left(x\right)\in \mathcal{S}\left(\mathbf{R}^d\right)$, then by $\eqref{r1}$,
	\begin{eqnarray*}
		\left\vert L^{\tilde{\nu}_R}\varphi\right\vert_{L^1\left(\mathbf{R}^d\right)}	&\leq&\int\int\left\vert \varphi\left( x+y\right)-\varphi\left(  x\right)-\chi_{\alpha}\left( y\right)y\cdot \nabla \varphi\left( x\right)\right\vert\tilde{\nu}_R\left(dy\right)dx\\
		&\leq& 1_{\alpha\in\left(0,1\right)}\int_{\left\vert y\right\vert\leq 1}\int_0^1\int\left\vert \nabla\varphi\left(x\right)\right\vert dx\left\vert y\right\vert d\theta\tilde{\nu}_R\left(dy\right)\\
		&&+ 1_{\alpha\in\left[1,2\right)}\int_{\left\vert y\right\vert\leq 1}\int_0^1\int_0^1\int\left\vert \nabla^2\varphi\left(x\right)\right\vert dx \left\vert y\right\vert^2 d\theta_1 d\theta_2\tilde{\nu}_R\left(dy\right)\\
		&&+\int_{\left\vert y\right\vert> 1}\int\left(2\left\vert\varphi\left( x\right)\right\vert+\chi_{\alpha}\left( y\right)\left\vert y\right\vert\left\vert \nabla \varphi\left( x\right)\right\vert\right) dx\tilde{\nu}_R\left(dy\right),
	\end{eqnarray*}
	again by (iii) in \textbf{A(w,l)},
	\begin{eqnarray*}
		\left\vert L^{\tilde{\nu}_R}\varphi\right\vert_{L^1\left(\mathbf{R}^d\right)}&\leq& C\left( \int_{\left\vert y\right\vert\leq 1}\left\vert y\right\vert^{\alpha_1}\tilde{\nu}_R\left(dy\right)+\int_{\left\vert y\right\vert> 1}\left\vert y\right\vert^{\alpha_2}\tilde{\nu}_R\left(dy\right)\right)\\
		&\leq&C\left( \alpha,d,\varphi,\alpha_1,\alpha_2\right).
	\end{eqnarray*}
\end{proof}

The lemma below is about integrability of $L^{\nu,\kappa}\varphi, \kappa\in\left( 0,1\right)$ and its probabilistic representation which we shall use repeatedly. 

\begin{lemma}\label{rep}
Let $\nu$ be a L\'{e}vy measure satisfying (iii) in \textbf{A(w,l)} and $\kappa\in\left( 0,1\right)$. $L^{\tilde{\nu}_R,\kappa},R>0$ is the associated operator defined as $\eqref{opp}$. Then for any $\varphi\left(x\right)\in C^{\infty}_b\left(\mathbf{R}^d\right)$, 
	\begin{eqnarray}
	L^{\tilde{\nu}_R,\kappa}\varphi\left( x\right)=C\int_0^{\infty}t^{-1-\kappa}\mathbf{E}\left[\varphi\left( x+Z_t^{\overline{\tilde{\nu}_R}}\right)-\varphi\left( x\right)\right]dt,R>0,\label{kap}
	\end{eqnarray}
	where $C^{-1}=\int_0^{\infty}t^{-\kappa-1}\left(1- e^{-t}\right)dt$ and 
	\begin{eqnarray*}
		\overline{\tilde{\nu}_R}\left( dy\right)=\frac{1}{2}\left( \tilde{\nu}_R\left( dy\right)+\tilde{\nu}_R\left(- dy\right)\right),R>0.
	\end{eqnarray*}
Besides, $L^{\tilde{\nu}_R,\kappa}\varphi\in C^{\infty}_b\left(\mathbf{R}^d\right)$. And $\left\vert L^{\tilde{\nu}_R,\kappa}\varphi\right\vert_{L^1\left( \mathbf{R}^d\right)}<C'$
for some $C'>0$ independent of $R$ if $\varphi\left(x\right)\in \mathcal{S}\left(\mathbf{R}^d\right)$.
\end{lemma}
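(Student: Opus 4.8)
The plan is to reduce the entire statement to two classical ingredients---Balakrishnan's subordination formula and the L\'{e}vy--Khinchine description of the semigroup generated by $Z_t^{\overline{\tilde{\nu}_R}}$---and then to control every quantity through the uniform bounds already supplied by Lemma \ref{Lop}. First I would record the symmetrization identity: since $\cos$ is even and $\overline{\tilde{\nu}_R}$ is symmetric (so the $\sin$ and the $\chi_{\alpha}$-drift contributions to its symbol vanish),
\[
-\Re\psi^{\tilde{\nu}_R}(\xi)=\int\left[1-\cos\left(2\pi\xi\cdot y\right)\right]\tilde{\nu}_R(dy)=-\psi^{\overline{\tilde{\nu}_R}}(\xi)=:\eta_R(\xi)\geq 0,
\]
so that $\psi^{\tilde{\nu}_R,\kappa}=-\eta_R^{\kappa}$. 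Balakrishnan's formula gives, for every $a\geq 0$, $a^{\kappa}=C\int_0^{\infty}t^{-1-\kappa}(1-e^{-ta})\,dt$ with $C^{-1}=\int_0^{\infty}t^{-\kappa-1}(1-e^{-t})\,dt$, the latter being finite precisely because $\kappa\in(0,1)$. Applying this with $a=\eta_R(\xi)$ yields $\psi^{\tilde{\nu}_R,\kappa}(\xi)=C\int_0^{\infty}t^{-1-\kappa}(e^{-t\eta_R(\xi)}-1)\,dt$. Finally, the L\'{e}vy--Khinchine formula for the symmetric process gives $\mathbf{E}[e^{i2\pi\xi\cdot Z_t^{\overline{\tilde{\nu}_R}}}]=e^{t\psi^{\overline{\tilde{\nu}_R}}(\xi)}=e^{-t\eta_R(\xi)}$, whence $\mathcal{F}[\mathbf{E}\varphi(\cdot+Z_t^{\overline{\tilde{\nu}_R}})](\xi)=e^{-t\eta_R(\xi)}\hat{\varphi}(\xi)$.

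With these three displays in hand, for $\varphi\in\mathcal{S}(\mathbf{R}^d)$ I would take the Fourier transform of the right-hand side of \eqref{kap}, interchange the operation $\int_0^{\infty}t^{-1-\kappa}(\cdots)\,dt$ with $\mathcal{F}$, read off $\psi^{\tilde{\nu}_R,\kappa}(\xi)\hat{\varphi}(\xi)$, and invert $\mathcal{F}$ to recover $L^{\tilde{\nu}_R,\kappa}\varphi$; this is exactly \eqref{kap}. The interchange is the single technical point, and it is legitimized by the same bound that proves convergence of the integral. Splitting at $t=1$: for $t\geq 1$ one has $\left\vert\mathbf{E}[\varphi(x+Z_t^{\overline{\tilde{\nu}_R}})-\varphi(x)]\right\vert\leq 2\left\vert\varphi\right\vert_0$ with $\int_1^{\infty}t^{-1-\kappa}\,dt<\infty$; for $t\leq 1$, Dynkin's formula gives $\mathbf{E}[\varphi(x+Z_t^{\overline{\tilde{\nu}_R}})-\varphi(x)]=\int_0^t\mathbf{E}[L^{\overline{\tilde{\nu}_R}}\varphi(x+Z_s^{\overline{\tilde{\nu}_R}})]\,ds$, so that $\left\vert\mathbf{E}[\cdots]\right\vert\leq t\left\vert L^{\overline{\tilde{\nu}_R}}\varphi\right\vert_0\leq Ct$ with $\int_0^1 t^{-\kappa}\,dt<\infty$ since $\kappa<1$. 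Here I would note that $\overline{\tilde{\nu}_R}$ inherits condition (iii) of \textbf{A(w,l)} from $\tilde{\nu}_R$ because $\left\vert y\right\vert=\left\vert-y\right\vert$, so Lemma \ref{Lop} applies to $L^{\overline{\tilde{\nu}_R}}$ and furnishes $\left\vert L^{\overline{\tilde{\nu}_R}}\varphi\right\vert_0\leq C$ uniformly in $R$.

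For the regularity assertion I would exploit that the law of $Z_t^{\overline{\tilde{\nu}_R}}$ is independent of $x$: differentiating \eqref{kap} under both $\mathbf{E}$ and $\int_0^{\infty}\,dt$ (dominated convergence, using the split-at-$t=1$ estimate now applied to $D^{\gamma}\varphi\in C_b^{\infty}$) yields $D^{\gamma}L^{\tilde{\nu}_R,\kappa}\varphi=L^{\tilde{\nu}_R,\kappa}D^{\gamma}\varphi$, each term bounded, so $L^{\tilde{\nu}_R,\kappa}\varphi\in C_b^{\infty}(\mathbf{R}^d)$; since the right-hand side of \eqref{kap} is meaningful for every $\varphi\in C_b^{\infty}$, it supplies the announced extension. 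For the $L^1$ estimate when $\varphi\in\mathcal{S}$, I would integrate \eqref{kap} in $x$ and bound $\left\Vert\mathbf{E}[\varphi(\cdot+Z_t^{\overline{\tilde{\nu}_R}})-\varphi]\right\Vert_{L^1}$ by $2\left\vert\varphi\right\vert_{L^1}$ for $t\geq 1$ (Fubini and translation invariance) and by $t\left\vert L^{\overline{\tilde{\nu}_R}}\varphi\right\vert_{L^1}\leq Ct$ for $t\leq 1$ (Dynkin, Fubini, translation invariance), the constant $C$ being uniform in $R$ thanks to the uniform $L^1$ bound in Lemma \ref{Lop}; integrating the two contributions against $t^{-1-\kappa}$ gives a finite constant independent of $R$.

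The main obstacle is therefore not any isolated hard estimate but the bookkeeping that keeps every constant uniform in $R$: convergence of the subordination integral, the Fubini interchange, and the final $L^1$ bound all rest on the $R$-uniform control of $L^{\overline{\tilde{\nu}_R}}$, which is precisely what the uniformity built into Lemma \ref{Lop} is designed to deliver. A secondary point worth stating explicitly is the justification that $L^{\overline{\tilde{\nu}_R}}$ is indeed the generator entering Dynkin's formula, i.e. that the symmetrized measure produces the operator \eqref{op} with $\nu=\overline{\tilde{\nu}_R}$; this follows from the standard L\'{e}vy generator theory together with the inheritance of (iii) noted above.
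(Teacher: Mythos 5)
Your overall route coincides with the paper's: the Balakrishnan identity $a^{\kappa}=C\int_0^{\infty}t^{-1-\kappa}\left(1-e^{-ta}\right)dt$ applied with $a=-\Re\psi^{\tilde{\nu}_R}\left(\xi\right)$, the observation $\Re\psi^{\tilde{\nu}_R}=\psi^{\overline{\tilde{\nu}_R}}$ combined with the L\'{e}vy--Khinchine formula, and the split at $t=1$ (direct bound for $t\geq 1$; Dynkin/It\^{o} plus the $R$-uniform bounds of Lemma \ref{Lop}, applied to $\overline{\tilde{\nu}_R}=\widetilde{\bar{\nu}}_R$ since $\bar{\nu}$ inherits (iii), for $t\leq 1$) to justify the Fubini interchange and to obtain the $R$-uniform $L^1$ estimate. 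All of that is correct and is exactly the paper's argument for $\varphi\in\mathcal{S}\left(\mathbf{R}^d\right)$; the same goes for your derivation of $D^{\gamma}L^{\tilde{\nu}_R,\kappa}\varphi=L^{\tilde{\nu}_R,\kappa}D^{\gamma}\varphi$ by dominated convergence.

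The one place you are too quick is the passage from $\varphi\in\mathcal{S}\left(\mathbf{R}^d\right)$ to $\varphi\in C_b^{\infty}\left(\mathbf{R}^d\right)$. The lemma asserts \eqref{kap} for the operator \emph{defined by the Fourier multiplier} \eqref{opp}; for $\varphi\in C_b^{\infty}$ the transform $\mathcal{F}\varphi$ is only a tempered distribution, and the product $\left(-\Re\psi^{\tilde{\nu}_R}\right)^{\kappa}\mathcal{F}\varphi$ is not a priori a well-defined element of $\mathcal{S}'\left(\mathbf{R}^d\right)$, because for $\kappa\in\left(0,1\right)$ the multiplier is generally not a smooth function near the zero of $\Re\psi^{\tilde{\nu}_R}$ at $\xi=0$. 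Saying that the right-hand side of \eqref{kap} is meaningful for every $\varphi\in C_b^{\infty}$ and therefore ``supplies the announced extension'' replaces the statement to be proved by a definition. The paper closes this step by taking cutoffs $\zeta_n\in C_0^{\infty}\left(\mathbf{R}^d\right)$ with $\zeta_n=1$ on $\left\{x:\left\vert x\right\vert\leq n\right\}$, applying the Schwartz-class identity to $\varphi\zeta_n$, moving the expectation onto the test function by translation invariance, passing to the limit in the topology of $\mathcal{S}'\left(\mathbf{R}^d\right)$ (dominated by the same $t$-split bound), and invoking continuity of $\mathcal{F}$ on $\mathcal{S}'$ to conclude simultaneously that \eqref{opp} is well defined on $C_b^{\infty}\left(\mathbf{R}^d\right)$ and that it agrees with the probabilistic integral. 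You need this (or an equivalent) limiting argument; with it supplied, the rest of your proof stands.
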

\begin{proof}
Clearly, for all $R>0,\xi\in\mathbf{R}^d$, $\Re\psi^{\tilde{\nu}_R}\left( \xi\right)\leq 0$. Then for any $\kappa\in\left( 0,1\right)$, 
	\begin{eqnarray*}
\int_0^{\infty}t^{-\kappa-1}\left( 1-\exp\{\Re\psi^{\tilde{\nu}_R}\left( \xi\right)t\}\right)dt=\left( -\Re\psi^{\tilde{\nu}_R}\left( \xi\right)\right)^{\kappa}\int_0^{\infty}t^{-\kappa-1}\left(1- e^{-t}\right)dt.
	\end{eqnarray*}
Thus,
	\begin{eqnarray*}
		L^{\tilde{\nu}_R,\kappa}\varphi\left( x\right)
		&=& C\mathcal{F}^{-1}\left[ \int_0^{\infty}t^{-\kappa-1}\left(\exp\{\Re\psi^{\tilde{\nu}_R}\left( \xi\right)t\}-1\right)\mathcal{F}\varphi dt \right]\left( x\right),
	\end{eqnarray*}
where $C^{-1}=\int_0^{\infty}t^{-\kappa-1}\left(1- e^{-t}\right)dt$. Since $\Re\psi^{\tilde{\nu}_R}\left( \xi\right)=\psi^{\overline{\tilde{\nu}_R}}\left( \xi\right)$, by the L\'{e}vy-Khintchine formula,
\begin{eqnarray*}
	L^{\tilde{\nu}_R,\kappa}\varphi\left( x\right)
		&=& C\mathcal{F}^{-1}\left[ \int_0^{\infty}t^{-\kappa-1}\left(\exp\{\psi^{\overline{\tilde{\nu}_R}}\left( \xi\right)t\}-1\right)\mathcal{F}\varphi dt \right]\left( x\right)\\
		&=& C\mathcal{F}^{-1}\left[ \int_0^{\infty}t^{-\kappa-1}\mathbf{E}\mathcal{F}\left(\varphi\left( x+Z^{\overline{\tilde{\nu}_R}}_t\right)-\varphi\left(x\right)\right) dt \right]\left( x\right)\\
&=&C\mathcal{F}^{-1}\left[ \int_0^{\infty}t^{-\kappa-1}\mathcal{F}\mathbf{E}\left(\varphi\left( x+Z^{\overline{\tilde{\nu}_R}}_t\right)-\varphi\left(x\right)\right) dt \right]\left( x\right)
\end{eqnarray*}
if $\varphi\left(x\right)\in \mathcal{S}\left(\mathbf{R}^d\right)$. Note 
	\begin{eqnarray*}
		&& \int_0^{\infty}t^{-\kappa-1}\left\vert\mathbf{E}\left[\varphi\left( x+Z^{\overline{\tilde{\nu}_R}}_t\right)-\varphi\left(x\right)\right]\right\vert dt \\
		&\leq& 	\int_0^{1}t^{-\kappa-1}\int_0^t\left\vert L^{\overline{\tilde{\nu}_R}}\varphi\left(x+Z^{\overline{\tilde{\nu}_R}}_{r-}\right)\right\vert dr dt+\int_1^{\infty}t^{-\kappa-1}\mathbf{E}\left\vert\varphi\left( x+Z^{\overline{\tilde{\nu}_R}}_t\right)-\varphi\left(x\right)\right\vert dt,
	\end{eqnarray*}
	and note $\overline{\tilde{\nu}_R}=\widetilde{\bar{\nu}}_R$. If $\nu$ satisfies (iii) in \textbf{A(w,l)}, so does $\bar{\nu}$. Then by Lemma \ref{Lop},
		\begin{eqnarray}
		 &&\int_0^{\infty}t^{-\kappa-1}\int\left\vert\mathbf{E}\left[\varphi\left( x+Z^{\overline{\tilde{\nu}_R}}_t\right)-\varphi\left(x\right)\right]\right\vert dxdt \nonumber\\
		 &\leq& \int_0^{1}t^{-\kappa-1}\int_0^t\int\left\vert L^{\overline{\tilde{\nu}_R}}\varphi\left(x\right)\right\vert dx dr dt+2\int_1^{\infty}t^{-\kappa-1}\int\left\vert\varphi\left(x\right)\right\vert dx dt\nonumber\\
		 &\leq& C'\label{int}
	\end{eqnarray}
	 for some $C'>0$ independent of $R$. Thus Fubini's theorem applies, and thus
	\begin{eqnarray*}
		L^{\tilde{\nu}_R,\kappa}\varphi\left( x\right)
		=C \int_0^{\infty}t^{-\kappa-1}\mathbf{E}\left[\varphi\left( x+Z^{\overline{\tilde{\nu}_R}}_t\right)-\varphi\left(x\right)\right]dt.
	\end{eqnarray*}
	
	$L^1$ integrability of $L^{\tilde{\nu}_R,\kappa}\varphi$ has been shown in $\eqref{int}$.
	
	For $\varphi\in C^{\infty}_b\left(\mathbf{R}^d\right)$, we introduce $\{\zeta_n:n\in\mathbf{N}\}\subseteq C_0^{\infty}\left( \mathbf{R}^d\right)$ such that $0\leq \zeta_n\left( x\right)\leq 1,\forall n\in\mathbf{N},\forall x\in\mathbf{R}^d$ and $\zeta_n\left( x\right)= 1,\forall x\in\{x\in\mathbf{R}^d: \left\vert x\right\vert\leq n\}$. Then $\varphi\zeta_n\xrightarrow{n\to\infty} \varphi$ pointwise, which by the dominated convergence theorem implies that $\varphi\zeta_n\xrightarrow{n\to\infty} \varphi$ in the weak topology on $\mathcal{S}'\left(\mathbf{R}^d\right)$. Clearly, $\eqref{kap}$ holds for $\varphi\zeta_n$. Hence, 
	\begin{eqnarray*}
		&&<L^{\tilde{\nu}_R,\kappa}\varphi\zeta_n,\eta>\\
		&=&C\int \int_0^{\infty}t^{-\kappa-1}\mathbf{E}\left[\varphi\zeta_n\left( x+Z^{\overline{\tilde{\nu}_R}}_t\right)-\varphi\zeta_n\left(x\right)\right]dt\eta\left( x\right)dx\\
		&=&C\int \int_0^{\infty}t^{-\kappa-1}\mathbf{E}\left[\eta\left( x-Z^{\overline{\tilde{\nu}_R}}_t\right)-\eta\left( x\right)\right]dt\varphi\zeta_n\left(x\right)dx,\forall \eta\in \mathcal{S}\left( \mathbf{R}^d\right).
	\end{eqnarray*}
	Let $n\rightarrow \infty$.
	\begin{eqnarray*}
		&&\lim_{n\rightarrow \infty}<L^{\tilde{\nu}_R,\kappa}\varphi\zeta_n,\eta>\\
		&=&C\int \int_0^{\infty}t^{-\kappa-1}\mathbf{E}\left[\eta\left( x-Z^{\overline{\tilde{\nu}_R}}_t\right)-\eta\left( x\right)\right]dt\varphi\left(x\right)dx\\
		&=&C\int \int_0^{\infty}t^{-\kappa-1}\mathbf{E}\left[\varphi\left( x+Z^{\overline{\tilde{\nu}_R}}_t\right)-\varphi\left(x\right)\right]dt\eta\left( x\right)dx,\forall\eta\in \mathcal{S}\left( \mathbf{R}^d\right).
	\end{eqnarray*}
	This is to say $L^{\tilde{\nu}_R,\kappa}\varphi\zeta_n\xrightarrow{n\to\infty} \int_0^{\infty}t^{-\kappa-1}\mathbf{E}\left[\varphi\left( x+Z^{\overline{\tilde{\nu}_R}}_t\right)-\varphi\left(x\right)\right]dt$ in the topology of $\mathcal{S}'\left(\mathbf{R}^d\right)$. By continuity of the Fourier transform, 
	\begin{eqnarray*}
	&&C\mathcal{F}\int_0^{\infty}t^{-\kappa-1}\mathbf{E}\left[\varphi\left( x+Z^{\overline{\tilde{\nu}_R}}_t\right)-\varphi\left(x\right)\right]dt\\
	&=& \lim_{n\to\infty}\mathcal{F}\left[ L^{\tilde{\nu}_R,\kappa}\varphi\zeta_n\right]=-\left(-\Re\psi^{\tilde{\nu}_R}\right)^{\kappa}\lim_{n\to\infty}\mathcal{F}\left[ \varphi\zeta_n\right]\\
	&=&-\left(-\Re\psi^{\tilde{\nu}_R}\right)^{\kappa}\mathcal{F}\varphi.
	\end{eqnarray*}
	Therefore, $\eqref{opp}$ is well-defined for all functions in $C^{\infty}_b\left(\mathbf{R}^d\right)$ and $\eqref{kap}$ applies. That $L^{\tilde{\nu}_R,\kappa}\varphi\in C^{\infty}_b\left(\mathbf{R}^d\right)$ is a result of dominated convergence theorem and induction.
\end{proof}

\noindent\textbf{Remark:}  Lemma \ref{rep} claims $L^{\tilde{\nu}_R,\kappa},\forall R>0$ is a closed operation in $C^{\infty}_b\left(\mathbf{R}^d\right)$. Because of that, we can set $L^{\nu,\kappa}=L^{\nu,\kappa/2}\circ L^{\nu,\kappa/2}$ if $\kappa\in\left( 1,2\right)$, where $\circ$ means composition of two operations. Clearly, $\eqref{opp}$ is well-defined for all $\kappa\in\left( 1,2\right)$.

\begin{corollary}\label{co1}
Let $\nu$ be a L\'{e}vy measure satisfying \textbf{A(w,l)} and $\kappa\in\left(0,1\right]$. Denote $g_j=\mathcal{F}^{-1}\left[\mathcal{F} g \left(N^{-j}\cdot\right)\right],\forall g\left(x\right)\in \mathcal{S}\left(\mathbf{R}^d\right)$. Then there exists a constant $C>0$ independent of $j$ such that
\begin{eqnarray*}
\left\vert L^{\nu,\kappa}g_j\right\vert_{L^1\left( \mathbf{R}^d\right)}< C w\left( N^{-j}\right)^{-\kappa}.
\end{eqnarray*}	
\end{corollary}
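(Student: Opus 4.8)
The plan is to reduce the desired estimate to the uniform (in $R$) $L^1$ bounds already proved in Lemma \ref{Lop} (for $\kappa=1$) and Lemma \ref{rep} (for $\kappa\in(0,1)$), by means of an exact scaling identity relating the multiplier $\psi^{\nu,\kappa}$ to $\psi^{\tilde{\nu}_R,\kappa}$. First I would record the elementary dilation facts: since $\mathcal{F}g_j(\xi)=\mathcal{F}g(N^{-j}\xi)$ we have $g_j(x)=N^{jd}g(N^j x)\in\mathcal{S}(\mathbf{R}^d)$, and more generally $\mathcal{F}^{-1}[\mathcal{F}F(N^{-j}\cdot)](x)=N^{jd}F(N^j x)$.

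The key step is the symbol identity
\[
\psi^{\nu}(\xi)=w(R)^{-1}\psi^{\tilde{\nu}_R}(R\xi),\qquad \xi\in\mathbf{R}^d,\ R>0.
\]
Combining the change of variables $\int f(y)\,\nu_R(dy)=\int f(y/R)\,\nu(dy)$ coming from \eqref{mea} with $\tilde{\nu}_R=w(R)\nu_R$ from \eqref{meas}, one finds
\[
\psi^{\tilde{\nu}_R}(\xi)=w(R)\int\bigl[e^{i2\pi(\xi/R)\cdot y}-1-i2\pi\chi_{\alpha}(y/R)(\xi/R)\cdot y\bigr]\nu(dy),
\]
which agrees with $w(R)\psi^{\nu}(\xi/R)$ except for the drift correction $-w(R)\,i2\pi(\xi/R)\cdot\int[\chi_{\alpha}(y/R)-\chi_{\alpha}(y)]y\,\nu(dy)$. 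When $\alpha\neq 1$ the cutoff $\chi_{\alpha}$ is constant and this term is zero; when $\alpha=1$ the factor $\chi_{1}(y/R)-\chi_{1}(y)$ equals $\pm 1$ on an annulus, so the correction vanishes by the symmetry hypothesis \eqref{alpha1}. (Here it matters that the order $\alpha$ is unchanged under both $\nu\mapsto\nu_R$ and $\nu\mapsto\tilde{\nu}_R$, so the same $\chi_{\alpha}$ is used for all three symbols.) Taking real parts and raising to the power $\kappa$, and using $w(R)>0$, promotes this to
\[
\psi^{\nu,\kappa}(\xi)=w(R)^{-\kappa}\psi^{\tilde{\nu}_R,\kappa}(R\xi),\qquad \kappa\in(0,1].
\]

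Setting $R=N^{-j}$ and multiplying by $\mathcal{F}g(N^{-j}\xi)=\mathcal{F}g_j(\xi)$, the definition \eqref{opp} gives
\[
\mathcal{F}[L^{\nu,\kappa}g_j](\xi)=w(N^{-j})^{-\kappa}\,\mathcal{F}[L^{\tilde{\nu}_{N^{-j}},\kappa}g](N^{-j}\xi),
\]
and inverting the Fourier transform with the dilation rule above yields the pointwise identity $L^{\nu,\kappa}g_j(x)=w(N^{-j})^{-\kappa}N^{jd}(L^{\tilde{\nu}_{N^{-j}},\kappa}g)(N^j x)$. The substitution $u=N^j x$ then cancels the Jacobian and produces $|L^{\nu,\kappa}g_j|_{L^1(\mathbf{R}^d)}=w(N^{-j})^{-\kappa}|L^{\tilde{\nu}_{N^{-j}},\kappa}g|_{L^1(\mathbf{R}^d)}$. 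Since $g\in\mathcal{S}(\mathbf{R}^d)$ and $\nu$ satisfies (iii) in \textbf{A(w,l)}, Lemma \ref{Lop} for $\kappa=1$ and Lemma \ref{rep} for $\kappa\in(0,1)$ bound the last $L^1$ norm by a constant independent of $R=N^{-j}$, hence independent of $j$, which is exactly the claim.

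I expect the only real obstacle to be the $\alpha=1$ case of the symbol identity: the drift cutoffs $\chi_{1}(y)$ and $\chi_{1}(y/R)$ no longer coincide, and one must check that their difference integrates to zero against $y\,\nu(dy)$, which is precisely what assumption \eqref{alpha1} provides. The remaining steps are routine bookkeeping with dilations and a change of variables in the $L^1$ integral.
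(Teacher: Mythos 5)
Your proposal is correct and follows essentially the same route as the paper: the scaling identity $\psi^{\nu}(\xi)=w(N^{-j})^{-1}\psi^{\tilde{\nu}_{N^{-j}}}(N^{-j}\xi)$ (with \eqref{alpha1} handling the drift-cutoff mismatch when $\alpha=1$), the dilation invariance of the $L^{1}$ norm, and the $R$-uniform bounds of Lemmas \ref{Lop} and \ref{rep}. The paper treats $j=0$ separately as an immediate consequence of those lemmas, but your argument with $R=1$, $w(1)=1$, $\tilde{\nu}_1=\nu$ covers that case as well.
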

\begin{proof}
If $j=0$, this is a straightforward consequence of Lemmas \ref{Lop} and \ref{rep}. Now consider $j\neq 0$. By $\eqref{alpha1}$ in \textbf{A(w,l)}, 
\begin{equation}\label{symbol}
\psi^{\nu}\left( \xi\right) = w\left( N^{-j}\right)^{-1}\psi^{\tilde{\nu}_{N^{-j}}}\left( N^{-j}\xi\right), \xi\in\mathbf{R}^d, \forall j\in\mathbf{N}_{+},
\end{equation}
therefore, by Lemma \ref{Lop},
\begin{eqnarray*}
	\left\vert L^{\nu}g_j\right\vert_{L^1\left( \mathbf{R}^d\right)} &=& \int \left\vert \mathcal{F}^{-1}\left[\psi^{\nu}\left( \xi\right)\mathcal{F}g\left( N^{-j}\xi\right)\right]\left( x\right)\right\vert dx\\
	&=& \int \left\vert \mathcal{F}^{-1}\left[w\left( N^{-j}\right)^{-1}\psi^{\tilde{\nu}_{N^{-j}}}\left( N^{-j}\xi\right)\mathcal{F}g\left( N^{-j}\xi\right)\right]\left( x\right)\right\vert dx\\
	&=& w\left( N^{-j}\right)^{-1}\int \left\vert \mathcal{F}^{-1}\left[\psi^{\tilde{\nu}_{N^{-j}}}\left( \xi\right)\mathcal{F}g\left( \xi\right)\right]\left( x\right)\right\vert dx\\
	&=& w\left( N^{-j}\right)^{-1}\left\vert L^{\tilde{\nu}_{N^{-j}}}g\right\vert_{L^1\left( \mathbf{R}^d\right)}\\
	&<&C\left( \alpha,d,\alpha_1,\alpha_2\right)w\left( N^{-j}\right)^{-1},
\end{eqnarray*}	
and by Lemma \ref{rep},
\begin{eqnarray*}
	\left\vert L^{\nu,\kappa}g_j\right\vert_{L^1\left( \mathbf{R}^d\right)}&=& \int \left\vert \mathcal{F}^{-1}\left[-\left(-\Re\psi^{\nu}\left( \xi\right)\right)^{\kappa}\mathcal{F}g\left( N^{-j}\xi\right)\right]\left( x\right)\right\vert dx\\
	&=& \int \left\vert \mathcal{F}^{-1}\left[-\left(-w\left( N^{-j}\right)^{-1}\Re\psi^{\tilde{\nu}_{N^{-j}}}\left( N^{-j}\xi\right)\right)^{\kappa}\mathcal{F}g\left( N^{-j}\xi\right)\right]\left( x\right)\right\vert dx\\
	&=& w\left( N^{-j}\right)^{-\kappa}\int \left\vert \mathcal{F}^{-1}\left[-\left(- \Re\psi^{\tilde{\nu}_{N^{-j}}}\left( \xi\right)\right)^{\kappa}\mathcal{F}g\left( \xi\right)\right]\left( x\right)\right\vert dx\\
	&=& w\left( N^{-j}\right)^{-\kappa}\left\vert L^{\tilde{\nu}_{N^{-j}},\kappa}g\right\vert_{L^1\left( \mathbf{R}^d\right)}\\
	&<&C\left( \alpha,d,\alpha_1,\alpha_2\right)w\left( N^{-j}\right)^{-\kappa}.
\end{eqnarray*}	
\end{proof}

The following two lemmas are crucial for proofs of norm equivalence.
\begin{lemma}\label{bij}
	Let $a\in\left( 0,\infty\right)$ and $\nu$ be a L\'{e}vy measure satisfying (iii) in \textbf{A(w,l)}. Then the operator $aI-L^{\nu}$ defines a bijection on $C_b^{\infty}\left( \mathbf{R}^d\right)$. Moreover, for any function $\varphi\in C_b^{\infty}\left( \mathbf{R}^d\right)$,
	\begin{eqnarray}
		&&\varphi\left( x\right) =\int_{0}^{\infty }e^{-a t}\mathbf{E}\left( aI
		-L^{\nu}\right) \varphi\left( x+Z_{t}^{\nu}\right) dt,\label{e1} \\
		&&\left( aI
		-L^{\nu}\right)^{-1}\varphi\left( x\right) =\int_{0}^{\infty }e^{-a t}\mathbf{E} \varphi\left( x+Z_{t}^{\nu}\right) dt,\quad x\in \mathbf{R}^{d},\label{e2}
	\end{eqnarray}
where $Z_{t}^{\nu}$ is the Levy process associated to $\nu$.
\end{lemma}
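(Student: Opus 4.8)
The plan is to realize the inverse of $aI-L^{\nu}$ as the resolvent of the convolution semigroup generated by $Z_t^{\nu}$, and to verify the two inverse relations through a Dynkin-type fundamental theorem of calculus together with an integration by parts in $t$. Throughout write $T_t\varphi\left( x\right):=\mathbf{E}\varphi\left( x+Z_t^{\nu}\right)$ and $R_a\varphi\left( x\right):=\int_0^{\infty}e^{-at}T_t\varphi\left( x\right)dt$; then $\eqref{e2}$ is exactly the claim $\left( aI-L^{\nu}\right)^{-1}=R_a$, while $\eqref{e1}$ is the relation $R_a\left( aI-L^{\nu}\right)\varphi=\varphi$ written out.

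First I would confirm that $aI-L^{\nu}$ is an operator on $C_b^{\infty}\left( \mathbf{R}^d\right)$. Since $\tilde{\nu}_1=\nu$, Lemma \ref{Lop} applied with $R=1$ gives $L^{\nu}\varphi\in C_b^{\infty}\left( \mathbf{R}^d\right)$ together with $D^{\gamma}L^{\nu}\varphi=L^{\nu}D^{\gamma}\varphi$ for every multi-index $\gamma$. I would then check that $R_a$ also preserves $C_b^{\infty}\left( \mathbf{R}^d\right)$: the contraction bound $\left\vert T_t\varphi\right\vert_0\leq\left\vert\varphi\right\vert_0$ yields $\left\vert R_a\varphi\right\vert_0\leq a^{-1}\left\vert\varphi\right\vert_0$, and because $Z_t^{\nu}$ is independent of $x$ the spatial derivatives pass through both $\mathbf{E}$ and the $dt$-integral, so $D^{\gamma}R_a\varphi=R_aD^{\gamma}\varphi$ is bounded by $a^{-1}\left\vert D^{\gamma}\varphi\right\vert_0<\infty$.

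The analytic heart of the argument is the identity
\[
T_t\varphi\left( x\right)-\varphi\left( x\right)=\int_0^t T_sL^{\nu}\varphi\left( x\right)ds,\qquad \varphi\in C_b^{\infty}\left( \mathbf{R}^d\right),
\]
equivalently $\tfrac{d}{dt}T_t\varphi=T_tL^{\nu}\varphi=L^{\nu}T_t\varphi$, the outer equality reflecting that $L^{\nu}$ commutes with the convolution operators $T_t$. This follows from It\^{o}'s formula applied to $\varphi\left( x+Z_t^{\nu}\right)$ upon taking expectations, the compensated small-jump part being a mean-zero martingale; the boundedness of $L^{\nu}\varphi$ supplied by $\eqref{fubi}$ legitimizes both the vanishing of that martingale's mean and the interchange of $L^{\nu}$ with $\mathbf{E}$ and with the $ds$-integral. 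Granting this, an integration by parts in $t$ gives
\[
L^{\nu}R_a\varphi=\int_0^{\infty}e^{-at}L^{\nu}T_t\varphi\,dt=\int_0^{\infty}e^{-at}\tfrac{d}{dt}T_t\varphi\,dt=\bigl[e^{-at}T_t\varphi\bigr]_0^{\infty}+a\int_0^{\infty}e^{-at}T_t\varphi\,dt=aR_a\varphi-\varphi,
\]
where the boundary term vanishes at $\infty$ since $\left\vert T_t\varphi\right\vert_0\leq\left\vert\varphi\right\vert_0$ and equals $-\varphi$ at $t=0$. Hence $\left( aI-L^{\nu}\right)R_a\varphi=\varphi$, which is surjectivity. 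Running the identical integration by parts on $\int_0^{\infty}e^{-at}T_tL^{\nu}\varphi\,dt$ produces $R_a\left( aI-L^{\nu}\right)\varphi=aR_a\varphi-\left( aR_a\varphi-\varphi\right)=\varphi$, which is injectivity and identifies $R_a$ with the two-sided inverse, giving $\eqref{e2}$; spelling out $R_a\left( aI-L^{\nu}\right)\varphi=\varphi$ gives $\eqref{e1}$.

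I expect the main obstacle to be the rigorous justification of the Dynkin identity and the attendant Fubini interchanges: one must verify that It\^{o}'s formula is applicable to $\varphi\left( x+Z_t^{\nu}\right)$ for merely bounded smooth (not compactly supported) $\varphi$, that the compensated jump integral is a genuine martingale with zero mean, and that the uniform estimates of Lemma \ref{Lop} are strong enough to move $L^{\nu}$ freely through the expectation and the two integrals. Once the semigroup-resolvent calculus is secured, the remaining verification of the inverse relations and of the two integral formulas is routine.
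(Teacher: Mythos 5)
Your proposal is correct and follows essentially the same route as the paper: both realize $\left( aI-L^{\nu}\right)^{-1}$ as the resolvent $\int_0^{\infty}e^{-at}\mathbf{E}\varphi\left( x+Z_t^{\nu}\right)dt$ and derive the two identities from It\^{o}'s formula, the only cosmetic difference being that the paper applies It\^{o} directly to $e^{-at}\varphi\left( x+Z_t^{\nu}\right)$ whereas you first establish the Dynkin identity and then integrate by parts in $t$. The justification points you flag (boundedness of $L^{\nu}\varphi$ from Lemma \ref{Lop}, vanishing of the boundary term as $S\to\infty$, Fubini) are exactly the ones the paper invokes.
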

\begin{proof}
By Lemma \ref{Lop}, $aI
-L^{\nu}$ maps from $C_b^{\infty}\left( \mathbf{R}^d\right)$ to $C_b^{\infty}\left( \mathbf{R}^d\right)$. Apply the It\^{o} formula to $e^{-at}\varphi\left( x+Z^{\nu}_t\right)$ on $\left[0,S\right]$ with respect to $t$, and take expectation afterwards, then
\begin{eqnarray*}
	&& e^{-aS}\mathbf{E}\varphi\left( x+Z^{\nu}_S\right)-\varphi\left(x\right)\nonumber\\
	&=& \int_0^S -ae^{-at}\mathbf{E}\varphi\left( x+Z^{\nu}_t\right)dt+\int_0^S e^{- at}\mathbf{E}L^{\nu}\varphi\left( x+Z^{\nu}_t\right)dt.
\end{eqnarray*}
Note both $\varphi$ and $L^{\nu}\varphi$ are bounded. Let $S\rightarrow \infty$ and we obtain $\eqref{e1}$, which by Fubini theorem can also be written as
\begin{eqnarray}
\varphi\left( x\right) =\left( aI
-L^{\nu}\right)\int_{0}^{\infty }e^{-a t}\mathbf{E} \varphi\left( x+Z_{t}^{\nu}\right) dt,
\end{eqnarray}
namely, $aI-L^{\nu}$ is a surjection. Meanwhile, if $\varphi$ is a function in $C_b^{\infty}\left( \mathbf{R}^d\right)$ such that $\left(aI-L^{\nu}\right)\varphi=0$, then applying the same procedure, we arrive at $\eqref{e1}$, which claims $\varphi=0$ and thus $aI-L^{\nu}$ is bijective. It follows immediately that
\begin{eqnarray}
\left( aI
-L^{\nu}\right)^{-1}\varphi\left( x\right) =\int_{0}^{\infty }e^{-a t}\mathbf{E} \varphi\left( x+Z_{t}^{\nu}\right) dt,\quad x\in \mathbf{R}^{d}.\nonumber
\end{eqnarray}
\end{proof}

Similar results for $\left( I-L^{\nu}\right)^{\kappa},\kappa\in\left( 0,1\right)$ are stated in next lemma. Denote
\begin{eqnarray*}
\mathcal{A}\left( \mathbf{R}^d\right)=\{\varphi\in\mathcal{S}'\left( \mathbf{R}^d\right): \left( a-\Re \psi^{\nu}\right)^{\kappa}\mathcal{F}\varphi, \left( a-\Re \psi^{\nu}\right)^{-\kappa}\mathcal{F}\varphi\in\mathcal{S}'\left( \mathbf{R}^d\right) \}.
\end{eqnarray*}
Define for all $ \varphi\in\mathcal{A}\left( \mathbf{R}^d\right)$,
\begin{eqnarray}
\left( aI-L^{\nu}\right)^{\kappa}\varphi&=&\mathcal{F}^{-1}\left[ \left( a-\Re \psi^{\nu}\right)^{\kappa}\mathcal{F}\varphi\right],\label{oopp}\\
\left( aI-L^{\nu}\right)^{-\kappa}\varphi&=&\mathcal{F}^{-1}\left[ \left( a-\Re \psi^{\nu}\right)^{-\kappa}\mathcal{F}\varphi\right].\label{oppp}
\end{eqnarray}
Obviously, $\eqref{oopp}$ and $\eqref{oppp}$ offer a bijection on $\mathcal{A}\left( \mathbf{R}^d\right)$.
\begin{lemma}\label{rep2}
	Let $\kappa\in\left( 0,1\right)$, $a\in\left( 0,\infty\right)$ and $\nu$ be a L\'{e}vy measure satisfying (iii) in \textbf{A(w,l)}.
	Then, $C_b^{\infty}\left( \mathbf{R}^d\right)\subset\mathcal{A}\left( \mathbf{R}^d\right)$ and thus $ \left( aI-L^{\nu}\right)^{\kappa}$ is a bijection on it. Moreover, for any function $\varphi\in  C_b^{\infty}\left( \mathbf{R}^d\right)$, 
\begin{eqnarray}
	\qquad\left( aI-L^{\nu}\right)^{\kappa}\varphi\left(x\right)&=&C\int_0^{\infty}t^{-\kappa-1}\left[ \varphi\left(x\right)-e^{-at}\mathbf{E}\varphi\left( x+Z_t^{\bar{\nu}}\right)\right]dt,\quad\label{rp1}\\
	\qquad\quad\left( aI-L^{\nu}\right)^{-\kappa}\varphi\left(x\right)&=&C'\int_0^{\infty}t^{\kappa-1}e^{-at}\mathbf{E}\varphi\left( x+Z_t^{\bar{\nu}}\right)dt,\label{rp2}
\end{eqnarray}	
where $C^{-1}=\int_0^{\infty}t^{-\kappa-1}\left(1- e^{-t}\right)dt$, $C'^{-1}=\int_0^{\infty}t^{\kappa-1}e^{-t}dt$, $Z_t^{\bar{\nu}}$ is the L\'{e}vy process associated to $\bar{\nu}$ and $\bar{\nu}\left( dy\right):=\frac{1}{2}\left( \nu\left( dy\right)+\nu\left(- dy\right)\right)$.
\end{lemma}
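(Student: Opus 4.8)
The plan is to regard Lemma \ref{rep2} as the resolvent ($a>0$) counterpart of Lemma \ref{rep}, now carried out for $\nu$ itself and extended to the negative fractional power. The algebraic engine is the pair of scalar subordination identities, valid for every $\lambda>0$ and $\kappa\in(0,1)$,
\[
\lambda^{\kappa}=C\int_{0}^{\infty}t^{-\kappa-1}\bigl(1-e^{-\lambda t}\bigr)\,dt,\qquad \lambda^{-\kappa}=C'\int_{0}^{\infty}t^{\kappa-1}e^{-\lambda t}\,dt,
\]
with $C,C'$ as in the statement. The point making them applicable is that $\Re\psi^{\nu}(\xi)=\psi^{\bar\nu}(\xi)=\int[\cos(2\pi\xi\cdot y)-1]\nu(dy)\le 0$, so $\lambda(\xi):=a-\Re\psi^{\nu}(\xi)\ge a>0$ for all $\xi$; moreover $\bar\nu$ is symmetric, inherits (iii) of \textbf{A(w,l)} from $\nu$, and has real symbol $\psi^{\bar\nu}=\Re\psi^{\nu}$. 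Writing $e^{-\lambda(\xi)t}=e^{-at}e^{t\psi^{\bar\nu}(\xi)}=e^{-at}\mathbf{E}e^{i2\pi\xi\cdot Z_{t}^{\bar\nu}}$ by the L\'evy--Khintchine formula, one sees that on the Fourier side $(a-\Re\psi^{\nu})^{\kappa}\mathcal{F}\varphi$ and $(a-\Re\psi^{\nu})^{-\kappa}\mathcal{F}\varphi$ are formally the transforms of the right-hand sides of \eqref{rp1} and \eqref{rp2}. The whole proof is the rigorous version of this computation plus the identification of the resulting objects as elements of $\mathcal{S}'$.

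First I would establish both formulas for $\varphi\in\mathcal{S}(\mathbf{R}^d)$, where $\varphi\in\mathcal{A}(\mathbf{R}^d)$ is immediate (since $(a-\Re\psi^\nu)^{-\kappa}\le a^{-\kappa}$ is bounded and $(a-\Re\psi^\nu)^{\kappa}$ grows at most polynomially, both products with the Schwartz function $\mathcal{F}\varphi$ are bounded integrable functions). Taking the Fourier transform of the candidate right-hand sides and interchanging $\mathcal{F}^{-1}$, $\mathbf{E}$ and the $t$-integral reduces everything to the scalar identities; the interchange is legitimate once the relevant double integrals are absolutely convergent. For \eqref{rp2} this is elementary: $\lvert\mathbf{E}\varphi(\cdot+Z_t^{\bar\nu})\rvert_{L^1}\le\lvert\varphi\rvert_{L^1}$ by translation invariance, and $\int_0^\infty t^{\kappa-1}e^{-at}\,dt<\infty$. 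For \eqref{rp1} I would split at $t=1$: for large $t$ the integrand is dominated by $2\lvert\varphi\rvert_{L^1}$ against $t^{-\kappa-1}$, while for small $t$ I write $\varphi(x)-e^{-at}\mathbf{E}\varphi(x+Z_t^{\bar\nu})=(1-e^{-at})\varphi(x)+e^{-at}\bigl(\varphi(x)-\mathbf{E}\varphi(x+Z_t^{\bar\nu})\bigr)$, bound $1-e^{-at}\le at$, and control the last term by Dynkin's formula $\mathbf{E}\varphi(x+Z_t^{\bar\nu})-\varphi(x)=\mathbf{E}\int_0^t L^{\bar\nu}\varphi(x+Z_r^{\bar\nu})\,dr$ together with $\lvert L^{\bar\nu}\varphi\rvert_{L^1}<\infty$ from Lemma \ref{Lop} (here $\bar\nu=\widetilde{\bar\nu}_1$ satisfies (iii)); this gives an $O(t)$ bound, integrable against $t^{-\kappa-1}$ since $\kappa<1$. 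These are exactly the estimates behind \eqref{int}, and they yield \eqref{rp1}, \eqref{rp2} for Schwartz $\varphi$.

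To pass from $\mathcal{S}$ to $C_b^\infty(\mathbf{R}^d)$ I would reuse the cutoff-and-duality device of Lemma \ref{rep}: with $\zeta_n$ the cutoffs, $\varphi\zeta_n\in C_0^\infty\subset\mathcal{S}$ satisfies both formulas and $\varphi\zeta_n\to\varphi$ pointwise and boundedly, hence in $\mathcal{S}'$. Pairing each representation with a test function $\eta\in\mathcal{S}$ and transposing the random shift onto $\eta$—which is where the symmetry $-Z_t^{\bar\nu}\stackrel{d}{=}Z_t^{\bar\nu}$ is used—replaces the kernels by $\int_0^\infty t^{-\kappa-1}[\eta(x)-e^{-at}\mathbf{E}\eta(x-Z_t^{\bar\nu})]\,dt$ resp. $\int_0^\infty t^{\kappa-1}e^{-at}\mathbf{E}\eta(x-Z_t^{\bar\nu})\,dt$, both in $L^1(\mathbf{R}^d)$ by the estimates just described applied to $\eta$. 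Dominated convergence against the uniformly bounded $\varphi\zeta_n\to\varphi$ then shows the right-hand sides converge in $\mathcal{S}'$ to the expressions built from $\varphi$, which are bounded continuous functions and hence in $\mathcal{S}'$. Since $\mathcal{F}[(aI-L^\nu)^{\pm\kappa}(\varphi\zeta_n)]=(a-\Re\psi^\nu)^{\pm\kappa}\mathcal{F}(\varphi\zeta_n)$ and $\mathcal{F}$ is continuous on $\mathcal{S}'$, identifying the weak limits exactly as in Lemma \ref{rep} gives $(a-\Re\psi^\nu)^{\pm\kappa}\mathcal{F}\varphi\in\mathcal{S}'$, which is precisely $C_b^\infty(\mathbf{R}^d)\subset\mathcal{A}(\mathbf{R}^d)$, and the limiting identities are \eqref{rp1}, \eqref{rp2}. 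Bijectivity on $\mathcal{A}(\mathbf{R}^d)$ is the one already noted for \eqref{oopp}--\eqref{oppp}, and $(aI-L^\nu)^{\pm\kappa}\varphi\in C_b^\infty(\mathbf{R}^d)$ follows by differentiating the representations under the integral and expectation.

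The main difficulty is the passage to $C_b^\infty$. Unlike the $\alpha$-stable model, $(a-\Re\psi^\nu)^{\pm\kappa}$ need not be smooth at the origin, so it is not a smooth multiplier of polynomial growth and one cannot simply push multiplication through the weak-$\ast$ limit $\mathcal{F}(\varphi\zeta_n)\to\mathcal{F}\varphi$; it is the probabilistic representations, through the fact that the transposed kernels are genuinely $L^1$, that keep every approximant inside $\mathcal{S}'$ and legitimize both the duality pairing and the dominated-convergence step. The secondary technical point is the small-$t$ bookkeeping in \eqref{rp1}, handled by Dynkin's formula and the uniform $L^1$ bound of Lemma \ref{Lop}, which is routine given those tools.
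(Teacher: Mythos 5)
Your proposal is correct and follows essentially the same route as the paper: the scalar subordination identities applied to $a-\Re\psi^{\nu}=a-\psi^{\bar\nu}$, Fubini justified by the split at $t=1$ with Dynkin's formula and the uniform $L^1$ bound of Lemma \ref{Lop}, and the cutoff-plus-duality passage from $\mathcal{S}$ to $C_b^{\infty}$ concluded by continuity of the Fourier transform on $\mathcal{S}'$. The only difference is presentational: you make explicit the role of the symmetry of $\bar\nu$ in transposing the shift onto the test function, which the paper leaves implicit.
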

\begin{proof}
	Since $a-\Re\psi^{\nu}\left( \xi\right)> 0,\forall \xi\in\mathbf{R}^d$ and
	\begin{eqnarray*}
		\int_0^{\infty}t^{-\kappa-1}\left( 1-\exp\{\Re\psi^{\nu}\left( \xi\right)t-at\}\right)dt=\left( a-\Re\psi^{\nu}\left( \xi\right)\right)^{\kappa}\int_0^{\infty}t^{-\kappa-1}\left(1- e^{-t}\right)dt,
	\end{eqnarray*}
then for all $\varphi\in \mathcal{S}\left( \mathbf{R}^d\right)$,
\begin{eqnarray}
\left( a-\Re\psi^{\nu}\left( \xi\right)\right)^{\kappa}\mathcal{F}\varphi&=&C\int_0^{\infty}t^{-\kappa-1}\left( 1-\exp\{\Re\psi^{\nu}\left( \xi\right)t-at\}\right)\mathcal{F}\varphi dt,\nonumber\\
&=& C\int_0^{\infty}t^{-\kappa-1}\mathcal{F}\mathbf{E}\left[ \varphi\left( x\right)-e^{-at}\varphi\left(x+Z_t^{\bar{\nu}}\right)\right]dt,\qquad\quad\label{fub}
\end{eqnarray}
where $C^{-1}=\int_0^{\infty}t^{-\kappa-1}\left(1- e^{-t}\right)dt$. Note for $t\in\left( 0,1\right)$, we have
\begin{eqnarray}
&&\left\vert\mathbf{E}\left[ \varphi\left( x\right)-e^{-at}\varphi\left(x+Z_t^{\bar{\nu}}\right)\right]\right\vert\nonumber\\
&\leq& \left\vert 1-e^{-at}\right\vert\left\vert \varphi\left( x\right)\right\vert+\mathbf{E}\int_0^t\left\vert L^{\bar{\nu}} \varphi\left(x+Z_{r-}^{\bar{\nu}}\right)\right\vert dr.\label{dec}
\end{eqnarray}
By Lemma \ref{Lop}, Fubini's theorem applies to $\eqref{fub}$, which implies
\begin{eqnarray*}
	\int_0^{\infty}t^{-\kappa-1}\mathbf{E}\left[ \varphi\left( x\right)-e^{-at}\varphi\left(x+Z_t^{\bar{\nu}}\right)\right]dt\in  C_b^{\infty}\left( \mathbf{R}^d\right),
\end{eqnarray*}
and
\begin{eqnarray*}
\left( a-\Re\psi^{\nu}\left( \xi\right)\right)^{\kappa}\mathcal{F}\varphi=C\mathcal{F}\int_0^{\infty}t^{-\kappa-1}\mathbf{E}\left[ \varphi\left( x\right)-e^{-at}\varphi\left(x+Z_t^{\bar{\nu}}\right)\right]dt\in  \mathcal{S}'\left( \mathbf{R}^d\right).
\end{eqnarray*}
Thus $\eqref{oopp}$ is well-defined. As a result,
\begin{eqnarray}
	\quad&&\left( aI-L^{\nu}\right)^{\kappa}\varphi\left( x\right)= C\int_0^{\infty}t^{-\kappa-1}\mathbf{E}\left[ \varphi\left( x\right)-e^{-at}\varphi\left(x+Z_t^{\bar{\nu}}\right)\right]dt.\label{map}
\end{eqnarray}

Similarly,
\begin{eqnarray*}
	\int_0^{\infty}t^{\kappa-1}\exp\{\Re\psi^{\nu}\left( \xi\right)t-at\}dt=\left( a-\Re\psi^{\nu}\left( \xi\right)\right)^{-\kappa}\int_0^{\infty}t^{\kappa-1}e^{-t}dt,
\end{eqnarray*}
then for all $\varphi\in \mathcal{S}\left( \mathbf{R}^d\right)$, 
\begin{eqnarray*}
	\left( a-\Re\psi^{\nu}\left( \xi\right)\right)^{-\kappa}\mathcal{F}\varphi=C'\mathcal{F}\int_0^{\infty}t^{\kappa-1}e^{-at}\mathbf{E}\varphi\left(x+Z_t^{\bar{\nu}}\right)dt\in  \mathcal{S}'\left( \mathbf{R}^d\right),
\end{eqnarray*}
where $C'^{-1}=\int_0^{\infty}t^{\kappa-1}e^{-t}dt$. And
\begin{eqnarray}
\left( aI-L^{\nu}\right)^{-\kappa}\varphi\left( x\right)= C'\int_0^{\infty}t^{\kappa-1}e^{-at}\mathbf{E}\varphi\left(x+Z_t^{\bar{\nu}}\right)dt.\label{inverse}
\end{eqnarray}

To extend $\eqref{map},\eqref{inverse}$ to all $\varphi\in C_b^{\infty}\left( \mathbf{R}^d\right)$, we repeat what we did in Lemma \ref{rep} and introduce $\{\zeta_n:n\in\mathbf{N}\}\subseteq C_0^{\infty}\left( \mathbf{R}^d\right)$ such that $0\leq \zeta_n\left( x\right)\leq 1,\forall n\in\mathbf{N},\forall x\in\mathbf{R}^d$ and $\zeta_n\left( x\right)= 1,\forall x\in\{x\in\mathbf{R}^d: \left\vert x\right\vert\leq n\}$. Then, $\varphi\zeta_n\xrightarrow{n\to\infty} \varphi$ and
\begin{eqnarray*}
\left( aI-L^{\nu}\right)^{-\kappa}\varphi\zeta_n&\xrightarrow{n\to\infty}&C' \int_0^{\infty}t^{\kappa-1}e^{-at}\mathbf{E}\varphi\left(x+Z_t^{\bar{\nu}}\right)dt,\\
\left( aI-L^{\nu}\right)^{\kappa}\varphi\zeta_n&\xrightarrow{n\to\infty}&C\int_0^{\infty}t^{-\kappa-1}\left[ \varphi\left(x\right)-e^{-at}\mathbf{E}\varphi\left( x+Z_t^{\bar{\nu}}\right)\right]dt
\end{eqnarray*}
all in the topology of $\mathcal{S}'\left( \mathbf{R}^d\right)$. Applying continuity of the Fourier transform, we know that $C_b^{\infty}\left( \mathbf{R}^d\right)\subset\mathcal{A}\left( \mathbf{R}^d\right)$ and $\eqref{rp1},\eqref{rp2}$ hold on it.
\end{proof}

\noindent\textbf{Remark:} Lemmas \ref{bij} and \ref{rep2} have shown that $\left( aI-L^{\nu}\right)^{\kappa},\kappa\in\left( 0,1\right]$ is a closed operation in $C_b^{\infty}\left( \mathbf{R}^d\right)$. Naturally, we may define $\left( aI-L^{\nu}\right)^{\kappa}$ for all $\kappa\in\left(1,2\right)$ and all $\varphi\in C_b^{\infty}\left( \mathbf{R}^d\right)$ as follows:
\begin{eqnarray*}
\left( aI-L^{\nu}\right)^{\kappa}\varphi&=&\left( aI-L^{\nu}\right)^{\kappa/2}\circ\left( aI-L^{\nu}\right)^{\kappa/2}\varphi, \\
\left( aI-L^{\nu}\right)^{-\kappa}\varphi&=&\left( aI-L^{\nu}\right)^{-\kappa/2}\circ\left( aI-L^{\nu}\right)^{-\kappa/2}\varphi.
\end{eqnarray*}
Here $\circ$ represents composition of two operations. This definition is compatible with $\eqref{oopp},\eqref{oppp}$ when $\kappa\in\left(1,2\right)$. The corollary below says that the probabilistic representation of $\left( aI-L^{\nu}\right)^{-\kappa}$ for $\kappa\in\left( 0,1\right)$ also applies to $ \kappa\in\left(1,2\right)$.
\begin{corollary}\label{exbij}
	Let $\kappa\in\left( 0,2\right)$, $a\in\left( 0,\infty\right)$ and $\nu$ be a L\'{e}vy measure satisfying (iii) in \textbf{A(w,l)}. Then $\left( aI-L^{\nu}\right)^{\kappa}$ is a bijection on $ C_b^{\infty}\left( \mathbf{R}^d\right)$. For any function $\varphi\in  C_b^{\infty}\left( \mathbf{R}^d\right)$, 
	\begin{eqnarray}
	\left( aI-L^{\nu}\right)^{-\kappa}\varphi\left(x\right)=C\int_0^{\infty}t^{\kappa-1}e^{-at}\mathbf{E}\varphi\left( x+Z_t\right)dt,\label{rp3}
	\end{eqnarray}	
	where $C$ is a constant only depending on $\kappa$, and $Z_t=Z_t^{\nu}$ if $\kappa=1$, $Z_t=Z_t^{\bar{\nu}}$ otherwise.
\end{corollary}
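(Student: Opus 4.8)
The plan is to treat the three regimes $\kappa\in(0,1)$, $\kappa=1$, and $\kappa\in(1,2)$ separately; the first two are already in hand and only the last needs work. For $\kappa\in(0,1)$ both the bijectivity of $(aI-L^\nu)^\kappa$ on $C_b^\infty(\mathbf{R}^d)$ and the representation $\eqref{rp3}$ are precisely Lemma \ref{rep2} (equation $\eqref{rp2}$), with constant $C=\left(\int_0^\infty t^{\kappa-1}e^{-t}dt\right)^{-1}=1/\Gamma(\kappa)$ and $Z_t=Z_t^{\bar\nu}$. For $\kappa=1$ they are Lemma \ref{bij} (equation $\eqref{e2}$): here $t^{\kappa-1}=1$, $C=1=1/\Gamma(1)$, and $Z_t=Z_t^{\nu}$, matching the statement.

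For $\kappa\in(1,2)$ I would first settle bijectivity. By the definition introduced in the Remark after Lemma \ref{rep2}, $(aI-L^\nu)^{\pm\kappa}=(aI-L^\nu)^{\pm\kappa/2}\circ(aI-L^\nu)^{\pm\kappa/2}$, and since $\kappa/2\in(1/2,1)\subset(0,1)$, each factor is a bijection of $C_b^\infty(\mathbf{R}^d)$ by Lemma \ref{rep2}; a composition of bijections is a bijection.

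It remains to identify the composition with the right-hand side of $\eqref{rp3}$. Introducing the L\'evy semigroup $P_t\varphi(x)=\mathbf{E}\varphi(x+Z_t^{\bar\nu})$, Lemma \ref{rep2} reads $(aI-L^\nu)^{-\kappa/2}\varphi=C'\int_0^\infty s^{\kappa/2-1}e^{-as}P_s\varphi\,ds$ with $C'=1/\Gamma(\kappa/2)$. Composing and using the Chapman--Kolmogorov identity $P_sP_r=P_{s+r}$,
\begin{eqnarray*}
(aI-L^\nu)^{-\kappa}\varphi&=&C'^2\int_0^\infty\!\!\int_0^\infty s^{\kappa/2-1}r^{\kappa/2-1}e^{-a(s+r)}P_sP_r\varphi\,ds\,dr\\
&=&C'^2\int_0^\infty\!\!\int_0^\infty s^{\kappa/2-1}r^{\kappa/2-1}e^{-a(s+r)}P_{s+r}\varphi\,ds\,dr.
\end{eqnarray*}
The interchange of $\mathbf{E}$ with the iterated integral is licensed by the absolute-integrability bounds obtained in the proof of Lemma \ref{rep2}. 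Substituting $t=s+r$ and evaluating the inner Beta integral $\int_0^t s^{\kappa/2-1}(t-s)^{\kappa/2-1}ds=t^{\kappa-1}B(\kappa/2,\kappa/2)$ collapses this to $C'^2 B(\kappa/2,\kappa/2)\int_0^\infty t^{\kappa-1}e^{-at}P_t\varphi\,dt$; since $C'^2 B(\kappa/2,\kappa/2)=\Gamma(\kappa/2)^{-2}\Gamma(\kappa/2)^2/\Gamma(\kappa)=1/\Gamma(\kappa)$, this is exactly $\eqref{rp3}$ with $C=1/\Gamma(\kappa)$ and $Z_t=Z_t^{\bar\nu}$.

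I expect the only delicate point to be the justification of the Chapman--Kolmogorov step together with Fubini: one must realize the second resolvent on an independent copy of the L\'evy process so that $Z_s^{\bar\nu}+\tilde Z_r^{\bar\nu}\stackrel{d}{=}Z_{s+r}^{\bar\nu}$ is legitimate, and then invoke Fubini --- guaranteed by the integrability estimates of Lemma \ref{rep2} --- to rearrange the double time integral into a single one. The remaining Beta--Gamma bookkeeping is routine.
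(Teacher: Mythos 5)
Your proposal is correct and follows essentially the same route as the paper: reduce to Lemmas \ref{bij} and \ref{rep2} for $\kappa\in(0,1]$, obtain bijectivity for $\kappa\in(1,2)$ from the composition definition, and identify the composed resolvents via independent copies of $Z^{\bar\nu}$ and a Beta--Gamma computation. The only cosmetic differences are your choice of substitution $t=s+r$ (the paper uses $s=tr$, yielding $\int_0^\infty r^{\kappa/2-1}(1+r)^{-\kappa}\,dr$, which is the same Beta integral) and your explicit tracking of the constant $1/\Gamma(\kappa)$, which the paper leaves as a generic $C(\kappa)$.
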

\begin{proof}
	That $\left( aI-L^{\nu}\right)^{\kappa},\kappa\in\left( 0,2\right)$ is a bijection follows from the definition. Suppose $\kappa\in\left( 1,2\right)$ and $\varphi\in  C_b^{\infty}\left( \mathbf{R}^d\right)$. Use $\eqref{rp2}$.
	\begin{eqnarray*}
	&&\left(aI-L^{\nu}\right)^{-\kappa}\varphi\left(x\right)\\
	&=&C\int_0^{\infty}t^{\kappa/2-1}e^{-at}\mathbf{E}\left(aI-L^{\nu}\right)^{-\kappa/2}\varphi\left( x+Z_t^{\bar{\nu}}\right)dt\\
	&=& C\int_0^{\infty}t^{\kappa/2-1}e^{-at}\mathbf{E}\int_0^{\infty}s^{\kappa/2-1}e^{-as}\mathbf{E}\varphi\left( x+Z_t^{\bar{\nu}}+Z_s^{\bar{\nu}}\right)dsdt.
	\end{eqnarray*}
$Z_t^{\bar{\nu}},Z_s^{\bar{\nu}}$ denote two independent and identically distributed L\'{e}vy processes associated to $\bar{\nu}$. Therefore,
	\begin{eqnarray*}
	&&\left(aI-L^{\nu}\right)^{-\kappa}\varphi\left(x\right)\\
	&=& C\int_0^{\infty}t^{\kappa/2-1}e^{-at}\int_0^{\infty}s^{\kappa/2-1}e^{-as}\mathbf{E}\varphi\left( x+Z_{t+s}^{\bar{\nu}}\right)dsdt\\
	&=& C\int_0^{\infty}t^{\kappa/2-1}\int_0^{\infty}s^{\kappa/2-1}e^{-at-as}\int\varphi\left( x+z\right)p\left(t+s, z\right)dzdsdt,
\end{eqnarray*}
where $p\left(t, z\right)$ is the probability density of $Z_{t}^{\bar{\nu}}$. Then by changing variables and applying Fubini theorem, we obtain
\begin{eqnarray*}
	&&\left(aI-L^{\nu}\right)^{-\kappa}\varphi\left(x\right)\\
	&=& C\int_0^{\infty}t^{\kappa-1}\int_0^{\infty}r^{\kappa/2-1}e^{-at-atr}\int\varphi\left( x+z\right)p\left(t+tr, z\right)dzdrdt\\
	&=& C\int_0^{\infty}r^{\kappa/2-1}\int\int_0^{\infty}t^{\kappa-1}e^{-at-atr}\varphi\left( x+z\right)p\left(t+tr, z\right)dtdzdr\\
   &=&C\int_0^{\infty}\frac{r^{\kappa/2-1}}{\left(1+r\right)^{\kappa}}dr\int_0^{\infty}\int t^{\kappa-1}e^{-at}\varphi\left( x+z\right)p\left(t, z\right)dzdt\\	
	&=& C\int_0^{\infty}t^{\kappa-1}e^{-at}\mathbf{E}\varphi\left( x+Z_{t}^{\bar{\nu}}\right)dt.
\end{eqnarray*}
\end{proof}

\begin{proposition}\label{pr3}
	Let $\nu$ be a L\'{e}vy measure satisfying \textbf{A(w,l)}, $\beta\in\left( 0,\infty\right),\kappa\in\left( 0,1\right]$. Then norm $\left\vert u\right\vert _{\nu,\kappa,\beta }$ and norm $\left\Vert u\right\Vert _{\nu,\kappa,\beta }$ are equivalent in $C_b^{\infty}\left( \mathbf{R}^d\right)$.	
\end{proposition}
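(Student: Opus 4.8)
The plan is to compare the three quantities $|u|_0$, $|L^{\nu,\kappa}u|_{\beta,\infty}$ and $|(I-L^\nu)^\kappa u|_{\beta,\infty}$ directly, using that $L^{\nu,\kappa}$ and $(I-L^\nu)^\kappa$ are Fourier multipliers with symbols $-(-\Re\psi^\nu)^\kappa$ and $(1-\Re\psi^\nu)^\kappa$ (and with $\psi^\nu$, $1-\psi^\nu$ when $\kappa=1$). Writing $m=-\Re\psi^\nu\ge0$, the elementary equivalence $\tfrac12(1+m^\kappa)\le(1+m)^\kappa\le 1+m^\kappa$ shows that $(I-L^\nu)^\kappa$ morally encodes ``identity plus $L^{\nu,\kappa}$'', so I would split the proof into a sup-norm comparison and a family of uniform-in-$j$ Littlewood--Paley kernel estimates, then assemble the two inequalities $|u|_{\nu,\kappa,\beta}\le C\|u\|_{\nu,\kappa,\beta}$ and $\|u\|_{\nu,\kappa,\beta}\le C|u|_{\nu,\kappa,\beta}$.

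For the sup-norm term, by Lemma \ref{rep2} and Corollary \ref{exbij} with $a=1$ the operator $(I-L^\nu)^{-\kappa}$ is convolution against the probability measure mixing the laws of $Z_t^{\bar\nu}$ against $C't^{\kappa-1}e^{-t}\,dt$ (total mass $1$), hence an $L^\infty$-contraction. Setting $v=(I-L^\nu)^\kappa u$ gives $|u|_0=|(I-L^\nu)^{-\kappa}v|_0\le|v|_0\le C(\beta)|v|_{\beta,\infty}=C\|u\|_{\nu,\kappa,\beta}$ by Lemma \ref{equiv}. The same representation disposes of the lowest block everywhere: since $L^{\nu,\kappa}u\ast\varphi_0=u\ast L^{\nu,\kappa}\varphi_0$ and $(I-L^\nu)^\kappa u\ast\varphi_0=u\ast(I-L^\nu)^\kappa\varphi_0$ with $L^{\nu,\kappa}\varphi_0,(I-L^\nu)^\kappa\varphi_0\in L^1$ (Lemmas \ref{Lop}, \ref{rep}, \ref{rep2}, as $\varphi_0\in\mathcal S$), both $j=0$ terms are bounded by $C|u|_0$.

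The heart of the matter is the comparison of the blocks $j\ge1$. Using the reproducing identity $\widehat{\varphi_j}=\widehat{\varphi_j}\,\widehat{\tilde\varphi_j}$ from \eqref{convol}, for two of our multipliers $T_a,T_b$ with $b\neq0$ on $\mathrm{supp}\,\widehat{\tilde\varphi_j}$ one has $T_au\ast\varphi_j=\mathcal F^{-1}\big[\tfrac{a}{b}\widehat{\tilde\varphi_j}\big]\ast(T_bu\ast\varphi_j)$, so that $|T_au\ast\varphi_j|_0\le\big|\mathcal F^{-1}[\tfrac{a}{b}\widehat{\tilde\varphi_j}]\big|_{L^1}\,|T_bu\ast\varphi_j|_0$. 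Taking $a=-m^\kappa,\,b=(1+m)^\kappa$ gives the block part of $|L^{\nu,\kappa}u|_{\beta,\infty}\le C\|u\|_{\nu,\kappa,\beta}$, and taking $a=(1+m)^\kappa,\,b=m^\kappa$ (legitimate for $j\ge1$ since $m>0$ off the origin by the non-degeneracy in \textbf{A(w,l)}(i)) gives the block part of the reverse bound. Everything then reduces to $\sup_{j\ge1}\big|\mathcal F^{-1}[\sigma\,\widehat{\tilde\varphi_j}]\big|_{L^1}<\infty$ for $\sigma=m^\kappa(1+m)^{-\kappa}$ and $\sigma=(1+m)^\kappa m^{-\kappa}$. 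Rescaling by $\xi=N^{j}\eta$ turns this into $\big|\mathcal F^{-1}[\sigma(N^j\cdot)\,\mathcal F\tilde\varphi]\big|_{L^1}$ with $\mathcal F\tilde\varphi$ supported in the fixed annulus $\{N^{-2}\le|\eta|\le N^{2}\}$; by the scaling identity \eqref{symbol}, $m(N^j\eta)=w(N^{-j})^{-1}\tilde m_j(\eta)$ there, with $\tilde m_j=-\Re\psi^{\tilde\nu_{N^{-j}}}$ bounded above by \textbf{A(w,l)}(iii), bounded below by \textbf{A(w,l)}(i), and $w(N^{-j})\in(0,C_0]$ by Lemma \ref{kl}, so the argument of each $\kappa$-power stays in a fixed compact subinterval of $(0,\infty)$.

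The main obstacle is the uniform smoothness of these rescaled symbols, i.e. bounding $\partial^\gamma_\eta\tilde m_j$ on the annulus uniformly in $j$: the small-jump part is controlled by $\int_{|y|\le1}|y|^2\tilde\nu_{N^{-j}}$ from \textbf{A(w,l)}(iii), but differentiating the large-jump part formally demands moments $\int_{|y|>1}|y|^{|\gamma|}\tilde\nu_{N^{-j}}$ that \textbf{A(w,l)} does not provide. I expect to bypass this exactly as the integrability condition \eqref{mu0} in \textbf{A(w,l)}(i) is designed for: instead of differentiating the symbol, I would realize the blockwise operators through the subordinated semigroup $\exp\{tL^{\overline{\tilde\nu}_{N^{-j}}}\}$ of Lemmas \ref{rep}--\ref{rep2}, whose transition densities inherit the Gaussian-type decay $\exp\{-\zeta^0\}$ of the non-degenerate core $\mu^0$; this smoothing absorbs the rough large jumps and yields the $L^1$ kernel bounds uniformly in $R=N^{-j}$, in the spirit of Corollary \ref{co1}. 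Combining the block estimates with the $j=0$ and sup-norm bounds proves both inequalities, hence the equivalence; the case $\kappa=1$ is identical with $m^\kappa,(1+m)^\kappa$ replaced by $\psi^\nu,1-\psi^\nu$, whose denominators have real part $\ge1$ so that no division issue arises.
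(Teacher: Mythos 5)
Your proposal is correct and rests on the same two pillars as the paper's proof: the probabilistic representations of Lemmas \ref{rep} and \ref{rep2} for the sup-norm and $j=0$ comparisons, and uniform-in-$j$ $L^{1}$ bounds for rescaled Littlewood--Paley kernels obtained by subordination plus the exponential decay of $\int\left\vert \mathbf{E}\tilde{\varphi}\left(x+Z_t^{\widetilde{\bar{\nu}}_{N^{-j}}}\right)\right\vert dx$ from Lemma \ref{lemma2} --- you correctly identify that differentiating the symbol is hopeless under \textbf{A(w,l)} and that the semigroup route is the intended workaround. Where you diverge is in how the blocks $j\geq 1$ are compared: you use the multiplicative quotients $m^{\kappa}\left(1+m\right)^{-\kappa}$ and $\left(1+m\right)^{\kappa}m^{-\kappa}$, whereas the paper splits additively, writing $\left(1+m\right)^{\kappa}-m^{\kappa}$ as $C\int_0^{\infty}t^{-\kappa-1}\left(1-e^{-t}\right)e^{-mt}dt$, so that $L^{\nu,\kappa}u\ast\varphi_j$ and $\left(I-L^{\nu}\right)^{\kappa}u\ast\varphi_j$ differ by a term bounded by $C\left\vert u\ast\varphi_j\right\vert_0$; the paper then only needs the single hard estimate $\left\vert \mathcal{F}^{-1}\left[m^{-\kappa}\widehat{\tilde{\varphi}}_j\right]\right\vert_{L^1}\leq Cw\left(N^{-j}\right)^{\kappa}$ (and its $\kappa=1$ analogue with $\left(\psi^{\nu}\right)^{-1}$). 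Your quotient route works but costs one extra step you leave implicit: to bound $\left\vert\mathcal{F}^{-1}\left[\sigma\widehat{\tilde{\varphi}}_j\right]\right\vert_{L^1}$ for $\sigma=\left(1+m\right)^{\kappa}m^{-\kappa}$ you must factor through a fattened annulus function (as in Lemma \ref{lem1}, using $\hat{\tilde{\varphi}}=\hat{\tilde{\varphi}}\hat{\tilde{\phi}}$) so that the two powers land on separate kernels whose $L^1$ norms scale as $w\left(N^{-j}\right)^{-\kappa}$ and $w\left(N^{-j}\right)^{\kappa}$ respectively and cancel. Also, for $\kappa=1$ your remark that the denominators have real part $\geq 1$ only covers division by $1-\psi^{\nu}$; division by $\psi^{\nu}$ itself is justified, as you note earlier for the fractional case, by the non-degeneracy lower bound $-\Re\psi^{\nu}>0$ on the annulus (Lemma \ref{lemma1}), not by a real part $\geq 1$. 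Neither point is a gap, only a place where the additive splitting is slightly more economical.
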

\begin{proof}
	For the purpose of clarity, we state our proof in parts. \\
	\noindent\textbf{Part 1:} Show $\left\vert u\right\vert _{\nu,\kappa,\beta }\leq C\left\Vert u\right\Vert _{\nu,\kappa,\beta }$ for all $\kappa\in\left( 0,1\right]$.
	
	 By $\eqref{e2}$, $\eqref{rp2}$ and $\eqref{sup}$, for all $\kappa\in\left(0,1\right]$,
	\begin{eqnarray*}
	\left\vert \left( I
	-L^{\nu}\right)^{-\kappa}u\right\vert_0\leq C\left\vert u\right\vert_0\leq C \left\vert u\right\vert_{\beta,\infty}, \forall u\in C_b^{\infty}\left( \mathbf{R}^d\right).
	\end{eqnarray*}
	Since $\left( I-L^{\nu}\right)^{\kappa}$ is a bijection, it implies
	\begin{eqnarray}
	\left\vert u\right\vert_0\leq C \left\vert \left( I
	-L^{\nu}\right)^{\kappa}u\right\vert_{\beta,\infty}, \forall u\in C_b^{\infty}\left( \mathbf{R}^d\right).\label{ssup}
	\end{eqnarray}
	
	In the mean time, by $\eqref{e2}$, for all $\kappa\in\left(0,1\right]$, $j\in\mathbf{N}, u\in C_b^{\infty}\left( \mathbf{R}^d\right)$,
	\begin{eqnarray}
	\left\vert\left[\left( I
	-L^{\nu}\right)^{-\kappa}u\right]\ast\varphi_j\right\vert_0 =\left\vert\int_{0}^{\infty }t^{\kappa-1}e^{- t}\mathbf{E}\left[ u\ast\varphi_j\left( x+Z_{t}\right)\right] dt\right\vert_0\leq C\left\vert u\ast\varphi_j\right\vert_0,\nonumber
	\end{eqnarray}
	where $Z_t=Z_{t}^{\nu}$ if $\kappa=1$ and $Z_t=Z_{t}^{\bar{\nu}}$ otherwise. Given that $\left( I-L^{\nu}\right)^{\kappa}$ is bijective, it leads to
	\begin{eqnarray*}
	\left\vert u\ast\varphi_j\right\vert_0\leq  C\left\vert \left[\left( I
	-L^{\nu}\right)^{\kappa}u\right]\ast\varphi_j\right\vert_0, \forall j\in\mathbf{N},
	\end{eqnarray*} 
	namely, for all $\kappa\in\left(0,1\right]$,
	\begin{eqnarray}
	\left\vert u\right\vert_{\beta,\infty}\leq C \left\vert \left( I
	-L^{\nu}\right)^{\kappa}u\right\vert_{\beta,\infty}, \forall u\in C_b^{\infty}\left( \mathbf{R}^d\right).\label{low}
	\end{eqnarray}
	
	Therefore,
	\begin{eqnarray}\label{kap1}
		\left\vert 
		L^{\nu}u\right\vert_{\beta,\infty}\leq \left\vert \left( I
		-L^{\nu}\right)u\right\vert_{\beta,\infty}+\left\vert u\right\vert_{\beta,\infty}\leq C \left\vert \left( I-L^{\nu}\right)u\right\vert_{\beta,\infty}.
	\end{eqnarray}
Similarly, for $\kappa\in\left( 0,1\right)$, by $\eqref{kap}$ and $\eqref{rp1}$,
\begin{eqnarray*}
	\left\vert L^{\nu,\kappa}u\ast \varphi_j\right\vert_0&\leq& \left\vert \left( I-L^{\nu}\right)^{\kappa}u\ast\varphi_j\right\vert_0+ C\left\vert\int_0^{\infty}t^{-\kappa-1}\left(1-e^{-t}\right) \mathbf{E}u\ast\varphi_j\left( x+Z_t^{\bar{\nu}}\right)dt\right\vert_0\\
	&\leq& \left\vert \left( I-L^{\nu}\right)^{\kappa}u\ast\varphi_j\right\vert_0+ C\left\vert u\ast\varphi_j\right\vert_0, \forall j\in\mathbf{N},
\end{eqnarray*}
which together with $\eqref{low}$ indicate
\begin{eqnarray}\label{kap2}
\qquad	\left\vert L^{\nu,\kappa}u\right\vert_{\beta,\infty}\leq \left\vert \left( I-L^{\nu}\right)^{\kappa}u\right\vert_{\beta,\infty}+ C\left\vert u\right\vert_{\beta,\infty}\leq C\left\vert \left( I-L^{\nu}\right)^{\kappa}u\right\vert_{\beta,\infty}.
\end{eqnarray}

Combine $\eqref{ssup},\eqref{kap1},\eqref{kap2}$. $\left\vert u\right\vert _{\nu,\kappa,\beta }\leq C\left\Vert u\right\Vert _{\nu,\kappa,\beta },\forall\kappa\in\left( 0,1\right]$.

\noindent\textbf{Part 2:} Show $\left\Vert u\right\Vert _{\nu,\kappa,\beta }\leq C\left\vert u\right\vert _{\nu,\kappa,\beta }$ for all $\kappa\in\left( 0,1\right)$. 

By $\eqref{kap}$ and $\eqref{rp1}$ again,
\begin{eqnarray*} 
\left\vert \left( I-
	L^{\nu}\right)^{\kappa}u\ast\varphi_j\right\vert_0&\leq&\left\vert L^{\nu,\kappa}u\ast \varphi_j\right\vert_0+ C\left\vert\int_0^{\infty}t^{-\kappa-1}\left(1-e^{-t}\right) \mathbf{E}u\ast\varphi_j\left( x+Z_t^{\bar{\nu}}\right)dt\right\vert_0\\
	&\leq&\left\vert L^{\nu,\kappa}u\ast \varphi_j\right\vert_0+ C\left\vert u\ast\varphi_j\right\vert_0,\forall j\in\mathbf{N}.
\end{eqnarray*}
 This is to say 
	\begin{eqnarray}\label{lower}
	\left\vert \left( I-
	L^{\nu}\right)^{\kappa}u\right\vert_{\beta,\infty}\leq \left\vert L^{\nu,\kappa}u\right\vert_{\beta,\infty}+ C\left\vert u\right\vert_{\beta,\infty}.
	\end{eqnarray}
	
	It then suffices to prove $\left\vert u\right\vert_{\beta,\infty}\leq C\left( \left\vert u\right\vert_{0}+\left\vert L^{\nu,\kappa}u\right\vert_{\beta,\infty}\right)$ for $\kappa\in\left(0,1\right)$. Note,
		\begin{eqnarray}\label{var0}
	\left\vert u\ast\varphi_0\right\vert_{0}\leq \left\vert\varphi_0\right\vert_{L^1\left(\mathbf{R}^d\right)}\left\vert u\right\vert_0\leq C\left\vert u\right\vert_0.
	\end{eqnarray}
For $j\neq 0$,
\begin{eqnarray*}
	\left\vert u\ast\varphi_j\right\vert_0
	&=&\left\vert \mathcal{F}^{-1}\left[\left( -\Re
	\psi^{\nu}\left( \xi\right)\right)^{-\kappa}\widehat{\tilde{\varphi}_j}\left( \xi\right)\hat{\varphi_j}\left( \xi\right)\left( -\Re
	\psi^{\nu}\left( \xi\right)\right)^{\kappa}\mathcal{F}u\right]\right\vert_0\\
	&=&\left\vert \left( \mathcal{F}^{-1}g_j\right)\ast \left(L^{\nu,\kappa}u\ast\varphi_j\right)\right\vert_0,
\end{eqnarray*}
where 
\begin{eqnarray*}
	g_j\left(\xi\right)=-\left( -\Re
	\psi^{\nu}\left( \xi\right)\right)^{-\kappa}\mathcal{F}\tilde{\varphi}\left(N^{-j} \xi\right).
\end{eqnarray*}

We would like to show that $\left\vert \mathcal{F}^{-1}g_j\right\vert_{L^1\left( \mathbf{R}^d\right)}<C$ for some $C$ independent of $j$. Because in that case,
\begin{eqnarray*}
	\left\vert u\ast\varphi_j\right\vert_0\leq \left\vert \mathcal{F}^{-1}g_j\right\vert_{L^1\left( \mathbf{R}^d\right)}\left\vert L^{\nu,\kappa}u\ast\varphi_j\right\vert_0\leq C\left\vert L^{\nu,\kappa}u\ast\varphi_j\right\vert_0, j\in\mathbf{N}_{+},
\end{eqnarray*}
which together with $\eqref{var0}$ lead to $\left\vert u\right\vert_{\beta,\infty}\leq C\left( \left\vert u\right\vert_{0}+\left\vert L^{\nu,\kappa}u\right\vert_{\beta,\infty}\right)$. Indeed, 
\begin{eqnarray*}
&&\int \left\vert \mathcal{F}^{-1}g_j\left( x\right)\right\vert dx\\
&\leq& C\left\vert \mathcal{F}^{-1}\int_0^{\infty}t^{\kappa-1}\exp\{\Re\psi^{\nu}\left( \xi\right)t\}\mathcal{F}\tilde{\varphi}\left(N^{-j} \xi\right)dt\right\vert_{L^1\left( \mathbf{R}^d\right)}\\
&=& C\left\vert \mathcal{F}^{-1}\int_0^{\infty}t^{\kappa-1}\exp\{w\left( N^{-j}\right)^{-1}\Re\psi^{\tilde{\nu}_{N^{-j}}}\left( N^{-j}\xi\right)t\}\mathcal{F}\tilde{\varphi}\left(N^{-j} \xi\right)dt\right\vert_{L^1\left( \mathbf{R}^d\right)}\\
&=& C\left\vert \mathcal{F}^{-1}\int_0^{\infty}t^{\kappa-1}\exp\{w\left( N^{-j}\right)^{-1}\Re\psi^{\tilde{\nu}_{N^{-j}}}\left( \xi\right)t\}\mathcal{F}\tilde{\varphi}\left( \xi\right)dt\right\vert_{L^1\left( \mathbf{R}^d\right)}.
\end{eqnarray*}
Note $\Re\psi^{\tilde{\nu}_{N^{-j}}}\left( \xi\right)=\psi^{\overline{\tilde{\nu}_{N^{-j}}}}\left( \xi\right)$, where $\overline{\tilde{\nu}_{N^{-j}}}\left( dy\right)=\widetilde{\bar{\nu}}_{N^{-j}}$. Then,
\begin{eqnarray*}
\int \left\vert \mathcal{F}^{-1}g_j\left( x\right)\right\vert dx\leq C\left\vert \mathcal{F}^{-1}\int_0^{\infty}t^{\kappa-1}\mathcal{F}\mathbf{E}\tilde{\varphi}\left( x+Z_{w\left( N^{-j}\right)^{-1}t}^{\widetilde{\bar{\nu}}_{N^{-j}}}\right) dt\right\vert_{L^1\left( \mathbf{R}^d\right)}.
\end{eqnarray*}
Recall that $supp \mathcal{F}\tilde{\varphi}=\{\xi: N^{-2}\leq \left\vert\xi\right\vert\leq N^2 \}$. By Lemma \ref{lemma2} in Appendix, there are positive constants $C_1,C_2$ independent of $j$ ($j\neq 0$), such that 
\begin{eqnarray*}
	\int_{\mathbf{R}^d}\left\vert\mathbf{E}\tilde{\varphi}\left(x+Z_t^{\widetilde{\bar{\nu}}_{N^{-j}}}\right)\right\vert dx \leq C_1 e^{-C_2 t}.
\end{eqnarray*}
Therefore,
\begin{eqnarray*}
	\int \left\vert \mathcal{F}^{-1}g_j\left( x\right)\right\vert dx &\leq& C\left\vert \int_0^{\infty}t^{\kappa-1}\mathbf{E}\tilde{\varphi}\left( x+Z_{w\left( N^{-j}\right)^{-1}t}^{\widetilde{\bar{\nu}}_{N^{-j}}}\right) dt\right\vert_{L^1\left( \mathbf{R}^d\right)}\\
	&\leq& C\int_0^{\infty}t^{\kappa-1}\exp\{-C_2w\left( N^{-j}\right)^{-1}t\}dt\\
	&\leq& Cw\left( N^{-j}\right)^{\kappa}\leq C
\end{eqnarray*}
for some $C$ independent of $j$.

\noindent\textbf{Part 3:} Show $\left\Vert u\right\Vert _{\nu,\kappa,\beta }\leq C\left\vert u\right\vert _{\nu,\kappa,\beta }$ for all $\kappa=1$. 

Since $\left\vert \left( I-
	L^{\nu}\right)u\right\vert_{\beta,\infty}\leq \left\vert 
	L^{\nu}u\right\vert_{\beta,\infty}+\left\vert u\right\vert_{\beta,\infty}$. Similarly as Part 2, we just need to show $\left\vert u\right\vert_{\beta,\infty}\leq C\left( \left\vert u\right\vert_{0}+\left\vert 
	L^{\nu}u\right\vert_{\beta,\infty}\right)$. Again,
	\begin{eqnarray*}
	\left\vert u\ast\varphi_0\right\vert_{0}\leq \left\vert\varphi_0\right\vert_{L^1\left(\mathbf{R}^d\right)}\left\vert u\right\vert_0\leq C\left\vert u\right\vert_0.
	\end{eqnarray*}
	And for $j\neq 0$,
	\begin{eqnarray*}
		\left\vert u\ast\varphi_j\right\vert_0
		&=&\left\vert \mathcal{F}^{-1}\left[\left( 
		\psi^{\nu}\left( \xi\right)\right)^{-1}\widehat{\tilde{\varphi}_j}\left( \xi\right)\hat{\varphi_j}\left( \xi\right)
		\psi^{\nu}\left( \xi\right)\mathcal{F}u\right]\right\vert_0\\
		&=&\left\vert \left( \mathcal{F}^{-1}g_j\right)\ast \left(L^{\nu}u\ast\varphi_j\right)\right\vert_0\\
		&\leq& \left\vert \mathcal{F}^{-1}g_j\right\vert_{L^1\left( \mathbf{R}^d\right)}\left\vert L^{\nu}u\ast\varphi_j\right\vert_0,
	\end{eqnarray*}
	where 
	\begin{eqnarray*}
		g_j\left(\xi\right)=\left(
		\psi^{\nu}\left( \xi\right)\right)^{-1}\mathcal{F}\tilde{\varphi}\left(N^{-j} \xi\right).
	\end{eqnarray*}

 By Lemma \ref{lemma1} in Appendix, for each $j\in\mathbf{N}_{+}$, $\Re\psi^{\nu}<0$, thus $g_j\left(\xi\right)$ is well-defined. The rest of this proof is devoted to looking for an upper bound of $\left\vert \mathcal{F}^{-1}g_j\right\vert_{L^1\left( \mathbf{R}^d\right)}$ that is uniform with respect to $j$. Analogously as before, applying Lemma \ref{lemma2} in Appendix,
\begin{eqnarray*}
\int \left\vert \mathcal{F}^{-1}g_j\left( x\right)\right\vert dx
&=& \left\vert \mathcal{F}^{-1}\left[\int_0^{\infty}\exp\{\psi^{\nu}\left( \xi\right)t\}  \mathcal{F}\tilde{\varphi}\left(N^{-j} \xi\right)dt\right]\right\vert_{L^1\left( \mathbf{R}^d\right)}\\
&\leq& \int_{\mathbf{R}^d} \int_0^{\infty}\left\vert\mathbf{E}\tilde{\varphi}\left( x+Z^{\tilde{\nu}_{N^{-j}}}_{w\left( N^{-j}\right)^{-1}t}\right)\right\vert dt dx\\
&\leq& Cw\left( N^{-j}\right)\leq C.
\end{eqnarray*}

This concludes the proof.
\end{proof}

\begin{proposition}\label{pr4}
Let $\nu$ be a L\'{e}vy measure satisfying \textbf{A(w,l)}, $\beta\in\left( 0,\infty\right),\kappa\in\left( 0,1\right]$. Then norm $\left\vert u\right\vert _{\nu,\kappa,\beta }$ and norm $\left\vert u\right\vert_{\kappa+\beta,\infty}$ are equivalent in $C_b^{\infty}\left( \mathbf{R}^d\right)$.	
\end{proposition}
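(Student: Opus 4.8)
The plan is to reduce the equivalence to two block (Littlewood--Paley) estimates expressing that $L^{\nu,\kappa}$ acts, frequency ring by frequency ring, like multiplication by $w(N^{-j})^{-\kappa}$; this $\kappa$ is exactly the gap between the exponents $\beta$ and $\kappa+\beta$ in the two norms. Concretely, I would first prove that there is a constant $C$, independent of $j$, such that
\[
|L^{\nu,\kappa}u\ast\varphi_j|_0\le Cw(N^{-j})^{-\kappa}|u\ast\varphi_j|_0,\qquad |u\ast\varphi_j|_0\le Cw(N^{-j})^{\kappa}|L^{\nu,\kappa}u\ast\varphi_j|_0
\]
for every $j\in\mathbf{N}_+$ and every $u\in C_b^\infty(\mathbf{R}^d)$.

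For the first (upper) estimate I would use that $L^{\nu,\kappa}$ commutes with convolution together with the reproducing identity $\varphi_j=\varphi_j\ast\tilde\varphi_j$ from \eqref{convol}, so that $L^{\nu,\kappa}u\ast\varphi_j=L^{\nu,\kappa}(u\ast\varphi_j)=(u\ast\varphi_j)\ast(L^{\nu,\kappa}\tilde\varphi_j)$, whence $|L^{\nu,\kappa}u\ast\varphi_j|_0\le|L^{\nu,\kappa}\tilde\varphi_j|_{L^1}|u\ast\varphi_j|_0$. Since $\tilde\varphi_j=\mathcal{F}^{-1}[\mathcal{F}\tilde\varphi(N^{-j}\cdot)]$ with $\tilde\varphi\in\mathcal{S}(\mathbf{R}^d)$, Corollary \ref{co1} gives $|L^{\nu,\kappa}\tilde\varphi_j|_{L^1}<Cw(N^{-j})^{-\kappa}$, which is the claim.

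For the second (lower) estimate I would reuse the multiplier computation already carried out in the proof of Proposition \ref{pr3}. Using $\hat\varphi_j=\hat\varphi_j\hat{\tilde\varphi}_j$ one writes $u\ast\varphi_j=(\mathcal{F}^{-1}g_j)\ast(L^{\nu,\kappa}u\ast\varphi_j)$ with $g_j(\xi)=(\psi^{\nu,\kappa}(\xi))^{-1}\mathcal{F}\tilde\varphi(N^{-j}\xi)$, which is legitimate for $j\ne0$ because $\psi^{\nu,\kappa}$ does not vanish on the ring $\{N^{j-2}\le|\xi|\le N^{j+2}\}$ (Lemma \ref{lemma1}). The crux is a uniform bound on $|\mathcal{F}^{-1}g_j|_{L^1}$; invoking the scaling identity \eqref{symbol}, the Gamma/subordination representation of $(\psi^{\nu,\kappa})^{-1}$, and the exponential $L^1$-decay $\int|\mathbf{E}\tilde\varphi(x+Z_t^{\widetilde{\bar{\nu}}_{N^{-j}}})|\,dx\le C_1e^{-C_2t}$ from Lemma \ref{lemma2} — exactly as in Parts 2 and 3 of Proposition \ref{pr3} — one gets $|\mathcal{F}^{-1}g_j|_{L^1}\le C\int_0^\infty t^{\kappa-1}e^{-C_2w(N^{-j})^{-1}t}\,dt=Cw(N^{-j})^{\kappa}$, so that $|u\ast\varphi_j|_0\le Cw(N^{-j})^{\kappa}|L^{\nu,\kappa}u\ast\varphi_j|_0$.

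Finally I would assemble the two inequalities, handling $j=0$ separately since the inversion of $\psi^{\nu,\kappa}$ fails at the origin. For $|u|_{\nu,\kappa,\beta}\le C|u|_{\kappa+\beta,\infty}$ I bound $|u|_0\le C|u|_{\kappa+\beta,\infty}$ by Lemma \ref{equiv}, multiply the upper estimate by $w(N^{-j})^{-\beta}$ to obtain $w(N^{-j})^{-\beta}|L^{\nu,\kappa}u\ast\varphi_j|_0\le Cw(N^{-j})^{-(\kappa+\beta)}|u\ast\varphi_j|_0$ for $j\ge1$, and treat $j=0$ via $L^{\nu,\kappa}u\ast\varphi_0=(u\ast\varphi_0)\ast(L^{\nu,\kappa}\tilde\varphi_0)$ with $L^{\nu,\kappa}\tilde\varphi_0\in L^1$ (Lemmas \ref{Lop}, \ref{rep}) and $w(1)=1$. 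For the converse I multiply the lower estimate by $w(N^{-j})^{-(\kappa+\beta)}$ to obtain $w(N^{-j})^{-(\kappa+\beta)}|u\ast\varphi_j|_0\le Cw(N^{-j})^{-\beta}|L^{\nu,\kappa}u\ast\varphi_j|_0\le C|u|_{\nu,\kappa,\beta}$ for $j\ge1$, and bound the $j=0$ term by $|u\ast\varphi_0|_0\le|\varphi_0|_{L^1}|u|_0$. The hard part is the uniform control $|\mathcal{F}^{-1}g_j|_{L^1}\le Cw(N^{-j})^\kappa$ in the lower estimate: the tameness of the multiplier $(\psi^{\nu,\kappa})^{-1}$ rests on the non-degeneracy in \textbf{A(w,l)}(i) and the uniform moment bounds \textbf{A(w,l)}(iii), and it is there that the probabilistic representation and Lemma \ref{lemma2} do the real work; the surrounding weight bookkeeping is routine.
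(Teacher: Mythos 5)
Your proposal is correct. The forward inequality $\left\vert u\right\vert _{\nu,\kappa,\beta }\leq C\left\vert u\right\vert_{\kappa+\beta,\infty}$ is proved exactly as in the paper: $L^{\nu,\kappa}u\ast\varphi_j=(u\ast\varphi_j)\ast L^{\nu,\kappa}\tilde{\varphi}_j$ via \eqref{convol}, then Corollary \ref{co1}. For the converse the paper takes a mildly different route: it first invokes Proposition \ref{pr3} to replace $\left\vert u\right\vert _{\nu,\kappa,\beta }$ by $\left\Vert u\right\Vert _{\nu,\kappa,\beta }$, and then proves $\left\vert \left( I-L^{\nu}\right)^{-\kappa}\left(u\ast\varphi_j\right)\right\vert_0\leq Cw\left( N^{-j}\right)^{\kappa}\left\vert u\ast\varphi_j\right\vert_0$ using the resolvent representations \eqref{e2} and \eqref{rp2} together with Lemma \ref{lemma2}, concluding by bijectivity of $\left( I-L^{\nu}\right)^{\kappa}$ on $C_b^{\infty}$ (Lemmas \ref{bij}, \ref{rep2}). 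You instead invert the multiplier $\psi^{\nu,\kappa}$ directly on each ring $j\geq 1$ and retain the weight in the bound $\left\vert \mathcal{F}^{-1}g_j\right\vert_{L^1}\leq Cw\left( N^{-j}\right)^{\kappa}$ --- a bound the paper actually derives in Parts 2 and 3 of Proposition \ref{pr3} via \eqref{symbol} and the subordination formula, but then relaxes to a uniform constant because there it only needs the unweighted conclusion. Your version is self-contained in the sense that it needs neither Proposition \ref{pr3} nor the bijectivity of $\left( I-L^{\nu}\right)^{\kappa}$, at the cost of repeating the Gamma-function computation; both versions rest on the same key ingredient, the exponential $L^1$-decay of $\mathbf{E}\tilde{\varphi}\left(\cdot+Z_t^{\tilde{\nu}_{N^{-j}}}\right)$ from Lemma \ref{lemma2}, and your treatment of the $j=0$ block and of the weight bookkeeping coincides with the paper's.
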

\begin{proof}
We first assume the finiteness of $\left\vert u\right\vert_{\kappa+\beta,\infty}$. It was showed in Lemma \ref{equiv} that $\left\vert u\right\vert_0\leq C\left\vert u\right\vert_{\kappa+\beta,\infty}$. To prove $\left\vert L^{\nu,\kappa}u\right\vert_{\beta,\infty}\leq C\left\vert u\right\vert_{\kappa+\beta,\infty}$ for some $C>0$, it suffices to show for each $j\in \mathbf{N}$,  
\begin{eqnarray*}
\left\vert \left(L^{\nu,\kappa}u\right)\ast \varphi_j\right\vert_0\leq C w\left( N^{-j}\right)^{-\kappa}\left\vert u\ast \varphi_j\right\vert_0, \quad \kappa\in\left( 0,1\right].
\end{eqnarray*}
In fact, by Corollary \ref{co1},
\begin{eqnarray*}
	&&\left\vert \left(L^{\nu,\kappa}u\right)\ast \varphi_j\right\vert_0=\left\vert L^{\nu,\kappa}\left(u\ast \varphi_j\ast \tilde{\varphi}_j\right)\right\vert_0=\left\vert \left(L^{\nu,\kappa}\tilde{\varphi}_j\right)\ast\left(u\ast \varphi_j\right) \right\vert_0\\
	&\leq& \left\vert L^{\nu,\kappa}\tilde{\varphi}_j\right\vert_{L^1\left( \mathbf{R}^d\right)}\left\vert u\ast \varphi_j\right\vert_0\leq C w\left( N^{-j}\right)^{-\kappa}\left\vert u\ast \varphi_j\right\vert_0.
\end{eqnarray*}
This is to say for all $\kappa\in\left( 0,1\right]$,
\begin{eqnarray}\label{lmu}
\left\vert u\right\vert _{\nu,\kappa,\beta }=\left\vert u\right\vert_0+\left\vert L^{\nu,\kappa}u\right\vert_{\beta,\infty}<C\left\vert u\right\vert_{\kappa+\beta,\infty}.
\end{eqnarray}

To show $\left\vert u\right\vert_{\kappa+\beta,\infty}<C\left\vert u\right\vert _{\nu,\kappa,\beta }$, according to Proposition \ref{pr3}, we just need to prove $\left\vert u\right\vert_{\kappa+\beta,\infty}<C\left\Vert u\right\Vert _{\nu,\kappa,\beta }$. By $\eqref{e2}$,
\begin{eqnarray*}
&&\left\vert\left( I
-L^{\nu}\right)^{-1}\left(u\ast\varphi_j\right)\right\vert_0\\
&=&\left\vert\int_{0}^{\infty }e^{- t}\mathcal{F}^{-1}\left[\exp\{\psi^{\nu}\left( \xi\right)t\}\widehat{\tilde{\varphi}}_j\left( \xi\right)\hat{\varphi_j}\left( \xi\right)\hat{ u}\left( \xi\right)\right] dt\right\vert_0,\forall j\in\mathbf{N}.
\end{eqnarray*}
By $\eqref{rp2}$, for all $\kappa\in\left( 0,1\right)$,
\begin{eqnarray*}
	&&\left\vert\left( I
	-L^{\nu}\right)^{-\kappa}\left(u\ast\varphi_j\right)\right\vert_0\\
	&=&C\left\vert\int_{0}^{\infty }t^{\kappa-1}e^{- t}\mathcal{F}^{-1}\left[\exp\{\psi^{\bar{\nu}}\left( \xi\right)t\}\widehat{\tilde{\varphi}}_j\left( \xi\right)\hat{\varphi_j}\left( \xi\right)\hat{ u}\left( \xi\right)\right] dt\right\vert_0,\forall j\in\mathbf{N}.
\end{eqnarray*}

First we consider $j=0$. Set $Z_t=Z_{t}^{\nu}$ if $\kappa=1$ and $Z_t=Z_{t}^{\bar{\nu}}$ otherwise. For all $\kappa\in\left( 0,1\right]$,
\begin{eqnarray}
	\left\vert\left( I
	-L^{\nu}\right)^{-\kappa}\left(u\ast\varphi_0\right)\right\vert_0 
	&\leq& \left\vert u\ast\varphi_0\right\vert_0\int_{0}^{\infty }t^{\kappa-1}e^{-t}\left\vert\mathbf{E}\tilde{\varphi}_0\left(\cdot+Z_t\right)\right\vert_{L^1\left( \mathbf{R}^d\right)}dt\nonumber\\
	&\leq& C\left\vert u\ast\varphi_0\right\vert_0.\label{q2}
\end{eqnarray}

For $j\neq 0$, use $\eqref{symbol}$.
\begin{eqnarray*}
&&\left\vert\left( I
-L^{\nu}\right)^{-1}\left(u\ast\varphi_j\right)\right\vert_0\\
&=&\left\vert\int_{0}^{\infty }e^{-t}\mathcal{F}^{-1}\left[\exp\{w\left( N^{-j}\right)^{-1}\psi^{\tilde{\nu}_{N^{-j}}}\left( N^{-j}\xi\right)t\}\mathcal{F}\tilde{\varphi}\left(N^{-j} \xi\right)\right]\ast \left(u\ast\varphi_j\right) dt\right\vert_0\\
&\leq& \left\vert u\ast\varphi_j\right\vert_0\int_{0}^{\infty }e^{-t}\left\vert\mathcal{F}^{-1}\left[\exp\{\psi^{\tilde{\nu}_{N^{-j}}}\left( \xi\right)w\left( N^{-j}\right)^{-1}t\}\mathcal{F}\tilde{\varphi}\left( \xi\right)\right]\right\vert_{L^1\left( \mathbf{R}^d\right)}dt\\
&\leq& \left\vert u\ast\varphi_j\right\vert_0\int_{0}^{\infty }e^{-t}\left\vert\mathbf{E}\tilde{\varphi}\left(\cdot+Z_{w\left( N^{-j}\right)^{-1}t}^{\tilde{\nu}_{N^{-j}}}\right)\right\vert_{L^1\left( \mathbf{R}^d\right)}dt,
\end{eqnarray*}
which, by Lemma \ref{lemma2} in Appendix, leads to
\begin{eqnarray}
&&\left\vert\left( I
	-L^{\nu}\right)^{-1}\left(u\ast\varphi_j\right)\right\vert_0\label{q3}\\
&\leq& C\left\vert u\ast\varphi_j\right\vert_0\int_{0}^{\infty }e^{-C_2w\left( N^{-j}\right)^{-1} t}dt\leq Cw\left( N^{-j}\right)\left\vert u\ast\varphi_j\right\vert_0.\nonumber
\end{eqnarray}
Similarly, for $\kappa\in\left( 0,1\right)$,
\begin{eqnarray}
	&&\left\vert\left( I
	-L^{\nu}\right)^{-\kappa}\left(u\ast\varphi_j\right)\right\vert_0\nonumber\\
	&\leq& C\left\vert u\ast\varphi_j\right\vert_0\int_{0}^{\infty }t^{\kappa-1}e^{-t}\left\vert\mathbf{E}\tilde{\varphi}\left(\cdot+Z_{w\left( N^{-j}\right)^{-1}t}^{\widetilde{\bar{\nu}}_{N^{-j}}}\right)\right\vert_{L^1\left( \mathbf{R}^d\right)}dt\nonumber\\
	&\leq& Cw\left( N^{-j}\right)^{\kappa}\left\vert u\ast\varphi_j\right\vert_0.\label{q4}
\end{eqnarray}

Combine $\eqref{q2}-\eqref{q4}$.
\begin{eqnarray*}
	\left\vert\left( I
	-L^{\nu}\right)^{-\kappa}u\right\vert_{\kappa+\beta,\infty}\leq C\left\vert u\right\vert_{\beta,\infty},\forall\kappa\in\left(0,1\right],\forall u\in C_b^{\infty}\left( \mathbf{R}^d\right).
\end{eqnarray*}
By Lemmas \ref{bij} and \ref{rep2}, that means
\begin{eqnarray*}
	\left\vert u\right\vert_{\kappa+\beta,\infty}\leq C\left\vert \left( I
	-L^{\nu}\right)^{\kappa}u\right\vert_{\beta,\infty},\forall u\in C_b^{\infty}\left( \mathbf{R}^d\right).
\end{eqnarray*}

Therefore, $\left\vert u\right\vert_{\kappa+\beta,\infty}<C\left\Vert u\right\Vert _{\nu,\kappa,\beta }<C\left\vert u\right\vert _{\nu,\kappa,\beta }$.
\end{proof}

\begin{corollary}\label{pr44}
	Let $\nu$ be a L\'{e}vy measure satisfying \textbf{A(w,l)}, $\beta\in\left( 0,\infty\right),\kappa\in\left( 0,1\right]$. Then norm $\left\Vert u\right\Vert _{\nu,\kappa,\beta }$ and norm $\left\vert u\right\vert_{\kappa+\beta,\infty}$ are equivalent in $C_b^{\infty}\left( \mathbf{R}^d\right)$.
\end{corollary}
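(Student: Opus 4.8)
The plan is to obtain this equivalence for free by chaining together the two preceding results via transitivity of norm equivalence. Both Proposition \ref{pr3} and Proposition \ref{pr4} take $|u|_{\nu,\kappa,\beta}$ as their common point of comparison, so the norm $|u|_{\nu,\kappa,\beta}$ serves as a bridge between $\left\Vert u\right\Vert_{\nu,\kappa,\beta}$ and $|u|_{\kappa+\beta,\infty}$.

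Concretely, I would first invoke Proposition \ref{pr3}, which asserts that on $C_b^{\infty}\left(\mathbf{R}^d\right)$ there is a constant $C_1>0$ with
\begin{equation*}
C_1^{-1}\left\Vert u\right\Vert_{\nu,\kappa,\beta}\leq \left\vert u\right\vert_{\nu,\kappa,\beta}\leq C_1\left\Vert u\right\Vert_{\nu,\kappa,\beta}.
\end{equation*}
Next I would invoke Proposition \ref{pr4}, which gives a constant $C_2>0$ with
\begin{equation*}
C_2^{-1}\left\vert u\right\vert_{\kappa+\beta,\infty}\leq \left\vert u\right\vert_{\nu,\kappa,\beta}\leq C_2\left\vert u\right\vert_{\kappa+\beta,\infty}.
\end{equation*}
Composing these two chains of inequalities yields
\begin{equation*}
\left(C_1 C_2\right)^{-1}\left\vert u\right\vert_{\kappa+\beta,\infty}\leq \left\Vert u\right\Vert_{\nu,\kappa,\beta}\leq C_1 C_2\left\vert u\right\vert_{\kappa+\beta,\infty},
\end{equation*}
which is exactly the claimed equivalence with $C=C_1 C_2$.

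Since the hypotheses of Corollary \ref{pr44} (namely $\nu$ satisfying \textbf{A(w,l)}, $\beta\in\left(0,\infty\right)$, and $\kappa\in\left(0,1\right]$) are precisely those under which Propositions \ref{pr3} and \ref{pr4} hold, there is no genuine obstacle to overcome; the statement is a formal consequence of transitivity. The only point requiring a moment's care is confirming that both propositions are stated on the same function class $C_b^{\infty}\left(\mathbf{R}^d\right)$ and for the same ranges of $\beta$ and $\kappa$, so that the intermediate norm $\left\vert u\right\vert_{\nu,\kappa,\beta}$ is simultaneously comparable to each of the other two. Having verified that alignment, the proof reduces to a single sentence noting that the two equivalences combine.
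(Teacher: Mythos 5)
Your proposal is correct and is exactly the paper's own proof: the authors also obtain Corollary \ref{pr44} by simply combining Propositions \ref{pr3} and \ref{pr4}, using $\left\vert u\right\vert _{\nu,\kappa,\beta }$ as the common intermediary on $C_b^{\infty}\left( \mathbf{R}^d\right)$. Nothing further is needed.
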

\begin{proof}
	This is a consequence of Propositions \ref{pr3} and \ref{pr4}.
\end{proof}

\begin{corollary}\label{co2}
	Let $\nu$ be a L\'{e}vy measure satisfying \textbf{A(w,l)}, $\kappa\in\left(0,2\right)$ and $\beta\in\left(0,\infty\right)$. $u\in C_b^{\infty}\left(\mathbf{R}^d\right)\cap \tilde{C}^{\kappa+\beta}_{\infty,\infty}\left(\mathbf{R}^d\right)$. Then there exists a constant $C>0$ independent of $j$ such that
	\begin{eqnarray*}
		\left\vert L^{\nu,\kappa}u\right\vert_{0}\leq \left\vert L^{\nu,\kappa}u\right\vert_{\beta,\infty}\leq C \left\vert u\right\vert_{\beta+\kappa,\infty}.
	\end{eqnarray*}	
\end{corollary}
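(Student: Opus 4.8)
The plan is to reduce the substantive inequality $\left\vert L^{\nu,\kappa}u\right\vert_{\beta,\infty}\leq C\left\vert u\right\vert_{\beta+\kappa,\infty}$ to Proposition \ref{pr4}, splitting according to whether $\kappa\in\left(0,1\right]$ or $\kappa\in\left(1,2\right)$. First I would note that $L^{\nu,\kappa}u\in C_b^{\infty}\left(\mathbf{R}^d\right)$ by Lemma \ref{rep} (and the remark following it when $\kappa\in\left(1,2\right)$), so every norm below is applied to a genuine element of $C_b^{\infty}\left(\mathbf{R}^d\right)$. The left inequality $\left\vert L^{\nu,\kappa}u\right\vert_0\leq\left\vert L^{\nu,\kappa}u\right\vert_{\beta,\infty}$ is then immediate from Lemma \ref{equiv} (estimate \eqref{sup}) applied to $L^{\nu,\kappa}u$, once the right-hand bound guarantees $L^{\nu,\kappa}u\in\tilde{C}^{\beta}_{\infty,\infty}\left(\mathbf{R}^d\right)$.

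For $\kappa\in\left(0,1\right]$ the bound is contained in Proposition \ref{pr4}: since $\left\vert u\right\vert_{\nu,\kappa,\beta}=\left\vert u\right\vert_0+\left\vert L^{\nu,\kappa}u\right\vert_{\beta,\infty}$, the equivalence $\left\vert u\right\vert_{\nu,\kappa,\beta}\leq C\left\vert u\right\vert_{\kappa+\beta,\infty}$ yields $\left\vert L^{\nu,\kappa}u\right\vert_{\beta,\infty}\leq C\left\vert u\right\vert_{\kappa+\beta,\infty}$ after discarding the nonnegative term $\left\vert u\right\vert_0$. At the level of dyadic blocks this is the estimate $\left\vert\left(L^{\nu,\kappa}u\right)\ast\varphi_j\right\vert_0\leq Cw\left(N^{-j}\right)^{-\kappa}\left\vert u\ast\varphi_j\right\vert_0$ proved there from $\varphi_j=\varphi_j\ast\tilde{\varphi}_j$ and Corollary \ref{co1}; multiplying by $w\left(N^{-j}\right)^{-\beta}$ and taking $\sup_j$ produces the constant that is independent of $j$.

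For $\kappa\in\left(1,2\right)$ I would set $\kappa/2\in\left(1/2,1\right)$ and invoke the composition definition $L^{\nu,\kappa}=L^{\nu,\kappa/2}\circ L^{\nu,\kappa/2}$ from the remark after Lemma \ref{rep}. Applying Proposition \ref{pr4} with exponent $\kappa/2$ and base smoothness $\beta+\kappa/2$ gives
\begin{eqnarray*}
\left\vert L^{\nu,\kappa/2}u\right\vert_{\beta+\kappa/2,\infty}\leq C\left\vert u\right\vert_{\beta+\kappa,\infty},
\end{eqnarray*}
so that $v:=L^{\nu,\kappa/2}u$ lies in $C_b^{\infty}\left(\mathbf{R}^d\right)\cap\tilde{C}^{\beta+\kappa/2}_{\infty,\infty}\left(\mathbf{R}^d\right)$. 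A second application of Proposition \ref{pr4}, now with exponent $\kappa/2$ and base smoothness $\beta$ to $v$, together with $L^{\nu,\kappa/2}v=L^{\nu,\kappa}u$, then gives
\begin{eqnarray*}
\left\vert L^{\nu,\kappa}u\right\vert_{\beta,\infty}\leq C\left\vert v\right\vert_{\beta+\kappa/2,\infty}=C\left\vert L^{\nu,\kappa/2}u\right\vert_{\beta+\kappa/2,\infty}\leq C\left\vert u\right\vert_{\beta+\kappa,\infty}.
\end{eqnarray*}

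I expect the only delicate point to be the bookkeeping in the case $\kappa\in\left(1,2\right)$: one must know that the intermediate function $L^{\nu,\kappa/2}u$ genuinely belongs to $C_b^{\infty}\left(\mathbf{R}^d\right)\cap\tilde{C}^{\beta+\kappa/2}_{\infty,\infty}\left(\mathbf{R}^d\right)$ before Proposition \ref{pr4} may be applied to it a second time. Its $C_b^{\infty}$ membership is Lemma \ref{rep}, and the finiteness of $\left\vert L^{\nu,\kappa/2}u\right\vert_{\beta+\kappa/2,\infty}$ is exactly what the first application supplies, so the two steps dovetail. The single thing to watch is that the base smoothness is raised to $\beta+\kappa/2$ in the first step (not $\beta$), so that the two increments of $\kappa/2$ accumulate to $\kappa$ and the final index reads $\beta+\kappa$.
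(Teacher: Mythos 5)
Your proposal is correct, and for $\kappa\in\left(0,1\right]$ it coincides with the paper's proof, which simply quotes Proposition \ref{pr4}. For $\kappa\in\left(1,2\right)$ your route differs in organization from the paper's: you iterate Proposition \ref{pr4} twice, first with exponent $\kappa/2$ at base smoothness $\beta+\kappa/2$ and then with exponent $\kappa/2$ at base smoothness $\beta$ applied to the intermediate function $v=L^{\nu,\kappa/2}u$, whereas the paper works directly at the level of a single Littlewood--Paley block, writing $L^{\nu,\kappa}u\ast\varphi_j=u\ast\varphi_j\ast L^{\nu,\kappa/2}\tilde{\varphi}_j\ast L^{\nu,\kappa/2}\tilde{\varphi}_j$ (using $\varphi_j=\varphi_j\ast\tilde{\varphi}_j$ twice) and then applying Young's inequality together with the bound $\left\vert L^{\nu,\kappa/2}\tilde{\varphi}_j\right\vert_{L^1\left(\mathbf{R}^d\right)}\leq Cw\left(N^{-j}\right)^{-\kappa/2}$ of Corollary \ref{co1} to each factor, which yields $\left\vert L^{\nu,\kappa}u\ast\varphi_j\right\vert_0\leq Cw\left(N^{-j}\right)^{-\kappa}\left\vert u\ast\varphi_j\right\vert_0$ in one stroke. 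Since the forward half of Proposition \ref{pr4} is itself proved by exactly this block estimate, the two arguments have the same analytic content; the paper's version buys you freedom from the bookkeeping you correctly flag (one never has to verify that the intermediate function lies in $C_b^{\infty}\left(\mathbf{R}^d\right)\cap\tilde{C}^{\beta+\kappa/2}_{\infty,\infty}\left(\mathbf{R}^d\right)$), while your version makes the semigroup structure of the fractional powers explicit and would generalize immediately to longer compositions. Your handling of the intermediate membership — $C_b^{\infty}$ via Lemma \ref{rep} and finiteness of $\left\vert v\right\vert_{\beta+\kappa/2,\infty}$ from the first application — is exactly what is needed, so there is no gap.
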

\begin{proof}
	By Proposition \ref{pr4}, if $\kappa\in\left(0,1\right]$,
	\begin{eqnarray*}
\left\vert L^{\nu,\kappa}u\right\vert_{0}\leq \left\vert L^{\nu,\kappa}u\right\vert_{\beta,\infty}\leq C\left\vert u\right\vert_{\kappa+\beta,\infty}.
	\end{eqnarray*}
Now suppose $\kappa\in\left(1,2\right)$. $L^{\nu,\kappa}u:=L^{\nu,\kappa/2}\circ L^{\nu,\kappa/2}u$. Then by Corollary \ref{co1},
 	\begin{eqnarray*}
 	&&\left\vert L^{\nu,\kappa}u\ast \varphi_j\right\vert_{0}=\left\vert u\ast \varphi_j\ast L^{\nu,\kappa/2}\tilde{\varphi}_j\ast L^{\nu,\kappa/2}\tilde{\varphi}_j\right\vert_{0}\\
 	&\leq& \left\vert u\ast \varphi_j\right\vert_{0}\left\vert L^{\nu,\kappa/2}\tilde{\varphi}_j\right\vert_{L^1\left(\mathbf{R}^d\right)}\left\vert L^{\nu,\kappa/2}\tilde{\varphi}_j\right\vert_{L^1\left(\mathbf{R}^d\right)}\\
 	&\leq& Cw\left( N^{-j}\right)^{-\kappa}\left\vert u\ast \varphi_j\right\vert_{0},\forall j\in\mathbf{N}.
 \end{eqnarray*}
 Therefore, $L^{\nu,\kappa} u\in\tilde{C}^{\beta}_{\infty,\infty}\left(\mathbf{R}^d\right)$ and $\left\vert L^{\nu,\kappa}u\right\vert_{0}\leq \left\vert L^{\nu,\kappa}u\right\vert_{\beta,\infty}\leq C\left\vert u\right\vert_{\kappa+\beta,\infty}$.
 \end{proof}

\begin{proposition}\label{pr5}
	Let $0<\beta'<\beta$. Then for any $ \varepsilon\in\left(0,1\right)$ and any bounded function $u$ in $\mathbf{R}^d$,	
	\begin{eqnarray*}
		\left\vert u\right\vert_{\beta',\infty}\leq\varepsilon\left\vert u\right\vert_{\beta,\infty}+C_{\varepsilon}\left\vert u\right\vert_{0},
	\end{eqnarray*}
	where $C_{\varepsilon}$ is independent of $u$.
\end{proposition}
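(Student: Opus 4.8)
The plan is to work directly from the definition $|u|_{\beta',\infty}=\sup_j w(N^{-j})^{-\beta'}|u\ast\varphi_j|_0$ and to split the supremum over the dyadic index $j$ into a low-frequency block $0\le j<J$ (finitely many terms) and a high-frequency tail $j\ge J$, with $J$ chosen depending on $\varepsilon$. The two elementary facts I would need are: (a) a uniform $L^1$ bound $\sup_j|\varphi_j|_{L^1(\mathbf R^d)}=:C_\phi<\infty$, which gives $|u\ast\varphi_j|_0\le C_\phi|u|_0$ for every $j$ (because $\varphi_j(x)=N^{jd}\check\phi(N^jx)$ for $j\ge 1$, so its $L^1$ norm equals $|\check\phi|_{L^1}$, and $\varphi_0$ is a fixed Schwartz function); and (b) the geometric decay $w(N^{-j})\le l(N^{-1})^jw(1)$, obtained by iterating the scaling inequality \eqref{scale}, together with $l(N^{-1})<1$.

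For the high-frequency tail I would write, for every $j$,
\[
w(N^{-j})^{-\beta'}|u\ast\varphi_j|_0=w(N^{-j})^{\beta-\beta'}\,w(N^{-j})^{-\beta}|u\ast\varphi_j|_0\le w(N^{-j})^{\beta-\beta'}|u|_{\beta,\infty}.
\]
Since $\beta-\beta'>0$, fact (b) gives $w(N^{-j})^{\beta-\beta'}\le\bigl(l(N^{-1})^{\beta-\beta'}\bigr)^jw(1)^{\beta-\beta'}$, which decreases geometrically to $0$. Hence I can pick $J=J(\varepsilon)$ so large that $\sup_{j\ge J}w(N^{-j})^{\beta-\beta'}\le\varepsilon$, and then $\sup_{j\ge J}w(N^{-j})^{-\beta'}|u\ast\varphi_j|_0\le\varepsilon|u|_{\beta,\infty}$.

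For the finitely many low-frequency terms $0\le j<J$ I would use fact (a):
\[
w(N^{-j})^{-\beta'}|u\ast\varphi_j|_0\le C_\phi\,w(N^{-j})^{-\beta'}|u|_0\le\Bigl(C_\phi\max_{0\le j<J}w(N^{-j})^{-\beta'}\Bigr)|u|_0=:C_\varepsilon|u|_0,
\]
the maximum being finite since it is over a finite set and $w>0$. Taking the supremum over all $j$ and combining the two blocks gives $|u|_{\beta',\infty}\le\max(\varepsilon|u|_{\beta,\infty},C_\varepsilon|u|_0)\le\varepsilon|u|_{\beta,\infty}+C_\varepsilon|u|_0$, as claimed; note that $C_\varepsilon$ depends only on $\varepsilon$ (through $J$), on $\beta'$, $N$, $\phi$ and on $w,l$, but not on $u$.

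The argument is essentially a Bernstein-type interpolation splitting and presents no serious obstacle; the only point requiring care is the quantitative decay of $w(N^{-j})^{\beta-\beta'}$ used to absorb the tail into $\varepsilon|u|_{\beta,\infty}$, which is exactly where the hypothesis $l(N^{-1})<1$ (and hence $w(N^{-j})\to 0$ geometrically) enters. If one prefers to avoid iterating \eqref{scale} by hand, the same decay can be read off from Lemma \ref{kl}(ii) via $w(N^{-j})\le C_0N^{-jr_2}$ with $r_2>0$.
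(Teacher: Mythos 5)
Your argument is correct, but it proceeds differently from the paper's. You split the frequency index into a finite low block $0\le j<J$ and a tail $j\ge J$, absorbing the tail into $\varepsilon\left\vert u\right\vert_{\beta,\infty}$ via the geometric decay $w\left(N^{-j}\right)\le l\left(N^{-1}\right)^{j}w\left(1\right)\to 0$ (which is where $l\left(N^{-1}\right)<1$ enters), and controlling the finitely many remaining terms by $C_{\phi}\left\vert u\right\vert_{0}$. The paper instead applies Young's inequality (the weighted arithmetic--geometric mean inequality) termwise in $j$: it writes $\left\vert u\ast\varphi_j\right\vert_0$ as a product of two factors with exponents $p,q$, chooses $p,q$ so that the leftover power $w\left(N^{-j}\right)^{\left(\beta-\beta'\right)q/p-\beta'}$ is nonnegative and hence uniformly bounded, and thus needs only $\sup_j w\left(N^{-j}\right)<\infty$ rather than decay to zero. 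Both routes rest on the same two ingredients (the uniform $L^1$ bound on $\varphi_j$ and the behaviour of $w\left(N^{-j}\right)$ coming from the scaling factor); yours makes the dependence of $C_{\varepsilon}$ on $\varepsilon$ explicit through the cutoff $J$ and avoids the choice of exponents, while the paper's is uniform in $j$ and slightly shorter. Your closing remark that the inequality is vacuous when $\left\vert u\right\vert_{\beta,\infty}=\infty$ is implicitly needed in both proofs and is fine.
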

\begin{proof}
	It is sufficient to show that for $\forall j\in\mathbf{N}$, 	
	\begin{eqnarray*}
		w\left( N^{-j}\right)^{-\beta'}\left\vert u\ast\varphi_j\right\vert_0\leq \varepsilon\left\vert u\right\vert_{\beta,\infty}+C_{\varepsilon}\left\vert u\right\vert_{0} .
	\end{eqnarray*}
	
	Apply Young's inequality. For any $\epsilon\in\left(0,1\right)$ and any pair of $p,q$ such that $\frac{1}{p}+\frac{1}{q}=1$,
	\begin{eqnarray*}
		\left\vert u\ast\varphi_j\right\vert_0 &=& \left( \epsilon w\left( N^{-j}\right)^{\frac{\beta'-\beta}{p}}\left\vert u\ast\varphi_j\right\vert_0^{1/p}\right)\left(\epsilon^{-1} w\left( N^{-j}\right)^{\frac{\beta-\beta'}{p}}\left\vert u\ast\varphi_j\right\vert_0^{1/q}\right)\\
		&\leq& \frac{\epsilon^p w\left( N^{-j}\right)^{\beta'-\beta}}{p}\left\vert u\ast\varphi_j\right\vert_0+\frac{ w\left( N^{-j}\right)^{\frac{\left(\beta-\beta'\right)q}{p}}}{q\epsilon^q}\left\vert u\ast\varphi_j\right\vert_0,\forall j\in\mathbf{N},
	\end{eqnarray*}
	thus,
	\begin{eqnarray*}
		&&w\left( N^{-j}\right)^{-\beta'}\left\vert u\ast\varphi_j\right\vert_0\\
		&\leq& \frac{\epsilon^p w\left( N^{-j}\right)^{-\beta}}{p}\left\vert u\ast\varphi_j\right\vert_0+\frac{ w\left( N^{-j}\right)^{\frac{\left(\beta-\beta'\right)q}{p}-\beta'}}{q\epsilon^q}\left\vert u\ast\varphi_j\right\vert_0\\
		&\leq& \frac{\epsilon^p }{p}\left\vert u\right\vert_{\beta,\infty}+\frac{1}{q\epsilon^q}w\left( N^{-j}\right)^{\frac{\left(\beta-\beta'\right)q}{p}-\beta'}\left\vert u\ast\varphi_j\right\vert_0,\forall j\in\mathbf{N}.
	\end{eqnarray*}
	
	Choose $p,q$ such that $\frac{\left(\beta-\beta'\right)q}{p}-\beta'\geq 0$, then for some $C>0$,
	\begin{eqnarray*}
		\frac{1}{q\epsilon^q}w\left( N^{-j}\right)^{\frac{\left(\beta-\beta'\right)q}{p}-\beta'}\left\vert u\ast\varphi_j\right\vert_0\leq \frac{C}{q\epsilon^q}\left\vert u\ast\varphi_j\right\vert_0\leq \frac{C}{q\epsilon^q}\left\vert u\right\vert_0,\forall j\in\mathbf{N}.
	\end{eqnarray*}
	Take $\epsilon$ such that $\frac{\epsilon^p }{p}=\varepsilon$ and this is the end the proof.
\end{proof}

\begin{proposition}\label{app}
Let $\beta\in\left(0,\infty\right)$, $u\in\tilde{C}^{\beta}_{\infty,\infty}\left(\mathbf{R}^d\right)$. Then there exists a sequence $ u_n\in C_b^{\infty}\left(\mathbf{R}^d\right)$ such that
\begin{eqnarray*}
\left\vert u\right\vert_{\beta,\infty}\leq \liminf_n\left\vert u_n\right\vert_{\beta,\infty},\quad\left\vert u_n\right\vert_{\beta,\infty}\leq C\left\vert u\right\vert_{\beta,\infty} 
\end{eqnarray*}
for some $C>0$ that only depends on $d,N$, and for any $0<\beta'<\beta$,
\begin{eqnarray*}
\left\vert u_n-u\right\vert_{\beta',\infty}\rightarrow 0 \mbox{ as }n\rightarrow \infty.
\end{eqnarray*}
\end{proposition}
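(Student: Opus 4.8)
The plan is to take $u_n$ to be the partial sums of the Littlewood--Paley decomposition provided by Lemma \ref{equiv}, namely
\[
u_n:=\sum_{j=0}^{n}u\ast\varphi_j .
\]
By Lemma \ref{equiv} each block $u\ast\varphi_j$ is a bounded continuous function with $\left\vert u\ast\varphi_j\right\vert_0\le w\left(N^{-j}\right)^{\beta}\left\vert u\right\vert_{\beta,\infty}$; moreover, since $\varphi_j=\varphi_j\ast\tilde{\varphi}_j$ by $\eqref{convol}$, every derivative satisfies $D^{\gamma}\left(u\ast\varphi_j\right)=\left(u\ast\varphi_j\right)\ast D^{\gamma}\tilde{\varphi}_j$, whence $\left\vert D^{\gamma}\left(u\ast\varphi_j\right)\right\vert_0\le\left\vert u\ast\varphi_j\right\vert_0\left\vert D^{\gamma}\tilde{\varphi}_j\right\vert_{L^1\left(\mathbf{R}^d\right)}<\infty$. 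Thus $u\ast\varphi_j\in C_b^{\infty}\left(\mathbf{R}^d\right)$, and the finite sum $u_n\in C_b^{\infty}\left(\mathbf{R}^d\right)$, as required.

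First I would record the frequency-localization fact that drives all three estimates. Because $\mathcal{F}\varphi_j$ is supported in the annulus $\{N^{j-1}\le\left\vert\xi\right\vert\le N^{j+1}\}$ (and in $\{\left\vert\xi\right\vert\le N\}$ for $j=0$), the Fourier supports of $\varphi_j$ and $\varphi_k$ are disjoint once $\left\vert j-k\right\vert\ge 3$, so $\varphi_j\ast\varphi_k=0$ there. Consequently $u_n\ast\varphi_k=\sum_{\left\vert j-k\right\vert\le 2,\,j\le n}\left(u\ast\varphi_j\right)\ast\varphi_k$, and in particular $u_n\ast\varphi_k=u\ast\varphi_k$ for every $n\ge k+2$. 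Note also that the $L^1$-norms $\left\vert\varphi_k\right\vert_{L^1\left(\mathbf{R}^d\right)}$ are bounded uniformly in $k$ (for $k\ge 1$ they all equal $\left\vert\check{\phi}\right\vert_{L^1\left(\mathbf{R}^d\right)}$).

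The three claims now follow by bookkeeping on these near-diagonal blocks. For the uniform bound I estimate, for each $k$,
\[
w\left(N^{-k}\right)^{-\beta}\left\vert u_n\ast\varphi_k\right\vert_0\le C\sum_{\left\vert j-k\right\vert\le 2}\frac{w\left(N^{-j}\right)^{\beta}}{w\left(N^{-k}\right)^{\beta}}\,w\left(N^{-j}\right)^{-\beta}\left\vert u\ast\varphi_j\right\vert_0,
\]
and use $\eqref{scale}$ to bound the finitely many ratios $w\left(N^{-j}\right)/w\left(N^{-k}\right)$ with $\left\vert j-k\right\vert\le 2$ by a constant depending only on $d,N$; taking the supremum over $k$ yields $\left\vert u_n\right\vert_{\beta,\infty}\le C\left\vert u\right\vert_{\beta,\infty}$. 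For the lower semicontinuity, fix $k$ and observe that $u_n\ast\varphi_k=u\ast\varphi_k$ for $n\ge k+2$, so $\liminf_n\left\vert u_n\right\vert_{\beta,\infty}\ge w\left(N^{-k}\right)^{-\beta}\left\vert u\ast\varphi_k\right\vert_0$; taking the supremum over $k$ gives $\left\vert u\right\vert_{\beta,\infty}\le\liminf_n\left\vert u_n\right\vert_{\beta,\infty}$.

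Finally, for the convergence in the weaker norm with $0<\beta'<\beta$, the localization gives $\left(u-u_n\right)\ast\varphi_k=0$ for $k\le n-2$, while for $k\ge n-1$, using $\eqref{scale}$ together with $\left\vert u\ast\varphi_j\right\vert_0\le w\left(N^{-j}\right)^{\beta}\left\vert u\right\vert_{\beta,\infty}$,
\[
w\left(N^{-k}\right)^{-\beta'}\left\vert\left(u-u_n\right)\ast\varphi_k\right\vert_0\le C\,w\left(N^{-k}\right)^{\beta-\beta'}\left\vert u\right\vert_{\beta,\infty}.
\]
Since $w\left(N^{-k}\right)\to 0$ as $k\to\infty$ and $\beta-\beta'>0$, the tail supremum $\sup_{k\ge n-1}w\left(N^{-k}\right)^{\beta-\beta'}$ tends to $0$ as $n\to\infty$, so $\left\vert u_n-u\right\vert_{\beta',\infty}\to 0$. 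This last quantitative step, where the strict inequality $\beta'<\beta$ is essential to extract decay from the high-frequency tail, is the main point of the proof; everything else is routine manipulation of the block structure.
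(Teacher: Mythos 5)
Your proof is correct and follows essentially the same route as the paper: both take $u_n$ to be partial sums of the Littlewood--Paley decomposition from Lemma \ref{equiv}, exploit the near-diagonal support of the $\mathcal{F}\varphi_j$ so that $u_n\ast\varphi_k$ stabilizes to $u\ast\varphi_k$ for large $n$, and extract decay of the high-frequency tail from $\beta'<\beta$. The only cosmetic difference is that in the uniform bound you compare $w\left(N^{-j}\right)$ with $w\left(N^{-k}\right)$ via the scaling factor, so your constant also picks up $l\left(N^{2}\right)^{\beta}$, whereas the paper keeps all blocks at the same index $j$ and gets a constant depending only on the $L^{1}$-norms of the $\varphi_j$'s.
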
	
\begin{proof}
Set $u_n\left( x\right)=\sum_{j=0}^{n+2}\left( u\ast\varphi_j\right)\left( x\right),n\in\mathbf{N}$. Then 
\begin{eqnarray*}
\left\vert u_n\right\vert_{\beta,\infty}=\sup_j\left\vert \sum_{k=0}^{n+2}u\ast \varphi_k\ast\varphi_j\right\vert_0 w\left( N^{-j}\right)^{-\beta}.
\end{eqnarray*}
By construction of $\varphi_j,j\in\mathbf{N}$ in this note, if $j\geq 1,n\geq j-1$,
\begin{eqnarray*}
	\left\vert \sum_{k=0}^{n+2}u\ast \varphi_k\ast\varphi_j\right\vert_0=\left\vert\sum_{k=j-1}^{j+1}u\ast \varphi_k\ast\varphi_j\right\vert_0=\left\vert u\ast \varphi_j\right\vert_0.
\end{eqnarray*}
If $j\geq 2,n< j-1$,
\begin{eqnarray*}
	\left\vert \sum_{k=0}^{n+2}u\ast \varphi_k\ast\varphi_j\right\vert_0&\leq& \left\vert u\ast \varphi_j\ast\varphi_{j-1}\right\vert_0+\left\vert u\ast \varphi_{j}\ast\varphi_j\right\vert_0\\
	&=&2\left\vert \mathcal{F}^{-1}\phi\right\vert_{L^1\left(\mathbf{R}^d\right)}\left\vert u\ast \varphi_j\right\vert_0.
\end{eqnarray*}
Besides,
\begin{eqnarray*}
	\left\vert \sum_{k=0}^{n+2}u\ast \varphi_k\ast\varphi_0\right\vert_0&\leq& \left\vert u\ast \varphi_0\ast\varphi_{0}\right\vert_0+\left\vert u\ast \varphi_{0}\ast\varphi_1\right\vert_0\\
	&=&\left(\left\vert \varphi_0\right\vert_{L^1\left(\mathbf{R}^d\right)}+\left\vert \varphi_1\right\vert_{L^1\left(\mathbf{R}^d\right)}\right)\left\vert u\ast \varphi_j\right\vert_0.
\end{eqnarray*}
Therefore, for all $n\in\mathbf{N}$,
\begin{eqnarray*}
	\left\vert u_n\right\vert_{\beta,\infty}\leq C\sup_j\left\vert u\ast \varphi_j\right\vert_0 w\left( N^{-j}\right)^{-\beta}\leq C\left\vert u\right\vert_{\beta,\infty}.
\end{eqnarray*}

On the other hand, by Lemma \ref{equiv}, $u\left( x\right)=\sum_{k=0}^{\infty}\left( u\ast\varphi_k\right)\left( x\right)$. Then in the same vein as above,
\begin{eqnarray*}
	\left\vert u\ast\varphi_j\right\vert_{0}&=& \left\vert\sum_{k=0}^{n+2} u\ast\varphi_k\ast \varphi_j+\sum_{k=n+3}^{\infty} u\ast\varphi_k\ast \varphi_j\right\vert_0\\
	&=&\left\vert u_n\ast\varphi_j\right\vert_{0}, \quad\forall n\geq j-1,\forall j\in\mathbf{N},
\end{eqnarray*}
thus, 
\begin{eqnarray*}
	\left\vert u\ast\varphi_j\right\vert_{0}w\left( N^{-j}\right)^{-\beta}\leq\left\vert u_n\right\vert_{\beta,\infty}, \quad\forall n\geq j-1,\forall j\in\mathbf{N},
\end{eqnarray*}
and thus $\left\vert u\right\vert_{\beta,\infty}\leq \liminf_n\left\vert u_n\right\vert_{\beta,\infty}$.

At last, 
\begin{eqnarray*}
	\left\vert u-u_n\right\vert_{\beta',\infty}&=&\sup_j \left\vert\sum_{k=n+3}^{\infty} u\ast\varphi_k\ast \varphi_j\right\vert_0w\left( N^{-j}\right)^{-\beta'}\\
	&=&\sup_{j\geq n+2} \left\vert\sum_{k=n+3}^{\infty} u\ast\varphi_k\ast \varphi_j\right\vert_0 w\left( N^{-j}\right)^{-\beta}w\left( N^{-j}\right)^{\beta-\beta'}\\
	&\leq&C\sup_{j\geq n+2} \left\vert u\ast \varphi_j\right\vert_0 w\left( N^{-j}\right)^{-\beta}w\left( N^{n-j}\right)^{\beta-\beta'}l\left( N^{-n}\right)^{\beta-\beta'}\\
	&\leq& C\left\vert u\right\vert_{\beta,\infty}l\left( N^{-n}\right)^{\beta-\beta'}\rightarrow 0 \mbox{ as }n\rightarrow \infty.
\end{eqnarray*}
\end{proof}

Using the approximating sequence introduced in the lemma above, we can extend $L^{\nu,\kappa}u,\kappa\in\left( 0,2\right)$ to all $u\in\tilde{C}^{\kappa+\beta}_{\infty,\infty}\left(\mathbf{R}^d\right),\beta>0$ as follows:
\begin{eqnarray*}
L^{\nu,\kappa}u\left(x\right)=\lim_{n\rightarrow \infty}L^{\nu,\kappa}u_n\left(x\right),x\in\mathbf{R}^d.
\end{eqnarray*}
The next proposition justifies this definition and addresses continuity of the operator defined in this sense. 
\begin{proposition}\label{cont}
	Let $\nu$ be a L\'{e}vy measure satisfying \textbf{A(w,l)}, $\beta\in\left(0,\infty\right)$ and $\kappa\in\left(0,2\right)$. Then $\eqref{opp}$ is well-defined for all $\kappa$ and all $u\in\tilde{C}^{\kappa+\beta}_{\infty,\infty}\left(\mathbf{R}^d\right)$,
	\begin{eqnarray}\label{ext}
	L^{\nu,\kappa}u\left(x\right)=\lim_{n\rightarrow \infty}L^{\nu,\kappa}u_n\left(x\right),x\in\mathbf{R}^d,
	\end{eqnarray}
and this convergence is uniform with respect to $x$. Moreover, 
	\begin{eqnarray*}
	\left\vert L^{\nu,\kappa}u\right\vert_{0}\leq\left\vert L^{\nu,\kappa}u\right\vert_{\beta,\infty}&\leq& C\left\vert u\right\vert_{\kappa+\beta,\infty}
	\end{eqnarray*}
	for some $C>0$ independent of $u$.
\end{proposition}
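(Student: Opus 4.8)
The plan is to define $L^{\nu,\kappa}u$ as the uniform limit of $L^{\nu,\kappa}u_n$ along the approximating sequence $u_n=\sum_{j=0}^{n+2}u\ast\varphi_j$ supplied by Proposition \ref{app}, and then to transport the estimate of Corollary \ref{co2} to the limit by lower semicontinuity of the Besov norm. The starting point is the per-frequency bound already contained in the proof of Corollary \ref{co2}: using the reproducing identity $\eqref{convol}$, $\varphi_j=\varphi_j\ast\tilde\varphi_j$, together with $\widehat{\varphi}_j(\widehat{\tilde\varphi}_j)^2=\widehat{\varphi}_j$ in the range $\kappa\in(1,2)$, one writes $L^{\nu,\kappa}(u\ast\varphi_j)$ as the convolution of $u\ast\varphi_j$ with one copy of $L^{\nu,\kappa}\tilde\varphi_j$ (for $\kappa\in(0,1]$) or two copies of $L^{\nu,\kappa/2}\tilde\varphi_j$ (for $\kappa\in(1,2)$). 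Corollary \ref{co1}, applied at level $\kappa$ or $\kappa/2\in(0,1)$ and to $g=\tilde\varphi$ so that $\tilde\varphi_j$ scales exactly as the corollary requires, together with Young's inequality then gives, uniformly in $j$, the bound $|L^{\nu,\kappa}(u\ast\varphi_j)|_0\le Cw(N^{-j})^{-\kappa}|u\ast\varphi_j|_0$.

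First I would establish the uniform convergence. For $n>m$ one has $u_n-u_m=\sum_{j=m+3}^{n+2}u\ast\varphi_j$, hence $L^{\nu,\kappa}(u_n-u_m)=\sum_{j=m+3}^{n+2}L^{\nu,\kappa}(u\ast\varphi_j)$, and the per-frequency bound combined with $|u\ast\varphi_j|_0\le w(N^{-j})^{\kappa+\beta}|u|_{\kappa+\beta,\infty}$ yields $|L^{\nu,\kappa}u_n-L^{\nu,\kappa}u_m|_0\le C|u|_{\kappa+\beta,\infty}\sum_{j=m+3}^{n+2}w(N^{-j})^{\beta}$. Since iterating $\eqref{scale}$ gives $w(N^{-j})\le l(N^{-1})^{j}$ with $l(N^{-1})<1$ (cf. Lemma \ref{kl}), the series $\sum_j w(N^{-j})^{\beta}$ is dominated by a convergent geometric series, so its tail tends to $0$ and $\{L^{\nu,\kappa}u_n\}$ is uniformly Cauchy. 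Its uniform limit is bounded and continuous, and I take it as the definition $\eqref{ext}$ of $L^{\nu,\kappa}u$. It is worth stressing that the strict positivity of $\beta$ is exactly what upgrades the mere boundedness of Corollary \ref{co2} into a convergent series here; at the critical regularity $\kappa$ itself the argument would only give boundedness.

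It then remains to pass the bound of Corollary \ref{co2} to the limit. Each $u_n$ lies in $C_b^\infty\cap\tilde C^{\kappa+\beta}_{\infty,\infty}$, so Corollary \ref{co2} and the uniform control $|u_n|_{\kappa+\beta,\infty}\le C|u|_{\kappa+\beta,\infty}$ from Proposition \ref{app} give $|L^{\nu,\kappa}u_n|_{\beta,\infty}\le C|u|_{\kappa+\beta,\infty}$ with $C$ independent of $n$. Uniform convergence implies convergence in $\mathcal{S}'(\mathbf{R}^d)$, hence $(L^{\nu,\kappa}u_n)\ast\varphi_j\to(L^{\nu,\kappa}u)\ast\varphi_j$ uniformly for each fixed $j$; taking $\liminf_n$ frequency by frequency gives $w(N^{-j})^{-\beta}|(L^{\nu,\kappa}u)\ast\varphi_j|_0\le C|u|_{\kappa+\beta,\infty}$ for every $j$, and the supremum over $j$ produces $|L^{\nu,\kappa}u|_{\beta,\infty}\le C|u|_{\kappa+\beta,\infty}$. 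The remaining inequality $|L^{\nu,\kappa}u|_0\le C|L^{\nu,\kappa}u|_{\beta,\infty}$ is then Lemma \ref{equiv}.

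The main obstacle is the bookkeeping in the case $\kappa\in(1,2)$: one must justify the double-convolution representation of $L^{\nu,\kappa}(u\ast\varphi_j)$ through the composition $L^{\nu,\kappa/2}\circ L^{\nu,\kappa/2}$ and verify that $|L^{\nu,\kappa/2}\tilde\varphi_j|_{L^1}$ scales like $w(N^{-j})^{-\kappa/2}$ uniformly in $j$. This uniformity is precisely the scaling invariance encoded by $\eqref{symbol}$ and \textbf{A(w,l)}(iii) that Corollary \ref{co1} already delivers, so the genuinely new content reduces to the elementary but essential summability argument of the second paragraph, which converts the uniform-in-$n$ boundedness into a uniform limit.
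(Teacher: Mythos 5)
Your proposal is correct and follows essentially the same route as the paper: the same approximating sequence $u_n=\sum_{j=0}^{n+2}u\ast\varphi_j$ from Proposition \ref{app}, the same frequency-localized bound via Corollaries \ref{co1}--\ref{co2}, uniform convergence of $L^{\nu,\kappa}u_n$, and passage to the limit band by band. The only (harmless) variation is that you obtain the uniform Cauchy property by summing the tail $\sum_{j>m+2}w(N^{-j})^{-\kappa}\left\vert u\ast\varphi_j\right\vert_0\leq C\left\vert u\right\vert_{\kappa+\beta,\infty}\sum_{j>m+2}w(N^{-j})^{\beta}$ directly, whereas the paper invokes Corollary \ref{co2} at the reduced regularity $\kappa+\beta'$ together with $\left\vert u_n-u_m\right\vert_{\kappa+\beta',\infty}\to 0$; the two computations are equivalent.
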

\begin{proof}
	Since $u\in\tilde{C}^{\kappa+\beta}_{\infty,\infty}\left(\mathbf{R}^d\right)$, by Proposition \ref{app}, there is a a sequence $ u_n\in C_b^{\infty}\left(\mathbf{R}^d\right)$ such that
	\begin{eqnarray*}
		\left\vert u\right\vert_{\kappa+\beta,\infty}\leq \liminf_n\left\vert u_n\right\vert_{\kappa+\beta,\infty},\quad\left\vert u_n\right\vert_{\kappa+\beta,\infty}\leq C\left\vert u\right\vert_{\kappa+\beta,\infty} 
	\end{eqnarray*}
	for some $C>0$ independent of $u$, and for any $0<\beta'<\beta$,
	\begin{eqnarray*}
		\left\vert u_n-u\right\vert_{\kappa+\beta',\infty}\rightarrow 0 \mbox{ as }n\rightarrow \infty,
	\end{eqnarray*}
which, according to Lemma \ref{equiv} and $\eqref{sup}$, indicates $ u\in C\left(\mathbf{R}^d\right)$. Meanwhile, $\left\vert u_n-u\right\vert_0\to 0$ as $n\rightarrow \infty$ and thus $u_n\xrightarrow{n\to\infty}u$ in the weak topology of $ \mathcal{S}'\left(\mathbf{R}^d\right)$. For such a sequence, by Corollary \ref{co2},
   \begin{eqnarray*}
	\left\vert L^{\nu,\kappa}u_n\right\vert_{0}&\leq& \left\vert L^{\nu,\kappa}u_n\right\vert_{\beta,\infty}\leq C \left\vert u_n\right\vert_{\beta+\kappa,\infty},\\
	\left\vert L^{\nu,\kappa}u_n-L^{\nu,\kappa}u_m\right\vert_{0}&\leq& C \left\vert u_n-u_m\right\vert_{\beta'+\kappa,\infty}\xrightarrow{n,m\to\infty}0.
   \end{eqnarray*}	
Therefore, both $L^{\nu,\kappa}u_n,\forall n\in\mathbf{N}$ and $\lim_{n\to\infty}L^{\nu,\kappa}u_n$ are continuous functions, and therefore,
\begin{eqnarray*}
	\mathcal{F}\left[\lim_{n\rightarrow \infty}L^{\nu}u_n\right]=\lim_{n\rightarrow \infty}\psi^{\nu}\mathcal{F}u_n&=&\psi^{\nu}\mathcal{F}u\in\mathcal{S}'\left(\mathbf{R}^d\right),\\
	\mathcal{F}\left[\lim_{n\rightarrow \infty}L^{\nu,\kappa}u_n\right]=\lim_{n\rightarrow \infty}-\left(-\Re\psi^{\nu}\right)^{\kappa} \mathcal{F}u_n&=&-\left(-\Re\psi^{\nu}\right)^{\kappa} \mathcal{F}u\in\mathcal{S}'\left(\mathbf{R}^d\right),\kappa\neq 1.
\end{eqnarray*}
Namely,
\begin{eqnarray*}
L^{\nu,\kappa}u\left(x\right)=\lim_{n\rightarrow \infty}L^{\nu,\kappa}u_n\left(x\right),x\in\mathbf{R}^d.
\end{eqnarray*}
Clearly, this convergence is uniform over $x$.
Now given any $\beta\in\left(0,\infty\right)$, 
\begin{eqnarray*}
	&&w\left( N^{-j}\right)^{-\beta}\left\vert L^{\nu,\kappa}u\ast \varphi_j\right\vert_0= \lim_{n\rightarrow\infty}w\left( N^{-j}\right)^{-\beta}\left\vert L^{\nu,\kappa}u_n\ast \varphi_j\right\vert_0\\
	&\leq&\limsup_{n\rightarrow\infty}\left\vert u_n\right\vert_{\beta+\kappa,\infty}\leq C \left\vert u\right\vert_{\beta+\kappa,\infty},\forall j\in\mathbf{N}.
\end{eqnarray*}
Namely, $\left\vert L^{\nu,\kappa}u\right\vert_{\beta,\infty}\leq C\left\vert u\right\vert_{\beta+\kappa,\infty}$.
\end{proof}

\begin{theorem}\label{thm4}
	Let $\nu$ be a L\'{e}vy measure satisfying \textbf{A(w,l)}, $\beta\in\left( 0,\infty\right),\kappa\in\left( 0,1\right]$. Then norm $\left\vert u\right\vert _{\nu,\kappa,\beta }$ and norm $\left\vert u\right\vert_{\kappa+\beta,\infty}$ are equivalent.	
\end{theorem}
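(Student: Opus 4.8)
The plan is to prove the two bounds $|u|_{\nu,\kappa,\beta}\le C|u|_{\kappa+\beta,\infty}$ and $|u|_{\kappa+\beta,\infty}\le C|u|_{\nu,\kappa,\beta}$ for an arbitrary distribution $u$ for which the relevant side is finite, thereby extending the $C_b^{\infty}$-equivalence of Proposition \ref{pr4} to the full spaces. Both hypotheses force $u$ to be a bounded function (since each involves $|u|_0$ or, via Lemma \ref{equiv}, controls it), so that all Littlewood--Paley pieces $u\ast\varphi_j$ are bounded and continuous.

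For the first bound I would simply appeal to Proposition \ref{cont}. If $|u|_{\kappa+\beta,\infty}<\infty$ then $u\in\tilde{C}^{\kappa+\beta}_{\infty,\infty}(\mathbf{R}^d)$, and Proposition \ref{cont} both defines $L^{\nu,\kappa}u$ through the limit \eqref{ext} and delivers $|L^{\nu,\kappa}u|_{\beta,\infty}\le C|u|_{\kappa+\beta,\infty}$; since $\mathcal{F}[L^{\nu,\kappa}u]=-(-\Re\psi^{\nu})^{\kappa}\mathcal{F}u$ for this limit, it agrees with the multiplier \eqref{opp} entering $|\cdot|_{\nu,\kappa,\beta}$. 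Combining this with $|u|_0\le C|u|_{\kappa+\beta,\infty}$ from Lemma \ref{equiv} gives $|u|_{\nu,\kappa,\beta}=|u|_0+|L^{\nu,\kappa}u|_{\beta,\infty}\le C|u|_{\kappa+\beta,\infty}$.

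For the reverse bound I would re-run the frequency-localised computation of Parts 2 and 3 of the proof of Proposition \ref{pr3}, now for a general $u$ with $|u|_{\nu,\kappa,\beta}<\infty$. Using \eqref{convol} (which gives $\mathcal{F}\varphi_j=\mathcal{F}\varphi_j\,\mathcal{F}\tilde{\varphi}_j$) and inserting $(-\Re\psi^{\nu})^{\pm\kappa}$ on the frequency annulus $\mathrm{supp}\,\mathcal{F}\varphi_j$, one obtains for $j\ge 1$ the multiplier identity
\[
u\ast\varphi_j=(\mathcal{F}^{-1}g_j)\ast(L^{\nu,\kappa}u\ast\varphi_j),\qquad
g_j(\xi)=-\bigl(-\Re\psi^{\nu}(\xi)\bigr)^{-\kappa}\mathcal{F}\tilde{\varphi}(N^{-j}\xi)
\]
(for $\kappa=1$ one uses instead $g_j=(\psi^{\nu})^{-1}\mathcal{F}\tilde{\varphi}(N^{-j}\cdot)$). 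The uniform estimate $|\mathcal{F}^{-1}g_j|_{L^1(\mathbf{R}^d)}\le Cw(N^{-j})^{\kappa}$ has already been established there, so Young's inequality yields $|u\ast\varphi_j|_0\le Cw(N^{-j})^{\kappa}|L^{\nu,\kappa}u\ast\varphi_j|_0$, whence $w(N^{-j})^{-(\kappa+\beta)}|u\ast\varphi_j|_0\le Cw(N^{-j})^{-\beta}|L^{\nu,\kappa}u\ast\varphi_j|_0\le C|L^{\nu,\kappa}u|_{\beta,\infty}$. The frequency $j=0$ is handled by $|u\ast\varphi_0|_0\le|\varphi_0|_{L^1}|u|_0$ together with $w(1)=1$. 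Taking the supremum over $j$ gives $|u|_{\kappa+\beta,\infty}\le C(|u|_0+|L^{\nu,\kappa}u|_{\beta,\infty})=C|u|_{\nu,\kappa,\beta}$.

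The step I expect to be the main obstacle is justifying that this identity and these estimates, used in Proposition \ref{pr3} only for $u\in C_b^{\infty}(\mathbf{R}^d)$, persist for a merely distributional $u$ with $|u|_{\nu,\kappa,\beta}<\infty$. Finiteness of the norm supplies exactly what is needed: $|u|_0<\infty$ makes $u$ bounded, while $|L^{\nu,\kappa}u|_{\beta,\infty}<\infty$ places $L^{\nu,\kappa}u$ in $\tilde{C}^{\beta}_{\infty,\infty}(\mathbf{R}^d)$, hence (Lemma \ref{equiv}) makes it a bounded continuous function, so that $L^{\nu,\kappa}u\ast\varphi_j$ is bounded and its convolution with $\mathcal{F}^{-1}g_j\in L^1$ is legitimate. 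The multiplier manipulation itself is valid in $\mathcal{S}'(\mathbf{R}^d)$ because $\mathcal{F}(u\ast\varphi_j)$ is supported in the annulus $\mathrm{supp}\,\mathcal{F}\varphi_j$, on which, by Lemma \ref{lemma1}, $\Re\psi^{\nu}$ is smooth and bounded away from $0$, so that the factors $(-\Re\psi^{\nu})^{\pm\kappa}$ are smooth symbols there.
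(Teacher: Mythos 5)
Your first inequality is exactly the paper's: Lemma \ref{equiv} for $\left\vert u\right\vert_0$ plus Proposition \ref{cont} for $\left\vert L^{\nu,\kappa}u\right\vert_{\beta,\infty}$. For the reverse inequality you take a slightly different route. The paper does not re-run the multiplier identity on the distributional $u$; it regularizes first, setting $u_n=\sum_{i=0}^{n+2}u\ast\varphi_i\in C_b^{\infty}$, applies the already-proved smooth-function equivalence (Proposition \ref{pr4}) to $u_n$, uses $\left\vert L^{\nu,\kappa}u_n\right\vert_{\beta,\infty}\leq C\left\vert L^{\nu,\kappa}u\right\vert_{\beta,\infty}$, and then transfers the bound back to $u$ via the identity $u_n\ast\varphi_j=u\ast\varphi_j$ for $n\geq j-1$ (with $j=0,1$ handled by $\left\vert u\right\vert_0$, as you also do). Both arguments ultimately rest on the same uniform bound $\left\vert\mathcal{F}^{-1}g_j\right\vert_{L^1}\leq Cw\left(N^{-j}\right)^{\kappa}$ from Parts 2--3 of Proposition \ref{pr3}, and your bookkeeping of the weights $w\left(N^{-j}\right)^{-\kappa-\beta}$ is right.

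The one point where your justification does not hold up as written is the claim that the manipulation is legitimate in $\mathcal{S}'$ because $\Re\psi^{\nu}$ is \emph{smooth} on the annulus. Under \textbf{A(w,l)} the tail of $\nu$ is only required to satisfy $\int_{\left\vert y\right\vert>1}\left\vert y\right\vert^{\alpha_2}\tilde{\nu}_R\left(dy\right)\leq N_0$ with $\alpha_2$ possibly below $1$, so $\psi^{\nu}$ need not even be $C^1$; Lemma \ref{lemma1} gives positivity and two-sided bounds of $-\Re\psi^{\nu}$ on the annulus, not regularity, and the $L^1$ control of $\mathcal{F}^{-1}g_j$ in the paper comes from the probabilistic representation, not from symbol calculus. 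The gap is repairable without changing your architecture: since $\left\vert u\right\vert_0<\infty$, each $u\ast\tilde{\varphi}_j$ already lies in $C_b^{\infty}\left(\mathbf{R}^d\right)$, so you may run the Proposition \ref{pr3} computation on $u\ast\tilde{\varphi}_j$ and use $u\ast\varphi_j=\left(u\ast\tilde{\varphi}_j\right)\ast\varphi_j$ together with the commutation of $L^{\nu,\kappa}$ with convolution against Schwartz functions; this is in substance the paper's partial-sum device. With that substitution your proof is complete.
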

\begin{proof}
	As a consequence of $\eqref{sup}$ and Proposition \ref{cont}, there exists a positive constant $C$ independent of $u$ such that
	\begin{eqnarray}
	\left\vert u\right\vert_0+\left\vert L^{\nu,\kappa}u\right\vert_{\beta,\infty}\leq C\left\vert u\right\vert_{\kappa+\beta,\infty}.
	\end{eqnarray}
	
	Now suppose $\left\vert u\right\vert_0+\left\vert L^{\nu,\kappa}u\right\vert_{\beta,\infty}<\infty$. First, $u$ is a bounded function. Then $u_n=\sum_{i=0}^{n+2}u\ast \varphi_i\in C_b^{\infty}\left( \mathbf{R}^d\right),\forall n\in\mathbf{N}$. Meanwhile, recall that 
	\begin{eqnarray*}
	\left(L^{\nu,\kappa}u\right)_n:= \sum_{i=0}^{n+2}\left(L^{\nu,\kappa}u\right)\ast \varphi_i=L^{\nu,\kappa}u_n\in \tilde{C}^{\kappa+\beta}_{\infty,\infty}\left(\mathbf{R}^d\right)
	\end{eqnarray*}
approximates $L^{\nu,\kappa}u$ and $\left\vert L^{\nu,\kappa}u_n\right\vert_{\beta,\infty}\leq C\left\vert L^{\nu,\kappa}u\right\vert_{\beta,\infty}$. Therefore, by Proposition \ref{pr4},
\begin{eqnarray*}
\left\vert u_n\right\vert_{\kappa+\beta,\infty}\leq C\left(\left\vert u_n\right\vert_0+ \left\vert L^{\nu,\kappa}u_n\right\vert_{\beta,\infty}\right)\leq C\left\vert L^{\nu,\kappa}u_n\right\vert_{\beta,\infty}\leq C\left\vert L^{\nu,\kappa}u\right\vert_{\beta,\infty}.
\end{eqnarray*}
That is to say, for any $j\in\mathbf{N}$,
\begin{eqnarray*}
	w\left(N^{-j}\right)^{-\kappa-\beta}\left\vert u_n\ast\varphi_j\right\vert_{0}\leq C\left\vert L^{\nu,\kappa}u\right\vert_{\beta,\infty}.
\end{eqnarray*}

It suffices to observe that for $j\geq 2, n\geq j-1$, 
\begin{eqnarray*}
	\left\vert u_n\ast\varphi_j\right\vert_{0}=\left\vert u\ast\tilde{\varphi}_j\ast\varphi_j\right\vert_{0}=\left\vert u\ast\varphi_j\right\vert_{0}\leq Cw\left(N^{-j}\right)^{\kappa+\beta}\left\vert L^{\nu,\kappa}u\right\vert_{\beta,\infty},
\end{eqnarray*}
and $\left\vert u_n\ast\varphi_j\right\vert_{0}=\left\vert u\ast\varphi_j\right\vert_{0}\leq C\left\vert u\right\vert_{0}$, $j=0$ or $1$, $n\geq j-1$. Therefore, $\left\vert u\right\vert_{\kappa+\beta,\infty}\leq C\left(\left\vert u\right\vert_{0}+\left\vert L^{\nu,\kappa}u\right\vert_{\beta,\infty}\right)$.
\end{proof}

\begin{theorem}\label{thm5}
	Let $\nu$ be a L\'{e}vy measure satisfying \textbf{A(w,l)}, $\beta\in\left( 0,\infty\right),\kappa\in\left( 0,1\right]$. Then norm $\left\Vert u\right\Vert _{\nu,\kappa,\beta }$ and norm $\left\vert u\right\vert_{\kappa+\beta,\infty}$ are equivalent.	
\end{theorem}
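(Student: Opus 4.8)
The equivalence is already known on $C_b^{\infty}\left(\mathbf{R}^d\right)$ by Corollary \ref{pr44}, so the task is only to remove the smoothness restriction, and the plan is to transcribe the scheme of Theorem \ref{thm4}, with $\left( I-L^{\nu}\right)^{\kappa}$ playing the role of $L^{\nu,\kappa}$. The whole argument is carried out block by block: each Littlewood--Paley piece $u\ast\varphi_j$ belongs to $C_b^{\infty}\left(\mathbf{R}^d\right)$, so every smooth-function result proved above applies to it, and since $\left( I-L^{\nu}\right)^{\kappa}$ acts on such a block through the Fourier symbol $\left( 1-\Re\psi^{\nu}\right)^{\kappa}$ (Lemma \ref{rep2}) it commutes with convolution by $\varphi_j$, giving $\left[\left( I-L^{\nu}\right)^{\kappa}u\right]\ast\varphi_j=\left( I-L^{\nu}\right)^{\kappa}\left( u\ast\varphi_j\right)$. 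Thus $\left( I-L^{\nu}\right)^{\kappa}u$ is recovered from its blocks as in Lemma \ref{equiv}, and its extension to the non-smooth class is legitimized by the limit procedure of Proposition \ref{cont}.

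For the inequality $\left\Vert u\right\Vert_{\nu,\kappa,\beta}\leq C\left\vert u\right\vert_{\kappa+\beta,\infty}$ the key is the forward block bound $\left\vert\left( I-L^{\nu}\right)^{\kappa}\left( u\ast\varphi_j\right)\right\vert_0\leq Cw\left( N^{-j}\right)^{-\kappa}\left\vert u\ast\varphi_j\right\vert_0$, the analogue for $\left( I-L^{\nu}\right)^{\kappa}$ of the block estimate proved for $L^{\nu,\kappa}$ inside Proposition \ref{pr4}. It follows from the block comparison $\left\vert\left( I-L^{\nu}\right)^{\kappa}\left( u\ast\varphi_j\right)\right\vert_0\leq\left\vert L^{\nu,\kappa}\left( u\ast\varphi_j\right)\right\vert_0+C\left\vert u\ast\varphi_j\right\vert_0$ established in the proof of Proposition \ref{pr3} (see \eqref{lower}), together with Corollary \ref{co1}, the error term being absorbed since $w\left( N^{-j}\right)^{\kappa}$ is bounded. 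Multiplying by $w\left( N^{-j}\right)^{-\beta}$ and using $\left\vert u\ast\varphi_j\right\vert_0\leq w\left( N^{-j}\right)^{\kappa+\beta}\left\vert u\right\vert_{\kappa+\beta,\infty}$ turns this into $w\left( N^{-j}\right)^{-\beta}\left\vert\left[\left( I-L^{\nu}\right)^{\kappa}u\right]\ast\varphi_j\right\vert_0\leq C\left\vert u\right\vert_{\kappa+\beta,\infty}$, and the supremum over $j$ is the claim.

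For the reverse inequality $\left\vert u\right\vert_{\kappa+\beta,\infty}\leq C\left\Vert u\right\Vert_{\nu,\kappa,\beta}$, write $v:=\left( I-L^{\nu}\right)^{\kappa}u$, which by hypothesis lies in $\tilde{C}^{\beta}_{\infty,\infty}\left(\mathbf{R}^d\right)$ and hence is bounded by Lemma \ref{equiv}. Inverting on each block, $u\ast\varphi_j=\left( I-L^{\nu}\right)^{-\kappa}\left( v\ast\varphi_j\right)$ (the Fourier transforms agree because $\left( 1-\Re\psi^{\nu}\right)^{-\kappa}\left( 1-\Re\psi^{\nu}\right)^{\kappa}=1$ against the compactly supported $\hat\varphi_j$), and the inverse block estimates \eqref{q2}--\eqref{q4} from the proof of Proposition \ref{pr4}, applied to the smooth function $v\ast\varphi_j$, yield $\left\vert u\ast\varphi_j\right\vert_0\leq Cw\left( N^{-j}\right)^{\kappa}\left\vert v\ast\varphi_j\right\vert_0$. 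Multiplying by $w\left( N^{-j}\right)^{-\kappa-\beta}$ gives $w\left( N^{-j}\right)^{-\kappa-\beta}\left\vert u\ast\varphi_j\right\vert_0\leq C\left\vert v\right\vert_{\beta,\infty}=C\left\Vert u\right\Vert_{\nu,\kappa,\beta}$ for every $j$, which is the desired bound; summing the blocks also shows that $u=\sum_j u\ast\varphi_j$ is bounded and continuous.

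The step with no exact counterpart in Theorem \ref{thm4} — and the one I expect to require the most care — is precisely this recovery of $u$ in the reverse direction. There the working norm $\left\vert u\right\vert_{\nu,\kappa,\beta}$ already contains the term $\left\vert u\right\vert_0$, so boundedness of $u$ is free and one may immediately form the smooth truncations $u_n=\sum_{i=0}^{n+2}u\ast\varphi_i$; here $\left\Vert u\right\Vert_{\nu,\kappa,\beta}=\left\vert\left( I-L^{\nu}\right)^{\kappa}u\right\vert_{\beta,\infty}$ carries no zeroth-order control, so boundedness of $u$ must be bootstrapped from $v$ through the inverse block bounds above before any truncation argument can begin. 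Once this is in place, together with the forward block bound serving as the analogue of Corollary \ref{co2}, the remainder is a direct transcription of the proof of Theorem \ref{thm4}.
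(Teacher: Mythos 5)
Your proposal is correct and follows essentially the same route as the paper: both reduce the statement to the smooth case via the Littlewood--Paley blocks (using that $u\ast\varphi_j\in C_b^{\infty}$ and that $\left( I-L^{\nu}\right)^{\kappa}$ commutes with convolution by $\varphi_j$, justified by the limit procedure), and both rest on the same forward block bound $\left\vert \left( I-L^{\nu}\right)^{\kappa}\left( u\ast\varphi_j\right)\right\vert_0\leq Cw\left( N^{-j}\right)^{-\kappa}\left\vert u\ast\varphi_j\right\vert_0$ and the inverse block estimates \eqref{q2}--\eqref{q4}; the paper merely packages these through the truncations $u_n=\sum_{i=0}^{n+2}u\ast\varphi_i$ and Corollary \ref{pr44}, which, since $u_n\ast\varphi_j=u\ast\varphi_j$ for $n\geq j-1$, amounts to the same block-by-block computation. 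Your closing observation --- that boundedness of $u$ is not free in the reverse direction and must be recovered from $v=\left( I-L^{\nu}\right)^{\kappa}u$ via the inverse block bounds --- is exactly the point the paper handles with the estimate $\left\vert u_n\right\vert_0\leq C\left\vert \left( I-L^{\nu}\right)^{\kappa}u\right\vert_{\beta,\infty}$.
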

\begin{proof}
$\kappa=1$ has been covered by Theorem \ref{thm4} and Proposition \ref{pr5}. Let us consider $\kappa\in\left( 0,1\right)$. 

First assume the finiteness of $\left\vert u\right\vert_{\kappa+\beta,\infty}$. Then by Lemma \ref{equiv}, $u$ is a bounded and continuous function. Set $u_n=\sum_{i=0}^{n+2}u\ast \varphi_i\in C_b^{\infty}\left( \mathbf{R}^d\right),n\in\mathbf{N}$. We have known that $\left\vert u_n-u\right\vert_{0}\leq C\left\vert u_n-u\right\vert_{\kappa+\beta',\infty}\xrightarrow{n\to\infty}0,\forall \beta'\in\left( 0,\beta\right)$. Hence, 
\begin{eqnarray*}
	\mathcal{F}\left[\lim_{n\rightarrow \infty}\left(I-L^{\nu}\right)^{\kappa}u_n\right]=\lim_{n\rightarrow \infty}\left(1-\Re\psi^{\mu}\right)^{\kappa} \mathcal{F}u_n=\left(1-\Re\psi^{\mu}\right)^{\kappa} \mathcal{F}u\in\mathcal{S}'\left(\mathbf{R}^d\right),
\end{eqnarray*}
namely, $\left(I-L^{\nu}\right)^{\kappa}u$ is well-defined and $\left(I-L^{\nu}\right)^{\kappa}u=\lim_{n\rightarrow \infty}\left(I-L^{\nu}\right)^{\kappa}u_n$. By Lemma \ref{equiv} and Corollary \ref{pr44},
\begin{eqnarray*}
	&&\lim_{n,m\rightarrow \infty}\left\vert\left(I-L^{\nu}\right)^{\kappa}u_n-\left(I-L^{\nu}\right)^{\kappa}u_m\right\vert_0\\
	&\leq& C\lim_{n,m\rightarrow \infty}\left\vert\left(I-L^{\nu}\right)^{\kappa}u_n-\left(I-L^{\nu}\right)^{\kappa}u_m\right\vert_{\beta'}\leq C\lim_{n,m\rightarrow \infty}\left\vert u_n-u_m\right\vert_{\kappa+\beta'}=0.
\end{eqnarray*}
Then the convergence is uniform on $\mathbf{R}^d$. Hence, for any $j\in\mathbf{N}$,
\begin{eqnarray*}
	&&w\left(N^{-j}\right)^{-\beta}\left\vert\left(I-L^{\nu}\right)^{\kappa}u\ast\varphi_j\right\vert_0=w\left(N^{-j}\right)^{-\beta}\lim_{n\rightarrow \infty}\left\vert\left(I-L^{\nu}\right)^{\kappa}u_n\ast\varphi_j\right\vert_{0}\\
	&\leq& \lim_{n\rightarrow \infty}\left\vert\left(I-L^{\nu}\right)^{\kappa}u_n\ast\varphi_j\right\vert_{\beta}\leq C\lim_{n\rightarrow \infty}\left\vert u_n\right\vert_{\kappa+\beta}\leq C\left\vert u\right\vert_{\kappa+\beta},
\end{eqnarray*}
i.e. $\left\Vert u\right\Vert _{\nu,\kappa,\beta }\leq C\left\vert u\right\vert_{\kappa+\beta}$.

If $\left\Vert u\right\Vert _{\nu,\kappa,\beta }$ is finite, then the approximating functions of $\left(I-L^{\nu}\right)^{\kappa}u$
\begin{eqnarray*}
\left(\left(I-L^{\nu}\right)^{\kappa}u\right)_n=\left(I-L^{\nu}\right)^{\kappa}u_n\in C_b^{\infty}\left( \mathbf{R}^d\right)\cap \tilde{C}^{\beta}_{\infty,\infty}\left( \mathbf{R}^d\right).
\end{eqnarray*}
Because $\left(I-L^{\nu}\right)^{\kappa}$ is a bijection on $ C_b^{\infty}\left( \mathbf{R}^d\right)$, $u_n\in  C_b^{\infty}\left( \mathbf{R}^d\right)$. Then Corollary \ref{pr44} implies immediately 
\begin{eqnarray*}
	\left\vert u_n\right\vert_{0}\leq C\left\vert u_n\right\vert_{\kappa+\beta,\infty}\leq C\left\vert\left(I-L^{\nu}\right)^{\kappa}u_n\right\vert_{\beta,\infty}\leq C\left\vert\left(I-L^{\nu}\right)^{\kappa}u\right\vert_{\beta,\infty}.
\end{eqnarray*}

For $j\geq 2, n\geq j-1$, 
\begin{eqnarray*}
	\left\vert u_n\ast\varphi_j\right\vert_{0}=\left\vert u\ast\tilde{\varphi}_j\ast\varphi_j\right\vert_{0}=\left\vert u\ast\varphi_j\right\vert_{0}\leq Cw\left(N^{-j}\right)^{\kappa+\beta}\left\vert \left(I-L^{\nu}\right)^{\kappa}u\right\vert_{\beta,\infty},
\end{eqnarray*}
and for $j=0$ or $1$, $n\geq j-1$, 
\begin{eqnarray*}
\left\vert u_n\ast\varphi_j\right\vert_{0}=\left\vert u\ast\varphi_j\right\vert_{0}\leq C\sup_{n}\left\vert u_n\right\vert_{0}\leq C\left\vert\left(I-L^{\nu}\right)^{\kappa}u\right\vert_{\beta,\infty}.
\end{eqnarray*}
Therefore, $\left\vert u\right\vert_{\kappa+\beta,\infty}\leq C\left\vert \left(I-L^{\nu}\right)^{\kappa}u\right\vert_{\beta,\infty}$.
\end{proof}

\begin{theorem}\label{thm6}
	Let $\nu$ be a L\'{e}vy measure satisfying \textbf{A(w,l)}, $\beta\in\left( 0,\infty\right),\kappa\in\left( 0,1\right]$. Then norm $\left\Vert u\right\Vert _{\nu,\kappa,\beta }$ and norm $\left\vert u\right\vert _{\nu,\kappa,\beta }$ are equivalent.	
\end{theorem}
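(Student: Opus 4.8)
The plan is to route both norms through the common intermediate quantity $\left\vert u\right\vert_{\kappa+\beta,\infty}$ and then invoke transitivity of norm equivalence. Indeed, the substantive work has already been done in the two preceding theorems: Theorem \ref{thm4} establishes that $\left\vert u\right\vert_{\nu,\kappa,\beta}$ and $\left\vert u\right\vert_{\kappa+\beta,\infty}$ are equivalent, while Theorem \ref{thm5} establishes that $\left\Vert u\right\Vert_{\nu,\kappa,\beta}$ and $\left\vert u\right\vert_{\kappa+\beta,\infty}$ are equivalent. Crucially, both of these statements are phrased for the full distribution spaces (not merely for $C_b^{\infty}\left(\mathbf{R}^d\right)$, as in Proposition \ref{pr3}), so they genuinely share $\left\vert\cdot\right\vert_{\kappa+\beta,\infty}$ as a common reference, and both identify their respective spaces with $\tilde{C}^{\kappa+\beta}_{\infty,\infty}\left(\mathbf{R}^d\right)$.

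First I would combine the upper bound from Theorem \ref{thm4} with the lower bound from Theorem \ref{thm5} to control $\left\vert u\right\vert_{\nu,\kappa,\beta}$ by $\left\Vert u\right\Vert_{\nu,\kappa,\beta}$, and symmetrically combine the upper bound from Theorem \ref{thm5} with the lower bound from Theorem \ref{thm4} for the reverse direction. Concretely, for $u$ in the common space one gets
\begin{eqnarray*}
\left\vert u\right\vert_{\nu,\kappa,\beta}\leq C\left\vert u\right\vert_{\kappa+\beta,\infty}\leq C\left\Vert u\right\Vert_{\nu,\kappa,\beta},\\
\left\Vert u\right\Vert_{\nu,\kappa,\beta}\leq C\left\vert u\right\vert_{\kappa+\beta,\infty}\leq C\left\vert u\right\vert_{\nu,\kappa,\beta},
\end{eqnarray*}
with $C$ depending only on the data already recorded in Theorems \ref{thm4} and \ref{thm5}. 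This is exactly the two-sided bound asserted in Theorem \ref{thm6}.

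There is essentially no obstacle to overcome here; the only point to watch is that all three norms are being compared on one and the same space of distributions, so that finiteness of any one of them places $u$ in the common domain $\tilde{C}^{\kappa+\beta}_{\infty,\infty}\left(\mathbf{R}^d\right)$ where every inequality above is available. This is precisely what the identifications in Theorems \ref{thm4} and \ref{thm5} guarantee. The cases $\kappa=1$ and $\kappa\in\left(0,1\right)$ require no separate treatment at this stage, since each is already absorbed into the hypotheses of the two theorems being chained together.
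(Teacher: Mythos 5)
Your proposal is correct and matches the paper's proof, which simply cites Theorems \ref{thm4} and \ref{thm5} and chains the two equivalences through the common intermediate norm $\left\vert u\right\vert_{\kappa+\beta,\infty}$. You have merely spelled out the transitivity argument that the paper leaves implicit.
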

\begin{proof}
	This is an immediate consequence of Theorems \ref{thm4} and \ref{thm5}.
\end{proof}

\section{Solution Estimates for Smooth Inputs}
\subsection{Existence and Uniqueness}

\begin{theorem}\label{thm1}
	Let $\nu$ be a L\'{e}vy measure, $\alpha\in\left( 0,2\right), \beta\in\left(0,1\right), \lambda\geq 0$. Assume that $f\left( t,x\right)\in  C_b^{\infty}\left(H_T\right)\cap \tilde{C}^{\beta}_{\infty,\infty}\left(H_T\right)$. Then there is a unique solution $u\in\left( t,x\right)\in C_b^{\infty}\left(H_T\right)$ to 
	\begin{eqnarray}\label{eeq1}
	\partial_t u\left( t,x\right)&=&L^{\nu}u\left( t,x\right)-\lambda u\left( t,x\right)+f\left( t,x\right), \\
	u\left( 0,x\right)&=& 0,\qquad\left( t,x\right)\in \left[0,T\right]\times\mathbf{R}^d.\nonumber
	\end{eqnarray}
\end{theorem}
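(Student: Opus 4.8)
The plan is to construct the solution explicitly through the Duhamel (variation-of-constants) representation attached to the L\'{e}vy process $Z_t^{\nu}$, then verify smoothness and the equation by differentiating under the integral and the expectation, and finally to establish uniqueness by an It\^{o}-formula argument. Concretely, I would set
\begin{eqnarray*}
u\left(t,x\right)=\int_0^t e^{-\lambda\left(t-s\right)}\mathbf{E}\left[f\left(s,x+Z_{t-s}^{\nu}\right)\right]ds,\quad\left(t,x\right)\in H_T,
\end{eqnarray*}
which is the natural candidate, since $P_{\tau}g\left(x\right):=\mathbf{E}\left[g\left(x+Z_{\tau}^{\nu}\right)\right]$ is the transition semigroup of $Z_t^{\nu}$ whose generator is $L^{\nu}$. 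Boundedness is immediate from $\left\vert u\right\vert_0\leq\left(\lambda^{-1}\wedge T\right)\left\vert f\right\vert_0$. (Only $f\in C_b^{\infty}$ is actually used here; the membership $f\in\tilde{C}^{\beta}_{\infty,\infty}$ becomes relevant for the regularity estimates of later sections.)

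Next I would establish that $u\in C_b^{\infty}\left(H_T\right)$. Because $f\left(s,\cdot\right)\in C_b^{\infty}$ with all derivatives bounded uniformly in $s$, dominated convergence lets me pass every spatial derivative under the integral and the expectation, giving $D^{\gamma}u\left(t,x\right)=\int_0^t e^{-\lambda\left(t-s\right)}\mathbf{E}\left[D^{\gamma}f\left(s,x+Z_{t-s}^{\nu}\right)\right]ds$, which is bounded. Using Lemma \ref{Lop} (so that $L^{\nu}$ maps $C_b^{\infty}$ into itself, commutes with $D^{\gamma}$, and the integrand defining $L^{\nu}f$ is absolutely $\nu$-integrable) together with Fubini's theorem, I can also interchange $L^{\nu}$ with both the time integral and the expectation to obtain $L^{\nu}u\left(t,x\right)=\int_0^t e^{-\lambda\left(t-s\right)}\mathbf{E}\left[L^{\nu}f\left(s,x+Z_{t-s}^{\nu}\right)\right]ds$. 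The generator identity $\frac{d}{d\tau}P_{\tau}g=L^{\nu}P_{\tau}g=P_{\tau}L^{\nu}g$ for $g\in C_b^{\infty}$ follows from the It\^{o} formula exactly as in the proof of Lemma \ref{bij}. Differentiating the representation in $t$ then produces $f\left(t,x\right)$ from the boundary term at $s=t$, $-\lambda u$ from the exponential, and $L^{\nu}u$ from the $\tau$-derivative of $P_{t-s}$, yielding
\begin{eqnarray*}
\partial_t u\left(t,x\right)=f\left(t,x\right)-\lambda u\left(t,x\right)+L^{\nu}u\left(t,x\right),\quad u\left(0,\cdot\right)=0.
\end{eqnarray*}

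From the equation itself I then bootstrap the remaining derivatives: $\partial_t u=f-\lambda u+L^{\nu}u$ is bounded, and differentiating this relation repeatedly in $t$ and $x$ while invoking $D^{\gamma}L^{\nu}=L^{\nu}D^{\gamma}$ shows every mixed derivative $\partial_t^k D^{\gamma}u$ is a finite combination of bounded functions, so $u\in C_b^{\infty}\left(H_T\right)$. For uniqueness, let $w$ be the difference of two such solutions, so that $\partial_t w=L^{\nu}w-\lambda w$ and $w\left(0,\cdot\right)=0$. Applying the It\^{o} formula to $s\mapsto e^{-\lambda s}w\left(t-s,x+Z_s^{\nu}\right)$ on $\left[0,t\right]$ and taking expectations, the drift integrand collapses to $e^{-\lambda s}\left(-\lambda w-\partial_t w+L^{\nu}w\right)\left(t-s,x+Z_s^{\nu}\right)=0$ by the equation, whence $\mathbf{E}\left[e^{-\lambda s}w\left(t-s,x+Z_s^{\nu}\right)\right]$ is constant in $s$ and evaluation at $s=t$ gives $w\left(t,x\right)=e^{-\lambda t}\mathbf{E}\left[w\left(0,x+Z_t^{\nu}\right)\right]=0$.

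The main obstacle I anticipate is the rigorous justification of the interchanges of $\partial_t$, $L^{\nu}$, the time integral, and the expectation, in particular verifying the generator identity $\partial_{\tau}P_{\tau}=L^{\nu}P_{\tau}$ and the absolute integrability required for Fubini, both of which rest on the uniform bounds furnished by Lemma \ref{Lop}. Once these commutations are licensed, the Duhamel computation producing the equation and the It\^{o} uniqueness argument are routine.
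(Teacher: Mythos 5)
Your proposal is correct and follows essentially the same route as the paper: the same Duhamel candidate $u\left(t,x\right)=\int_0^t e^{-\lambda\left(t-s\right)}\mathbf{E}f\left(s,x+Z_{t-s}^{\nu}\right)ds$, smoothness via dominated convergence and Lemma \ref{Lop}, verification of the equation via the It\^{o} formula (the paper applies It\^{o} to $e^{-\lambda\left(r-s\right)}f\left(s,x+Z_r^{\nu}-Z_s^{\nu}\right)$ and integrates in $s$, which is the same computation as your differentiation of the semigroup representation), and the identical It\^{o}-based uniqueness argument.
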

\begin{proof}
	\textsc{Existence. }Denote $F\left(r,Z^{\nu}_r\right)=e^{-\lambda\left( r-s\right)}f\left(s, x+Z^{\nu}_r-Z^{\nu}_s\right), s\leq r\leq t,$ and apply the It\^{o} formula to $F\left(r,Z^{\nu}_r\right)$ on $\left[s,t\right]$.
	\begin{eqnarray*}
		&& e^{-\lambda\left( t-s\right)}f\left(s, x+Z^{\nu}_t-Z^{\nu}_s\right)-f\left( s,x\right)\\
		&=& -\lambda\int_s^t  F\left(r,Z^{\nu}_{r}\right)dr+\int_s^t\int \chi_{\alpha}\left( y\right)y\cdot\nabla F\left(r, Z^{\nu}_{r-}\right)\tilde{J}\left(dr,dy\right)\\
		&&+ \int_s^t \int \left[ F\left(r,Z^{\nu}_{r-}+y\right)-F\left(r,Z^{\nu}_{r-}\right)-\chi_{\alpha}\left(y\right) y\cdot \nabla F\left(r,Z^{\nu}_{r-}\right)\right]J\left( dr,dy\right).
	\end{eqnarray*}
	Take expectation for both sides and use the stochastic Fubini theorem,
	\begin{eqnarray*}
		&&  e^{-\lambda\left( t-s\right)}\mathbf{E}f\left(s, x+Z^{\nu}_t-Z^{\nu}_s\right)-f\left( s,x\right)\\
		&=& -\lambda\int_s^t  e^{-\lambda\left( r-s\right)}\mathbf{E}f\left(s, x+Z^{\nu}_r-Z^{\nu}_s\right)dr+\int_s^t L^{\nu} e^{-\lambda\left( r-s\right)}\mathbf{E}f\left(s, x+Z^{\nu}_r-Z^{\nu}_s\right)dr.
	\end{eqnarray*}
	Integrate both sides over $\left[0,t\right]$ with respect to $s$ and obtain
	\begin{eqnarray*}
		&&  \int_0^t e^{-\lambda\left( t-s\right)}\mathbf{E}f\left(s, x+Z^{\nu}_t-Z^{\nu}_s\right)ds-\int_0^t f\left( s,x\right)ds\\
		&=& -\lambda\int_0^t\int_0^r  e^{-\lambda\left( r-s\right)}\mathbf{E}f\left(s, x+Z^{\nu}_r-Z^{\pi}_s\right)dsdr\\
		&&+\int_0^t L^{\nu}\int_0^r  e^{-\lambda\left( r-s\right)}\mathbf{E}f\left(s, x+Z^{\nu}_r-Z^{\nu}_s\right)dsdr,
	\end{eqnarray*}
	which shows $u\left(t,x\right)=\int_0^t e^{-\lambda\left( t-s\right)}\mathbf{E}f\left(s, x+Z_{t-s}^{\nu}\right)ds$ solves $\eqref{eeq1}$ in the integral sense. Obviously, as a result of  the dominated convergence theorem and Fubini's theorem, $u\in C^{\infty}_b\left( H_T\right)$. And by the equation, $u$ is continuously differentiable in $t$.
	
	\textsc{Uniqueness. }Suppose there are two solutions $u_1,u_2$ solving the equation, then $u:=u_1-u_2$ solves 
	\begin{eqnarray}\label{uni}
	\partial_t u\left( t,x\right)&=&L^{\nu} u\left(t,x\right)-\lambda u\left(t,x\right),\\
	u\left( 0,x\right)&=& 0.\nonumber
	\end{eqnarray}
	
	Fix any $t\in\left[0,T\right]$. Apply the It\^{o} formula to $v\left( t-s,Z^{\nu}_s\right):=e^{-\lambda s}u\left(t-s,x+Z^{\nu}_s\right)$, $0\leq s\leq t,$ over $\left[0,t\right]$ and take expectation for both sides of the resulting identity, then 
	\begin{eqnarray*}
	u\left( t,x\right)=-\mathbf{E}\int_0^t e^{-\lambda s}\left[ \left(-\partial_t u-\lambda u+L^{\nu}u\right)\circ\left(t-s,x+Z^{\nu}_{s-}\right)\right]ds=0.
	\end{eqnarray*}
\end{proof}

\subsection{H\"{o}lder-Zygmund Estimates of the Solution}

Since $f\left(t,x\right)\in C_b^{\infty}\left( H_T\right)\cap \tilde{C}^{\beta}_{\infty,\infty}\left(H_T\right)$, by Lemma \ref{equiv}, 
\begin{eqnarray*}
f\left(t,x\right)&=&\left(f\left(t,\cdot\right)\ast \varphi_0\left(\cdot\right)\right)\left(x\right)+\sum_{j=1}^{\infty}\left(f\left(t,\cdot\right)\ast \varphi_j\left(\cdot\right)\right)\left(x\right)\\
&:=&f_0\left(t,x\right)+\sum_{j=1}^{\infty}f_j\left(t,x\right).
\end{eqnarray*}
Accordingly, $u_j\left(t,x\right)=u\left(t,x\right)\ast\varphi_j\left(x\right)=\int_0^t e^{-\lambda\left( t-s\right)}\mathbf{E}f_j\left(s, x+Z_{t-s}^{\nu}\right)ds, j=0,1\ldots$ is the solution to $\eqref{eeq1}$ with input $f_j=f\ast \varphi_j$. Then by Lemmas \ref{rep} and \ref{Lop}, for $\kappa\in\left(0,1\right]$,
\begin{eqnarray*}
	&&L^{\mu,\kappa}u_j\left(t,x\right)=u_j\ast L^{\mu,\kappa}\tilde{\varphi}_j\\
	&=&\int_0^t e^{-\lambda\left( t-s\right)}\mathbf{E}\int f_j\left(s, x-z+Z_{t-s}^{\nu}\right)L^{\mu,\kappa}\tilde{\varphi}_j\left( z\right)dz ds\\
	&=&\int_0^t e^{-\lambda\left( t-s\right)}\int f_j\left(s, z\right)\mathbf{E}L^{\mu,\kappa}\tilde{\varphi}_j\left( x-z+Z_{t-s}^{\nu}\right)dz ds,
\end{eqnarray*}
and then
\begin{eqnarray}
L^{\mu,\kappa}u_j\left(t,x\right)
	&=& \int_0^t\int \mathcal{F}^{-1}\left[e^{\left(\psi ^{\nu }\left( \xi \right)-\lambda\right)\left(t-s\right)}\widehat{L^{\mu,\kappa}\tilde{\varphi}_j}\right]\left( x-z\right)f_j\left(s, z\right)dz ds \nonumber\\
	&=& C \int_0^t \int \mathcal{F}^{-1}\left[\tilde{F}_{t-s}^{j,\kappa}\left( \xi \right)\right]\left( z\right)f_j\left(s, x-z\right)dzds, \label{rp}
\end{eqnarray}
where for $j\in\mathbf{N}$,
\begin{eqnarray}
	\quad&&\tilde{F}_{t}^{j,\kappa}\left( \xi \right) :=	\left\{\begin{array}{ll}\label{rr}
	-e^{\left(\psi ^{\nu }\left( \xi \right)-\lambda\right)t} \left( -\Re\psi^{\mu}\left( \xi\right)\right)^{\kappa} \hat{\tilde{\varphi}}_j\left(\xi\right) ,& \xi \in \mathbf{R}^{d},\kappa\in\left( 0,1\right),\\
	e^{\left(\psi ^{\nu }\left( \xi \right)-\lambda\right)t} \psi^{\mu}\left( \xi\right) \hat{\tilde{\varphi}}_j\left(\xi\right), & \xi \in \mathbf{R}^{d},\kappa=1,\\
	e^{\left(\psi ^{\nu }\left( \xi \right)-\lambda\right)t}  \hat{\tilde{\varphi}}_j\left(\xi\right), & \xi \in \mathbf{R}^{d},\kappa=0.
	\end{array}\right. 
\end{eqnarray}

In particular, when $j\in\mathbf{N}_{+}$,
\begin{eqnarray}
\mathcal{F}^{-1}\left[\tilde{F}_{t}^{j,\kappa}\left( \xi \right)\right]\left( x\right)=e^{-\lambda t}w\left( N^{-j}\right)^{-\kappa}N^{jd}H^{j,\kappa}_{w\left( N^{-j}\right)^{-1}t}\left( N^j x\right),\label{rrp}
\end{eqnarray}
where for $j\in\mathbf{N}_{+}$,
\begin{eqnarray*}\label{rrr}
H_{t}^{j,\kappa}:=\left\{\begin{array}{ll}
		\mathcal{F}^{-1}\left[-\exp\{\psi ^{\tilde{\nu}_{N^{-j}} }\left( \xi \right)t\} \left( -\Re\psi^{\tilde{\mu}_{N^{-j}}}\left( \xi\right)\right)^{\kappa} \hat{\tilde{\varphi}}\left(\xi\right)\right] ,& \kappa\in\left( 0,1\right),\\
		\mathcal{F}^{-1}\left[\exp\{\psi ^{\tilde{\nu}_{N^{-j}} }\left( \xi \right)t\} \psi^{\tilde{\mu}_{N^{-j}}}\left( \xi\right) \hat{\tilde{\varphi}}\left(\xi\right)\right] ,& \kappa=1,\\
		\mathcal{F}^{-1}\left[\exp\{\psi ^{\tilde{\nu}_{N^{-j}} }\left( \xi \right)t\} \hat{\tilde{\varphi}}\left(\xi\right)\right] ,& \kappa=0.
	\end{array}\right. 
\end{eqnarray*}
\begin{lemma}\label{lem1}
Let $\kappa\in\left[ 0,1\right]$. For all $t\in\left[0,T\right]$ and $j\in\mathbf{N}_{+}$, there is $C_1,C_2>0$ depending only on $\alpha,d,N,\alpha_1,\alpha_2,\kappa$ such that
\begin{eqnarray*}
	\int \left\vert H^{j,\kappa}_{t}\left(  x\right)\right\vert dx\leq C_1e^{-C_2 t}.
\end{eqnarray*}
\end{lemma}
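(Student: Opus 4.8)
The plan is to realize $H^{j,\kappa}_t$ as a convolution of two factors, isolating all the $t$-dependence (which carries the exponential decay) into a single fixed Schwartz function, while the $\kappa$-dependent part contributes only a uniform-in-$j$ $L^{1}$ bound. Throughout, write $a^{j,\kappa}(\xi)=-\big(-\Re\psi^{\tilde\mu_{N^{-j}}}(\xi)\big)^{\kappa}$ for $\kappa\in(0,1)$, $a^{j,1}(\xi)=\psi^{\tilde\mu_{N^{-j}}}(\xi)$, and $a^{j,0}\equiv1$, so that by the definition of $H^{j,\kappa}_t$ one has $\mathcal{F}[H^{j,\kappa}_t]=e^{\psi^{\tilde\nu_{N^{-j}}}t}\,a^{j,\kappa}\,\hat{\tilde\varphi}$.

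The first ingredient is the elementary identity that for any $g\in\mathcal{S}(\mathbf{R}^d)$ and any L\'evy measure $\pi$, the characteristic function of $Z_t^{\pi}$ gives $\mathcal{F}\big[\mathbf{E}g(\cdot+Z_t^{\pi})\big](\xi)=e^{\psi^{\pi}(\xi)t}\hat g(\xi)$; in particular $\mathcal{F}^{-1}\big[e^{\psi^{\tilde\nu_{N^{-j}}}(\xi)t}\hat{\tilde\varphi}(\xi)\big]=\mathbf{E}\tilde\varphi(\cdot+Z_t^{\tilde\nu_{N^{-j}}})$. Next I fix, once and for all, a function $\Phi\in\mathcal{S}(\mathbf{R}^d)$ with $\hat\Phi\in C_0^{\infty}$, $\hat\Phi\equiv1$ on the fixed annulus $\{N^{-2}\le|\xi|\le N^2\}=\operatorname{supp}\hat{\tilde\varphi}$, and $\operatorname{supp}\hat\Phi\subset\{N^{-3}\le|\xi|\le N^3\}$. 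Since $\hat\Phi\hat{\tilde\varphi}=\hat{\tilde\varphi}$, the defining multiplier of $H^{j,\kappa}_t$ factors as
\[
e^{\psi^{\tilde\nu_{N^{-j}}}t}\,a^{j,\kappa}\,\hat{\tilde\varphi}=\big(a^{j,\kappa}\,\hat\Phi\big)\cdot\big(e^{\psi^{\tilde\nu_{N^{-j}}}t}\,\hat{\tilde\varphi}\big),
\]
and taking inverse Fourier transforms converts the product into a convolution: for every $j\in\mathbf{N}_{+}$,
\[
H^{j,\kappa}_t=\big(L^{\tilde\mu_{N^{-j}},\kappa}\Phi\big)\ast\big(\mathbf{E}\tilde\varphi(\cdot+Z_t^{\tilde\nu_{N^{-j}}})\big),
\]
where $L^{\tilde\mu_{N^{-j}},\kappa}\Phi=\mathcal{F}^{-1}\big[a^{j,\kappa}\hat\Phi\big]$ in the sense of $\eqref{opp}$ (for $\kappa=0$ this factor is $\Phi$, and one may just use $H^{j,0}_t=\mathbf{E}\tilde\varphi(\cdot+Z_t^{\tilde\nu_{N^{-j}}})$ directly).

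I would then estimate by submultiplicativity of $L^{1}$ under convolution,
\[
\int\big|H^{j,\kappa}_t(x)\big|\,dx\le\big\|L^{\tilde\mu_{N^{-j}},\kappa}\Phi\big\|_{L^{1}}\cdot\big\|\mathbf{E}\tilde\varphi(\cdot+Z_t^{\tilde\nu_{N^{-j}}})\big\|_{L^{1}}.
\]
Because $\mu$ satisfies \textbf{A(w,l)}, the rescaled measures $\tilde\mu_{N^{-j}}$ obey (iii) uniformly in $j$, and since $\Phi\in\mathcal{S}(\mathbf{R}^d)$ is a fixed function, the final assertions of Lemma \ref{Lop} (case $\kappa=1$) and Lemma \ref{rep} (case $\kappa\in(0,1)$) give $\big\|L^{\tilde\mu_{N^{-j}},\kappa}\Phi\big\|_{L^{1}}\le C$ with $C$ independent of $j$. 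For the second factor, $\tilde\varphi$ has Fourier transform supported in the fixed annulus $\{N^{-2}\le|\xi|\le N^2\}$, so Lemma \ref{lemma2} of the Appendix furnishes constants $C_1,C_2>0$, independent of $j$, with $\big\|\mathbf{E}\tilde\varphi(\cdot+Z_t^{\tilde\nu_{N^{-j}}})\big\|_{L^{1}}\le C_1e^{-C_2t}$. Multiplying the two bounds yields the claim.

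The genuine analytic content—the exponential decay in $t$, uniform in $j$—is entirely carried by Lemma \ref{lemma2}, whose decay rate originates in the strict negativity $\Re\psi^{\tilde\nu_{N^{-j}}}(\xi)\le-c_0<0$ on the annulus, uniform in $j$, coming from the non-degeneracy in \textbf{A(w,l)}(i). Consequently, the only point requiring care in the present lemma is the bookkeeping of the convolution, namely arranging the factorization so that the $t$-dependence sits in the single fixed function $\tilde\varphi$ while the $\kappa$-dependent symbol $a^{j,\kappa}$ acts on the fixed bump $\Phi$; once this separation is made, the uniform-in-$j$ $L^{1}$ estimates of Lemmas \ref{Lop}--\ref{rep} and the decay of Lemma \ref{lemma2} combine at once. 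I do not anticipate any obstacle beyond verifying $\hat\Phi\hat{\tilde\varphi}=\hat{\tilde\varphi}$ and that $\tilde\mu_{N^{-j}}$ inherits (iii) with constants independent of $j$.
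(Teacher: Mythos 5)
Your proof is correct and follows essentially the same route as the paper: the paper likewise factors the multiplier as $\bigl(a^{j,\kappa}\hat{\tilde{\phi}}\bigr)\cdot\bigl(e^{\psi^{\tilde{\nu}_{N^{-j}}}t}\hat{\tilde{\varphi}}\bigr)$ with an auxiliary bump $\tilde{\phi}$ (built from the partition of unity rather than chosen abstractly, but playing exactly the role of your $\Phi$), writes $H^{j,\kappa}_t=\bigl(L^{\tilde{\mu}_{N^{-j}},\kappa}\tilde{\phi}\bigr)\ast\mathbf{E}\tilde{\varphi}\bigl(\cdot+Z_t^{\tilde{\nu}_{N^{-j}}}\bigr)$, and concludes by Young's inequality together with the uniform-in-$j$ $L^1$ bounds of Lemmas \ref{Lop}--\ref{rep} and the exponential decay from Lemma \ref{lemma2}.
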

\begin{proof}
	Recall that $\hat{\tilde{\varphi}}\left(\xi\right)=\phi\left( N\xi\right)+\phi\left( \xi\right)+\phi\left( N^{-1}\xi\right)$. If we introduce $\tilde{\phi}$ such that $\hat{\tilde{\phi}}\left(\xi\right)=\phi\left( N^2\xi\right)+\phi\left( N\xi\right)+\phi\left( \xi\right)+\phi\left( N^{-1}\xi\right)+\phi\left( N^{-2}\xi\right)$, then  $\hat{\tilde{\varphi}}=\hat{\tilde{\varphi}}\hat{\tilde{\phi}}$ and $\hat{\tilde{\varphi}},\hat{\tilde{\phi}}\in C_0^{\infty}\left(\mathbf{R}^d\right)$. We write
	\begin{eqnarray*}
		H_{t}^{j,0} &=& \mathcal{F}^{-1}[\hat{\tilde{\phi}}\left( \xi \right)\exp \left\{ \psi ^{\tilde{\nu}_{N^{-j}}}\left( \xi \right)
		t\right\}\hat{\tilde{\varphi}}\left( \xi \right) ],\\
		H_{t}^{j,1} &=& \mathcal{F}^{-1}[\psi^{\tilde{\mu}_{N^{-j}}}\left( \xi\right) \hat{\tilde{\phi}}\left( \xi \right)\exp \left\{ \psi ^{\tilde{\nu}_{N^{-j}}}\left( \xi \right)
		t\right\}\hat{\tilde{\varphi}}\left( \xi \right) ],\\
		H_{t}^{j,\kappa} &=& \mathcal{F}^{-1}[-\left( -\Re\psi^{\tilde{\mu}_{N^{-j}}}\left( \xi\right)\right)^{\kappa} \hat{\tilde{\phi}}\left( \xi \right)\exp \left\{ \psi ^{\tilde{\nu}_{N^{-j}}}\left( \xi \right)
		t\right\}\hat{\tilde{\varphi}}\left( \xi \right) ],\quad \kappa\in\left( 0,1\right).
	\end{eqnarray*}

	Thus, for all $\kappa\in\left[ 0,1\right]$, 
	\begin{eqnarray*}
		H_{t}^{j,\kappa}\left( x\right) = \int\left[L^{\tilde{\mu}_{N^{-j}},\kappa}\tilde{\phi}\left(x-z\right)\right]\cdot\left[\mathbf{E}\tilde{\varphi}\left( z+Z_t^{\tilde{\nu}_{N^{-j}}} \right)\right]dz,x\in\mathbf{R}^d,
	\end{eqnarray*}
and thus
\begin{eqnarray*}
	\left\vert H_{t}^{j,\kappa}\right\vert_{L^1\left(\mathbf{R}^d\right)}
	\leq \left\vert L^{\tilde{\mu}_{N^{-j}},\kappa}\tilde{\phi}\right\vert_{L^1\left(\mathbf{R}^d\right)}\left\vert \mathbf{E}\tilde{\varphi}\left( \cdot+Z_t^{\tilde{\nu}_{N^{-j}}} \right)\right\vert_{L^1\left(\mathbf{R}^d\right)}.
\end{eqnarray*}
	
	Since $\nu$ verifies \textbf{A(w,l)}, by Lemma \ref{lemma2}, there exist positive constants $C_1,C_2$ depending only on $\alpha,d,N,\alpha_1,\alpha_2$, such that
	\begin{eqnarray*}
	\left\vert\mathbf{E}\tilde{\varphi}\left( \cdot+Z_t^{\tilde{\nu}_{N^{-j}}} \right)\right\vert_{L^1\left(\mathbf{R}^d\right)} <C_1e^{-C_2 t}.
	\end{eqnarray*}
Combining Lemmas \ref{Lop} and \ref{rep}, we then arrive at the conclusion.
\end{proof}
\begin{lemma}\label{lem2}
Let $\kappa\in\left[ 0,1\right]$. For all $t\in\left[0,T\right]$ and $j\in\mathbf{N}$, there is $C>0$ depending only on $\alpha,d,N,\alpha_1,\alpha_2$ such that
\begin{eqnarray*}
\int \left\vert\mathcal{F}^{-1}\left[\tilde{F}_{t}^{0,\kappa}\left( \xi \right)\right]\left( x\right)\right\vert dx&<&C,\\
\int_0^t\int \left\vert\mathcal{F}^{-1}\left[\tilde{F}_{r}^{j,\kappa}\left( \xi \right)\right]\left( x\right)\right\vert dxdr&<&C,j\in\mathbf{N}_{+}.
\end{eqnarray*}
\end{lemma}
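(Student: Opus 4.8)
The plan is to prove the two estimates separately, treating the frequency block $j=0$ by a direct convolution bound and the blocks $j\in\mathbf{N}_{+}$ by exploiting the exact self-similar structure \eqref{rrp} together with Lemma \ref{lem1}. The heuristic is that for $j\geq 1$ the pointwise-in-time $L^{1}$ norm carries a singular factor $w\left(N^{-j}\right)^{-\kappa}$ that blows up as $j\to\infty$, and only the time integration tames it; whereas for $j=0$ the relevant frequencies are bounded, so no singularity appears and a uniform-in-$t$ bound is available without integrating.

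For $j\in\mathbf{N}_{+}$ I would substitute \eqref{rrp} and change variables $y=N^{j}x$, which gives
\begin{equation*}
\int\left\vert\mathcal{F}^{-1}\left[\tilde{F}_{r}^{j,\kappa}\right]\left(x\right)\right\vert dx
= e^{-\lambda r}w\left(N^{-j}\right)^{-\kappa}\int\left\vert H^{j,\kappa}_{w\left(N^{-j}\right)^{-1}r}\left(y\right)\right\vert dy.
\end{equation*}
Lemma \ref{lem1} bounds the last integral by $C_{1}\exp\{-C_{2}w(N^{-j})^{-1}r\}$, so using $e^{-\lambda r}\leq 1$ and integrating over $r\in\left[0,t\right]$ (extending to $\left[0,\infty\right)$ since the integrand is positive),
\begin{equation*}
\int_{0}^{t}\int\left\vert\mathcal{F}^{-1}\left[\tilde{F}_{r}^{j,\kappa}\right]\left(x\right)\right\vert dx\,dr
\leq C_{1}w\left(N^{-j}\right)^{-\kappa}\int_{0}^{\infty}e^{-C_{2}w\left(N^{-j}\right)^{-1}r}dr
=\frac{C_{1}}{C_{2}}w\left(N^{-j}\right)^{1-\kappa}.
\end{equation*}
Since $\kappa\leq 1$ and, by Lemma \ref{kl}(ii), $w(N^{-j})\leq C_{0}N^{-jr_{2}}\leq C_{0}N^{-r_{2}}$ is bounded uniformly in $j\geq 1$, the right-hand side is bounded by a constant independent of $j$ and $t$. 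This is exactly the point where integration in time is indispensable: the extra factor $w(N^{-j})$ it produces cancels the singular part of $w(N^{-j})^{-\kappa}$ precisely because $\kappa\leq 1$.

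For $j=0$ no scaling is available, so I would argue directly. Pick $\tilde{\phi}_{0}\in\mathcal{S}(\mathbf{R}^{d})$ with $\hat{\tilde{\phi}}_{0}\in C_{0}^{\infty}(\mathbf{R}^{d})$ and $\hat{\tilde{\phi}}_{0}\equiv 1$ on the compact support of $\hat{\tilde{\varphi}}_{0}$ (recall $\tilde{\varphi}_{0}=\varphi_{0}+\varphi_{1}$, whose transform vanishes for $\left\vert\xi\right\vert>N^{2}$), so that $\hat{\tilde{\varphi}}_{0}=\hat{\tilde{\phi}}_{0}\hat{\tilde{\varphi}}_{0}$. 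Factoring the symbol in \eqref{rr} accordingly and using $\mathcal{F}\left[\mathbf{E}\tilde{\varphi}_{0}\left(\cdot+Z_{t}^{\nu}\right)\right]=e^{\psi^{\nu}t}\hat{\tilde{\varphi}}_{0}$, one obtains for every $\kappa\in\left[0,1\right]$ the convolution identity
\begin{equation*}
\mathcal{F}^{-1}\left[\tilde{F}_{t}^{0,\kappa}\right]\left(x\right)
= e^{-\lambda t}\left(L^{\mu,\kappa}\tilde{\phi}_{0}\right)\ast\left(\mathbf{E}\tilde{\varphi}_{0}\left(\cdot+Z_{t}^{\nu}\right)\right)\left(x\right),
\end{equation*}
with the convention $L^{\mu,0}\tilde{\phi}_{0}:=\tilde{\phi}_{0}$. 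By Young's inequality the $L^{1}$ norm is at most $e^{-\lambda t}\left\vert L^{\mu,\kappa}\tilde{\phi}_{0}\right\vert_{L^{1}\left(\mathbf{R}^{d}\right)}\left\vert\mathbf{E}\tilde{\varphi}_{0}\left(\cdot+Z_{t}^{\nu}\right)\right\vert_{L^{1}\left(\mathbf{R}^{d}\right)}$. Here $\left\vert L^{\mu,\kappa}\tilde{\phi}_{0}\right\vert_{L^{1}\left(\mathbf{R}^{d}\right)}<\infty$ follows from Lemma \ref{Lop} (for $\kappa=1$) and Lemma \ref{rep} (for $\kappa\in\left(0,1\right)$) applied with reference measure $\mu$, i.e. $R=1$, while translation invariance of Lebesgue measure together with Jensen's inequality give $\left\vert\mathbf{E}\tilde{\varphi}_{0}\left(\cdot+Z_{t}^{\nu}\right)\right\vert_{L^{1}\left(\mathbf{R}^{d}\right)}\leq\left\vert\tilde{\varphi}_{0}\right\vert_{L^{1}\left(\mathbf{R}^{d}\right)}$ uniformly in $t$. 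Combined with $e^{-\lambda t}\leq 1$ this yields the first bound.

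The only genuinely delicate aspect is the bookkeeping of uniformity in the second estimate: one must confirm that $w(N^{-j})^{1-\kappa}$ stays bounded across all $j\in\mathbf{N}_{+}$ and all $\kappa\in\left[0,1\right]$, which reduces to $\kappa\leq 1$ and the polynomial majorant for $w$ from Lemma \ref{kl}. Everything else is a routine assembly of the $L^{1}$ estimates already in hand (Lemmas \ref{Lop}, \ref{rep}, \ref{lem1}), the Lévy--Khintchine formula, and Young's inequality.
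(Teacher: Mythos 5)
Your proof is correct and follows essentially the same route as the paper: for $j\geq 1$ the paper likewise substitutes \eqref{rrp}, changes the time variable, and invokes Lemma \ref{lem1} to produce the factor $w\left(N^{-j}\right)^{1-\kappa}$, which is bounded since $\kappa\leq 1$ and $w\left(N^{-j}\right)$ is uniformly bounded for $j\geq 1$. For $j=0$ the paper is marginally more direct, recognizing $\mathcal{F}^{-1}\left[\tilde{F}_{t}^{0,\kappa}\right]$ as $e^{-\lambda t}\mathbf{E}L^{\mu,\kappa}\tilde{\varphi}_0\left(\cdot+Z_t^{\nu}\right)$ and applying Jensen's inequality together with Lemmas \ref{Lop} and \ref{rep} to $\tilde{\varphi}_0\in\mathcal{S}\left(\mathbf{R}^d\right)$ itself, so your auxiliary multiplier $\tilde{\phi}_0$ is unnecessary though harmless.
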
	
\begin{proof}
First by Lemmas \ref{Lop} and \ref{rep},
	\begin{eqnarray*}
	\int \left\vert\mathcal{F}^{-1}\left[\tilde{F}_{t}^{0,\kappa}\left( \xi \right)\right]\left( x\right)\right\vert dx\leq \int \left\vert \mathbf{E}L^{\mu,\kappa}\tilde{\varphi}_0\left(x+Z^{\nu}_t\right)\right\vert dx\leq \int \left\vert L^{\mu,\kappa}\tilde{\varphi}_0\left(x\right)\right\vert dx<C.
	\end{eqnarray*}

For $j\in\mathbf{N}_{+}$, use Lemma \ref{lem1}.
\begin{eqnarray*}
	&&\int_0^t\int \left\vert\mathcal{F}^{-1}\left[\tilde{F}_{r}^{j,\kappa}\left( \xi \right)\right]\left( x\right)\right\vert dxdr\\
	&\leq& \int_0^t\int \left\vert w\left( N^{-j}\right)^{-\kappa}H^{j,\kappa}_{w\left( N^{-j}\right)^{-1}r}\left(x\right)\right\vert dxdr\\
	&\leq& w\left( N^{-j}\right)^{1-\kappa}\int_0^{\infty}\int \left\vert H^{j,\kappa}_{r}\left(x\right)\right\vert dxdr<C.
\end{eqnarray*}
\end{proof}
\begin{corollary}\label{col1}
	Let $\kappa\in\left[ 0,1\right]$ and $u$ be the solution to $\eqref{eeq1}$ and $\mu$ be the reference measure. Then there exists $C>0$ depending only on $\alpha,d,N,\kappa,\alpha_1,\alpha_2, T$ such that
	\begin{eqnarray*}
	\left\vert L^{\mu,\kappa}u_j\right\vert_0\leq C\left\vert f_j\right\vert_0, j\in\mathbf{N}.
	\end{eqnarray*} 
\end{corollary}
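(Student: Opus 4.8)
The plan is to read the bound directly off the convolution representation \eqref{rp}, feeding in the kernel estimates already collected in Lemma \ref{lem2}. Since \eqref{rp} exhibits $L^{\mu,\kappa}u_j\left(t,x\right)$ as a space--time convolution of $f_j$ against the kernel $\mathcal{F}^{-1}[\tilde{F}^{j,\kappa}_{t-s}]$, the first step is simply to move the absolute value inside both integrals and factor out the sup norm of $f_j$:
\begin{eqnarray*}
\left\vert L^{\mu,\kappa}u_j\left(t,x\right)\right\vert\leq C\left\vert f_j\right\vert_0\int_0^t\int\left\vert\mathcal{F}^{-1}\left[\tilde{F}_{t-s}^{j,\kappa}\left( \xi \right)\right]\left( z\right)\right\vert dz\,ds,
\end{eqnarray*}
a bound that is uniform in $x$. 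The entire estimate thus reduces to controlling the space--time $L^1$ norm of the kernel by a constant independent of $\left(t,x\right)$.

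Then I would split into the two cases already separated in Lemma \ref{lem2}. For $j=0$, the lemma furnishes the pointwise-in-time bound $\int\left\vert\mathcal{F}^{-1}[\tilde{F}^{0,\kappa}_{t-s}]\right\vert dz<C$ for each $s\in\left[0,t\right]\subseteq\left[0,T\right]$; integrating over $s$ then produces the factor $t\leq T$, so that $\int_0^t\int\left\vert\mathcal{F}^{-1}[\tilde{F}^{0,\kappa}_{t-s}]\right\vert dz\,ds\leq CT$. For $j\in\mathbf{N}_{+}$, the change of variables $r=t-s$ converts the double integral into $\int_0^t\int\left\vert\mathcal{F}^{-1}[\tilde{F}^{j,\kappa}_{r}]\right\vert dz\,dr$, which is exactly the quantity bounded by a $j$-independent constant in the second assertion of Lemma \ref{lem2} (this is where the parabolic scaling \eqref{rrp} and the exponential $L^1$-decay from Lemma \ref{lem1} do the real work, compensating the growth $w(N^{-j})^{-\kappa}$ against the integrable time factor). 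Taking the supremum over $\left(t,x\right)\in H_T$ yields $\left\vert L^{\mu,\kappa}u_j\right\vert_0\leq C\left\vert f_j\right\vert_0$ in both cases.

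I do not expect a genuine obstacle: all of the analytic difficulty has been absorbed into Lemmas \ref{lem1} and \ref{lem2}, which themselves rest on the $L^1$ bounds for $L^{\tilde{\mu}_R,\kappa}$ from Lemmas \ref{Lop} and \ref{rep} and on the exponential $L^1$-decay of $\mathbf{E}\tilde{\varphi}(\cdot+Z^{\tilde{\nu}_{N^{-j}}}_t)$. The only point that warrants care is bookkeeping: applying the correct form of Lemma \ref{lem2} to each case ($j=0$ versus $j\geq1$), and allowing the final constant to depend on $T$, which the statement permits. For completeness one may note that the dependence of $C$ on $\alpha,d,N,\kappa,\alpha_1,\alpha_2,T$ is inherited verbatim from Lemma \ref{lem2}.
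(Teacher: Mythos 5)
Your proposal is correct and follows essentially the same route as the paper: both read the estimate directly off the representation \eqref{rp}, pull out $\left\vert f_j\right\vert_0$, and invoke Lemma \ref{lem2} to bound the space--time $L^1$ norm of the kernel uniformly in $j$. Your extra care in separating the $j=0$ case (pointwise-in-time bound times $T$) from the $j\geq 1$ case (already time-integrated bound) is a bookkeeping refinement the paper glosses over, not a different argument.
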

\begin{proof}
	Recall that 
	\begin{eqnarray*}
		L^{\mu,\kappa}u_j\left(t,x\right)=C \int_0^t \int \mathcal{F}^{-1}\left[\tilde{F}_{t-s}^{j,\kappa}\left( \xi \right)\right]\left( z\right)f_j\left(s, x-z\right)dzds, j\in\mathbf{N}. 
	\end{eqnarray*}
Therefore, by Lemma \ref{lem2}, for all $t\in\left[0,T\right]$,
\begin{eqnarray*}
	\left\vert L^{\mu,\kappa}u_j\right\vert_0\leq C\left\vert f_j\right\vert_0 \int_0^T \int \left\vert \mathcal{F}^{-1}\left[\tilde{F}_{t-s}^{j,\kappa}\left( \xi \right)\right]\left( z\right)\right\vert dzds \leq C\left\vert f_j\right\vert_0, j\in\mathbf{N}. 
\end{eqnarray*}
\end{proof}
\begin{lemma}\label{lem3}
	Let $\kappa\in\left[ 0,1\right]$ and $\mu$ be the reference measure. Both $\mu$ and $\nu$ satisfy \textbf{A(w,l)}. Then there is $C>0$ depending only on $\alpha,d,N,\alpha_1,\alpha_2,\kappa$, such that for all $0\leq s<t$,
	\begin{eqnarray*}
		\left\vert\mathbf{E}\left[ L^{\mu,\kappa}\tilde{\varphi}_0\left(\cdot+Z^{\nu}_{t}\right)-L^{\mu,\kappa}\tilde{\varphi}_0\left(\cdot+Z^{\nu}_{s}\right)\right]\right\vert_{L^1\left(\mathbf{R}^d\right)}\leq C\left( t-s\right).
	\end{eqnarray*}
\end{lemma}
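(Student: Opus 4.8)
The plan is to set $g := L^{\mu,\kappa}\tilde{\varphi}_0$, to recognize the displayed quantity as $P_t g - P_s g$, where $P_r h(x) := \mathbf{E}\left[ h\left( x+Z^{\nu}_r\right)\right]$ is the convolution semigroup of $Z^{\nu}$, and to exploit two facts: (a) $P_r$ is a contraction on $L^1\left( \mathbf{R}^d\right)$, and (b) $P_t g - P_s g = \int_s^t P_r L^{\nu} g\,dr$. Granting $L^{\nu}g\in L^1$, Minkowski's integral inequality together with the $L^1$-contractivity of $P_r$ gives
\[
\left\vert P_t g - P_s g\right\vert_{L^1\left(\mathbf{R}^d\right)} \le \int_s^t \left\vert P_r L^{\nu} g\right\vert_{L^1\left(\mathbf{R}^d\right)}\,dr \le (t-s)\,\left\vert L^{\nu} g\right\vert_{L^1\left(\mathbf{R}^d\right)},
\]
which is the assertion once $\left\vert L^{\nu}g\right\vert_{L^1\left(\mathbf{R}^d\right)}$ is bounded by a constant of the advertised type. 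The contractivity is the elementary estimate $\int\left\vert\int h\left( x+y\right)P_r\left( dy\right)\right\vert dx \le \int\int\left\vert h\left( x+y\right)\right\vert dx\,P_r\left( dy\right) = \left\vert h\right\vert_{L^1\left(\mathbf{R}^d\right)}$, using that the law $P_r$ of $Z^{\nu}_r$ is a probability measure and Lebesgue measure is translation invariant; the integral identity is obtained exactly as in Lemma \ref{bij}, by applying the It\^{o} formula to $g\left( x+Z^{\nu}_r\right)$ on $\left[ s,t\right]$, taking expectations (the compensated jump integral has zero mean since $g,\nabla g, L^{\nu}g$ are bounded), and invoking the stochastic Fubini theorem.

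The heart of the matter is to show $L^{\nu}g = L^{\nu}L^{\mu,\kappa}\tilde{\varphi}_0 \in L^1\left( \mathbf{R}^d\right)$ with norm controlled by $\alpha,d,N,\alpha_1,\alpha_2,\kappa$. The obstacle is that $g$, unlike $\tilde{\varphi}_0$, need not lie in $\mathcal{S}\left( \mathbf{R}^d\right)$, so Lemma \ref{Lop} cannot be quoted verbatim; a careful reading of its $L^1$-bound, however, shows that it only uses finiteness of $\left\vert\varphi\right\vert_{L^1}, \left\vert\nabla\varphi\right\vert_{L^1},\left\vert\nabla^2\varphi\right\vert_{L^1}$ (via Taylor expansion in $x$ and Fubini) together with (iii) in \textbf{A(w,l)}. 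It therefore suffices to control these three norms for $g$. Since $L^{\mu,\kappa}$ is a Fourier multiplier it commutes with differentiation, and its probabilistic representations $\eqref{kap}$, $\eqref{rp1}$ yield $D^{\gamma}g = L^{\mu,\kappa}D^{\gamma}\tilde{\varphi}_0$ for every multi-index $\gamma$. As $D^{\gamma}\tilde{\varphi}_0\in\mathcal{S}\left( \mathbf{R}^d\right)$, Lemma \ref{Lop} (for $\kappa=1$) and Lemma \ref{rep} (for $\kappa\in\left( 0,1\right)$, the case $\kappa=0$ being trivial since then $g=\tilde{\varphi}_0$) give $D^{\gamma}g \in L^1\left( \mathbf{R}^d\right)$ with norm depending only on $\alpha,d,N,\alpha_1,\alpha_2,\kappa$; the only $\varphi$-dependence is through the fixed functions $D^{\gamma}\tilde{\varphi}_0$, determined by $d$ and $N$. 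Feeding the bounds for $\left\vert\gamma\right\vert\le 2$ into the $L^1$-estimate of Lemma \ref{Lop} applied to $\varphi=g$ and the measure $\nu$ then delivers $\left\vert L^{\nu}g\right\vert_{L^1\left(\mathbf{R}^d\right)}\le C\left( \alpha,d,N,\alpha_1,\alpha_2,\kappa\right)$.

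The difficulty to watch is precisely this non-Schwartz issue for $g$: one must verify that the commutation $D^{\gamma}L^{\mu,\kappa}\tilde{\varphi}_0 = L^{\mu,\kappa}D^{\gamma}\tilde{\varphi}_0$ and the ensuing $L^1$ bounds follow from Lemmas \ref{Lop} and \ref{rep} without assuming $g\in\mathcal{S}\left( \mathbf{R}^d\right)$, and that all $\mu$- and $\nu$-integrals entering the constants are uniformly controlled through \textbf{A(w,l)}(iii), so that $N_0$ and the exponents $\alpha_1,\alpha_2$ absorb the measure-dependence. Once $\left\vert L^{\nu}g\right\vert_{L^1\left(\mathbf{R}^d\right)}<\infty$ is secured, the semigroup/contraction argument closes the proof with the linear-in-$(t-s)$ rate. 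Equivalently, one may phrase the computation on the Fourier side: the quantity in question equals $\mathcal{F}^{-1}\left[\tilde{F}_t^{0,\kappa}\right]-\mathcal{F}^{-1}\left[\tilde{F}_s^{0,\kappa}\right]$ at $\lambda=0$ in the notation of $\eqref{rr}$, and the relation $\partial_t\tilde{F}_t^{0,\kappa}=\psi^{\nu}\tilde{F}_t^{0,\kappa}$ corresponds under $\mathcal{F}^{-1}$ to the action of $L^{\nu}$, giving the same integral representation and the same reduction to bounding $\left\vert L^{\nu}g\right\vert_{L^1\left(\mathbf{R}^d\right)}$.
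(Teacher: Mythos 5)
Your argument is correct, but it takes a different route from the paper. You push the operator $L^{\nu}$ onto $g=L^{\mu,\kappa}\tilde{\varphi}_0$ itself, via the Dynkin identity $P_tg-P_sg=\int_s^tP_rL^{\nu}g\,dr$ and $L^1$-contractivity of the semigroup, and you correctly identify and resolve the resulting obstacle: $g$ is not Schwartz, so you must reopen the proof of Lemma \ref{Lop} and observe that its $L^1$-estimate only needs $\left\vert g\right\vert_{L^1}$, $\left\vert \nabla g\right\vert_{L^1}$, $\left\vert \nabla^2 g\right\vert_{L^1}$ together with \textbf{A(w,l)}(iii), these norms being controlled through the commutation $D^{\gamma}L^{\mu,\kappa}\tilde{\varphi}_0=L^{\mu,\kappa}D^{\gamma}\tilde{\varphi}_0$ and Lemmas \ref{Lop}, \ref{rep} applied to the Schwartz functions $D^{\gamma}\tilde{\varphi}_0$. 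The paper avoids this issue entirely by a factorization trick: it introduces an auxiliary $\bar{\varphi}_0\in\mathcal{S}\left(\mathbf{R}^d\right)$ with $\mathcal{F}\bar{\varphi}_0=1$ on $\mathrm{supp}\,\mathcal{F}\tilde{\varphi}_0$, writes the quantity as the convolution of $L^{\mu,\kappa}\tilde{\varphi}_0$ with $\mathbf{E}\left[\bar{\varphi}_0\left(\cdot+Z_t^{\nu}\right)-\bar{\varphi}_0\left(\cdot+Z_s^{\nu}\right)\right]$, and applies Young's inequality plus the It\^{o} formula to $\bar{\varphi}_0$, so that $L^{\mu,\kappa}$ and $L^{\nu}$ each only ever act on a fixed Schwartz function and Lemmas \ref{Lop} and \ref{rep} apply verbatim. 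Your approach is more direct and dispenses with the auxiliary function at the cost of extending Lemma \ref{Lop} beyond $\mathcal{S}\left(\mathbf{R}^d\right)$; the paper's approach exploits the compact frequency support of $\tilde{\varphi}_0$ to stay within the hypotheses of the stated lemmas. Both yield the constant with the advertised dependence.
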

\begin{proof}
	Denote $\bar{\varphi}_0=\mathcal{F}^{-1}\left[ \mathcal{F}\tilde{\varphi}_0\left( \xi\right)+\phi\left( N^{-2}\xi\right)\right]$, then $\bar{\varphi}_0\in\mathcal{S}\left( \mathbf{R}^d\right)$ and $\mathcal{F}\tilde{\varphi}_0\mathcal{F}\bar{\varphi}_0=\mathcal{F}\tilde{\varphi}_0$. And then, 
	\begin{eqnarray*}
		&&\left\vert\mathbf{E}\left[ L^{\mu}\tilde{\varphi}_0\left(\cdot+Z^{\nu}_{t}\right)-L^{\mu}\tilde{\varphi}_0\left(\cdot+Z^{\nu}_{s}\right)\right]\right\vert_{L^1\left(\mathbf{R}^d\right)}\\
		&=&\left\vert\mathcal{F}^{-1}[\psi^{\mu}\left( \xi\right) \hat{\tilde{\varphi}}_0\left( \xi \right)\left(e^{ \psi ^{\nu}\left( \xi \right)
			t}-e^{ \psi ^{\nu}\left( \xi \right)
			s}\right)\hat{\bar{\varphi}}_0\left( \xi \right) ]\right\vert_{L^1\left(\mathbf{R}^d\right)}\\
		&\leq&\left\vert L^{\mu}\tilde{\varphi}_0\right\vert_{L^1\left(\mathbf{R}^d\right)}\left\vert \mathbf{E}\left[\bar{\varphi}_0\left( \cdot+Z_{t}^{\nu} \right)-\bar{\varphi}_0\left( \cdot+Z_{s}^{\nu} \right)\right]\right\vert_{L^1\left(\mathbf{R}^d\right)}\\
		&\leq& \left\vert L^{\mu}\tilde{\varphi}_0\right\vert_{L^1\left(\mathbf{R}^d\right)}\left\vert \mathbf{E}\int_s^t L^{\nu}\bar{\varphi}_0\left( \cdot+Z_{r-}^{\nu} \right)dr\right\vert_{L^1\left(\mathbf{R}^d\right)},
		\end{eqnarray*}
 thus, by Lemmas \ref{Lop}, \ref{rep},
 	\begin{eqnarray*}
 	\left\vert\mathbf{E}\left[ L^{\mu}\tilde{\varphi}_0\left(\cdot+Z^{\nu}_{t}\right)-L^{\mu}\tilde{\varphi}_0\left(\cdot+Z^{\nu}_{s}\right)\right]\right\vert_{L^1\left(\mathbf{R}^d\right)}\leq C\left( t-s\right).
 \end{eqnarray*}
 	Similarly, for $\kappa\in\left(0,1\right)$,
	\begin{eqnarray*}
		&&\left\vert\mathbf{E}\left[ L^{\mu,\kappa}\tilde{\varphi}_0\left(\cdot+Z^{\nu}_{t}\right)-L^{\mu,\kappa}\tilde{\varphi}_0\left(\cdot+Z^{\nu}_{s}\right)\right]\right\vert_{L^1\left(\mathbf{R}^d\right)}\\
		&=&\left\vert\mathcal{F}^{-1}[-\left(-\Re\psi^{\tilde{\mu}_{N^{-j}}}\left( \xi\right)\right)^{\kappa} \hat{\tilde{\varphi}}_0\left( \xi \right)\left(e^{ \psi ^{\nu}\left( \xi \right)
			t}-e^{ \psi ^{\nu}\left( \xi \right)
			s}\right)\hat{\bar{\varphi}}_0\left( \xi \right) ]\right\vert_{L^1\left(\mathbf{R}^d\right)}\\
		&\leq& \left\vert L^{\mu,\kappa}\tilde{\varphi}_0\right\vert_{L^1\left(\mathbf{R}^d\right)}\left\vert \mathbf{E}\int_s^t L^{\nu}\bar{\varphi}_0\left( \cdot+Z_{r-}^{\nu} \right)dr\right\vert_{L^1\left(\mathbf{R}^d\right)}\leq C\left( t-s\right),
	\end{eqnarray*}
and for $\kappa=0$,
\begin{eqnarray*}
	&&\left\vert\mathbf{E}\left[ \tilde{\varphi}_0\left(\cdot+Z^{\nu}_{t}\right)-\tilde{\varphi}_0\left(\cdot+Z^{\nu}_{s}\right)\right]\right\vert_{L^1\left(\mathbf{R}^d\right)}\\
	&\leq& \left\vert \tilde{\varphi}_0\right\vert_{L^1\left(\mathbf{R}^d\right)}\left\vert \mathbf{E}\int_s^t L^{\nu}\bar{\varphi}_0\left( \cdot+Z_{r-}^{\nu} \right)dr\right\vert_{L^1\left(\mathbf{R}^d\right)}\leq C\left( t-s\right).
\end{eqnarray*}
\end{proof}

The next Lemma is a stronger version of Lemma \ref{lem3} because the Fourier transform of the underlying Schwartz function has a compact support that is away from 0.
\begin{lemma}\label{lem4}
	Let $\kappa\in\left[ 0,1\right]$ and $j\in\mathbf{N}_{+}$.  Then there are $C_1, C_2>0$ depending only on $\alpha,d,N,\alpha_1,\alpha_2,\kappa$, such that for all $0\leq s<t$,
	\begin{eqnarray*}
		\int \left\vert H^{j,\kappa}_{t}\left(  x\right)-H^{j,\kappa}_{s}\left(  x\right)\right\vert dx\leq C_1e^{-C_2 s}\left( t-s\right).
	\end{eqnarray*}
\end{lemma}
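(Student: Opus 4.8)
The plan is to exploit that $H^{j,\kappa}_t$ depends on $t$ only through the semigroup factor $\exp\{\psi^{\tilde{\nu}_{N^{-j}}}(\xi)t\}$, and to peel the difference $H^{j,\kappa}_t-H^{j,\kappa}_s$ into three Fourier multipliers whose inverse transforms are controlled separately in $L^1$: one carrying the symbol of $\tilde{\mu}_{N^{-j}}$, one carrying a time-$s$ semigroup factor that supplies the decay $e^{-C_2 s}$, and one carrying the increment $\exp\{\psi^{\tilde{\nu}_{N^{-j}}}(\xi)(t-s)\}-1$ that supplies the factor $t-s$. Since a product of multipliers corresponds to a convolution of the inverse transforms, Young's inequality bounds the $L^1$ norm of the difference by the product of the three $L^1$ norms, yielding exactly $C_1 e^{-C_2 s}(t-s)$.

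First I would record the factorization device from Lemma \ref{lem1}: since $\hat{\tilde{\varphi}}=\hat{\tilde{\varphi}}\hat{\tilde{\phi}}$, we also have $\hat{\tilde{\varphi}}=\hat{\tilde{\phi}}^{\,2}\hat{\tilde{\varphi}}$, so two spare copies of $\hat{\tilde{\phi}}$ are available. Writing $m_j(\xi)$ for the symbol multiplier defining $H^{j,\kappa}$ (that is, $-(-\Re\psi^{\tilde{\mu}_{N^{-j}}})^{\kappa}$ if $\kappa\in(0,1)$, $\psi^{\tilde{\mu}_{N^{-j}}}$ if $\kappa=1$, and $1$ if $\kappa=0$) and using $\exp\{\psi t\}-\exp\{\psi s\}=\exp\{\psi s\}(\exp\{\psi(t-s)\}-1)$, I would distribute the three analytic pieces across these copies:
\begin{eqnarray*}
H^{j,\kappa}_t-H^{j,\kappa}_s=\mathcal{F}^{-1}\big[\,m_j\,\hat{\tilde{\phi}}\,\big]\ast\mathcal{F}^{-1}\big[\exp\{\psi^{\tilde{\nu}_{N^{-j}}}(\xi)s\}\hat{\tilde{\phi}}\,\big]\ast\mathcal{F}^{-1}\big[\big(\exp\{\psi^{\tilde{\nu}_{N^{-j}}}(\xi)(t-s)\}-1\big)\hat{\tilde{\varphi}}\,\big].
\end{eqnarray*}
By the conventions of \eqref{opp} the first factor is $L^{\tilde{\mu}_{N^{-j}},\kappa}\tilde{\phi}$, the second is $\mathbf{E}\tilde{\phi}(\cdot+Z^{\tilde{\nu}_{N^{-j}}}_s)$, and the third is $\mathbf{E}\tilde{\varphi}(\cdot+Z^{\tilde{\nu}_{N^{-j}}}_{t-s})-\tilde{\varphi}$.

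Then I would bound each factor in $L^1(\mathbf{R}^d)$ uniformly in $j$. Since $\tilde{\phi}\in\mathcal{S}(\mathbf{R}^d)$ and $\mu$ satisfies \textbf{A(w,l)}, Lemmas \ref{Lop} and \ref{rep} give $|L^{\tilde{\mu}_{N^{-j}},\kappa}\tilde{\phi}|_{L^1}\le C$ independent of $j$, exactly as in Lemma \ref{lem1}. Because $\hat{\tilde{\phi}}$ is supported away from the origin, Lemma \ref{lemma2} yields $|\mathbf{E}\tilde{\phi}(\cdot+Z^{\tilde{\nu}_{N^{-j}}}_s)|_{L^1}\le C_1 e^{-C_2 s}$ with $C_1,C_2$ independent of $j$. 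For the increment, It\^o's formula gives $\mathbf{E}\tilde{\varphi}(x+Z^{\tilde{\nu}_{N^{-j}}}_{t-s})-\tilde{\varphi}(x)=\mathbf{E}\int_0^{t-s}L^{\tilde{\nu}_{N^{-j}}}\tilde{\varphi}(x+Z^{\tilde{\nu}_{N^{-j}}}_{r-})\,dr$, so Tonelli's theorem together with the translation invariance of Lebesgue measure and the uniform bound $|L^{\tilde{\nu}_{N^{-j}}}\tilde{\varphi}|_{L^1}\le C$ from Lemma \ref{Lop} (valid since $\tilde{\varphi}\in\mathcal{S}$) produces $|\mathbf{E}\tilde{\varphi}(\cdot+Z^{\tilde{\nu}_{N^{-j}}}_{t-s})-\tilde{\varphi}|_{L^1}\le C(t-s)$. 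Multiplying the three bounds finishes the estimate.

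The computations are routine once the splitting is in place, so the only real subtlety is choosing the decomposition so that the three requested features—the operator symbol, the decay, and the linear increment—land on distinct, independently controllable factors while keeping every bound uniform in $j$; the availability of two spare copies of $\hat{\tilde{\phi}}$ from $\hat{\tilde{\varphi}}=\hat{\tilde{\phi}}^{\,2}\hat{\tilde{\varphi}}$ is what makes this clean. The increment factor is essentially the computation already carried out in Lemma \ref{lem3}, now with the rescaled measure $\tilde{\nu}_{N^{-j}}$ in place of $\nu$, and it is this step that supplies the crucial $(t-s)$.
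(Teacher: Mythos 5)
Your proposal is correct and follows essentially the same route as the paper: the identical three-factor convolution splitting $H^{j,\kappa}_t-H^{j,\kappa}_s=L^{\tilde{\mu}_{N^{-j}},\kappa}\tilde{\phi}\ast\mathbf{E}\tilde{\phi}\left(\cdot+Z^{\tilde{\nu}_{N^{-j}}}_s\right)\ast\mathbf{E}\left[\tilde{\varphi}\left(\cdot+Z^{\tilde{\nu}_{N^{-j}}}_{t-s}\right)-\tilde{\varphi}\right]$ using the two spare copies of $\hat{\tilde{\phi}}$, with the same three $L^1$ bounds from Lemmas \ref{Lop}, \ref{rep}, \ref{lemma2} and the It\^{o}-formula representation of the increment. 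No gaps.
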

\begin{proof}
	Similarly as what we did in Lemma \ref{lem1}, we introduce $\tilde{\phi}$ such that $\hat{\tilde{\phi}}\left(\xi\right)=\phi\left( N^2\xi\right)+\hat{\tilde{\varphi}}\left(\xi\right)+\phi\left( N^{-2}\xi\right)$. As a consequence, $\hat{\tilde{\varphi}}=\hat{\tilde{\varphi}}\hat{\tilde{\phi}}\hat{\tilde{\phi}}$, and $\hat{\tilde{\varphi}},\hat{\tilde{\phi}},\hat{\tilde{\phi}}\in C_0^{\infty}\left(\mathbf{R}^d\right)$. Then
		\begin{eqnarray*}
	H_{t}^{j,0}-H_{s}^{j,0}=\mathcal{F}^{-1}[\hat{\tilde{\phi}}\left( \xi \right)e^{ \psi ^{\tilde{\nu}_{N^{-j}}}\left( \xi \right)
			s}\hat{\tilde{\phi}}\left( \xi \right)\left(e^{ \psi ^{\tilde{\nu}_{N^{-j}}}\left( \xi \right)
			\left(	t-s\right)}-1\right)\hat{\tilde{\varphi}}\left( \xi \right) ],
	\end{eqnarray*}
and
		\begin{eqnarray*}
		&&H_{t}^{j,1}-H_{s}^{j,1}\\
		&=&\mathcal{F}^{-1}[\psi^{\tilde{\mu}_{N^{-j}}}\left( \xi\right) \hat{\tilde{\phi}}\left( \xi \right)e^{ \psi ^{\tilde{\nu}_{N^{-j}}}\left( \xi \right)
			s}\hat{\tilde{\phi}}\left( \xi \right)\left(e^{ \psi ^{\tilde{\nu}_{N^{-j}}}\left( \xi \right)
	\left(	t-s\right)}-1\right)\hat{\tilde{\varphi}}\left( \xi \right) ],
	\end{eqnarray*}
and for $\kappa\in\left( 0,1\right)$,
	\begin{eqnarray*}
	&&H_{t}^{j,\kappa}-H_{s}^{j,\kappa}\\
	&=&\mathcal{F}^{-1}[-\left( -\Re\psi^{\tilde{\mu}_{N^{-j}}}\left( \xi\right)\right)^{\kappa} \hat{\tilde{\phi}}\left( \xi \right)e^{ \psi ^{\tilde{\nu}_{N^{-j}}}\left( \xi \right)
		s}\hat{\tilde{\phi}}\left( \xi \right)\left(e^{ \psi ^{\tilde{\nu}_{N^{-j}}}\left( \xi \right)
		\left(	t-s\right)}-1\right)\hat{\tilde{\varphi}}\left( \xi \right) ].
	\end{eqnarray*}
	
	Thus, for all $\kappa\in\left[ 0,1\right]$, 
	\begin{eqnarray*}
		&&H_{t}^{j,\kappa}-H_{s}^{j,\kappa }\\
		&=&L^{\tilde{\mu}_{N^{-j}},\kappa}\tilde{\phi}\left(\cdot\right)\ast\mathbf{E}\tilde{\phi}\left( \cdot+Z_s^{\tilde{\nu}_{N^{-j}}} \right)\ast\mathbf{E}\left[\tilde{\varphi}\left( \cdot+Z_{t-s}^{\tilde{\nu}_{N^{-j}}} \right)-\tilde{\varphi}\left( \cdot \right)\right]\\
		&=&L^{\tilde{\mu}_{N^{-j}},\kappa}\tilde{\phi}\left(\cdot\right)\ast\mathbf{E}\tilde{\phi}\left( \cdot+Z_s^{\tilde{\nu}_{N^{-j}}} \right)\ast\mathbf{E}\int_0^{t-s} L^{\tilde{\nu}_{N^{-j}}}\tilde{\varphi}\left( \cdot+Z_{r-}^{\tilde{\nu}_{N^{-j}}} \right)dr,
	\end{eqnarray*}
	and thus by Lemmas \ref{Lop}, \ref{rep} and \ref{lemma2},
	\begin{eqnarray*}
		&&\int \left\vert H^{j,\kappa}_{t}\left(  x\right)-H^{j,\kappa}_{s}\left(  x\right)\right\vert dx\\
		&\leq& \int_0^{t-s}\left\vert L^{\tilde{\mu}_{N^{-j}},\kappa}\tilde{\phi}\right\vert_{L^1\left(\mathbf{R}^d\right)}dr\left\vert \mathbf{E}\tilde{\phi}\left( \cdot+Z_s^{\tilde{\nu}_{N^{-j}}} \right)\right\vert_{L^1\left(\mathbf{R}^d\right)}\left\vert L^{\tilde{\nu}_{N^{-j}}}\tilde{\varphi}\right\vert_{L^1\left(\mathbf{R}^d\right)}\\
		&\leq& C_1e^{-C_2 s}\left( t-s\right).
	\end{eqnarray*}
\end{proof}

\begin{lemma}\label{lem5}
	Let $u$ be the solution to $\eqref{eeq1}$, $\mu$ be the reference measure and $\kappa\in\left[ 0,1\right]$. Then there exists $C>0$ depending only on $\alpha,d,N,\alpha_1,\alpha_2,\kappa, T$ such that for all $0\leq s<t\leq T$,
	\begin{eqnarray*}
		\left\vert L^{\mu,\kappa}u_0\left( t,x\right)-L^{\mu,\kappa}u_0\left( s,x\right)\right\vert&\leq& C\left( t-s\right)\left\vert f_0\right\vert_0,\forall x\in\mathbf{R}^d,\\
		\left\vert L^{\mu,\kappa}u_j\left( t,x\right)-L^{\mu,\kappa}u_j\left( s,x\right)\right\vert&\leq& C\left( t-s\right)^{1-\kappa}\left\vert f_j\right\vert_0, \forall x\in\mathbf{R}^d,j\in\mathbf{N}_{+}.
	\end{eqnarray*} 
\end{lemma}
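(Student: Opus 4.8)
The plan is to work directly from the representation $\eqref{rp}$,
$$L^{\mu,\kappa}u_j(t,x)=C\int_0^t\int \mathcal{F}^{-1}[\tilde{F}^{j,\kappa}_{t-\sigma}](z)\,f_j(\sigma,x-z)\,dz\,d\sigma,$$
and to split the time increment into its two natural pieces. Writing $\delta=t-s$ and substituting $u=s-\sigma$ in the part of the integral over $[0,s]$, one gets $L^{\mu,\kappa}u_j(t,x)-L^{\mu,\kappa}u_j(s,x)=\mathrm{I}_j+\mathrm{II}_j$, where $\mathrm{I}_j$ carries the \emph{kernel difference} $\mathcal{F}^{-1}[\tilde{F}^{j,\kappa}_{u+\delta}]-\mathcal{F}^{-1}[\tilde{F}^{j,\kappa}_{u}]$ integrated against $f_j$ over $u\in[0,s]$, and $\mathrm{II}_j$ is the tail $\int_s^t$ of the single kernel. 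Both pieces are estimated in $L^1$ against $\left\vert f_j\right\vert_0$, so the whole argument reduces to $L^1$ bounds on $\mathcal{F}^{-1}[\tilde{F}^{j,\kappa}]$ and on its time increments, which are precisely what Lemmas \ref{lem1}--\ref{lem4} supply.

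For the tail $\mathrm{II}_j$ I would invoke the $L^1$ bounds on the kernel. For $j=0$ the uniform estimate $\int\lvert\mathcal{F}^{-1}[\tilde{F}^{0,\kappa}_\tau]\rvert\,dx\le C$ from Lemma \ref{lem2} gives $\mathrm{II}_0\le C\delta\left\vert f_0\right\vert_0$ at once. For $j\ge 1$ the scaling identity $\eqref{rrp}$ together with Lemma \ref{lem1} yields $\int\lvert\mathcal{F}^{-1}[\tilde{F}^{j,\kappa}_\tau]\rvert\,dx\le Cw(N^{-j})^{-\kappa}e^{-C_2 w(N^{-j})^{-1}\tau}$; the change of variable $r=w(N^{-j})^{-1}\tau$ then reduces $\mathrm{II}_j$ to $Cw(N^{-j})^{1-\kappa}\int_0^{w(N^{-j})^{-1}\delta}e^{-C_2 r}\,dr$, and since $a\mapsto a^{\kappa-1}\int_0^a e^{-C_2 r}\,dr$ is bounded on $(0,\infty)$ (it has finite limits at both endpoints for $\kappa\in[0,1]$), this is $\le C\delta^{1-\kappa}\left\vert f_j\right\vert_0$.

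For the increment piece $\mathrm{I}_j$ I would factor each kernel as $\mathcal{F}^{-1}[\tilde{F}^{j,\kappa}_a]=e^{-\lambda a}K^{j,\kappa}_a$, where $K^{j,\kappa}_a$ is the $\lambda$-free, $L^1$-bounded kernel with exponential decay appearing above, and split the difference at times $u+\delta$ and $u$ into a kernel-increment term (common factor $e^{-\lambda(u+\delta)}$, difference $K^{j,\kappa}_{u+\delta}-K^{j,\kappa}_u$) and a multiplier-increment term ($(e^{-\lambda(u+\delta)}-e^{-\lambda u})K^{j,\kappa}_u$). The kernel-increment term is exactly governed by the genuine time-regularity estimates, Lemma \ref{lem3} for $j=0$ and, through $\eqref{rrp}$, Lemma \ref{lem4} for $j\ge 1$, producing the factor $\delta$ with decay $e^{-C_2 w(N^{-j})^{-1}u}$. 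The only delicate point is that the constant must be $\lambda$-free: after integrating the decay $e^{-(\lambda+C_2 w(N^{-j})^{-1})u}$ over $u\in[0,s]$ one meets $\tfrac{1-e^{-\lambda\delta}}{\lambda+C_2 w(N^{-j})^{-1}}$, which the elementary inequalities $1-e^{-\lambda\delta}\le\lambda\delta$ and $\tfrac{\lambda}{\lambda+c}\le 1$ bound by $\delta$ independently of $\lambda$ (for $j=0$ one uses instead $\int_0^s e^{-\lambda u}\,du\le\lambda^{-1}$). This gives $\mathrm{I}_0\le C\delta\left\vert f_0\right\vert_0$ and $\mathrm{I}_j\le Cw(N^{-j})^{-\kappa}\delta\,\left\vert f_j\right\vert_0$ for $j\ge 1$.

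Finally, for $j\ge 1$ I would convert $w(N^{-j})^{-\kappa}\delta$ into the claimed $\delta^{1-\kappa}$ by comparing $\delta$ with $w(N^{-j})$. When $\delta<w(N^{-j})$ one has $w(N^{-j})^{-\kappa}\delta=(\delta/w(N^{-j}))^{\kappa}\delta^{1-\kappa}\le\delta^{1-\kappa}$, so both $\mathrm{I}_j$ and $\mathrm{II}_j$ are $\le C\delta^{1-\kappa}\left\vert f_j\right\vert_0$. When $\delta\ge w(N^{-j})$ I would discard the increment structure and bound $L^{\mu,\kappa}u_j(t,\cdot)$ and $L^{\mu,\kappa}u_j(s,\cdot)$ separately by the refinement $\left\vert L^{\mu,\kappa}u_j\right\vert_0\le Cw(N^{-j})^{1-\kappa}\left\vert f_j\right\vert_0$ (obtained from the same kernel $L^1$ bound as in $\mathrm{II}_j$), whence $\lvert L^{\mu,\kappa}u_j(t,x)-L^{\mu,\kappa}u_j(s,x)\rvert\le Cw(N^{-j})^{1-\kappa}\left\vert f_j\right\vert_0\le C\delta^{1-\kappa}\left\vert f_j\right\vert_0$ using $1-\kappa\ge 0$. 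I expect the main obstacle to be bookkeeping rather than ideas: keeping the $\lambda$-factor out of the constant in $\mathrm{I}_j$, and tracking the powers of $w(N^{-j})$ through the scaling $\eqref{rrp}$ so that the two regimes glue cleanly to the single exponent $1-\kappa$.
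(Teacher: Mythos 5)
Your proposal is correct and follows essentially the same route as the paper: the same splitting of $L^{\mu,\kappa}u_j(t,\cdot)-L^{\mu,\kappa}u_j(s,\cdot)$ into a tail over $[s,t]$ and a kernel-increment over $[0,s]$, the same further separation of the $e^{-\lambda\cdot}$ multiplier from the semigroup kernel, the same use of Lemmas \ref{lem1}--\ref{lem4} via the scaling identity $\eqref{rrp}$, and the same dichotomy on $w(N^{-j})^{-1}(t-s)$ versus $1$ to produce the exponent $1-\kappa$. The only cosmetic difference is that in the regime $t-s\geq w(N^{-j})$ you bound the two terms separately by $Cw(N^{-j})^{1-\kappa}\left\vert f_j\right\vert_0$ rather than continuing to estimate the difference, which is what the paper's case analysis amounts to anyway.
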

\begin{proof}
	According to $\eqref{rp}$,
	\begin{eqnarray*}
		&&\left\vert L^{\mu,\kappa}u_j\left(t,x\right)-L^{\mu,\kappa}u_j\left(s,x\right)\right\vert\\
		&\leq& C\left\vert f_j\right\vert_0 \int_s^t \int\left\vert \mathcal{F}^{-1}\left[\tilde{F}_{t-r}^{j,\kappa}\left( \xi \right)\right]\left( z\right)\right\vert dzdr \\
		&&+ C\left\vert f_j\right\vert_0\int_0^s \int\left\vert \mathcal{F}^{-1}\left[\tilde{F}_{t-r}^{j,\kappa}\left( \xi \right)-\tilde{F}_{s-r}^{j,\kappa}\left( \xi \right)\right]\left( z\right)\right\vert dzdr \\
		&:=& C\left\vert f_j\right\vert_0\left( I_1+I_2\right), \quad j\in\mathbf{N}. 
	\end{eqnarray*}

When $j=0$, Lemma \ref{lem2} implies
\begin{eqnarray*}
	I_1=\int_s^t \int\left\vert \mathcal{F}^{-1}\left[\tilde{F}_{t-r}^{j,\kappa}\left( \xi \right)\right]\left( z\right)\right\vert dzdr \leq C\left( t-s\right),\forall \kappa\in\left[0,1\right]. 
\end{eqnarray*}
When $j\neq 0$, recall $\eqref{rrp}$.
\begin{eqnarray*}
	I_1 &\leq& \int_s^t \int\left\vert w\left( N^{-j}\right)^{-\kappa}H^{j,\kappa}_{w\left( N^{-j}\right)^{-1}\left(t-r\right)}\left( z\right)\right\vert dzdr  \\
	&=& w\left( N^{-j}\right)^{1-\kappa}\int_0^{w\left( N^{-j}\right)^{-1}\left(t-s\right)} \int\left\vert H^{j,\kappa}_{r}\left( z\right)\right\vert dzdr . 
\end{eqnarray*}
If $w\left( N^{-j}\right)^{-1}\left(t-s\right)\leq 1$, since $\int\left\vert H^{j,\kappa}_{r}\left( z\right)\right\vert dz<C$ by Lemma \ref{lem1},
\begin{eqnarray*}
I_1\leq Cw\left( N^{-j}\right)^{1-\kappa}w\left( N^{-j}\right)^{-1}\left(t-s\right)\leq C\left(t-s\right)^{1-\kappa}.
\end{eqnarray*}	
If $w\left( N^{-j}\right)^{-1}\left(t-s\right)> 1$, again use Lemma \ref{lem1}.
\begin{eqnarray*}
	I_1\leq w\left( N^{-j}\right)^{1-\kappa}\int_0^{\infty} \int\left\vert H^{j,\kappa}_{r}\left( z\right)\right\vert dzdr\leq Cw\left( N^{-j}\right)^{1-\kappa}<C\left(t-s\right)^{1-\kappa}.
\end{eqnarray*}	

Next we investigate $I_2$. Recall definitions $\eqref{rr}-\eqref{rrr}$. When $j=0$ and $\kappa\in\left[0,1\right]$,
\begin{eqnarray*}
	I_2&=&\int_0^s \int\left\vert \mathcal{F}^{-1}\left[\tilde{F}_{t-r}^{0,\kappa}\left( \xi \right)-\tilde{F}_{s-r}^{0,\kappa}\left( \xi \right)\right]\left( z\right)\right\vert dzdr\\
	&\leq& \left\vert e^{-\lambda\left( t-s\right)}-1\right\vert\int_0^s e^{-\lambda\left( s-r\right)}\int\left\vert \mathcal{F}^{-1}\left[-e^{\psi ^{\nu }\left( \xi \right)\left(t-r\right)} \mathcal{F}L^{\mu,\kappa}\tilde{\varphi}_0\left(\xi\right)\right]\left( z\right)\right\vert dzdr\\
	&+&\int_0^s \int\left\vert \mathcal{F}^{-1}\left[-\left(e^{\psi ^{\nu }\left( \xi \right)\left(t-r\right)}-e^{\psi ^{\nu }\left( \xi \right)\left(s-r\right)}\right) \mathcal{F}L^{\mu,\kappa}\tilde{\varphi}_0\left(\xi\right)\right]\left( z\right)\right\vert dzdr\\
	&:=& I_{21}+I_{22}.
\end{eqnarray*}
Therefore, by Lemmas \ref{Lop} and \ref{rep},
\begin{eqnarray*}
	I_{21}&\leq& \frac{2T}{\lambda}\left\vert e^{-\lambda\left( t-s\right)}-1\right\vert \left\vert\mathbf{E} L^{\mu,\kappa}\tilde{\varphi}_0\left(\cdot+Z^{\nu}_{t-r}\right)\right\vert_{L^1\left(\mathbf{R}^d\right)}\\
	&\leq& \frac{2T}{\lambda}\left\vert e^{-\lambda\left( t-s\right)}-1\right\vert \left\vert L^{\mu,\kappa}\tilde{\varphi}_0\right\vert_{L^1\left(\mathbf{R}^d\right)}\leq C\left( t-s\right),\quad\kappa\in\left[0,1\right].
\end{eqnarray*}
Meanwhile, by Lemma \ref{lem3},
\begin{eqnarray*}
	I_{22}&\leq& T \left\vert\mathbf{E}\left[ L^{\mu,\kappa}\tilde{\varphi}_0\left(\cdot+Z^{\nu}_{t-r}\right)-L^{\mu,\kappa}\tilde{\varphi}_0\left(\cdot+Z^{\nu}_{s-r}\right)\right]\right\vert_{L^1\left(\mathbf{R}^d\right)}\\
	&\leq&  C\left( t-s\right),\quad\kappa\in\left[0,1\right].
\end{eqnarray*}

When $j\neq 0$,
\begin{eqnarray*}
	I_2
	&\leq& \left\vert e^{-\lambda\left( t-s\right)}-1\right\vert w\left( N^{-j}\right)^{-\kappa}\int_0^s e^{-\lambda\left( s-r\right)} \int\left\vert H^{j,\kappa}_{w\left( N^{-j}\right)^{-1}\left(s-r\right)}\left( z\right)\right\vert dzdr\\
	&+&w\left( N^{-j}\right)^{-\kappa}\int_0^s \int\left\vert H^{j,\kappa}_{w\left( N^{-j}\right)^{-1}\left(t-r\right)}\left( z\right)-H^{j,\kappa}_{w\left( N^{-j}\right)^{-1}\left(s-r\right)}\left( z\right)\right\vert dzdr\\
	&:=& I'_{21}+I'_{22}.
\end{eqnarray*}

By Lemma \ref{lem1},
\begin{eqnarray*}
	I'_{21}
	&\leq& \left\vert e^{-\lambda\left( t-s\right)}-1\right\vert w\left( N^{-j}\right)^{-\kappa}\int_0^s e^{-\lambda r} \int\left\vert H^{j,\kappa}_{w\left( N^{-j}\right)^{-1}r}\left( z\right)\right\vert dzdr\\
	&\leq& C\left\vert e^{-\lambda\left( t-s\right)}-1\right\vert w\left( N^{-j}\right)^{-\kappa}\int_0^s e^{-\lambda r} e^{-C'w\left( N^{-j}\right)^{-1}r}dr.
\end{eqnarray*}
If $w\left( N^{-j}\right)^{-1}\left(t-s\right)\leq 1$, 
\begin{eqnarray*}
	I'_{21}&\leq& C\left\vert e^{-\lambda\left( t-s\right)}-1\right\vert w\left( N^{-j}\right)^{-\kappa}\int_0^s e^{-\lambda r} dr\\
	&\leq& C\left(t-s\right)w\left( N^{-j}\right)^{-\kappa}\leq C \left(t-s\right)^{1-\kappa}.
\end{eqnarray*}
If $w\left( N^{-j}\right)^{-1}\left(t-s\right)> 1$, use Lemma \ref{lem1}.
\begin{eqnarray*}
	I'_{21}
	&\leq& C w\left( N^{-j}\right)^{-\kappa}\int_0^{s} e^{-C'w\left( N^{-j}\right)^{-1}r}dr\leq C w\left( N^{-j}\right)^{1-\kappa}\leq C \left(t-s\right)^{1-\kappa}.
\end{eqnarray*}

On the other hand,
\begin{eqnarray*}
	I'_{22}&=&w\left( N^{-j}\right)^{1-\kappa}\int_0^{w\left( N^{-j}\right)^{-1}s} \int\left\vert H^{j,\kappa}_{w\left( N^{-j}\right)^{-1}\left(t-s\right)+r}\left( z\right)-H^{j,\kappa}_{r}\left( z\right)\right\vert dzdr\\
	&\leq& w\left( N^{-j}\right)^{1-\kappa}\int_0^{\infty} \int\left\vert H^{j,\kappa}_{w\left( N^{-j}\right)^{-1}\left(t-s\right)+r}\left( z\right)-H^{j,\kappa}_{r}\left( z\right)\right\vert dzdr.
\end{eqnarray*}
If $w\left( N^{-j}\right)^{-1}\left(t-s\right)\leq 1$, use Lemma \ref{lem4}.
\begin{eqnarray*}
	I'_{22}
	\leq C w\left( N^{-j}\right)^{1-\kappa}w\left( N^{-j}\right)^{-1}\left(t-s\right)\leq C \left(t-s\right)^{1-\kappa}.
\end{eqnarray*}
If $w\left( N^{-j}\right)^{-1}\left(t-s\right)> 1$, use Lemma \ref{lem1}.
\begin{eqnarray*}
	I'_{22}
	\leq 2 w\left( N^{-j}\right)^{1-\kappa}\int_0^{\infty} \int\left\vert H^{j,\kappa}_{r}\left( z\right)\right\vert dzdr\leq C w\left( N^{-j}\right)^{1-\kappa}\leq C \left(t-s\right)^{1-\kappa}.
\end{eqnarray*}

This is the end of the proof.
\end{proof}
\begin{theorem}\label{thm11}
	Let $\nu$ be a L\'{e}vy measure satisfying \textbf{A(w,l)} and $\beta\in\left( 0,\infty\right)$. Then the unique solution $u\in\left( t,x\right)$ to $\eqref{eeq1}$ satisfies
	\begin{eqnarray}
	\left\vert u\right\vert_{\beta,\infty}&\leq& C\left( \lambda^{-1}\wedge T\right) \left\vert f\right\vert_{\beta,\infty},\label{est5}\\
	\left\vert u\right\vert_{1+\beta,\infty}&\leq& C\left\vert f\right\vert_{\beta,\infty}\label{est1}
	\end{eqnarray}
	for some $C$ depending on $\alpha,\alpha_1,\alpha_2,N, \beta, d,T$. Meanwhile, for all $\kappa\in\left[ 0,1\right]$, there exists a constant $C$ depending on $\alpha,\kappa,\beta,\alpha_1,\alpha_2,N, d, T,\nu$ such that for all $0\leq s<t\leq T$,
	\begin{equation}\label{est2}
	\left\vert u\left(t,\cdot\right)-u\left(s,\cdot\right)\right\vert_{\kappa+\beta,\infty}\leq C\left\vert t-s\right\vert^{1-\kappa}\left\vert f\right\vert_{\beta,\infty}.
	\end{equation}
\end{theorem}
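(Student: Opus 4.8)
The plan is to assemble the three estimates from the Littlewood--Paley representation of the solution together with the uniform kernel bounds of Lemmas \ref{lem1}--\ref{lem5} and the norm equivalence of Theorem \ref{thm4}. Throughout I write $u_j=u\ast\varphi_j$, which by linearity is the solution of $\eqref{eeq1}$ with input $f_j=f\ast\varphi_j$, and I use the representation $u_j(t,x)=\int_0^t e^{-\lambda(t-s)}\mathbf{E}f_j(s,x+Z^{\nu}_{t-s})\,ds$. Since the reference measure $\mu$ also satisfies \textbf{A(w,l)}, Theorem \ref{thm4} applies with reference measure $\mu$, giving $|v|_{\kappa+\beta,\infty}\simeq |v|_0+|L^{\mu,\kappa}v|_{\beta,\infty}$ for $\kappa\in(0,1]$; because the multiplier $\psi^{\mu,\kappa}$ commutes with convolution by $\varphi_j$, one has $(L^{\mu,\kappa}u)\ast\varphi_j=L^{\mu,\kappa}u_j$, so that $|L^{\mu,\kappa}u|_{\beta,\infty}=\sup_j w(N^{-j})^{-\beta}|L^{\mu,\kappa}u_j|_0$.

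For $\eqref{est5}$ I bound the building blocks directly: from the probabilistic representation, $|u_j|_0\le |f_j|_0\int_0^t e^{-\lambda(t-s)}\,ds=\frac{1-e^{-\lambda t}}{\lambda}|f_j|_0\le(\lambda^{-1}\wedge T)|f_j|_0$. Multiplying by $w(N^{-j})^{-\beta}$ and taking the supremum over $j$ yields $\eqref{est5}$ at once. For $\eqref{est1}$, by the equivalence above with $\kappa=1$ it suffices to control $|u|_0+|L^{\mu}u|_{\beta,\infty}$. The first term is dominated by $C\,|u|_{\beta,\infty}\le C(\lambda^{-1}\wedge T)|f|_{\beta,\infty}$ through Lemma \ref{equiv} and $\eqref{est5}$; for the second, Corollary \ref{col1} gives $|L^{\mu}u_j|_0\le C|f_j|_0$ uniformly in $j$, so multiplying by $w(N^{-j})^{-\beta}$ and taking the supremum gives $|L^{\mu}u|_{\beta,\infty}\le C|f|_{\beta,\infty}$.

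The time-regularity estimate $\eqref{est2}$ follows the same template applied to $v=u(t,\cdot)-u(s,\cdot)$, whose $j$-th block is $v_j=u_j(t,\cdot)-u_j(s,\cdot)$. For $\kappa\in(0,1]$ the equivalence reduces the claim to bounding $|v|_0+|L^{\mu,\kappa}v|_{\beta,\infty}$, while the case $\kappa=0$ is simply $|v|_{\beta,\infty}$. The essential input is Lemma \ref{lem5}: for $j\in\mathbf{N}_{+}$ it gives $|L^{\mu,\kappa}u_j(t,\cdot)-L^{\mu,\kappa}u_j(s,\cdot)|_0\le C(t-s)^{1-\kappa}|f_j|_0$, while for $j=0$ the bound is $C(t-s)|f_0|_0\le C T^{\kappa}(t-s)^{1-\kappa}|f_0|_0$; in both cases multiplying by $w(N^{-j})^{-\beta}$ and taking the supremum produces $|L^{\mu,\kappa}v|_{\beta,\infty}\le C(t-s)^{1-\kappa}|f|_{\beta,\infty}$, and the remaining term $|v|_0$ is handled by Lemma \ref{equiv} together with the $\kappa=0$ estimate. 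Collecting the pieces gives $\eqref{est2}$.

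The proof is thus essentially bookkeeping, since all analytic difficulty has been absorbed into the uniform kernel estimates of Lemmas \ref{lem1}--\ref{lem5} and into Section 3. The one point requiring care is the interchange $(L^{\mu,\kappa}u)\ast\varphi_j=L^{\mu,\kappa}u_j$, and more generally that $L^{\mu,\kappa}$ commutes with the Littlewood--Paley projections on $u$; this is legitimate because $u\in C_b^\infty(H_T)$ by Theorem \ref{thm1}, so $L^{\mu,\kappa}u$ is well defined by Lemmas \ref{Lop} and \ref{rep} and the identity holds at the level of Fourier multipliers. A second, minor point is that, since $0\le t-s\le T$, any factor $(t-s)$ may be absorbed into $C_T(t-s)^{1-\kappa}$, which is what lets the $j=0$ contribution match the required H\"{o}lder exponent in $\eqref{est2}$.
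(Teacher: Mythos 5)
Your proposal is correct and follows essentially the same route as the paper: the same Littlewood--Paley block representation $u_j(t,x)=\int_0^t e^{-\lambda(t-s)}\mathbf{E}f_j(s,x+Z^{\nu}_{t-s})\,ds$, Corollary \ref{col1} for $\eqref{est1}$, Lemma \ref{lem5} for $\eqref{est2}$, and the norm equivalence of Section 3 (the paper invokes Proposition \ref{pr4} rather than Theorem \ref{thm4}, which is immaterial since $u\in C_b^{\infty}$ here). The bookkeeping, including absorbing the $j=0$ factor $(t-s)$ into $T^{\kappa}(t-s)^{1-\kappa}$, matches the paper's argument.
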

\begin{proof}
	Denote as before $u_j=u\ast \varphi_j, j\in\mathbf{N}$. We have known that 
	\begin{eqnarray*}
	u_j\left(t,x\right)=\int_0^t e^{-\lambda\left( t-s\right)}\mathbf{E}f_j\left(s, x+Z_{t-s}^{\nu}\right)ds,\forall j\in\mathbf{N}.
	\end{eqnarray*}
 Obviously, 
 \begin{eqnarray*}
 	\left\vert u_j\right\vert_0\leq \left\vert f_j\right\vert_0\int_0^t e^{-\lambda s}ds\leq C\left( \lambda^{-1}\wedge T\right) \left\vert f_j\right\vert_0,\forall j\in\mathbf{N},
 \end{eqnarray*}
which implies $\left\vert u\right\vert_{\beta,\infty}\leq C\left( \lambda^{-1}\wedge T\right) \left\vert f\right\vert_{\beta,\infty}$, $\forall \beta\in\left( 0,\infty\right)$. Recall $\eqref{sup}$. 
 	\begin{eqnarray*}
 	\left\vert u\right\vert_{0}\leq\left\vert u\right\vert_{\beta,\infty}\leq C\left\vert f\right\vert_{\beta,\infty}.
 \end{eqnarray*}
In the mean time, note $L^{\mu}u\ast \varphi_j= L^{\mu}u_j$, and by taking $\kappa=1$ in Corollary \ref{col1}, $\left\vert L^{\mu}u_j\right\vert_0\leq C\left\vert f_j\right\vert_0$. This is to say, $\left\vert L^{\mu}u\right\vert_{\beta,\infty}\leq C\left\vert f\right\vert_{\beta,\infty}$. By Proposition \ref{pr4}, 
	\begin{eqnarray*}
	\left\vert u\right\vert_{1+\beta,\infty}\leq C\left(\left\vert u\right\vert_{0}+\left\vert L^{\mu}u\right\vert_{\beta,\infty}\right)\leq C\left\vert f\right\vert_{\beta,\infty}.
\end{eqnarray*}

Similarly, by Lemma \ref{lem5}, we know that for all $j\in\mathbf{N}$,
	\begin{eqnarray*}
	\left\vert L^{\mu,\kappa}u_j\left( t,x\right)-L^{\mu,\kappa}u_j\left( s,x\right)\right\vert\leq C\left( t-s\right)^{1-\kappa}\left\vert f_j\right\vert_0,\forall x\in\mathbf{R}^d,\kappa\in\left[ 0,1\right],
\end{eqnarray*} 
namely, for all $\beta\in\left( 0,\infty\right)$,
	\begin{eqnarray*}
	\left\vert L^{\mu,\kappa}u\left( t,\cdot\right)-L^{\mu,\kappa}u\left( s,\cdot\right)\right\vert_{\beta,\infty}\leq C\left( t-s\right)^{1-\kappa}\left\vert f\right\vert_{\beta,\infty},\kappa\in\left[ 0,1\right].
\end{eqnarray*} 
Therefore, for all $\kappa\in\left[0,1\right]$ and all $0\leq s<t\leq T$,
\begin{eqnarray*}
&&\left\vert u\left(t,\cdot\right)-u\left(s,\cdot\right)\right\vert_{\mu,\kappa,\beta}\\
&\leq&\left\vert u\left(t,\cdot\right)-u\left(s,\cdot\right)\right\vert_{\beta,\infty}+\left\vert L^{\mu,\kappa}u\left(t,\cdot\right)-L^{\mu,\kappa}u\left(s,\cdot\right)\right\vert_{\beta,\infty}\\
&\leq& C\left\vert t-s\right\vert^{1-\kappa}\left\vert f\right\vert_{\beta,\infty}.
\end{eqnarray*}
By Proposition \ref{pr4}, this is equivalent to
\begin{eqnarray*}
	\left\vert u\left(t,\cdot\right)-u\left(s,\cdot\right)\right\vert_{\kappa+\beta,\infty}\leq C\left\vert t-s\right\vert^{1-\kappa}\left\vert f\right\vert_{\beta,\infty}.
\end{eqnarray*}
\end{proof}

\section{Proof of Theorem 1.1: Generalized H\"{o}lder-Zygmund inputs}

	\textsc{Existence and Estimates. }Given $f\in \tilde{C}^{\beta}_{\infty,\infty}\left( H_T\right)$, by Proposition \ref{app}, we can find a sequence of functions $f_n$ in $C_b^{\infty}\left( H_T\right)$ such that 
	\begin{equation*}
	\left\vert f_n\right\vert_{\beta,\infty}\leq C\left\vert f\right\vert_{\beta,\infty},\quad\left\vert f\right\vert_{\beta,\infty}\leq\liminf_{n}\left\vert f_n\right\vert_{\beta,\infty},
	\end{equation*}	
	and for any $0<\beta'<\beta$,
	\begin{equation*}
	\left\vert f_n-f\right\vert_{\beta',\infty}\rightarrow 0 \mbox{ as }n\rightarrow\infty.
	\end{equation*}
	According to Theorems \ref{thm1} and \ref{thm11}, for each pair of functions $f_m,f_n$, there are corresponding solutions $u_m, u_n\in C_b^{\infty}\left( H_T\right)$ verifying
	\begin{eqnarray*}
		\left\vert u_m- u_n\right\vert_{1+\beta',\infty}\leq C\left\vert f_m-f_n\right\vert_{\beta',\infty}\rightarrow 0,  \mbox{ as }m,n\rightarrow\infty
	\end{eqnarray*}
	for all $\beta'\in\left( 0,\beta\right)$, which by Proposition \ref{pr4} implies
	\begin{eqnarray*}
		\left\vert u_n-u_m\right\vert_0\rightarrow 0 \mbox{ as } m,n\rightarrow\infty.
	\end{eqnarray*}
	Clearly, $\{u_n:n\geq 0\}$ has a limit in the space of continuous functions. We denote it by $u$. $\lim_{n\rightarrow\infty}\left\vert u_n-u\right\vert_0=0$. Therefore, for any given $j\in\mathbf{N}$, 
\begin{eqnarray*}
	&&w\left( N^{-j}\right)^{-1-\beta}\left\vert u\ast \varphi_j\right\vert_0= \lim_{n\rightarrow\infty}w\left( N^{-j}\right)^{-1-\beta}\left\vert u_n\ast \varphi_j\right\vert_0\\
	&\leq&\limsup_{n\rightarrow\infty}\left\vert u_n\right\vert_{1+\beta,\infty}\leq C \limsup_{n\rightarrow\infty}\left\vert f_n\right\vert_{\beta,\infty}\leq C\left\vert f\right\vert_{\beta,\infty},
\end{eqnarray*}
which indicates $u\in \tilde{C}^{1+\beta}_{\infty,\infty}\left( H_T\right)$ and $\left\vert u\right\vert_{1+\beta,\infty}\leq C\left\vert f\right\vert_{\beta,\infty}$. Meanwhile, for any given $j\in\mathbf{N}$ and any $\beta'\in\left( 0,\beta\right)$, 
\begin{eqnarray*}
	&&\lim_{n\rightarrow\infty}w\left( N^{-j}\right)^{-1-\beta'}\left\vert \left( u_n-u\right)\ast \varphi_j\right\vert_0\\
	&=&\lim_{n\rightarrow\infty}\lim_{m\rightarrow\infty}w\left( N^{-j}\right)^{-1-\beta'}\left\vert \left( u_n-u_m\right)\ast \varphi_j\right\vert_0\\
	&\leq&\lim_{n\rightarrow\infty}\lim_{m\rightarrow\infty}\left\vert \left( u_n-u_m\right)\right\vert_{1+\beta',\infty}\\
	&\leq& C \lim_{n\rightarrow\infty}\lim_{m\rightarrow\infty}\left\vert f_n-f_m\right\vert_{\beta',\infty}= 0,
\end{eqnarray*}
Namely, for all $\beta'\in\left( 0,\beta\right)$.
\begin{eqnarray}\label{supp}
\lim_{n\rightarrow\infty}\left\vert  u_n-u\right\vert_{1+\beta',\infty}\rightarrow 0 ,  \mbox{ as }n\rightarrow\infty.
\end{eqnarray}
	
 Analogously, for any given $j\in\mathbf{N}$, 
\begin{eqnarray*}
	&&w\left( N^{-j}\right)^{-\beta}\left\vert u\ast \varphi_j\right\vert_0= \lim_{n\rightarrow\infty}w\left( N^{-j}\right)^{-\beta}\left\vert u_n\ast \varphi_j\right\vert_0\\
	&\leq&\limsup_{n\rightarrow\infty}\left\vert u_n\right\vert_{\beta,\infty}\leq C\left( \lambda^{-1}\wedge T\right) \limsup_{n\rightarrow\infty}\left\vert f_n\right\vert_{\beta,\infty}\leq C\left( \lambda^{-1}\wedge T\right) \left\vert f\right\vert_{\beta,\infty}.
\end{eqnarray*}
This implies $\left\vert u\right\vert_{\beta,\infty}\leq C\left( \lambda^{-1}\wedge T\right) \left\vert f\right\vert_{\beta,\infty}$.
	
	Using Theorems \ref{thm1} and \ref{thm11}, we can show in the same vein that for all $0\leq s\leq t\leq T$, $\kappa\in\left[ 0,1\right]$,
	\begin{eqnarray*}
		u\left(t,\cdot\right)-u\left(s,\cdot\right)&=&\lim_{n\rightarrow \infty}\left( u_n\left(t,\cdot\right)-u_n\left(s,\cdot\right)\right)\in \tilde{C}^{\kappa+\beta}_{\infty,\infty}\left( \mathbf{R}^d\right),\\
		\left\vert u\left(t,\cdot\right)-u\left(s,\cdot\right)\right\vert_{\kappa+\beta,\infty}&\leq& C\limsup_{n\rightarrow\infty}\left\vert u_n\left(t,\cdot\right)-u_n\left(s,\cdot\right)\right\vert_{\kappa+\beta,\infty}\\
		&\leq& C\left\vert t-s\right\vert^{1-\kappa}\left\vert f\right\vert_{\beta,\infty}.
	\end{eqnarray*}
	
	Now we claim that such a function $u$ solves $\eqref{eeq1}$, i.e.,
	\begin{eqnarray}\label{eq2}
	\quad &&u\left( t,x\right)=\int_0^t \left[ L^{\nu}u\left( r,x\right)-\lambda u\left( r,x\right)+f\left( r,x\right)\right]dr, \quad\left( t,x\right)\in \left[0,T\right]\times\mathbf{R}^d.
	\end{eqnarray}
	Indeed, according to $\eqref{supp}$ and Proposition \ref{cont}, $L^{\nu}u=\lim_{n\rightarrow\infty}L^{\nu}u_n$ and $\lim_{n\rightarrow\infty}\left\vert L^{\nu}u_n-L^{\nu}u\right\vert_0=0$. Passing the limit on both sides of 
	\begin{eqnarray*}
		u_n\left( t,x\right)=\int_0^t \left[ L^{\nu}u_n\left( r,x\right)-\lambda u_n\left( r,x\right)+f_n\left( r,x\right)\right]dr, 
	\end{eqnarray*}
	we obtain $\eqref{eq2}$.
	
	\textsc{Uniqueness. }Suppose there are two solutions $u_1,u_2\in \tilde{C}^{1+\beta}_{\infty,\infty}\left( H_T\right)$ to $\eqref{eq2}$, then $u:=u_1-u_2$ solves
	\begin{eqnarray*}
		u\left( t,x\right)=\int_0^t \left[ L^{\nu}u\left( r,x\right)-\lambda u\left( r,x\right)\right]dr, \quad\left( t,x\right)\in \left[0,T\right]\times\mathbf{R}^d.
	\end{eqnarray*}
	By Proposition \ref{app}, there is a sequence of functions $u_n\in C_b^{\infty}\left( H_T\right)$ such that for any $0<\beta'<\beta$, 
	\begin{equation}\label{mmm}
	\left\vert u_n-u\right\vert_{1+\beta',\infty}\rightarrow 0 \mbox{ as } n\rightarrow\infty.
	\end{equation}

	Clearly, $\tilde{u}_n\left( t,x\right):=\int_0^t u_n\left( s,x\right)ds$ solves
	\begin{eqnarray*}
		\int_0^t u_n\left( s,x\right)ds&=&\int_0^t \lbrack L^{\nu}\int_0^s u_n\left( r,x\right)dr-\lambda \int_0^s u_n\left( r,x\right)dr+\Big( u_n\left( s,x\right)\\
		&&-L^{\nu}\int_0^s u_n\left( r,x\right)dr+\lambda\int_0^s u_n\left( r,x\right)dr\Big)\rbrack ds.
	\end{eqnarray*}
	By Lemma \ref{Lop}, $ u_n\left( t,x\right)-L^{\nu}\int_0^t u_n\left( s,x\right)ds+\lambda\int_0^t u_n\left( s,x\right)ds\in C_b^{\infty}\left( H_T\right)$. Then according to Theorem \ref{thm11}, 
	\begin{eqnarray*}
		\left\vert \tilde{u}_n\right\vert_{1+\beta',\infty}\leq\left\vert u_n\left( t,x\right)-L^{\nu}\int_0^t u_n\left( s,x\right)ds+\lambda\int_0^t u_n\left( s,x\right)ds\right\vert_{\beta',\infty}.
	\end{eqnarray*}
Use $\eqref{mmm}$, $\eqref{sup}$ and Proposition \ref{cont}.
	\begin{eqnarray*}
		&&\left\vert \int_0^t u\left( s,x\right)ds\right\vert_{1+\beta',\infty}= \lim_{n\rightarrow\infty}\left\vert \int_0^t u_n\left( s,x\right)ds\right\vert_{1+\beta',\infty}\\
		&\leq&\liminf_{n\rightarrow\infty}\left\vert u_n\left( t,x\right)-L^{\nu}\int_0^t u_n\left( s,x\right)ds+\lambda\int_0^t u_n\left( s,x\right)ds\right\vert_{\beta',\infty}\\
		&\leq& \left\vert u\left( t,x\right)-L^{\nu}\int_0^t u\left( s,x\right)ds+\lambda\int_0^t u\left( s,x\right)ds\right\vert_{\beta',\infty}=0.
	\end{eqnarray*}
By $\eqref{sup}$ again, $\int_0^t u\left( s,x\right)ds=0$ for all $t\in\left[ 0,T\right]$ and $x\in\mathbf{R}^d$, and thus $u=0$ $\left(t,x\right)$-a.e..

\section{Appendix}

We simply state a few results that were used in this paper. Please look up in the references for proofs if you are interested. 

Recall all parameters introduced in assumptions \textbf{A(w,l)}.
\begin{lemma}\cite[Lemma 7]{cr}\label{lemma1}
	Let $\nu$ be a L\'{e}vy measure of order $\alpha$ and $w$ be a scaling function. \\
	\noindent (i) Suppose there exists $N_2>0$ such that for all $R>0$,
	\begin{eqnarray*}
	\int \left(\left\vert y\right\vert \wedge 1\right)\tilde{\nu}_{R}\left( dy\right)&\leq&N_2 \mbox{ if } \alpha\in\left( 0,1\right),\\
	\int \left(\left\vert y\right\vert^2 \wedge 1\right)\tilde{\nu}_{R}\left( dy\right)&\leq&N_2 \mbox{ if } \alpha=1,\\
	\int \left(\left\vert y\right\vert^2 \wedge \left\vert y\right\vert\right)\tilde{\nu}_{R}\left( dy\right)&\leq&N_2 \mbox{ if } \alpha\in\left(1,2\right).
	\end{eqnarray*}
	Then there is a constant $C>0$ depending only on $c_1,N_0,N_1,N_2$ such that for all $\xi\in\mathbf{R}^d$,
	\begin{eqnarray*}
	\int\left[ 1-\cos\left( 2\pi\xi\cdot y\right)\right]\nu\left(dy\right)&\leq& C w\left( \left\vert \xi\right\vert^{-1}\right)^{-1},\\
	\int\left[ \sin\left( 2\pi\xi\cdot y\right)-2\pi\chi_{\alpha}\left( y\right) \xi\cdot y\right]\nu\left(dy\right)&\leq& C w\left( \left\vert \xi\right\vert^{-1}\right)^{-1},
	\end{eqnarray*}
where $w\left( \left\vert \xi\right\vert^{-1}\right)^{-1}:=0$ if $\xi=0$.\\
\noindent (ii) Suppose there is $n_1>0$ such that for all $R>0$ and all $\xi\in\{\xi\in\mathbf{R}^d:\left\vert \xi\right\vert=1\}$,
\begin{eqnarray*}
	\int_{\left\vert y\right\vert \leq 1}\left\vert \xi\cdot y\right\vert^2 \tilde{\nu}_R\left(dy\right)\geq n_1.
\end{eqnarray*}
Then there is a constant $c>0$ depending only on $c_1,N_0,N_1,N_2,n_1$ such that for all $\xi\in\mathbf{R}^d$,
\begin{eqnarray*}
	\int\left[ 1-\cos\left( 2\pi\xi\cdot y\right)\right]\nu\left(dy\right)\geq c w\left( \left\vert \xi\right\vert^{-1}\right)^{-1},
\end{eqnarray*}
where $w\left( \left\vert \xi\right\vert^{-1}\right)^{-1}:=0$ if $\xi=0$.
\end{lemma}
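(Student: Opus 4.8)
The plan is to exploit the scaling relation between $\nu$ and the dilated measures $\tilde{\nu}_R$ of $\eqref{mea}$--$\eqref{meas}$, which turns both estimates into bounds for the symbol of $\tilde{\nu}_R$ evaluated at unit frequencies, uniform in $R$. Since $\int g\,d\tilde{\nu}_R=w(R)\int g(y/R)\,\nu(dy)$, taking $g(y)=h(Ry)$ gives
\begin{equation*}
\int h(y)\,\nu(dy)=w(R)^{-1}\int h(Ry)\,\tilde{\nu}_R(dy),\qquad R>0 .
\end{equation*}
Putting $h(y)=1-\cos(2\pi\xi\cdot y)$, $R=|\xi|^{-1}$ and $\theta=\xi/|\xi|\in S^{d-1}$, and recalling $-\Re\psi^{\nu}(\xi)=\int[1-\cos(2\pi\xi\cdot y)]\nu(dy)$, I obtain
\begin{equation*}
\int[1-\cos(2\pi\xi\cdot y)]\nu(dy)=w(|\xi|^{-1})^{-1}\int[1-\cos(2\pi\theta\cdot y)]\tilde{\nu}_{|\xi|^{-1}}(dy),
\end{equation*}
and likewise for the sine expression. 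Thus it suffices to bound $\int[1-\cos(2\pi\theta\cdot y)]\tilde{\nu}_R(dy)$ and its sine analogue \emph{above} by an absolute constant for (i), and the cosine integral \emph{below} by a positive constant for (ii), uniformly over $R>0$ and $\theta\in S^{d-1}$; one then multiplies by $w(|\xi|^{-1})^{-1}$.

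For part (i) I would use the elementary inequalities $1-\cos s\le\min(2,s^2/2)$ and $|\sin s-s|\le\min(2|s|,|s|^3/6)$. Since $|\theta|=1$, the integrands are dominated by $C(1\wedge|y|^2)$ for the cosine and by $C(|y|\wedge1)$, $C(|y|^2\wedge1)$, or $C(|y|^3\wedge|y|)$ for the sine according as $\alpha\in(0,1)$, $\alpha=1$, or $\alpha\in(1,2)$; each is controlled by the corresponding uniform moment hypothesis on $\tilde{\nu}_R$. The one subtlety is the sine term when $\alpha=1$: the rescaling $h(Ry)$ turns the cutoff $\chi_\alpha(y)=1_{|y|\le1}$ into $1_{|y|\le R^{-1}}$, so the truncation radius depends on $R$. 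I would eliminate this dependence with the symmetry hypothesis $\eqref{alpha1}$, which is inherited by every $\tilde{\nu}_R$ (one checks $\int_{a<|y|<b}y\,\tilde{\nu}_R(dy)=w(R)R^{-1}\int_{aR<|y|<bR}y\,\nu(dy)=0$), and hence lets me restore the cutoff at $|y|\le1$ at no cost; the resulting integrand is $\le C(|y|^2\wedge1)$, again uniformly integrable.

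The lower bound in (ii) is the main obstacle, and is precisely where the scaling factor $l$ enters. Restricting to $|y|\le\delta$ with $2\pi\delta\le\pi/2$ gives $1-\cos(2\pi\theta\cdot y)\ge c(\theta\cdot y)^2$ there, so the task reduces to showing $\int_{|y|\le\delta}(\theta\cdot y)^2\tilde{\nu}_R(dy)\ge c'$ uniformly. The obstruction is that the non-degeneracy is posed on the \emph{unit} ball, not on the small ball, and must be made uniform in $R$. I would resolve this via the scaling identity $\int_{|y|\le\delta}(\theta\cdot y)^2\tilde{\nu}_R(dy)=w(R)R^{-2}\int_{|y|\le\delta R}(\theta\cdot y)^2\nu(dy)$, apply the hypothesis at radius $\delta R$ to get $\int_{|y|\le\delta R}(\theta\cdot y)^2\nu(dy)\ge n_1 w(\delta R)^{-1}(\delta R)^2$, and finally invoke $\eqref{scale}$ in the form $w(\delta R)\le l(\delta)w(R)$. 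This produces
\begin{equation*}
\int_{|y|\le\delta}(\theta\cdot y)^2\tilde{\nu}_R(dy)\ge n_1\delta^2\,\frac{w(R)}{w(\delta R)}\ge \frac{n_1\delta^2}{l(\delta)}>0,
\end{equation*}
independently of $R$ and $\theta$, whence $\int[1-\cos(2\pi\theta\cdot y)]\tilde{\nu}_R(dy)\ge c\,n_1\delta^2/l(\delta)$. Combined with the scaling identity of the first paragraph this yields the asserted lower bound $-\Re\psi^{\nu}(\xi)\ge c\,w(|\xi|^{-1})^{-1}$, completing the proof.
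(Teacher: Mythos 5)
The paper does not prove this lemma at all: it is quoted verbatim in the appendix from \cite[Lemma 7]{cr}, with the reader referred to that reference for the proof. Your argument is therefore a self-contained reconstruction rather than a variant of anything in this paper, and as far as I can check it is correct. The scaling identity $\int h\,d\nu=w(R)^{-1}\int h(R\cdot)\,d\tilde{\nu}_R$ is the right reduction, the elementary bounds $1-\cos s\le\min(2,s^2/2)$ and $|\sin s-s|\le\min(2|s|,|s|^3/6)$ match the three moment hypotheses case by case, and the lower bound in (ii) via $\int_{|y|\le\delta}(\theta\cdot y)^2\tilde{\nu}_R(dy)\ge n_1\delta^2\,w(R)/w(\delta R)\ge n_1\delta^2/l(\delta)$ is exactly the point where the scaling factor must enter, handled correctly. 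Two small remarks. First, your treatment of the sine term for $\alpha=1$ invokes the cancellation condition \eqref{alpha1}, which is not listed among the hypotheses of the lemma as stated (it is part (ii) of \textbf{A(w,l)}); some such symmetry assumption is genuinely unavoidable there, since $\int_{1<|y|<|\xi|}|y|\,\tilde{\nu}_R(dy)$ need not be bounded uniformly in $|\xi|$, so you should state explicitly that you are working under the full \textbf{A(w,l)} package (as the dependence of the constants on $c_1,N_0,N_1$ already suggests the original lemma does). Second, in (ii) you should fix $\delta$ once and for all (e.g.\ $\delta=1/4$ so that $2\pi\delta\le\pi/2$) so that $l(\delta)$ is an absolute constant of the setup; with that made explicit the proof is complete.
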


\begin{lemma}\cite[Lemma 5]{cr}\label{lemma3}
	Let $\nu$ be a L\'{e}vy measure satisfying \textbf{A(w,l)}. $Z^{\tilde{\nu}_R}_t$ is the L\'{e}vy process associated to $\tilde{\nu}_R,R>0$. For each $t, R$, $Z^{\tilde{\nu}_R}_t$ has a bounded and continuous density function $p^R\left( t,x\right),t\in\left( 0,\infty\right), x\in\mathbf{R}^d$. And $p^R\left( t,x\right)$ has bounded and continuous derivatives up to order $4$. Meanwhile, for any multi-index $\left\vert\vartheta\right\vert\leq 4$, 
\begin{eqnarray*}
	\int\left\vert \partial^{\vartheta}p^R\left( t,x\right)\right\vert dx &\leq&  C\gamma\left( t\right)^{-\left\vert\vartheta\right\vert},\\
	\sup_{x\in\mathbf{R}^d}\left\vert \partial^{\vartheta}p^R\left( t,x\right)\right\vert  &\leq&  C\gamma\left( t\right)^{-d-\left\vert\vartheta\right\vert},
\end{eqnarray*}
where $C>0$ is independent of $t,R$. For any $\beta\in\left(0,1\right)$ such that $\left\vert\vartheta\right\vert+\beta<4$,
\begin{eqnarray*}
	\int\left\vert \partial^{\beta}\partial^{\vartheta}p^R\left( t,x\right)\right\vert dx &\leq&  C\gamma\left( t\right)^{-\left\vert\vartheta\right\vert-\beta}.
\end{eqnarray*}
For any $a>0$, there is a constant $C>0$ independent of $t,R$, so that 
	\begin{eqnarray*}
	\int_{\left\vert x\right\vert>a}\left\vert \partial^{\vartheta}p^R\left( t,x\right)\right\vert dx &\leq&  C\left( \gamma\left( t\right)^{2-\left\vert\vartheta\right\vert}+t\gamma\left( t\right)^{-\left\vert\vartheta\right\vert}\right). 
	\end{eqnarray*}
\end{lemma}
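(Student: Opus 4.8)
The plan is to realize the density by Fourier inversion from the characteristic exponent and to read off every bound from $R$-uniform estimates on that exponent and its $\xi$-derivatives. Since $Z_t^{\tilde{\nu}_R}$ has characteristic function $\exp\{\psi^{\tilde{\nu}_R}(\xi)t\}$, the candidate density is
\begin{eqnarray*}
p^R(t,x)=\mathcal{F}^{-1}\left[\exp\{\psi^{\tilde{\nu}_R}(\cdot)t\}\right](x)=\int e^{i2\pi x\cdot\xi}\exp\{\psi^{\tilde{\nu}_R}(\xi)t\}d\xi,
\end{eqnarray*}
so the statement reduces to showing that this integral, and the integrals obtained by differentiating in $\xi$ and by multiplying by powers of $\xi$, converge and decay at the advertised rates uniformly in $R$.

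The two facts I would establish first are a lower bound and a domination. For the lower bound, Lemma \ref{lemma1}(ii), fed by the non-degeneracy $\int_{|y|\leq 1}|\xi\cdot y|^2\mu^0(dy)\geq c_0$ from \textbf{A(w,l)}(i) and the uniform moment bound \textbf{A(w,l)}(iii), gives
\begin{eqnarray*}
-\Re\psi^{\tilde{\nu}_R}(\xi)=\int\left[1-\cos(2\pi\xi\cdot y)\right]\tilde{\nu}_R(dy)\geq c\,w\left(|\xi|^{-1}\right)^{-1}
\end{eqnarray*}
with $c$ independent of $R$; since $w(|\xi|^{-1})^{-1}\to\infty$ as $|\xi|\to\infty$ by Lemma \ref{kl}(ii), this fixes the correct decay rate. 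For $R$-uniform integrability I would use the domination $\tilde{\nu}_R\geq\mu^0$, which yields $\exp\{\Re\psi^{\tilde{\nu}_R}(\xi)t\}\leq\exp\{-\zeta^0(\xi)t\}$ and hence an $R$-free integrable envelope; the hypothesis $\int|\xi|^4[1+\upsilon(\xi)]^{d+3}\exp\{-\zeta^0(\xi)\}d\xi<\infty$ is exactly what is needed to differentiate under the integral. Indeed, $D_\xi^\beta\psi^{\tilde{\nu}_R}(\xi)=\int(i2\pi y)^\beta e^{i2\pi\xi\cdot y}\tilde{\nu}_R(dy)$ for $|\beta|\geq 2$ (with the compensator correction at $|\beta|=1$) is controlled, via \textbf{A(w,l)}(iii), by a power of $1+\upsilon(\xi)$, so $D_\xi^\beta\exp\{\psi^{\tilde{\nu}_R}(\xi)t\}$ stays integrable for $|\beta|$ large enough (of size $O(d)$), which is what the fourth-order smoothness and the $L^1$ bounds will consume. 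This simultaneously gives existence, continuity, and $C^4$-smoothness of $p^R(t,\cdot)$.

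With these estimates the bounds follow by a scaling argument organized around $\gamma(t)$ (Lemma \ref{kl}(iv)), the spatial scale at which $w(|\xi|^{-1})^{-1}t\sim 1$, i.e. $|\xi|\sim\gamma(t)^{-1}$. The sup bound is immediate from
\begin{eqnarray*}
\sup_x\left\vert\partial^\vartheta p^R(t,x)\right\vert\leq\int|2\pi\xi|^{|\vartheta|}\exp\{\Re\psi^{\tilde{\nu}_R}(\xi)t\}d\xi,
\end{eqnarray*}
splitting at $|\xi|\sim\gamma(t)^{-1}$: polynomial growth dominates on the inner region and the exponential envelope on the outer, producing $C\gamma(t)^{-d-|\vartheta|}$. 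For the $L^1$ bound I would insert the weight $(1+|x/\gamma(t)|^2)^{k}$ with $k>d/2$, so that
\begin{eqnarray*}
\int\left\vert\partial^\vartheta p^R(t,x)\right\vert dx\leq C\gamma(t)^{d}\sup_x\left[\left(1+|x/\gamma(t)|^2\right)^{k}\left\vert\partial^\vartheta p^R(t,x)\right\vert\right];
\end{eqnarray*}
multiplication by $x^{2\varrho}$ transfers to $D_\xi^{2\varrho}$ on the Fourier side, where the derivative bounds of the previous paragraph (again split at $\gamma(t)^{-1}$) close the estimate with the claimed power of $\gamma(t)$. The fractional case $|\vartheta|+\beta<4$ follows by the same scheme after a fractional-difference estimate on the Fourier side.

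The tail estimate over $\{|x|>a\}$ is where I expect the real work. The $\gamma(t)^{2-|\vartheta|}$ term I would obtain from a second-moment weight, $\int_{|x|>a}|\partial^\vartheta p^R|dx\leq a^{-2}\int|x|^2|\partial^\vartheta p^R|dx$, since the $|x|^2$ weight gains exactly two powers of $\gamma(t)$ in the scaling above; the additional term linear in $t$ I would extract from the expansion $\exp\{\psi^{\tilde{\nu}_R}(\xi)t\}-1\approx\psi^{\tilde{\nu}_R}(\xi)t$, whose inverse transform is essentially $t$ times $L^{\tilde{\nu}_R}$ applied to a mollified kernel, contributing $t\gamma(t)^{-|\vartheta|}$ away from the origin because the Lévy measure's own tail is not integrable against the Gaussian-type weight. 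The main obstacle throughout is uniformity in $R$: every symbol bound, and especially the high-order $\xi$-derivative bounds feeding the weight trick, must hold with constants depending only on $\alpha,\alpha_1,\alpha_2,d,c_0$ and not on $R$, which is precisely what the $R$-free hypotheses \textbf{A(w,l)}(i) and (iii)—domination by the fixed $\mu^0$ and the uniform moment bound—are designed to guarantee.
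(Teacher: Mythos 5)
You should first be aware that the paper contains no proof of this statement: it is imported verbatim as \cite[Lemma 5]{cr}, and the Appendix explicitly defers to that reference. So the comparison below is against what the hypotheses of \textbf{A(w,l)} are visibly designed to support. Your overall architecture --- Fourier inversion of $\exp\{\psi^{\tilde{\nu}_R}(\xi)t\}$, $R$-uniform upper and lower symbol bounds, splitting at the frequency scale where $-\Re\psi^{\tilde{\nu}_R}(\xi)t\sim 1$, and polynomial weights to convert sup bounds into $L^1$ bounds --- is the right shape, and the sup-norm bound (which needs no $\xi$-derivatives) does go through as you describe.

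There is, however, a genuine gap at the step that carries the $L^1$ and tail estimates. Your weight trick requires differentiating $\exp\{\psi^{\tilde{\nu}_R}(\xi)t\}$ in $\xi$ up to order roughly $d+2$, and your tail bound invokes $\int|x|^2\,|\partial^{\vartheta}p^R(t,x)|\,dx$, i.e.\ two further derivatives. But the formula $D_\xi^\beta\psi^{\tilde{\nu}_R}(\xi)=\int(i2\pi y)^\beta e^{i2\pi\xi\cdot y}\tilde{\nu}_R(dy)$ requires $\int_{|y|>1}|y|^{|\beta|}\tilde{\nu}_R(dy)<\infty$, whereas \textbf{A(w,l)}(iii) only supplies $\int_{|y|>1}|y|^{\alpha_2}\tilde{\nu}_R(dy)\le N_0$ with $\alpha_2\le 2$ (possibly $\alpha_2<1$); in general $\psi^{\tilde{\nu}_R}$ is not even twice differentiable and $\mathbf{E}\bigl|Z_t^{\tilde{\nu}_R}\bigr|^2$ can be infinite, so the step $\int_{|x|>a}\le a^{-2}\int|x|^2(\cdots)$ fails outright. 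Relatedly, bounding $D_\xi^\beta\psi^{\tilde{\nu}_R}$ by powers of $1+\upsilon(\xi)$ cannot follow from $\tilde{\nu}_R\ge\mu^0$: that domination goes the wrong way, since $\upsilon$ and $\zeta^0$ are built from the \emph{smaller} measure $\mu^0$. The missing idea, which the hypotheses are shaped for, is the factorization $\tilde{\nu}_R=\mu^0+(\tilde{\nu}_R-\mu^0)$, giving $p^R(t,\cdot)=p^{\mu^0}(t,\cdot)\ast q^R(t,\cdot)$ with $q^R$ a probability measure; all $x$-derivatives and all polynomial weights are then placed on the $\mu^0$-factor, whose L\'evy measure is supported in the unit ball and hence has moments of every order --- this is exactly why \eqref{mu0} is an integrability condition on $|\xi|^4[1+\upsilon(\xi)]^{d+3}e^{-\zeta^0(\xi)}$ with $\upsilon,\zeta^0$ defined through $\mu^0$. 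Two smaller corrections: Lemma \ref{lemma1}(ii) as stated lower-bounds $-\Re\psi^{\nu}$, not $-\Re\psi^{\tilde{\nu}_R}$, and for the rescaled family the natural lower bound is in terms of $l(|\xi|^{-1})^{-1}$, whose inverse is $\gamma$ --- conflating $w$ with $l$ here would put the wrong function in the final rates; and the additive $t\gamma(t)^{-|\vartheta|}$ term in the tail bound should come from splitting off the large jumps (whose intensity is bounded uniformly in $R$ by (iii), so the probability of seeing one by time $t$ is $O(t)$), not from $e^{\psi t}-1\approx\psi t$, whose inverse Fourier transform is not a function.
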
	
 
\begin{lemma}\cite[Lemma 2]{cr2}\label{lemma2}
	Let $\nu$ be a L\'{e}vy measure satisfying \textbf{A(w,l)} and $Z^{\tilde{\nu}_R}_t$ be the L\'{e}vy process associated to $\tilde{\nu}_R$. Then for any $\varphi,\varphi_0\in \mathcal{S}\left( \mathbf{R}^d\right)$ such that $\mathcal{F}\varphi_0\in C_0^{\infty}\left( \mathbf{R}^d\right)$, $supp \left(\mathcal{F}\varphi\right)\subseteq \{\xi: 0<R_1\leq \left\vert \xi\right\vert\leq R_2\}$, and 
	\begin{eqnarray*}
	\max_{\left\vert\gamma\right\vert\leq \left[d/2\right]+3}\left\vert D^{\gamma}\hat{\varphi}\left(\xi\right)\right\vert \leq N_2, R_1\leq \left\vert\xi\right\vert\leq R_2.
	\end{eqnarray*}
	Then there are constants $C_1,C_2>0$ depending only on $c_1,N_0,N_1,N_2,R_1,R_2,d$ such that 
	\begin{eqnarray*}
		\int\left( 1+\left\vert x\right\vert^{\alpha_2}\right)\left\vert \mathbf{E}\varphi\left( x+Z^{\tilde{\nu}_R}_t\right)\right\vert dx&\leq& C_1 e^{-C_2 t}, \quad t\geq 0,\\
		\int\left\vert x\right\vert^{\alpha_2}\left\vert \mathbf{E}\varphi_0\left( x+Z^{\tilde{\nu}_R}_t\right)\right\vert dx &\leq& C_1 \left( 1+t\right), \quad t\geq 0,\\
		\int \left\vert \mathbf{E}\varphi_0\left( x+Z^{\tilde{\nu}_R}_t\right)\right\vert dx &\leq& C_1, \quad t\geq 0.
	\end{eqnarray*}
\end{lemma}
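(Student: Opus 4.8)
The plan is to pass to the Fourier side, extract an exponentially decaying factor from a spectral gap on the annulus $\mathrm{supp}\,\widehat{\varphi}$, and convert $L^{\infty}$-decay in $\xi$ into weighted $L^{1}$-decay in $x$ via Plancherel. By the L\'{e}vy--Khintchine formula, $\mathbf{E}e^{2\pi i\xi\cdot Z_{t}^{\tilde{\nu}_{R}}}=e^{t\psi^{\tilde{\nu}_{R}}\left(\xi\right)}$, so
\begin{equation*}
\mathbf{E}\varphi\left( x+Z_{t}^{\tilde{\nu}_{R}}\right)=\mathcal{F}^{-1}\left[ e^{t\psi^{\tilde{\nu}_{R}}\left(\xi\right)}\widehat{\varphi}\left(\xi\right)\right]\left( x\right),\qquad \left\vert e^{t\psi^{\tilde{\nu}_{R}}\left(\xi\right)}\right\vert=e^{-t\zeta^{R}\left(\xi\right)},\quad \zeta^{R}\left(\xi\right)=\int\left[ 1-\cos\left( 2\pi\xi\cdot y\right)\right]\tilde{\nu}_{R}\left( dy\right).
\end{equation*}
Only the first estimate carries exponential decay, and it alone uses the annulus support. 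I would first record a uniform spectral gap: there is $C_{2}>0$ with $\zeta^{R}\left(\xi\right)\geq C_{2}$ whenever $R_{1}\leq\left\vert\xi\right\vert\leq R_{2}$, for every $R$. Since $\tilde{\nu}_{R}\geq\mu^{0}$ by \textbf{A(w,l)}(i), restricting to $\left\vert y\right\vert\leq\delta$ with $\delta\leq\left( 2R_{2}\right)^{-1}$ makes $\left\vert 2\pi\xi\cdot y\right\vert\leq\pi$, whence $1-\cos\left( 2\pi\xi\cdot y\right)\geq c\left\vert\xi\cdot y\right\vert^{2}$; together with the non-degeneracy $\int_{\left\vert y\right\vert\leq 1}\left\vert\xi\cdot y\right\vert^{2}\mu^{0}\left( dy\right)\geq c_{0}\left\vert\xi\right\vert^{2}$ of \textbf{A(w,l)}(i) (choosing $\delta$ so small that the shell $\delta<\left\vert y\right\vert\leq 1$ carries at most half the mass of $\left\vert\xi\cdot y\right\vert^{2}\mu^{0}$) this gives $\zeta^{R}\left(\xi\right)\geq c\,c_{0}R_{1}^{2}=:C_{2}$, uniformly in $R$; this is also the content of Lemma \ref{lemma1}(ii) for the family $\left\{\tilde{\nu}_{R}\right\}$.

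The difficulty is that high $\xi$-derivatives of $\psi^{\tilde{\nu}_{R}}$ need not exist, since $\partial_{\xi}^{\gamma}\psi^{\tilde{\nu}_{R}}$ involves $\int\left( 2\pi iy\right)^{\gamma}e^{2\pi i\xi\cdot y}\tilde{\nu}_{R}\left( dy\right)$, and on $\left\vert y\right\vert>1$ this needs $\int_{\left\vert y\right\vert>1}\left\vert y\right\vert^{\left\vert\gamma\right\vert}\tilde{\nu}_{R}$, which diverges once $\left\vert\gamma\right\vert>\alpha_{2}$. I would therefore split $Z_{t}^{\tilde{\nu}_{R}}=Z_{t}^{s}+Z_{t}^{\ell}$ into independent small- ($\left\vert y\right\vert\leq 1$) and large- ($\left\vert y\right\vert>1$) jump parts, respecting the compensator $\chi_{\alpha}$. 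The small-jump symbol $\psi^{s}$ is smooth with all $\xi$-derivatives bounded uniformly in $R$ on the annulus, because on $\left\vert y\right\vert\leq 1$ one has $\left\vert y\right\vert^{\left\vert\gamma\right\vert}\leq\left\vert y\right\vert^{\alpha_{1}}$ for $\left\vert\gamma\right\vert\geq 2$ (the first-order term being tamed by $\chi_{\alpha}$), so every such integral is $\leq N_{0}$ by \textbf{A(w,l)}(iii); moreover the gap above lives in $\psi^{s}$, as $\mu^{0}$ is supported in the unit ball and hence $\tilde{\nu}_{R}^{s}\geq\mu^{0}$. Conditioning on $Z_{t}^{\ell}$ and using $\left\vert x-z\right\vert^{\alpha_{2}}\leq C\left(\left\vert x\right\vert^{\alpha_{2}}+\left\vert z\right\vert^{\alpha_{2}}\right)$ reduces the first estimate to
\begin{equation*}
\int\left( 1+\left\vert x\right\vert^{\alpha_{2}}\right)\left\vert \mathbf{E}\varphi\left( x+Z_{t}^{\tilde{\nu}_{R}}\right)\right\vert dx\leq C\left( 1+\mathbf{E}\left\vert Z_{t}^{\ell}\right\vert^{\alpha_{2}}\right)\int\left( 1+\left\vert x\right\vert^{\alpha_{2}}\right)\left\vert \Phi_{t}^{s}\left( x\right)\right\vert dx,\qquad \Phi_{t}^{s}=\mathcal{F}^{-1}\left[ e^{t\psi^{s}}\widehat{\varphi}\right],
\end{equation*}
where $\mathbf{E}\left\vert Z_{t}^{\ell}\right\vert^{\alpha_{2}}\leq C\left( 1+t\right)$ by the standard moment estimate (separating compensated and uncompensated large jumps and invoking the $L^{p}$-martingale inequality for $1\leq p\leq 2$).

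It then remains to bound $\int\left( 1+\left\vert x\right\vert^{\alpha_{2}}\right)\left\vert\Phi_{t}^{s}\right\vert dx\leq Ce^{-C_{2}t}$; the prefactor $1+t$ is absorbed since $\left( 1+t\right)e^{-C_{2}t}\leq C_{1}e^{-C_{2}'t}$. With $s_{0}=\left[ d/2\right]+1$ and the weight $\left( 1+\left\vert x\right\vert^{2}\right)^{-s_{0}/2}\in L^{2}$, Cauchy--Schwarz and Plancherel give
\begin{equation*}
\int\left( 1+\left\vert x\right\vert^{\alpha_{2}}\right)\left\vert\Phi_{t}^{s}\right\vert dx\leq C\left\Vert\left( 1+\left\vert x\right\vert^{2}\right)^{\left( s_{0}+\alpha_{2}\right)/2}\Phi_{t}^{s}\right\Vert_{L^{2}}=C\left\Vert\left( I-\left( 2\pi\right)^{-2}\Delta_{\xi}\right)^{\left( s_{0}+\alpha_{2}\right)/2}\left[ e^{t\psi^{s}}\widehat{\varphi}\right]\right\Vert_{L^{2}}.
\end{equation*}
Because $\alpha_{2}\leq 2$, the total order $s_{0}+\alpha_{2}\leq\left[ d/2\right]+3$ matches the number of $\widehat{\varphi}$-derivatives bounded by $N_{2}$; derivatives falling on $e^{t\psi^{s}}$ produce a polynomial in $t$ times bounded symbol-derivatives times $\left\vert e^{t\psi^{s}}\right\vert=e^{-t\zeta^{R,s}}\leq e^{-C_{2}t}$ on the annulus. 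As the $L^{2}$-integral runs over the fixed bounded annulus it is finite, and the polynomial is swallowed by the exponential. The main obstacle is exactly this reconciliation: the non-differentiability of the full symbol forces the small/large split, and one must differentiate $e^{t\psi^{s}}$ up to order $\left[ d/2\right]+3$ while preserving genuine exponential decay, all uniformly in $R$; every such uniformity is traceable to $c_{0},N_{0}$ in \textbf{A(w,l)}(i),(iii) (or, if one prefers to avoid symbol differentiation entirely, to the density bounds $\int\left\vert\partial^{\vartheta}p^{R}\left( t,x\right)\right\vert dx\leq C\gamma\left( t\right)^{-\left\vert\vartheta\right\vert}$ of Lemma \ref{lemma3}).

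Finally, the two $\varphi_{0}$-estimates require neither the annulus nor the gap. Since $p^{R}\left( t,\cdot\right)$ is a probability density, Fubini yields $\int\left\vert\mathbf{E}\varphi_{0}\left( x+Z_{t}^{\tilde{\nu}_{R}}\right)\right\vert dx\leq\int\!\!\int\left\vert\varphi_{0}\left( x+z\right)\right\vert p^{R}\left( t,z\right)dz\,dx=\left\vert\varphi_{0}\right\vert_{L^{1}\left(\mathbf{R}^{d}\right)}$, the third bound; and the same step with $\left\vert x\right\vert^{\alpha_{2}}\leq C\left(\left\vert x+z\right\vert^{\alpha_{2}}+\left\vert z\right\vert^{\alpha_{2}}\right)$ reduces the second bound to $\int\left\vert z\right\vert^{\alpha_{2}}p^{R}\left( t,z\right)dz=\mathbf{E}\left\vert Z_{t}^{\tilde{\nu}_{R}}\right\vert^{\alpha_{2}}\leq C\left( 1+t\right)$, uniformly in $R$, the same moment estimate as above.
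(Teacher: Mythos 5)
This lemma is not proved in the paper at all: it sits in the appendix, is cited as Lemma~2 of \cite{cr2}, and the appendix explicitly refers the reader to the references for proofs. So there is no in-paper argument to compare against; what can be said is whether your reconstruction is sound, and it is. The skeleton is right and the two genuinely delicate points are correctly identified and handled: (a) the uniform-in-$R$ spectral gap on the annulus, which you correctly extract from $\tilde{\nu}_R\geq\mu^0$, the elementary bound $1-\cos\theta\geq c\theta^2$ for $\left\vert\theta\right\vert\leq\pi$, and the non-degeneracy constant $c_0$ in \textbf{A(w,l)}(i) (your choice of $\delta$ works because $\int_{\delta<\left\vert y\right\vert\leq 1}\left\vert\xi\cdot y\right\vert^{2}\mu^{0}\left(dy\right)\leq\int_{\delta<\left\vert y\right\vert\leq1}\left\vert y\right\vert^{2}\mu^{0}\left(dy\right)$ can be made $\leq c_0/2$ uniformly in $\xi$); and (b) the small/large jump splitting forced by the failure of $\int_{\left\vert y\right\vert>1}\left\vert y\right\vert^{\left\vert\gamma\right\vert}\tilde{\nu}_R<\infty$ for $\left\vert\gamma\right\vert>\alpha_2$, with the gap correctly located in the small-jump symbol and the large-jump contribution reduced to the moment $\mathbf{E}\left\vert Z_t^{\ell}\right\vert^{\alpha_2}\leq C\left(1+t\right)$. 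Your accounting of why exactly $\left[d/2\right]+3$ derivatives of $\hat{\varphi}$ are needed ($s_0=\left[d/2\right]+1$ for the $L^2$ weight plus $\alpha_2\leq2$ for the moment weight) is the right explanation of the hypothesis. Two points deserve one more line each if you write this out: the operator $\left(I-\left(2\pi\right)^{-2}\Delta_\xi\right)^{\left(s_0+\alpha_2\right)/2}$ is of fractional order, so you should dominate the $H^{s_0+\alpha_2}$ norm by the integer-order $H^{\left[d/2\right]+3}$ norm of $e^{t\psi^{s}}\hat{\varphi}$ (legitimate since the function is supported in the fixed annulus) rather than differentiate fractionally; and the moment bound for $Z_t^{\ell}$ needs a case split on whether the large jumps are compensated (von Bahr--Esseen for $\alpha_2\in\left(1,2\right]$, concavity of $x\mapsto x^{\alpha_2}$ for $\alpha_2<1$), with the constant traced to $N_0$ in \textbf{A(w,l)}(iii). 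Neither point is a gap, only a place where the write-up is compressed.
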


\end{document}